\documentclass[11pt]{article}
\usepackage[margin=1in]{geometry}

\usepackage{amsfonts,amsmath,amssymb,amsthm}
\usepackage{aliascnt}
\usepackage{mathtools,bm,mathrsfs,mleftright}
\usepackage{xcolor,enumitem,autobreak}
\allowdisplaybreaks[4]
\usepackage{graphicx}
\usepackage{placeins}
\usepackage[authoryear,round]{natbib}
\setcitestyle{authoryear,round}
\usepackage[colorlinks,citecolor=blue,linkcolor=blue,urlcolor=blue]{hyperref}
\usepackage[nameinlink,noabbrev]{cleveref}

\crefname{theorem}{Theorem}{Theorems}
\crefname{lemma}{Lemma}{Lemmas}
\crefname{corollary}{Corollary}{Corollaries}
\crefname{proposition}{Proposition}{Propositions}
\crefname{definition}{Definition}{Definitions}
\crefname{remark}{Remark}{Remarks}
\crefname{example}{Example}{Examples}
\crefname{assumption}{Assumption}{Assumptions}
\crefname{condition}{Condition}{Conditions}
\crefname{section}{Section}{Sections}
\crefname{subsection}{Subsection}{Subsections}
\crefname{subsubsection}{Subsection}{Subsections}

\crefformat{equation}{(#2#1#3)}
\crefrangeformat{equation}{(#3#1#4)--(#5#2#6)}
\crefmultiformat{equation}{(#2#1#3)}{ and (#2#1#3)}{, (#2#1#3)}{, and (#2#1#3)}

\newcommand{\R}{\mathbb{R}}
\newcommand{\E}{\mathbb{E}}

\newcommand{\bbS}{\mathbb{S}}
\newcommand{\bbZ}{\mathbb{Z}}

\DeclareMathOperator{\Var}{Var}
\DeclareMathOperator{\Cov}{Cov}

\newcommand{\mr}{\mathrm}

\newcommand{\caA}{\mathcal{A}}
\newcommand{\caE}{\mathcal{E}}

\newcommand{\caL}{\mathcal{L}}
\newcommand{\caR}{\mathcal{R}}

\newcommand{\xk}[1]{\left(#1\right)}
\newcommand{\zk}[1]{\left[#1\right]}
\newcommand{\dk}[1]{\left\{#1\right\}}

\newcommand{\zkx}[1]{[#1]}

\providecommand{\ang}[1]{\left\langle{#1}\right\rangle}
\providecommand{\angx}[1]{\langle{#1}\rangle}
\providecommand{\abs}[1]{\left\lvert{#1}\right\rvert}
\providecommand{\norm}[1]{\left\lVert{#1}\right\rVert}
\providecommand{\absx}[1]{\lvert {#1}\rvert}
\providecommand{\normx}[1]{\lVert{#1}\rVert}
\providecommand{\floor}[1]{\left\lfloor{#1}\right\rfloor}
\providecommand{\dd}{~\mathrm{d}}

\newcommand{\caEelig}[1]{\caE_{#1}^{\mr{elig}}}
\newcommand{\caEthr}[1]{\caE_{#1}^{\mr{thr}}}
\newcommand{\caEkeep}[1]{\caE_{#1}^{\mr{keep}}}

\theoremstyle{plain}
\newtheorem{theorem}{Theorem}[section]
\newaliascnt{lemma}{theorem}
\newtheorem{lemma}[lemma]{Lemma}
\aliascntresetthe{lemma}
\newaliascnt{corollary}{theorem}
\newtheorem{corollary}[corollary]{Corollary}
\aliascntresetthe{corollary}

\theoremstyle{definition}
\newaliascnt{proposition}{theorem}
\newtheorem{proposition}[proposition]{Proposition}
\aliascntresetthe{proposition}
\newaliascnt{definition}{theorem}

\aliascntresetthe{definition}
\newaliascnt{remark}{theorem}

\aliascntresetthe{remark}

\newtheorem{assumption}{Assumption}

\hypersetup{colorlinks=true,linkcolor=blue,citecolor=blue,urlcolor=blue}

\begin{document}

\title{The Cost of Discretization in Functional Linear Regression: Minimax Rates and Adaptation}
\author{
T. Tony Cai\thanks{Department of Statistics and Data Science, The Wharton School, University of Pennsylvania, Philadelphia, PA, USA. Email: \texttt{tcai@wharton.upenn.edu}.}
\and
Yicheng Li\thanks{KLATASDS-MOE, School of Statistics, East China Normal University, Shanghai, China. Email: \texttt{ycli@sfs.ecnu.edu.cn}.}
}
\date{\today}
\maketitle

\begin{abstract}
      We study scalar-on-function linear regression when each covariate curve is observed only through finitely many noisy point evaluations. Our goal is to characterize the minimax estimation and prediction risks as joint functions of the number of trajectories \(n\) and the within-trajectory resolution \(m\). Working in a fixed trigonometric eigenbasis, with covariance eigenvalues decaying at rate \(\alpha\) and slope function of Sobolev smoothness \(s\), we derive matching minimax upper and lower bounds under two canonical sampling schemes.

Under an independent random design, the minimax prediction rate is
\[
  n^{-\frac{2\alpha+2s}{2\alpha+2s+1}}
  +
  (nm)^{-\frac{2\alpha+2s}{4\alpha+2s+1}} .
\]
The first term is the fully observed functional linear regression benchmark, while the second term captures the cost of noisy point evaluations after amplification by the inverse covariance operator. Under a common design on an equally spaced grid, the shared sampling geometry introduces additional obstructions, and the minimax prediction rate becomes
\[
  n^{-\frac{2\alpha+2s}{2\alpha+2s+1}}
  +
  (nm)^{-\frac{2\alpha+2s}{4\alpha+2s+1}}
  +
  m^{-(2\alpha+2s)}
  +
  m^{-4\alpha}.
\]
Here the third term represents discretization error induced by the fixed grid, whereas the fourth reflects the cost of identifying unknown eigenvalues from observations on a common grid.

We further construct data-driven adaptive estimators that screen the covariance scale and threshold blockwise prediction energy, attaining these rates without prior knowledge of the eigenvalue sequence or the smoothness indices. The results reveal a sharp phase transition that depends on the sampling resolution under independent design and a richer phase diagram under common design. Numerical simulations and a real data example illustrate the theoretical findings.

     \end{abstract}

\noindent\textbf{Keywords:} Adaptive estimation; common design; discrete noisy observations; functional linear regression; independent design; minimax rate.

  \section{Introduction}
\label{sec:intro}

Functional linear regression (FLR) is a central model for scalar-on-function regression in functional data analysis
\citep{ramsay2005_FunctionalDataAnalysis,wang2016_FunctionalData,hsing2015_TheoreticalFoundationsFDA}.
It provides a basic framework for studying how an entire trajectory, curve, image profile, or time-varying signal predicts a scalar outcome.
When the functional covariate is fully observed, estimation and prediction are supported by a mature minimax theory.
For example, \citet{yuan2010_RKHSFLR} provided an RKHS framework with smoothness regularization that yields minimax optimal rates for both slope estimation and prediction.
Other related work covers functional PCA and regularization for slope estimation
\citep{hall2007_MethodologyConvergenceFLR},
adaptive estimation
\citep{cai2012_MinimaxAdaptivePrediction,cai2018_AdaptiveFLR},
and recent general regularization methods
\citep{fan2024_SpectralAlgorithmsFLR,gupta2025_OptimalRatesFLRGeneralRegularization}.

In realistic applications, however, the predictor trajectory is rarely observed as an ideal function on a continuum.
Instead, each subject is typically measured only at finitely many locations, and each measurement may be contaminated by noise
\citep{cai2011_OptimalMeanFunctionDiscrete,zhang2016_SparseDense,zhou2023_DiscretelyObservedFLR}.
This regime arises in longitudinal studies, wearable sensing, imaging, environmental monitoring, and other acquisition systems where functional predictors are available only through discrete noisy measurements.
The distinction is not a technical nuisance.
Discrete noisy observation changes the statistical experiment itself: the sampling frequency, measurement noise, and sampling geometry all affect how much information about the latent trajectory and the slope function is available.
Consequently, procedures that are optimal for fully observed FLR need not remain optimal, or even rate-optimal, when the covariates are observed only through noisy point evaluations.
A fundamental question is therefore how the minimax risks for slope estimation and prediction are affected by discretization, and how the within-trajectory resolution \(m\) interacts with the number of trajectories \(n\) to determine the effective information in the data.

This paper studies this ``cost of discretization'' for functional linear regression.
Concretely, the data consist of scalar responses and noisy point evaluations
\[
  Y_i = \ang{X_i,\beta}_{L^2} + \varepsilon_i,
  \qquad
  Z_{ij} = X_i(t_{ij}) + \delta_{ij},
  \quad j=1,\dots,m,\quad i=1,\dots,n.
\]
Here \(X_i\) are independent latent trajectories, \(\beta\) is the unknown slope function, and \(t_{ij}\) are the sampling locations.
For estimation, performance is measured by the squared \(L^2\) risk
\[
  \E \normx{\hat{\beta}-\beta}_{L^2}^2
\]
of the slope function itself.
For prediction, performance is measured by the excess prediction risk
\[
  \E_\star \zkx{\angx{X_\star,\hat{\beta}-\beta}_{L^2}^2},
\]
where \(X_\star\) is an independent test trajectory drawn from the same distribution as the \(X_i\)'s.
The estimation and prediction problems are closely related but not identical:
estimation measures recovery of the slope function in \(L^2\), whereas prediction weights errors through the covariance structure of the future trajectory.
In this paper, the two analyses are parallel, so we focus on prediction risk in the main theorems for clarity, while the corresponding rates for estimation error are also discussed.
The full setting is given in \cref{sec:setup}.

The sampling scheme itself is a central part of the statistical problem.
We focus on two standard schemes:
\emph{independent design} and \emph{common design}
\citep{cai2010_NonparametricCovarianceFunction}.
Under independent design, each subject is observed at its own random sampling locations;
under common design, all trajectories share the same deterministic grid.
These two designs represent common data-acquisition mechanisms and lead to fundamentally different forms of information loss.
In independent design, random sampling locations can be pooled across subjects, so increasing \(n\) also improves coverage of the domain.
In common design, all subjects are observed through the same grid, so increasing \(n\) reduces stochastic variation but cannot remove geometric aliasing or fixed-grid approximation errors.
Thus the central question is not only how the minimax rates depend on \((n,m)\), but also how the answer changes with the sampling geometry.
Identifying this dependence is essential for understanding when discretely observed FLR behaves like the fully observed problem, when the total number of scalar measurements \(nm\) is the limiting resource, and when the grid itself creates an irreducible obstruction.

A related line of work studies the cost of discretization for functional mean and covariance estimation,
where the sampling frequency already changes the statistical problem before the inverse structure of FLR enters.
Sparse methods for functional data developed by \citet{yao2005_SparseLongitudinalFDA} and \citet{hall2006_PrincipalComponentLongitudinal} provide foundational tools for recovering mean and covariance structure from partial noisy trajectories.
For mean estimation, \citet{cai2011_OptimalMeanFunctionDiscrete} made this cost explicit by deriving distinct minimax rates and phase transitions in \(m\) under common and independent designs respectively.
Analogous rates have been obtained for covariance estimation \citep{cai2010_NonparametricCovarianceFunction} and
high-dimensional functional data \citep{petersen2024_DiscreteFunctionalCovariance}.
The sparse to dense viewpoint was further systematized by \citet{zhang2016_SparseDense} and has also been studied through predictive distributions \citep{gajardo2021_PredictiveDistributions}.
These works often identify a transition between two regimes: when \( m \) is large, the fully observed rate is recovered, whereas when \( m \) is small, the rate is dominated by a sampling term that depends on \( m \).

The FLR problem considered here is nevertheless different in an essential way because it also has an inverse structure.
One must recover not only the sample path \(X\), but also the slope function \(\beta\) and the interaction between the two.
As a result, discretization terms can arise in FLR that have no analogue in mean or covariance estimation.
For discretely observed scalar-on-function regression, the most relevant prior work is \citet{zhou2023_DiscretelyObservedFLR},
which focuses on independent random design and gives sufficient conditions on \(m\) under which the fully observed dense rate is recovered.
However, it does not provide a minimax lower bound that depends on \(m\), so it is unclear whether its threshold for the dense regime is sharp, and it also does not address sampling on a common grid.
What left open by these works is a complete minimax theory for the FLR targets that treats \(n\) and \(m\) as joint statistical resources,
separates the term due to noisy point evaluations from the fully observed benchmark,
and distinguishes these statistical terms from losses created by a common grid.

We address this problem by establishing minimax rates with respect to both \( n \) and \(m\) for the two sampling schemes.
To isolate the cost of discretization, we work in a fixed trigonometric basis and assume that the covariance operator of \(X\) is diagonalized by this basis, which includes periodic stationary covariance kernels.
This formulation sets aside the additional effects of covariance eigenbasis estimation and of the alignment between the covariance geometry and the regularity scale of \(\beta\)~\citep{cai2012_MinimaxAdaptivePrediction},
which are present even in the fully observed FLR model and are not the focus of this paper.
The resulting rates show that discretely observed FLR has its own phase transition structure, not merely the sparse to dense transition known from mean and covariance estimation.
Under independent design, the cost of discretization appears through a sampling term with a distinct exponent from the dense FLR term due to amplification by covariance inversion.
Under common design, the rate is richer: the same statistical branch remains,
but the shared grid creates additional geometric obstructions through fixed grid discretization and the identification of unknown eigenvalues.

\begin{table}[t]
  \centering
  \caption{Rate terms appearing in the minimax prediction risk under the main observation settings.
  Define \( \nu\coloneqq 2\alpha+2s \) and \( \kappa\coloneqq 4\alpha+2s+1 \).
  The known \(\lambda\) common design row treats the eigenvalue sequence as fixed and available, whereas the last row treats it as unknown.
  }
  \label{tab:intro-rate-terms}
  \begin{tabular}{l c@{\hspace{3pt}}c@{\hspace{3pt}}c@{\hspace{3pt}}c@{\hspace{3pt}}c@{\hspace{3pt}}c@{\hspace{3pt}}c}
    \hline
    Setting                         & Dense (I)            &       & Sampling (II)          &       & Grid (III)   &       & \shortstack{Eigenvalue\\ identification} (IV) \\
    \hline
    Fully observed                  & \(n^{-\nu/(\nu+1)}\) &       &                        &       &              &       &                          \\
    Independent design              & \(n^{-\nu/(\nu+1)}\) & \(+\) & \((nm)^{-\nu/\kappa}\) &       &              &       &                          \\
    Common design, known \(\lambda\) & \(n^{-\nu/(\nu+1)}\) & \(+\) & \((nm)^{-\nu/\kappa}\) & \(+\) & \(m^{-\nu}\) &       &                  \\
    Common design, unknown \(\lambda\) & \(n^{-\nu/(\nu+1)}\) & \(+\) & \((nm)^{-\nu/\kappa}\) & \(+\) & \(m^{-\nu}\) & \(+\) & \(m^{-4\alpha}\) \\
    \hline
  \end{tabular}
\end{table}
 
To illustrate our results in detail,
let $\alpha$ be the decay rate of the covariance eigenvalues of \(X\) and \(s\) be the smoothness of the slope function \(\beta\) (see \cref{sec:setup} for precise definitions).
In the case of independent design, the random sampling locations can be pooled across subjects, so the discretization cost appears as a single sampling term caused by noisy point evaluations.
In the range \(\alpha>1/2\) and \(s\ge0\) of minimum regularity requirement, the minimax prediction risk is of order
\[
  n^{-\frac{2\alpha+2s}{2\alpha+2s+1}}
  +
  (nm)^{-\frac{2\alpha+2s}{4\alpha+2s+1}}.
\]
The first term is the minimax rate for fully observed or dense FLR~\citep{cai2012_MinimaxAdaptivePrediction,zhou2023_DiscretelyObservedFLR}.
The exponent $2\alpha+2s$ reflects the effective smoothness of the problem as the sum of the slope smoothness and the covariance decay.
The second term does not appear in the fully observed theory and represents the sampling price paid for noisy point evaluations.
Its exponent has a larger denominator because measurement noise is amplified by the inverse covariance eigenvalues before it enters prediction.
The phase transition between the two terms occurs at \(m\asymp n^{2\alpha/(2\alpha+2s+1)}\).
Below this threshold, the effective information is governed by the total number of scalar measurements \(nm\);
above it, further refinement within each curve no longer improves the rate, and the effective information is governed by the number of curves \(n\) alone.

Common design is different because all trajectories are viewed through the same grid.
The two statistical terms above are still present, but a common grid leaves a geometric error that cannot be averaged away by increasing the number of curves.
When the eigenvalue sequence \((\lambda_k)_{k \in \bbZ}\) is known, the oracle rate for an equal grid consists of the two statistical terms and an additional term \(m^{-(2\alpha+2s)}\), representing the reconstruction error left by the fixed grid.
When \((\lambda_k)_{k \in \bbZ}\) is unknown, estimation must also contend with an obstruction from eigenvalue identification,
and the sharp rate has the four term decomposition
\[
  n^{-\frac{2\alpha+2s}{2\alpha+2s+1}}
  +
  (nm)^{-\frac{2\alpha+2s}{4\alpha+2s+1}}
  +
  m^{-(2\alpha+2s)}
  +
  m^{-4\alpha}.
\]
The additional term $m^{-4\alpha}$ depends solely on the covariance decay and sampling frequency, and represents the cost of identifying the eigenvalues from the common grid.
Our lower bound shows that this term is unavoidable when the eigenvalues are unknown and is not merely a byproduct of the adaptive construction.
This contrasts with independent design, where estimating the eigenvalue sequence does not change the minimax rate.
In this case, the phase diagram is more complex, with multiple transitions between the four terms as \(m\) increases.
See \Cref{fig:common-design-phase-diagram} for an illustration.

Our analysis also requires several new technical ingredients beyond the fully observed FLR theory, as discretization cannot be analyzed as a separate error.
On the lower bound side, we establish a delicate Fisher information control that accounts for noisy point evaluations.
In addition, under common design, we develop two distinct indistinguishability constructions to capture the effects of fixed grid discretization and unknown eigenvalue identification.
On the upper bound side, the interplay between the slope function, the covariance function, and sampling geometry creates a nontrivial tradeoff between bias and variance that cannot be resolved by a single truncation or thresholding step for adaptation.
To this end, we construct adaptive procedures that combine eigenvalue screening with blockwise thresholding of prediction energy.
We believe that these techniques will be of independent interest for other functional data problems with discretization and measurement noise.

\subsection{Related work}
\label{subsec:related-work}

Fully observed scalar-on-function regression has a well-developed minimax and regularization theory.
Classical results develop generalized, principal component, smoothing, and prediction-oriented methods for the functional linear model; in particular, \citet{muller2005_GeneralizedFunctional} studied generalized functional linear models, \citet{cai2006_PredictionFLR} separated prediction from slope recovery, and \citet{hall2007_MethodologyConvergenceFLR} established convergence rates for principal component estimators.
The RKHS formulation of \citet{yuan2010_RKHSFLR} and the adaptive prediction theory of \citet{cai2012_MinimaxAdaptivePrediction,cai2018_AdaptiveFLR} provide benchmark rates when the covariate trajectories are observed as functions.
Related inverse problem viewpoints include the Le Cam equivalence result of \citet{meister2011_AsymptoticEquivalenceFLR}, and recent spectral, general regularization, or unified model analyses include \citet{fan2024_SpectralAlgorithmsFLR,gupta2025_OptimalRatesFLRGeneralRegularization,balasubramanian2025_FunctionalLinear}.

For sparsely or discretely observed functional data, a central theme is that the sampling frequency within each curve changes the statistical experiment.
The PACE methodology of \citet{yao2005_SparseLongitudinalFDA} and the principal component theory of \citet{hall2006_PrincipalComponentLongitudinal} provide foundational tools for recovering latent functional structure from noisy partial trajectories.
For mean and covariance structure, \citet{cai2011_OptimalMeanFunctionDiscrete,cai2010_NonparametricCovarianceFunction} established sharp rates under discrete sampling, and \citet{zhang2016_SparseDense} clarified the transition from sparse to dense regimes.
More recent work extends this regime analysis to predictive distributions and high-dimensional functional data \citep{gajardo2021_PredictiveDistributions,petersen2024_DiscreteFunctionalCovariance,guo2025_SparseDenseHighDimensions}.

Several papers address functional regression with sparse, discrete, or noisy covariates directly.
\citet{yao2005_FLRLongitudinal} developed a functional regression procedure for sparse longitudinal predictor and response processes using conditional principal component scores.
\citet{li2007_RatesConvergenceFLR} and \citet{cardot2007_SmoothingSplineErrorsInVariables} studied slope estimation when functional predictors are observed on discrete grids with measurement error, and \citet{ferraty2012_PresmoothingFLR} investigated presmoothing before functional linear regression.
The closest minimax comparator is \citet{zhou2023_DiscretelyObservedFLR}, which proves that the fully observed rate is attainable under independent random design once \(m\) is sufficiently large.
However, none of these papers gives a matching \(m\)-dependent minimax lower bound for the prediction risk.

\subsection{Organization of the paper}
\label{subsec:organization}

The rest of the paper is organized as follows.
\Cref{sec:setup} introduces the problem formulation, including the model, parameter spaces, and prediction and estimation risks.
\Cref{sec:indep} studies independent design: it states the sampling assumption,
gives the oracle upper bound, proves the lower bound, and constructs the
adaptive estimator and upper bound.
\Cref{sec:common} follows the same structure for the common design problem on an equal grid, where the phase diagram includes the additional terms from fixed grid discretization and the identification of unknown eigenvalues.
\Cref{sec:numerics} presents simulation studies and a real data example.
\Cref{sec:discussion} returns to the main conclusions, provides the corresponding \(L^2\) estimation rates, and discusses the current limitations and next open questions.
All proofs and technical results are collected in the appendices.

We use the following notation throughout the paper.
Let $L^2([0,1])$ be the space of square integrable functions on $[0,1]$ with inner product $\ang{\cdot,\cdot}_{L^2}$ and norm $\norm{\cdot}_{L^2}$.
The expectation operator is denoted by \(\E\), with subscripts indicating the variable of integration when needed.
For real numbers \(a\) and \(b\), write \(a\wedge b=\min\{a,b\}\) and \(a\vee b=\max\{a,b\}\), and let \(\floor{x}\) be the integer part of \(x\).
For a finite set \(A\), \(\absx{A}\) denotes its cardinality, and \(\mathbf{1}\{\cdot\}\) denotes the indicator function.
For two nonnegative quantities \(a\) and \(b\), the notation \(a\lesssim b\) means that \(a\le Cb\) for a constant \(C\) independent of \(n\), \(m\), and the tuning parameters under discussion; \(a\gtrsim b\) and \(a\asymp b\) are defined analogously.
The constants hidden in these comparisons may depend on fixed model parameters such as \(c_\lambda,C_\lambda,R,\sigma_\varepsilon,\sigma_\delta\), and on fixed compact ranges of smoothness parameters when uniform adaptive statements are considered.
   \section{Problem Formulation}
\label{sec:setup}

We use the following functional linear regression model and loss criteria.
Let \( X(\cdot) \) be a centered Gaussian process on the unit interval \([0,1]\) with continuous path.
Let \(K(s,t)=\E[X(s)X(t)]\) be its covariance function and $\Sigma$ be the covariance operator defined as
\(
(\Sigma f)(t) = \int_0^1 K(s,t) f(s) \dd s
\)
for any \(f\in L^2[0,1]\).
Let $\dk{X_i(\cdot)}_{i=1}^n$ be independent copies of \(X\).
The response \(Y_i\) follows the scalar-on-function regression model
\begin{equation}
  \label{eq:FLR}
  Y_i = \int_0^1 X_i(t)\beta(t) \dd t + \varepsilon_i = \ang{X_i, \beta}_{L^2} + \varepsilon_i,
\end{equation}
where \(\beta\) is an unknown slope function and \(\varepsilon_i\) are independent \(\mathcal{N}(0,\sigma_\varepsilon^2)\) errors.

The predictor trajectories \(X_i\) are observed only through noisy point evaluations.
Let \(m\) be the number of sampling locations per trajectory and let \(\{t_{ij}:j=1,\dots,m\}\) be the sampling locations for the \(i\)-th trajectory.
The observed point evaluations are
\begin{equation}
  \label{eq:sampling-model}
  Z_{ij}=X_i(t_{ij})+\delta_{ij},
\end{equation}
where \(\delta_{ij}\) are independent \(\mathcal{N}(0,\sigma_\delta^2)\) measurement errors with \(\sigma_\delta^2 > 0\).
The families \(\{\delta_{ij}\}\), \(\{\varepsilon_i\}\), and \(\{X_i\}\) are mutually independent.
Throughout the paper, \(\varepsilon_i\) denotes the scalar response error and \(\delta_{ij}\) denotes the pointwise measurement error.
The two sampling schemes for \(t_{ij}\) are specified later in \cref{assum:indep-design} and \cref{assum:common-design}.

Given an estimator \(\hat{\beta}\) of the slope function \(\beta\), our primary performance criterion is the excess prediction risk
\begin{equation}
  \label{eq:generalization-error}
  \E_{\star} \zk{\angx{X_{\star}, \hat{\beta}-\beta}_{L^2}^2},
\end{equation}
where \(X_\star\) is an independent test trajectory drawn from the same distribution as the \(X_i\)'s and \(\E_{\star}\) denotes the expectation with respect to \(X_\star\).
We also consider slope recovery under the squared \(L^2\) estimation loss \(\normx{\hat{\beta}-\beta}_{L^2}^2\).
The main theorems below are stated for prediction risk; the corresponding rates for estimation risk are discussed in \cref{sec:discussion}.

To focus on the impact of discrete noisy sampling and avoid technicalities related to the choice of basis, we work with the fixed orthonormal trigonometric basis on \([0,1]\)
\begin{equation}
  \label{eq:trig-basis}
  e_0(t)\coloneqq 1,\qquad
  e_r(t)\coloneqq \sqrt{2}\cos(2\pi r t),\qquad
  e_{-r}(t)\coloneqq \sqrt{2}\sin(2\pi r t),
  \quad r\ge1.
\end{equation}
The choice of the index set \(\bbZ\) is for notational convenience, with \(r=0\) corresponding to the constant function, positive indices labeling cosine functions, and negative indices labeling sine functions.
Then, every square integrable function \(f\) is represented as
\[
  f=\sum_{r\in\bbZ} a_r e_r,\qquad
  a_r=\ang{f,e_r}_{L^2}=\int_0^1 f(t)e_r(t)\dd t,
  \qquad a_r \in\R.
\]

Throughout the paper, we assume that \(\Sigma\) is diagonalized by the trigonometric basis \((e_r)_{r\in\bbZ}\) with eigenvalues \((\lambda_r)_{r\in\bbZ}\).
It is satisfied, for example, by periodic stationary covariance kernels \(K(s,t)=\kappa(s-t \bmod 1)\), which arise naturally for many periodic data settings, such as daily or weekly trajectories, when covariance depends on circular lag.
This assumption lets us focus on the statistical cost created by noisy point evaluations and by the sampling geometry, rather than on the additional impact of estimating the covariance eigenbasis or the alignment between the slope function and the covariance geometry.
In a principal component formulation, the covariance operator is diagonal in its own population eigenbasis, but that basis is unknown and covariance dependent.
The need of estimating the covariance eigenbasis from discretely observed data may introduce an additional source of error, which is not the main focus of this paper.
Moreover, imposing coefficient decay or smoothness of \(\beta\) in that basis ties the slope class to the covariance geometry, thereby introducing an alignment issue between covariance and slope regularity~\citep{cai2012_MinimaxAdaptivePrediction,gupta2025_OptimalRatesFLRGeneralRegularization} that is already present in the fully observed case.
Therefore, we instead use the fixed trigonometric basis both to diagonalize the covariance operator and to unify the smoothness classes for the predictor trajectories \(X_i\) and for the slope function \(\beta\).

Consequently, the predictor trajectories \(X_i\) have the Karhunen--Loève expansion
\begin{equation}
  \label{eq:predictor-fourier-expansion}
  X_i(t)=\sum_{r\in\bbZ} x_{ir} e_r(t),
  \qquad
  \E x_{ir}^2=\lambda_r,
\end{equation}
where the Fourier scores are independent centered Gaussian variables.
We also assume that the unknown slope function \(\beta\) has the Fourier expansion
\begin{equation}
  \label{eq:beta-fourier}
  \beta(t)=\sum_{r\in\bbZ} \theta_r e_r(t),
  \qquad
  \theta_r \in\R.
\end{equation}
We introduce the cross-covariance function as
\begin{equation}
  \label{eq:g-fourier}
  g(t)\coloneqq \E[Y_1 X_1(t)] = (\Sigma\beta)(t) = \sum_{r\in\bbZ} \gamma_r e_r(t), \qquad \gamma_r \coloneqq\lambda_r \theta_r.
\end{equation}
With the fixed trigonometric basis, the excess prediction risk can be expressed as a weighted \(\ell^2\) error of the Fourier coefficients.
If \( \hat{\beta} \) has the Fourier expansion \( \hat{\beta}=\sum_{r\in\bbZ} \hat{\theta}_r e_r \), then
\begin{equation}
  \label{eq:generalization-error-fourier}
  \caR(\hat{\beta};\theta,\lambda) \coloneqq  \E_{\star} \left[\angx{X_\star,\hat{\beta}-\beta}^2 \right] = \sum_{r\in\bbZ} \lambda_r \absx{\hat{\theta}_r-\theta_r}^2.
\end{equation}
Since \(\hat{\beta}\) depends on the training data, \(\caR(\hat{\beta};\theta,\lambda)\) is the test risk conditional on that training sample.
The minimax bounds below use the outer expectation \(\E\caR(\hat{\beta};\theta,\lambda)\), taken over all training randomness, including the latent trajectories, design points, response errors, and measurement errors.

For $\alpha > 1/2$,
we define the eigenvalue class
\begin{equation}
  \label{eq:class-lambda}
  \caL_\alpha(c_\lambda,C_\lambda)
  \coloneqq
  \dk{
   (\lambda_r)_{r\in\bbZ}
    :
    c_\lambda(1+\absx{r})^{-2\alpha}
    \le \lambda_r \le
    C_\lambda(1+\absx{r})^{-2\alpha}
  }
\end{equation}
where \(0<c_\lambda < C_\lambda<\infty\) are fixed constants.
The requirement \(\alpha>1/2\) is necessary to ensure that the eigenvalues are summable and the predictor trajectories \(X_i\) are continuous almost surely.
For \(s\ge 0\) and \(R_0>0\), the slope smoothness is indexed by the Sobolev ball
\begin{equation}
  \label{eq:class-theta}
  \Theta_s(R_0)
  \coloneqq
  \dk{
  \theta=(\theta_r)_{r\in\bbZ}
    :
    \sum_{r\in\bbZ}(1+\absx{r})^{2s}\theta_r^2 \le R_0^2
  }.
\end{equation}
The condition \(\theta \in \Theta_s(R_0)\) is equivalent to assuming that the slope function \(\beta\) belongs to the periodic Sobolev space \(H^s([0,1])\).
Throughout the paper, we assume that \(\alpha > 1/2\) and \(s\ge 0\) without further mention,
which is the minimal regularity range for the problem to be well defined.
   \section{Independent Design}
\label{sec:indep}

Under independent design, each subject is observed on an independent random grid.
Formally, we introduce the following assumption on the design points.

\begin{assumption}[Independent design]
  \label{assum:indep-design}
  The design points satisfy \(t_{ij} \overset{\mathrm{iid}}{\sim}\mathrm{Unif}[0,1]\), independently across \(i\) and \(j\), and independently of all other random quantities.
\end{assumption}

The results in this section show that discretization affects the risk only through an additional statistical measurement term.

\subsection{Oracle Estimator}
\label{subsec:indep-oracle-upper}

We begin with an oracle estimator that depends on the eigenvalue sequence \((\lambda_r)_{r\in\bbZ}\) and uses a deterministic cutoff \(d\), which may depend on the smoothness indices \((\alpha,s)\).
Define the cross-covariance coefficient estimator for \(\gamma_r\) by
\begin{equation}
  \label{eq:cross-covariance-coefficient}
  \bar{Z}_{ir}
  \coloneqq
  \frac{1}{m}\sum_{j=1}^m Z_{ij} e_r(t_{ij}),
  \qquad
  \hat{\gamma}_r
  \coloneqq
  \frac{1}{n}\sum_{i=1}^n Y_i \bar{Z}_{ir}.
\end{equation}
Under independent design, \( \hat{\gamma}_r \) is an unbiased estimator of \(\gamma_r\).
Since \(\gamma_r=\lambda_r \theta_r\), define the plug-in estimator
\begin{equation}
  \label{eq:oracle-estimator}
  \tilde{\beta}(t) \coloneqq \sum_{\absx{r}\le d}\lambda_r^{-1} \hat{\gamma}_r e_r(t),
\end{equation}
where the cutoff \(d\) is a tuning parameter.
The oracle estimator satisfies the following upper bound on the excess prediction risk.

\begin{theorem}[Oracle upper bound under independent design]
  \label{thm:indep-oracle-upper}
  Assume that \(\lambda\in\caL_\alpha(c_\lambda,C_\lambda)\) is known.
  Under \cref{assum:indep-design}, for every \(d\ge 1\),
  \begin{equation}
    \label{eq:indep-oracle-terms}
    \sup_{\lambda\in\caL_\alpha(c_\lambda,C_\lambda)}
    \sup_{\theta\in\Theta_s(R_0)}
    \E\caR(\tilde{\beta};\theta,\lambda)
    \lesssim
    \frac{d}{n}
    +
    \frac{d^{2\alpha+1}}{nm}
    +
    d^{-(2\alpha+2s)}.
  \end{equation}
  Consequently, take the optimized cutoff
  \begin{equation}
    \label{eq:indep-oracle-d}
    d^* \asymp n^{\frac{1}{2\alpha+2s+1}} \wedge (nm)^{\frac{1}{4\alpha+2s+1}}
  \end{equation}
  and let \(\tilde{\beta}^*\) be the corresponding estimator.
  Then
  \begin{equation}
    \label{eq:indep-oracle-rate}
    \sup_{\lambda\in\caL_\alpha(c_\lambda,C_\lambda)}
    \sup_{\theta\in\Theta_s(R_0)}
    \E\caR(\tilde{\beta}^*;\theta,\lambda)
    \lesssim
    n^{-\frac{2\alpha+2s}{2\alpha+2s+1}}
    +
    (nm)^{-\frac{2\alpha+2s}{4\alpha+2s+1}}.
  \end{equation}
\end{theorem}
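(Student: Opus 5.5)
The plan is a bias--variance decomposition in the Fourier coordinates, using \eqref{eq:generalization-error-fourier}. Since \(\tilde\theta_r=\lambda_r^{-1}\hat\gamma_r\) for \(\absx{r}\le d\) and \(\tilde\theta_r=0\) otherwise,
\[
  \E\caR(\tilde\beta;\theta,\lambda)
  =\sum_{\absx{r}\le d}\lambda_r^{-1}\E\absx{\hat\gamma_r-\gamma_r}^2
  +\sum_{\absx{r}>d}\lambda_r\theta_r^2 .
\]
The bias term is controlled by \(\lambda_r\le C_\lambda(1+\absx{r})^{-2\alpha}\) and \(\theta\in\Theta_s(R_0)\). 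This gives
\[
  \sum_{\absx{r}>d}\lambda_r\theta_r^2\le C_\lambda (1+d)^{-2\alpha-2s}R_0^2 .
\]

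For the variance term I will use the unbiasedness of \(\hat\gamma_r\) and independence across \(i\), so that \(\E\absx{\hat\gamma_r-\gamma_r}^2\le n^{-1}\E[Y_1^2\bar Z_{1r}^2]\). The task is then to show
\[
  \E[Y_1^2\bar Z_{1r}^2]\lesssim \lambda_r+m^{-1}.
\]
To do this, decompose \(\bar Z_{1r}=x_{1r}+A_r+B_r\). Here \(A_r=\frac1m\sum_j\bigl(X_1(t_{1j})e_r(t_{1j})-x_{1r}\bigr)\) is the Monte Carlo error of the design, and \(B_r=\frac1m\sum_j\delta_{1j}e_r(t_{1j})\) is the noise term.

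Conditionally on \(X_1\), the summands of \(A_r\) are i.i.d. and centered (since \(\int X_1e_r=x_{1r}\)). Hence
\[
  \E[A_r^2\mid X_1]\le m^{-1}\int_0^1X_1^2e_r^2\le 2m^{-1}\normx{X_1}_{L^2}^2 .
\]
Similarly \(\E[B_r^2]=\sigma_\delta^2 m^{-1}\), because \(\int e_r^2=1\). Since \(Y_1\) is independent of the design and of \(\delta\), one gets \(\E[Y_1^2B_r^2]=\sigma_\delta^2\E Y_1^2/m\). Also \(\E[Y_1^2A_r^2]\le 2m^{-1}\E[Y_1^2\normx{X_1}^2]\). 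By Cauchy--Schwarz and Gaussianity this is \(\lesssim m^{-1}\), because \(\E Y_1^2=\sum\lambda_r\theta_r^2+\sigma_\varepsilon^2\le C_\lambda R_0^2+\sigma_\varepsilon^2\) and \(\E\normx{X_1}^4\lesssim(\sum\lambda_r)^2<\infty\) (this uses \(\alpha>1/2\)).

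For the leading term, Isserlis' formula for the jointly Gaussian pair \((Y_1,x_{1r})\) gives
\[
  \E[Y_1^2x_{1r}^2]=\E Y_1^2\,\lambda_r+2\gamma_r^2\lesssim\lambda_r ,
\]
using \(\gamma_r^2=\lambda_r^2\theta_r^2\le\lambda_r C_\lambda R_0^2\). Combining the three pieces via \((a+b+c)^2\le3(a^2+b^2+c^2)\) yields \(\E\absx{\hat\gamma_r-\gamma_r}^2\lesssim n^{-1}(\lambda_r+m^{-1})\).

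Summing over \(\absx{r}\le d\) and applying the lower bound \(\lambda_r\ge c_\lambda(1+\absx{r})^{-2\alpha}\) gives
\[
  \sum_{\absx{r}\le d}\lambda_r^{-1}\,n^{-1}(\lambda_r+m^{-1})\lesssim \frac dn+\frac{d^{2\alpha+1}}{nm}.
\]
This proves \eqref{eq:indep-oracle-terms}. All constants depend only on \(c_\lambda,C_\lambda,R_0,\sigma_\varepsilon,\sigma_\delta,\alpha\), which makes the bound uniform over both classes.

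For \eqref{eq:indep-oracle-rate}, plug in \(d^*\). Let \(d_1=n^{1/(2\alpha+2s+1)}\), which balances \(d/n\) with the bias, and \(d_2=(nm)^{1/(4\alpha+2s+1)}\), which balances \(d^{2\alpha+1}/(nm)\) with the bias. With \(d^*=d_1\wedge d_2\), both variance terms are at most their balanced values, since they are increasing in \(d\). The bias satisfies \((d^*)^{-(2\alpha+2s)}\le d_1^{-(2\alpha+2s)}+d_2^{-(2\alpha+2s)}\), which is exactly the claimed sum of rates.

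The main obstacle is the fourth-moment bound on \(Y_1^2A_r^2\), where the design randomness interacts with the unbounded Gaussian process. The conditioning argument above handles it, provided one checks that \(\int X_1^2e_r^2\le2\normx{X_1}^2\) holds uniformly in \(r\).
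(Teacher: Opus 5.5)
Your proposal is correct and follows essentially the same route as the paper. Both arguments split the risk into bias and variance in Fourier coordinates, bound each coordinate by \(\E\absx{\hat\gamma_r-\gamma_r}^2\lesssim n^{-1}(\lambda_r+m^{-1})\), sum using \(\lambda_r\gtrsim(1+\absx{r})^{-2\alpha}\), and balance at \(d_1\wedge d_2\). The only difference is cosmetic: you split \(\bar Z_{1r}\) into score, design and noise pieces and use Isserlis' formula, whereas the paper applies the law of total variance conditional on \((x,\varepsilon)\) with Cauchy--Schwarz, and both give the same bound.
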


The proof of \cref{thm:indep-oracle-upper} is deferred to Appendix~B.
We remark here that \(d/n\) is the standard estimation contribution from estimating \(d\) coordinates in fully observed FLR, \(d^{-(2\alpha+2s)}\) is the prediction tail, and \(d^{2\alpha+1}/(nm)\) is the cost of estimating \(\gamma_r\) from noisy point evaluations before division by \(\lambda_r\).

\subsection{Lower Bound}
\label{subsec:indep-lower-bound}

We next provide a matching lower bound.

\begin{theorem}[Lower bound under independent design]
  \label{thm:indep-lower}
  Under \cref{assum:indep-design}, we have
  \begin{equation}
    \label{eq:indep-lower}
    \inf_{\hat{\beta}}
    \sup_{\lambda\in\caL_\alpha(c_\lambda,C_\lambda)}
    \sup_{\theta\in\Theta_s(R_0)}
    \E\caR(\hat{\beta};\theta,\lambda)
    \gtrsim
    n^{-\frac{2\alpha+2s}{2\alpha+2s+1}}
    +
    (nm)^{-\frac{2\alpha+2s}{4\alpha+2s+1}}.
  \end{equation}
\end{theorem}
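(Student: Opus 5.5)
The plan is to prove the two terms in \eqref{eq:indep-lower} separately, since a lower bound of order $A+B$ follows from $\max(A,B)\gtrsim A+B$. In both parts I fix a single eigenvalue sequence $\lambda_r=\bar c(1+\absx{r})^{-2\alpha}$ with $\bar c\in(c_\lambda,C_\lambda)$, so that $\lambda\in\caL_\alpha(c_\lambda,C_\lambda)$ and the supremum over $\lambda$ can be dropped.

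\textbf{First term.} The discretely observed experiment is a randomization of the fully observed one: $(Y_i,X_i)$ together with independent $t_{ij}$ and $\delta_{ij}$ generates $(Y_i,Z_{ij},t_{ij})$. Hence any estimator in our model is also an estimator in the fully observed model, and the fully observed minimax rate $n^{-(2\alpha+2s)/(2\alpha+2s+1)}$ is a valid lower bound here. One could cite \citet{cai2012_MinimaxAdaptivePrediction}, but I would instead rederive it with the same Assouad argument as below, so that the whole proof is self-contained.

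\textbf{Assouad construction.} For a level $D\ge1$ to be chosen, let $\omega\in\{0,1\}^{D}$ index the frequencies $r\in\{D+1,\dots,2D\}$. Set
\[
\theta^\omega_r=\epsilon\,\omega_r \quad\text{for } r\in\{D+1,\dots,2D\},\qquad \theta^\omega_r=0 \quad\text{otherwise}.
\]
Membership in $\Theta_s(R_0)$ requires $D^{2s+1}\epsilon^2\lesssim R_0^2$. The loss separates over coordinates, $\caR=\sum_r\lambda_r\absx{\hat\theta_r-\theta_r}^2$, with each flipped coordinate costing about $\lambda_r\epsilon^2\asymp D^{-2\alpha}\epsilon^2$. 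Assouad's lemma then gives
\[
\inf_{\hat\beta}\max_\omega\E\caR\gtrsim D\cdot D^{-2\alpha}\epsilon^2,
\]
provided the KL divergence between any two hypotheses differing in one coordinate is $O(1)$.

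\textbf{KL bound (the main step).} For a single subject, the law of the design and of $Z_i=(Z_{i1},\dots,Z_{im})$ does not depend on $\theta$. Conditionally on $(t,Z)$, the response $Y$ is Gaussian with mean $\angx{\beta,\E[X\mid t,Z]}$ and variance at least $\sigma_\varepsilon^2$. For $\beta,\beta'$ differing only in coordinate $r$ by $\epsilon$, the KL divergence is the expected conditional KL, and the variance difference contributes only lower order (or one uses the exact Gaussian KL with $\Var(Y\mid Z)\ge\sigma_\varepsilon^2$). This gives
\[
\mathrm{KL}\lesssim \frac{\epsilon^2}{\sigma_\varepsilon^2}\,\E\bigl[\E[x_r\mid t,Z]^2\bigr]=\frac{\epsilon^2}{\sigma_\varepsilon^2}\,\E\Var\bigl(\E[x_r\mid t,Z]\bigm| t\bigr).
\]
To bound the last factor, conditioning on more information only increases the variance of a conditional mean. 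So I condition additionally on $x_{-r}$; then $x_r$ is observed through $m$ Gaussian measurements $Z_{ij}-\sum_{k\neq r}x_{ik}e_k(t_{ij})=x_re_r(t_{ij})+\delta_{ij}$. This yields
\[
\Var\bigl(\E[x_r\mid t,Z,x_{-r}]\bigm| t\bigr)=\lambda_r-\Bigl(\lambda_r^{-1}+\sigma_\delta^{-2}\sum_j e_r(t_j)^2\Bigr)^{-1}\le \lambda_r^2\sigma_\delta^{-2}\sum_j e_r(t_j)^2 .
\]
Its expectation under \cref{assum:indep-design} is at most $m\lambda_r^2/\sigma_\delta^2$. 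The per-subject KL is also trivially at most $\epsilon^2\lambda_r/\sigma_\varepsilon^2$, as in full observation. Summing over $n$ subjects,
\[
\mathrm{KL}\lesssim n\epsilon^2\min\bigl(\lambda_r,\;m\lambda_r^2\bigr)\asymp n\epsilon^2\min\bigl(D^{-2\alpha},\;mD^{-4\alpha}\bigr).
\]
This Fisher-information control is the crux of the argument. The remaining care is only that the bound holds uniformly over the hypercube, which it does because the other coordinates never enter the bound.

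\textbf{Balancing.} For the sampling term, take $\epsilon^2\asymp D^{4\alpha}/(nm)$, which makes the KL $O(1)$. Then choose $D\asymp (nm)^{1/(4\alpha+2s+1)}$ to saturate the Sobolev constraint $D^{2s+1}\epsilon^2\asymp1$. The risk is $D^{1-2\alpha}\epsilon^2=D^{2\alpha+1}/(nm)\asymp (nm)^{-(2\alpha+2s)/(4\alpha+2s+1)}$.

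For the dense term, use the branch $\lambda_r$ instead: $\epsilon^2\asymp D^{2\alpha}/n$ and $D\asymp n^{1/(2\alpha+2s+1)}$. This gives risk $D/n\asymp n^{-(2\alpha+2s)/(2\alpha+2s+1)}$.

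Because the KL bound is a minimum of the two expressions, each construction is valid in every regime with no case split on $m$, and taking the maximum of the two bounds gives \eqref{eq:indep-lower}.
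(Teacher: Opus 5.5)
\textbf{Comparison with the paper's proof.} Your argument is correct, and it uses a different lower-bound device from the paper. You apply Assouad's lemma on the hypercube $\{0,1\}^D$ over frequencies $D+1,\dots,2D$ with pairwise KL bounds. The paper instead puts a smooth product prior (the $\cos^2$ density with amplitude $a\asymp d^{-s-1/2}$) on the same kind of block and applies the multivariate van Trees inequality with a Fisher-information bound.

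The information-theoretic core is the same in both. Your $\E[\E[x_r\mid t,Z]^2]$ equals the paper's $\lambda_r^2\varphi_r^\top V_B^{-1}\varphi_r$ averaged over $t$. Your device of conditioning additionally on $x_{-r}$ is a cleaner way to obtain the bound $\min\{\lambda_r, m\lambda_r^2/\sigma_\delta^2\}$, which the paper gets from $V_B\succeq\sigma_\delta^2 I_m$ and $V_B\succeq\lambda_r\varphi_r\varphi_r^\top$.

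What each route buys:
\begin{itemize}
\item Assouad is more elementary: the chain rule for KL plus Pinsker, with no prior regularity needed.
\item van Trees only needs the $\theta$-dependent part of the information controlled on average over the prior, through $\E_{\pi_d}[\theta_B^\top\Lambda_B\theta_B]$. Assouad needs it uniformly over neighbouring pairs.
\end{itemize}
Both give the two rates with no case split on $m$.

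\textbf{The step you should write out.} This is the variance part of the conditional Gaussian KL. The conditional variance $\Var(Y\mid t,Z)=\sigma_\varepsilon^2+\theta^\top S\theta$, with $S=\Cov(x\mid t)$, depends on all of $\theta$. So your remark that ``the other coordinates never enter the bound'' is not literally true. Also, invoking $\Var(Y\mid t,Z)\ge\sigma_\varepsilon^2$ only handles the mean part of the KL.

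The fix is short. For neighbours $\theta,\theta'$ with $\theta-\theta'=\pm\epsilon e_r$, PSD Cauchy--Schwarz gives $|v-v'|=\epsilon\,|(S(\theta+\theta'))_r|\le\epsilon\sqrt{\lambda_r}\,\norm{\Lambda^{1/2}(\theta+\theta')}_2$. The inequality $x-1-\log x\le (x-1)^2/x$ then bounds the variance part by $(v-v')^2/\sigma_\varepsilon^4\lesssim\epsilon^2\lambda_r\cdot\epsilon^2D^{1-2\alpha}$ per subject, uniformly on the cube. This is the analogue of the paper's term $C\lambda_r\theta_B^\top\Lambda_B\theta_B$.

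With $\epsilon^2\asymp D^{-2s-1}$, this term is smaller than the mean part by a factor $D^{-2s}/m\le 1$ in the sampling branch and $D^{-2\alpha-2s}$ in the dense branch. This matches the paper's own comparison, and your balancing goes through unchanged.
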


The proof of \cref{thm:indep-lower} is based on a van Trees argument over Gaussian block submodels tailored to the independent random design.
The main difficulty is to identify the impact of discrete noisy observations on the Fisher information.
A straightforward reduction to ordinary nonparametric regression only recovers the dense observation contribution.
To capture the cost of discretization, our refined analysis provides an alternative upper bound of the Fisher information that scales with \( m \), leading to the second term in the lower bound.
In addition, we have to choose an optimized block size \( d \) to balance the two terms and prior information,
which turns out to mirror the decomposition in the upper bound in \cref{eq:indep-oracle-terms}.
The detailed proof is given in Appendix~D.

\subsection{Adaptive Estimator}
\label{subsec:indep-adaptive-upper}

We now construct an adaptive estimator that attains the minimax rate without knowledge of \((\lambda_r)_{r\in\bbZ}\), \(\alpha\), or \(s\).
The main difficulty is that adaptation cannot be separated from covariance estimation.
The oracle risk balance is expressed in the prediction norm \cref{eq:generalization-error-fourier}, so the scale at which a frequency component should be kept or discarded depends on the eigenvalue \(\lambda_r\).
The coefficient itself is also recovered through the inverse relation \(\theta_r=\gamma_r/\lambda_r\).
Thus the eigenvalues enter both the selection step and the final inversion step.
A plug-in adaptive procedure must therefore estimate \((\lambda_r)_{r\in\bbZ}\) accurately enough to calibrate the selection rule, while preventing small or poorly estimated eigenvalues from producing unstable denominators.
This differs from standard block thresholding in fully observed FLR because both the block energy and the local prediction metric are unknown.
The same covariance pilot that stabilizes the inverse estimator \(\hat{\gamma}_r/\check{\lambda}_r\) also determines whether the empirical prediction energy is meaningful on a block.
To this end, we partition the subjects into two disjoint groups
\begin{equation}
  \label{eq:adaptive-n-partition}
  \{1,\dots,n\}
  =
  I_\lambda \cup I_\gamma,\quad
  n_\lambda
  \coloneqq
  \absx{I_\lambda},
  \quad
  n_\gamma
  \coloneqq
  \absx{I_\gamma},
  \quad
  n_\lambda \asymp n_\gamma \asymp n,
\end{equation}
where \(I_\lambda\) is used for the covariance pilot and \(I_\gamma\) is used for the cross-covariance coefficients.
This sample splitting makes the covariance pilot independent of the cross-covariance coefficient estimator, simplifying the analysis of the expected prediction error.
If one only aims for an upper bound in high probability, this independence is not essential and the same upper bound can be given without splitting the subjects.
In practice, it is therefore preferable to use all subjects in both steps, which typically improves the constant factors.

We begin with the eigenvalue pilot.
For each \(i\in I_\lambda\), split the indices within each curve into two disjoint sets
\[
  J_i^{(1)} \cup J_i^{(2)}=\{1,\dots,m\},
  \qquad
  m_i^{(a)} \coloneqq\absx{J_i^{(a)}}\asymp m,\quad a = 1,2.
\]
For \(r\in\bbZ\) and \(a=1,2\), define
\[
  U_{ir}^{(a)}
  \coloneqq
  \frac{1}{m_i^{(a)}}\sum_{j\in J_i^{(a)}}Z_{ij} e_r(t_{ij}),
\]
and set
\begin{equation}
  \label{eq:indep-adaptive-lambda-pilot}
  \check{\lambda}_r
  \coloneqq
  \frac{1}{n_\lambda}\sum_{i\in I_\lambda} U_{ir}^{(1)} U_{ir}^{(2)}.
\end{equation}
The split within each curve makes the two averages conditionally independent given the latent curve, removing the leading bias from measurement noise; in particular, \(\check{\lambda}_r\) is unbiased for \(\lambda_r\).

On the regression group, estimate the cross-covariance coefficients similarly to the oracle case by
\begin{equation}
  \label{eq:indep-adaptive-gamma-estimator}
  \hat{\gamma}_r
  \coloneqq
  \frac{1}{n_\gamma}\sum_{i\in I_\gamma} Y_i \bar{Z}_{ir}.
\end{equation}

The adaptive selection is carried out on a deterministic active frequency band.
Let the largest active dyadic block index and the corresponding symmetric cutoff level be
\begin{equation}
  \label{eq:indep-adaptive-cutoff}
  L_n
  \coloneqq
  \floor{\log_2 \left(C_L \min\{n^{1/2},(nm)^{1/3}\}\right)},
  \qquad
  K_n
  \coloneqq
  2^{L_n},
\end{equation}
where \(C_L>0\) is a fixed numerical constant.
Then \( K_n \asymp n^{1/2} \wedge (nm)^{1/3} \), which is large enough for the oracle comparison in \cref{eq:indep-oracle-d} uniformly on compact subsets of \(\{\alpha>1/2,\ s\ge0\}\).
Define the dyadic blocks by \( B_0=\{0,\pm 1\} \) and \( B_\ell=\{r:2^{\ell-1}<\absx{r}\le 2^\ell\} \), for \(\ell \ge 1\).
For each active block \(B_\ell\), \(0\le \ell\le L_n\), define the empirical prediction energy and its variance proxy by
\begin{equation}
  \label{eq:indep-adaptive-selection-statistics}
  \hat{S}_\ell
  \coloneqq
  \sum_{r\in B_\ell}
  \frac{\absx{\hat{\gamma}_r}^2}{\check{\lambda}_r},
  \qquad
  \hat{V}_\ell
  \coloneqq
  \frac{C_V}{n}
  \sum_{r\in B_\ell}
  \left(1+\frac{1}{m\check{\lambda}_r}\right).
\end{equation}
Here \(C_V>0\) is a fixed sufficiently large constant.
The retention rule requires both a reliable covariance pilot on the block and empirical energy above the variance proxy.
We define the thresholding level for the covariance pilot by
\begin{equation}
  \label{eq:indep-adaptive-lambda-threshold}
  \zeta_{n,m} \coloneqq M_0 m^{-1} \sqrt{\frac{\log(nm)}{n_\lambda}},
\end{equation}
where \(M_0>0\) is a fixed sufficiently large constant.

The thresholded coefficient estimator is
\begin{equation}
  \label{eq:indep-adaptive-thresholded-coefficients}
  \hat{\theta}_r
  \coloneqq
  \frac{\hat{\gamma}_r}{\check{\lambda}_r}
  \mathbf{1}\left\{
              \min_{u\in B_\ell} \check{\lambda}_u \ge 2\zeta_{n,m}
              \ \text{and}\
              \hat{S}_\ell \ge\hat{V}_\ell
  \right\},\quad \text{for }r\in B_\ell,\ 0\le \ell\le L_n.
\end{equation}
The first thresholding is used to avoid unstable division by small or poorly estimated eigenvalues, so that \(\check{\lambda}_r\) is positive on the retained block and the division is well-defined.
The second thresholding keeps only the significant components in terms of the prediction norm for adaptivity.
Set \(\hat{\theta}_r=0\) for \(\absx{r}>K_n\).
Write \( \hat{\theta}_{B_\ell} = (\hat{\theta}_r)_{r\in B_\ell}\) for the estimated coordinates in the block \(B_\ell\).

Finally, we add a projection step for stability.
Fix a sufficiently large constant \(R\ge R_0\).
Let \(\Pi_R\) denote the projection operator \( \Pi_R(u) \coloneqq \min(1,R/\norm{u}_2) u \).
Define the projected coordinates by
\[
  \bar{\theta}_{B_\ell}
  \coloneqq
  \Pi_R \left(\hat{\theta}_{B_\ell} \right),
  \qquad
  0\le \ell\le L_n.
\]
Writing \(\bar{\theta}_r\) for the resulting coordinates, define the final estimator
\begin{equation}
  \label{eq:indep-adaptive-estimator}
  \hat{\beta}(t)\coloneqq
  \sum_{\absx{r}\le K_n}\bar{\theta}_r e_r(t).
\end{equation}

\begin{theorem}[Adaptive upper bound under independent design]
  \label{thm:indep-adaptive-upper}
  Under \cref{assum:indep-design}, uniformly over any compact subset of
  \(\{(\alpha,s):\alpha>1/2,\ s\ge0\}\), the estimator
  \cref{eq:indep-adaptive-estimator} satisfies
  \begin{equation}
    \label{eq:indep-adaptive-rate}
    \sup_{\lambda\in\caL_\alpha(c_\lambda,C_\lambda)}
    \sup_{\theta\in\Theta_s(R_0)}
    \E\caR(\hat{\beta};\theta,\lambda)
    \lesssim
    n^{-\frac{2\alpha+2s}{2\alpha+2s+1}}
    +
    (nm)^{-\frac{2\alpha+2s}{4\alpha+2s+1}}.
  \end{equation}
\end{theorem}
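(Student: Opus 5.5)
We reduce the adaptive bound to the oracle decomposition of \cref{thm:indep-oracle-upper}, handled block by block. Write $\rho_{n,m}\coloneqq n^{-\nu/(\nu+1)}+(nm)^{-\nu/\kappa}$ with $\nu=2\alpha+2s$, $\kappa=4\alpha+2s+1$. Let $d^*$ be the oracle cutoff \cref{eq:indep-oracle-d}, and let $\ell^*$ be the block index with $2^{\ell^*}\asymp d^*$. Because $K_n\asymp n^{1/2}\wedge(nm)^{1/3}\gtrsim d^*$ uniformly on compacts with $\alpha>1/2$, the tail $\absx{r}>K_n$ contributes at most $K_n^{-\nu}\lesssim (d^*)^{-\nu}\lesssim\rho_{n,m}$. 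It therefore suffices to bound $\sum_{\ell\le L_n}\E\sum_{r\in B_\ell}\lambda_r(\bar\theta_r-\theta_r)^2$.

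\textbf{Step 1: a good event for the covariance pilot.} Conditionally on $X_i$, the product $U^{(1)}_{ir}U^{(2)}_{ir}$ has variance of order $(\lambda_r+\sigma_\delta^2/m)^2$ plus lower-order cross terms. By Gaussian/sub-exponential concentration and a union bound over $\absx{r}\le K_n$, with probability $1-O((nm)^{-c})$ for $c$ as large as desired (taking $M_0$ large) we get
\[
\absx{\check\lambda_r-\lambda_r}\le \tfrac12\lambda_r+\tfrac12\zeta_{n,m}\quad\text{for all }\absx{r}\le K_n.
\]
On this event $\caE$, either $\check\lambda_r\asymp\lambda_r$, or the block fails the first screening test; moreover any block with $\lambda_r\ge 4\zeta_{n,m}$ passes. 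On $\caE^c$, the projection $\Pi_R$ gives $\sum_\ell\lambda\normx{\bar\theta_{B_\ell}-\theta_{B_\ell}}^2\lesssim (R^2+R_0^2)\log K_n$. Multiplied by $(nm)^{-c}$, this is negligible. We also need the screening not to kill useful blocks. For $\absx{r}\lesssim d^*$ we have $\lambda_r\gtrsim (d^*)^{-2\alpha}$, and we must check that this exceeds $\zeta_{n,m}\asymp m^{-1}\sqrt{\log(nm)/n}$. This follows from $(d^*)^{2\alpha}\le (nm)^{2\alpha/\kappa}$ being much smaller than $m\sqrt{n}/\sqrt{\log(nm)}$ in the relevant regime. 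If the check fails, a screened block satisfies $\lambda_r\lesssim\zeta$, so its bias $\lambda_r\theta_r^2$ summed over the screened blocks is still $\lesssim$ the variance term at the corresponding cutoff. Either way, screening costs at most $\rho_{n,m}$.

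\textbf{Step 2: blockwise oracle inequality on $\caE$.} On $\caE$ we work with the blocks that pass the screening. The ratio estimation error decomposes as
\[
\frac{\hat\gamma_r}{\check\lambda_r}-\theta_r=\frac{\hat\gamma_r-\gamma_r}{\check\lambda_r}+\theta_r\,\frac{\lambda_r-\check\lambda_r}{\check\lambda_r}.
\]
Using independence between $I_\lambda$ and $I_\gamma$, we have $\E[(\hat\gamma_r-\gamma_r)^2]\lesssim n^{-1}(1+\lambda_r/m\cdot\lambda_r^{-1}\cdots)\lesssim n^{-1}\lambda_r(1+1/(m\lambda_r))$. This is exactly the oracle variance from \cref{thm:indep-oracle-upper}, and $\hat V_\ell$ is, up to constants, its blockwise sum. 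The second term contributes $\lambda_r\theta_r^2\,\E(\check\lambda_r-\lambda_r)^2/\lambda_r^2\lesssim\theta_r^2\,(1+1/(m\lambda_r))^2/n$. Its sum is bounded by $R_0^2$ times a quantity that is $o(\rho_{n,m})$ for $\absx r\le K_n$ on passing blocks.

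We then use the standard block-thresholding dichotomy. Let $S_\ell=\sum_{B_\ell}\gamma_r^2/\lambda_r=\sum_{B_\ell}\lambda_r\theta_r^2$ and $V_\ell$ the population proxy. A chi-square-type deviation bound for $\hat S_\ell$ gives $\hat S_\ell\approx S_\ell+V_\ell/C_V\pm$ fluctuation of order $\sqrt{\absx{B_\ell}}V_\ell/C_V$ plus cross terms. With $C_V$ large, the dyadic block sizes $\absx{B_\ell}\asymp 2^\ell$ yield exponentially small probabilities $e^{-c2^\ell}$ for the wrong decisions. This gives the per-block risk
\[
\lesssim\min(S_\ell,V_\ell)+e^{-c2^\ell}R^2.
\]

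\textbf{Step 3: summation.} We have $V_\ell\asymp 2^\ell/n+2^{\ell(2\alpha+1)}/(nm)$ and $S_\ell\lesssim 2^{-\ell\nu}$ from $\theta\in\Theta_s(R_0)$. Summing $\min(S_\ell,V_\ell)$ over the blocks and splitting at $\ell^*$ reproduces $d^*/n+(d^*)^{2\alpha+1}/(nm)+(d^*)^{-\nu}\asymp\rho_{n,m}$ by geometric sums. For the exponential remainders, the low blocks are the problem, since $e^{-c2^\ell}$ is small only for large $\ell$. So we refine: for fixed-size blocks, a wrong decision costs at most $V_\ell$ or $S_\ell$ times a constant, not $R^2$. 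We therefore bound the risk of a wrong keep by a constant times $V_\ell$, and the risk of a wrong kill by $S_\ell\le 4V_\ell$ on the event where killing happens with large true energy. Uniformity over compact $(\alpha,s)$ holds because all constants depend continuously on $(\alpha,s)$.

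\textbf{Main obstacle.} The hardest step is Step 2's deviation bound for $\hat S_\ell$. Its noise is a heteroscedastic, non-Gaussian quadratic form: the product of $Y_i$ with noisy Fourier averages, divided by the random $\check\lambda_r$. Its variance must be shown to be $\lesssim V_\ell^2/\absx{B_\ell}$ while still allowing exponential tails. We would first condition on the $I_\lambda$ sample and on the event $\caE$. We would then apply a Hanson--Wright/Bernstein bound to the i.i.d. sum over $i\in I_\gamma$ of the sub-exponential vectors $(Y_i\bar Z_{ir}/\sqrt{\check\lambda_r})_{r\in B_\ell}$. The key estimate is their covariance operator norm, $\lesssim n^{-1}(1+1/(m\lambda_r))$, which uses orthogonality of the $e_r$ under uniform design.
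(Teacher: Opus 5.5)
Your overall architecture matches the paper's:
\begin{itemize}
\item a pilot good event with polynomially small failure probability;
\item eligibility forcing $\check\lambda_r\asymp\lambda_r$, and the check $\lambda_{d^*}/\zeta_{n,m}\to\infty$, so no block up to $d^*$ is screened out;
\item the ratio decomposition with a plug-in remainder;
\item a weak/strong/intermediate block dichotomy, with the summation split at $\ell^*$.
\end{itemize}
The genuine gap is the step you yourself flag as the main obstacle. This is the exponential-tail control, at scale $V_\ell$, of the block noise $\Delta_\ell=\bigl((\hat\gamma_r-\gamma_r)/\sqrt{\lambda_r}\bigr)_{r\in B_\ell}$.

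You propose a Bernstein or Hanson--Wright bound for the i.i.d.\ subject vectors $(Y_i\bar Z_{ir}/\sqrt{\check\lambda_r})_{r\in B_\ell}$, calibrated by their covariance operator norm. This fails because these vectors are not sub-exponential at the covariance scale. In a direction $u$ the design-sampling part is $m^{-1}\sum_j Z_{ij}\Psi_u(t_{ij})$, with $\Psi_u=\sum_{r\in B_\ell}u_re_r/\sqrt{\lambda_r}$. Parseval under uniform design gives the correct second moment via $\|\Psi_u\|_{L^2}^2\asymp 1/\lambda_{2^\ell}$. The tails, however, are driven by $\sup_t|\Psi_u(t)|$. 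For Dirichlet-type directions $u_r\propto e_r(t_0)$ this is of order $\sqrt{|B_\ell|}\,\|\Psi_u\|_{L^2}$. In addition, the factors $Y_i$ and $\|X_i\|_\infty$ make the summands only sub-Weibull.

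So whenever $m\lesssim|B_\ell|$, the directional tail norm exceeds the directional standard deviation by a factor of order $\sqrt{|B_\ell|/m}$ (up to logarithms). This happens for the upper active blocks when $m$ is small, which is exactly where false keeps must be ruled out. A net-based Bernstein bound driven by that norm then gives at best $\|\Delta_\ell\|_2^2\lesssim(|B_\ell|/m)V_\ell$ with probability $1-e^{-c|B_\ell|}$. That is not enough for $\hat V_\ell$ to reject pure-noise blocks. Hanson--Wright does not apply either, since the coordinates are coupled through $Y_i$ and the shared design points.

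The missing idea is to decouple the weak variance from the tail parameter. The paper splits $\Delta_\ell=\Delta_\ell^a+\Delta_\ell^b$.
\begin{itemize}
\item The structural part $n_\gamma^{-1}\sum_i\bigl(Y_ix_{ir}/\sqrt{\lambda_r}-\sqrt{\lambda_r}\theta_r\bigr)$ is a degree-two Gaussian chaos in each direction. Its $\psi_1$ and $L^2$ norms are comparable, so vector Bernstein works.
\item For the sampling part, the paper conditions on $(X_i,Y_i,\delta_{ij})$. This makes the $n_\gamma m$ design draws independent, bounded Hilbert-valued summands. It then applies Bousquet's inequality with weak variance $\lesssim H_{n,m}/(nm\lambda_{2^\ell})$ (your Parseval estimate), mean $\lesssim\sqrt{H_{n,m}|B_\ell|/(nm\lambda_{2^\ell})}$, and envelope $\lesssim L_{n,m}\sqrt{|B_\ell|/\lambda_{2^\ell}}/(nm)$. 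Finally it truncates the random quantities $H_{n,m}$ and $L_{n,m}$.
\end{itemize}
This is why the paper's remainder is $\omega_\ell=e^{-c|B_\ell|}+e^{-c(nm)^{1/4}}+e^{-cn^{1/3}}$ rather than $e^{-c2^\ell}$. The extra terms are harmless against $\sum_\ell V_\ell$.

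Two smaller repairs are also needed.
\begin{enumerate}
\item[(i)] A wrong kill on a strong block is controlled by $S_\ell\le 4\|\Delta_\ell\|_2^2$ on that event, not by $S_\ell\le 4V_\ell$. This yields a truncated noise moment, or $\lesssim V_\ell$ by Markov. Also, $\sum_{\ell>\ell_0}e^{-c2^\ell}R^2$ with $\ell_0$ fixed is a nonvanishing constant. So the $V_\ell$-scale bound must be used on all blocks up to $\ell^*$, or at least up to $2^\ell\gtrsim\log n$.
\item[(ii)] Your plug-in display drops a factor $\lambda_r$; it should read $n^{-1}\lambda_r\theta_r^2\bigl(1+1/(m\lambda_r)\bigr)^2$. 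As written, its sum over passing blocks can be of order $1/\log(nm)$. The corrected form is $\lesssim\rho_{n,m}$, using $\lambda_r\ge\zeta_{n,m}$ on passing blocks and $\sum_{|r|\le D}\theta_r^2/\lambda_r\lesssim D^{(2\alpha-2s)_+}$.
\end{enumerate}
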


\cref{thm:indep-adaptive-upper} shows that the adaptive estimator under independent design attains the minimax rate without knowledge of \((\lambda_r)_{r\in\bbZ}\), \(\alpha\), or \(s\), with no additional cost in the rate.
The proof controls the error from the covariance pilot, the cross-covariance estimator, and the blockwise selection rule in the prediction norm.
The key point is that the eigenvalue threshold prevents unstable inversion, while the empirical prediction energy threshold retains only blocks whose signal is large relative to the dense variance and the variance due to noisy measurements.
The proof is given in Appendix~C.

\subsection{Discussion}
\label{subsec:indep-discussion}

We now discuss the implications of the results under independent design.
Let
\[
  \caR_{n,m}^{\star,\mathrm{ind}}
  \coloneqq
  \inf_{\hat{\beta}}
  \sup_{\lambda\in\caL_\alpha(c_\lambda,C_\lambda)}
  \sup_{\theta\in\Theta_s(R_0)}
  \E\caR(\hat{\beta};\theta,\lambda),
\]
where the infimum and risk are evaluated under \cref{assum:indep-design}.
Combining the adaptive upper bound with the lower bound yields the following minimax rate:

\begin{corollary}[Minimax rate under independent design]
  \label{cor:indep-minimax}
  Under \cref{assum:indep-design}, \(\alpha>1/2\), and \(s\ge0\),
  \begin{equation}
    \label{eq:indep-minimax-rate}
    \caR_{n,m}^{\star,\mathrm{ind}}
    \asymp
    \underbrace{n^{-\frac{2\alpha+2s}{2\alpha+2s+1}}}_{\mathrm{I}}
    +
    \underbrace{(nm)^{-\frac{2\alpha+2s}{4\alpha+2s+1}}}_{\mathrm{II}}.
  \end{equation}
\end{corollary}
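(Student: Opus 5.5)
The corollary follows by sandwiching \(\caR_{n,m}^{\star,\mathrm{ind}}\) between two results already stated, so no new analysis is needed.

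For the upper bound, I will use that the minimax risk is at most the worst-case risk of any particular estimator. The natural choice is the adaptive estimator \cref{eq:indep-adaptive-estimator}. \Cref{thm:indep-adaptive-upper} bounds its risk, uniformly over \(\lambda\in\caL_\alpha(c_\lambda,C_\lambda)\) and \(\theta\in\Theta_s(R_0)\), by the sum \(\mathrm{I}+\mathrm{II}\). That theorem holds uniformly over compact subsets of \(\{\alpha>1/2,\ s\ge0\}\), so in particular at the fixed \((\alpha,s)\). Alternatively, the oracle estimator \(\tilde\beta^*\) from \cref{thm:indep-oracle-upper} would also work. It is a legitimate estimator for each fixed class because the minimax infimum is taken class by class, though it depends on \(\lambda\). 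So the adaptive estimator is cleaner: it is a genuine estimator and gives \(\caR_{n,m}^{\star,\mathrm{ind}}\lesssim \mathrm{I}+\mathrm{II}\) directly.

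For the lower bound, I will quote \cref{thm:indep-lower}, which gives \(\caR_{n,m}^{\star,\mathrm{ind}}\gtrsim \mathrm{I}+\mathrm{II}\) under \cref{assum:indep-design}. Combining the two inequalities yields \(\asymp\).

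The only point to check is that the two results control the same quantity. Both use the risk \(\E\caR(\hat\beta;\theta,\lambda)\) with the outer expectation over all training randomness, over the same classes and the same design assumption. The hidden constants depend only on \((\alpha,s,c_\lambda,C_\lambda,R_0,\sigma_\varepsilon,\sigma_\delta)\). All of the real difficulty lies in the two cited theorems, so there is no substantive obstacle here.
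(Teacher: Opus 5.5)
Your proof is the paper's: \cref{cor:indep-minimax} is obtained exactly by sandwiching \(\caR_{n,m}^{\star,\mathrm{ind}}\) between the adaptive upper bound \cref{thm:indep-adaptive-upper} and the lower bound \cref{thm:indep-lower}. One correction to your aside: the oracle \(\tilde\beta^*\) would \emph{not} also work, because in \(\caR_{n,m}^{\star,\mathrm{ind}}\) the supremum over \(\lambda\in\caL_\alpha(c_\lambda,C_\lambda)\) sits inside the infimum over estimators, so a \(\lambda\)-dependent rule only controls the smaller quantity \(\sup_{\lambda}\inf_{\hat\beta}\sup_{\theta}\E\caR(\hat\beta;\theta,\lambda)\); a \(\lambda\)-free estimator is genuinely needed, and under common design this distinction is exactly what produces the extra \(m^{-4\alpha}\) term.
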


Under independent design, the rate is the larger of the fully observed FLR rate~\citep{yuan2010_RKHSFLR,cai2012_MinimaxAdaptivePrediction}
and a discretization term due to measurement noise.
This yields a sharp phase transition in the resolution \(m\) within each curve.
When $m$ is small, the discretization term dominates;
when \(m\) is large, the fully observed term dominates.
Let $\zeta_* = 2\alpha / (2\alpha + 2s + 1)$ be the critical exponent for \(m\), \(\nu=2\alpha+2s\) and \(\kappa=\nu+2\alpha+1\).
The rate is summarized as
\[
  \caR_{n,m}^{\star,\mathrm{ind}}
  \asymp
  \begin{cases}
    (nm)^{-\nu/\kappa},
  &
  m\le n^{\zeta_*},
    \\[2mm]
    n^{-\nu/(\nu+1)},
    &
    m\ge n^{\zeta_*}.
  \end{cases}
\]

In the sparse regime, the cost of discretization under independent design enters through the number \(nm\) of effective scalar observations.
The exponent \( \frac{2\alpha+2s}{4\alpha+2s+1} \) comes from the interplay between the smoothness of the slope function and the eigenvalue decay, as well as the inverse-covariance amplification incurred when converting the cross-covariance coefficient estimation error to the slope coefficient estimation error.
As a result, the exponent in the discretization term is smaller than that in the fully observed term.

This sparse to dense transition is similar to that of mean function estimation where the optimal rate is given by
\( n^{-1} + (mn)^{-(2\alpha-1)/(2\alpha)} \) under independent design~\citep{cai2011_OptimalMeanFunctionDiscrete}.
For mean function estimation, the fully observed rate is parametric but the discretization term is nonparametric,
whereas for FLR, both terms are nonparametric.
In both settings, the exponents in the two terms differ, and the discretization term has the smaller exponent.

We also compare our result to the most closely related work of \citet{zhou2023_DiscretelyObservedFLR}, who study the independent random design and provide sufficient conditions for attaining the fully observed benchmark rate in the dense regime with noisy discrete covariates.
Translating their notation to ours, their sufficient sampling condition per curve is \(m\gtrsim n^{\zeta_{\mathrm{ZYZ}}}\) with
\(
\zeta_{\mathrm{ZYZ}} = \frac{4\alpha+2}{2\alpha+2s+1} \vee \frac{2\alpha+s+1/2}{2\alpha+2s+1}.
\)
This exponent satisfies \(\zeta_{\mathrm{ZYZ}}>\zeta_*\) for all \(\alpha>1/2\) and \(s\ge0\), and the gap can be arbitrarily close to 1 when \(\alpha\) is large and \(s\) is small,
so the sufficient condition in \citet{zhou2023_DiscretelyObservedFLR} is not tight.
In contrast, by showing a matching lower bound, particularly with respect to $m$,
our result establishes the sharp phase transition in the resolution within each curve and provides a complete characterization of the cost of discretization under independent design.

Finally, we consider optimal sampling allocation under a fixed total sampling budget.
This question is relevant when the total sampling budget is fixed by cost or other constraints, while the allocation between the number of subjects and the number of measurements per subject remains flexible.
Under a fixed total scalar sampling budget \(nm=N\), where \(N\) denotes the total number of scalar point measurements and \(\nu,\kappa\) are as above, the rate under independent design with fixed budget is
\[
  \inf_{nm=N} \caR_{n,m}^{\star,\mathrm{ind}}
  \asymp
  N^{-\nu/\kappa},
\]
which is attained by any allocation satisfying \( m\lesssim N^{2\alpha/\kappa} \).
Thus, under independent design with a fixed total budget, the rate favors more subjects with fewer measurements per subject over fewer subjects with more measurements per subject.

   \section{Common Design}
\label{sec:common}

Under common design, all trajectories are observed on the same deterministic grid.
This setting is natural for many regularly sampled data sets, for example when measurements are recorded every minute or every hour on a common schedule.
We impose the following deterministic design assumption.

\begin{assumption}[Common design]
  \label{assum:common-design}
  All trajectories share the deterministic equal grid
  \[
    t_{ij}=t_j=\frac{j-1}{m},
    \qquad
    j=1,\dots,m.
  \]
\end{assumption}

The shared grid changes the role of discretization.
Unlike independent design, the sampling geometry is not averaged out across subjects, so frequencies with the same trace on the grid remain statistically coupled even as \(n\) increases.
The minimax rate under common design contains the same two statistical terms as in the case of independent design.
The shared grid also creates a term from fixed grid discretization and, when the eigenvalue sequence is unknown, an additional term from eigenvalue identification.

\subsection{Oracle Estimator}
\label{subsec:common-oracle-upper}

We first study an oracle estimator that knows the eigenvalue sequence \((\lambda_r)_{r\in\bbZ}\).
The construction is the same as the oracle estimator under independent design in \cref{eq:oracle-estimator} with a cutoff level \(d\).
However, under common design, the cross-covariance estimator \( \hat{\gamma} \) in \cref{eq:cross-covariance-coefficient}
is no longer an unbiased estimator of the population coefficient \(\gamma_r\),
since bias can arise from aliasing of distinct frequencies on the shared grid.
Here and below, ``known eigenvalues'' means that the estimator is allowed to use the particular eigenvalue sequence \((\lambda_r)_{r\in\bbZ}\).
The following upper bound shows that an additional term from fixed grid discretization appears in the oracle risk.

\begin{theorem}[Oracle upper bound under common design]
  \label{thm:common-oracle-upper}
  Assume that \(\lambda\in\caL_\alpha(c_\lambda,C_\lambda)\) is known.
  Under \cref{assum:common-design}, for every integer \(1\le d\le m/4\),
  \begin{equation}
    \label{eq:common-oracle-terms}
    \sup_{\theta\in\Theta_s(R_0)}
    \E\caR(\tilde{\beta};\theta,\lambda)
    \lesssim
    \frac{d}{n}
    +
    \frac{d^{2\alpha+1}}{nm}
    +
    d^{-(2\alpha+2s)}
    +
    m^{-(2\alpha+2s)}.
  \end{equation}
  Take the optimized cutoff
  \begin{equation}
    \label{eq:common-oracle-d}
    d^*
    \asymp
    m
    \wedge
    n^{\frac{1}{2\alpha+2s+1}}
    \wedge
    (nm)^{\frac{1}{4\alpha+2s+1}}.
  \end{equation}
  Let \(\tilde{\beta}^*\) denote the estimator \(\tilde{\beta}\) constructed with \(d=d^*\).
  Then
  \begin{equation}
    \label{eq:common-oracle-rate}
    \sup_{\theta\in\Theta_s(R_0)}
    \E\caR(\tilde{\beta}^*;\theta,\lambda)
    \lesssim
    n^{-\frac{2\alpha+2s}{2\alpha+2s+1}}
    +
    (nm)^{-\frac{2\alpha+2s}{4\alpha+2s+1}}
    +
    m^{-(2\alpha+2s)}.
  \end{equation}
\end{theorem}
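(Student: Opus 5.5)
We decompose the risk coordinatewise. Since \(\tilde\beta\) has coefficients \(\lambda_r^{-1}\hat\gamma_r\) for \(\absx{r}\le d\) and zero otherwise, \cref{eq:generalization-error-fourier} gives
\[
  \E\caR(\tilde\beta;\theta,\lambda)
  =\sum_{\absx{r}\le d}\lambda_r^{-1}\E(\hat\gamma_r-\gamma_r)^2+\sum_{\absx{r}>d}\lambda_r\theta_r^2 .
\]
The tail is at most \(C_\lambda d^{-2\alpha-2s}\sum_r(1+\absx r)^{2s}\theta_r^2\lesssim d^{-(2\alpha+2s)}\), exactly as in \cref{thm:indep-oracle-upper}. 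For \(\absx r\le d\) we split \(\E(\hat\gamma_r-\gamma_r)^2=b_r^2+\Var(\hat\gamma_r)\), with \(b_r=\E\hat\gamma_r-\gamma_r\), and bound the two pieces separately.

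\textbf{Bias (aliasing).} On the equal grid, \(\frac1m\sum_j e_u(t_j)e_r(t_j)\) equals \(\delta_{ur}\) whenever \(\absx u,\absx r<m/2\). For a general \(u\), it is a bounded quantity that vanishes unless \(u\equiv \pm r \pmod m\). We write \(\E[Y_1\bar Z_{1r}]=\sum_u\gamma_u\,\frac1m\sum_j e_u(t_j)e_r(t_j)\), using \(\E[Y_1X_1(t)]=g(t)\) and the fact that \(\delta\) is independent of \(Y\). Since \(\absx r\le d\le m/4\), this gives
\[
  \absx{b_r}\lesssim\sum_{k\ne0}\bigl(\absx{\gamma_{km+r}}+\absx{\gamma_{km-r}}\bigr),
\]
where the aliased indices satisfy \(\absx u\gtrsim \absx k m\). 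Because \(\absx{\gamma_u}\lesssim(1+\absx u)^{-2\alpha}\absx{\theta_u}\), Cauchy--Schwarz gives
\[
  b_r^2\lesssim\Bigl(\sum_{k\ne0}(\absx k m)^{-2\alpha-2s}\Bigr)\Bigl(\sum_{k}(1+\absx{km\pm r})^{2s}\theta_{km\pm r}^2\Bigr)\,m^{-2\alpha}.
\]
We use the weights \((1+\absx u)^{-2\alpha}\) split as \(u^{-(2\alpha+s)}\cdot u^{s}\), so that \(\sum_{k\ne 0}(km)^{-2(2\alpha+s)}\lesssim m^{-4\alpha-2s}\). Hence \(b_r^2\lesssim m^{-4\alpha-2s}A_r\), where \(A_r=\sum_k(1+\absx{km\pm r})^{2s}\theta^2_{km\pm r}\). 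The residue classes \(r \bmod m\) are disjoint up to the \(\pm\) pairing, so \(\sum_{\absx r\le d}A_r\lesssim R_0^2\). Therefore
\[
  \sum_{\absx r\le d}\lambda_r^{-1}b_r^2\lesssim \max_{\absx r\le d}\lambda_r^{-1}\, m^{-4\alpha-2s}\lesssim d^{2\alpha}m^{-4\alpha-2s}\le m^{-(2\alpha+2s)},
\]
using \(d\le m\). This is the main obstacle: one must pair the \(\lambda_r^{-1}\) amplification, which can be as large as \(m^{2\alpha}\), with the extra decay \(m^{-2\alpha}\) of \(\lambda_u\) at the aliased frequencies. One must also keep \(\theta\) summable over aliases using only \(s\ge0\), rather than a pointwise bound, and the residue-class disjointness argument is what makes this work.

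\textbf{Variance.} We have \(\Var\hat\gamma_r\le n^{-1}\E[Y_1^2\bar Z_{1r}^2]\). The product \(Y_1\bar Z_{1r}\) is a product of jointly Gaussian variables, so \(\E[Y_1^2\bar Z_{1r}^2]\lesssim \E Y_1^2\,\E\bar Z_{1r}^2\), with \(\E Y_1^2\lesssim 1\). Write \(\bar Z_{1r}=\frac1m\sum_j X_1(t_j)e_r(t_j)+\frac1m\sum_j\delta_{1j}e_r(t_j)\). The noise part has variance \(\sigma_\delta^2/m\). The signal part equals \(x_{1r}\) plus its aliases, with variance \(\lambda_r+\sum_{k\ne0}\lambda_{km\pm r}\lesssim\lambda_r+m^{-2\alpha}\lesssim\lambda_r\), since \(\absx r\le m\). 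Thus \(\lambda_r^{-1}\Var\hat\gamma_r\lesssim n^{-1}(1+(m\lambda_r)^{-1})\lesssim n^{-1}(1+\absx r^{2\alpha}/m)\). Summing over \(\absx r\le d\) gives \(d/n+d^{2\alpha+1}/(nm)\).

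Combining the bias, variance, and tail bounds yields \cref{eq:common-oracle-terms}. For \cref{eq:common-oracle-rate}, we take \(d^*\) as in \cref{eq:common-oracle-d}, with the constant chosen so that \(d^*\le m/4\).
\begin{itemize}
  \item If \(d^*\) is determined by the two statistical balances, the first three terms reproduce the rate of \cref{thm:indep-oracle-upper}.
  \item If \(d^*\asymp m\), then \(d^{*-(2\alpha+2s)}\asymp m^{-(2\alpha+2s)}\), while \(d^*/n\) and \(d^{*\,2\alpha+1}/(nm)\) are bounded by the statistical terms, since \(d^*\) lies below both balancing points.
\end{itemize}
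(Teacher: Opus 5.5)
Your proposal is correct and follows essentially the same route as the paper. The paper uses the same risk decomposition, the same Cauchy--Schwarz bound on the aliasing bias over frequencies $|u|\gtrsim m$ with bounded residue-class multiplicity (it places the weights on $\gamma$ through the $H^{s+2\alpha}$ norm, which is equivalent to your weights on $\theta$), the same variance control through $\widetilde\lambda_r\asymp\lambda_r$, and the same cutoff case analysis; the only cosmetic difference is that you bound $\E[Y_1^2\bar Z_{1r}^2]$ by Gaussian moment factorization, whereas the paper uses a total-variance decomposition.
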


The proof is given in Appendix~E.
The term \(d/n\) is the usual finite-dimensional estimation contribution.
The term \(d^{2\alpha+1}/(nm)\) is the cost due to measurement noise after division by the low frequency eigenvalues, and \(d^{-(2\alpha+2s)}\) is the bias of truncation.
The additional term \(m^{-(2\alpha+2s)}\) represents the reconstruction error left by the fixed grid.
These terms are discussed further in \cref{subsec:common-discussion}.

\subsection{Lower Bound}
\label{subsec:common-lower-bound}

We next prove minimax lower bounds under common design.
The first part is the oracle experiment with a fixed known eigenvalue sequence, and its lower bound contains the two statistical terms from independent design together with the fixed grid discretization term.
The second part is the genuine unknown eigenvalue experiment; in that case an additional identification term from the common grid is unavoidable.

\begin{theorem}[Lower bound under common design]
  \label{thm:common-lower}
  Let \cref{assum:common-design} hold.
  If \( \lambda\in\caL_\alpha(c_\lambda,C_\lambda) \) is fixed and known,
  then
  \begin{equation}
    \label{eq:common-oracle-lower-rate}
    \inf_{\hat{\beta}}
    \sup_{\theta\in\Theta_s(R_0)}
    \E\caR(\hat{\beta};\theta,\lambda)
    \gtrsim
    n^{-\frac{2\alpha+2s}{2\alpha+2s+1}}
    +
    (nm)^{-\frac{2\alpha+2s}{4\alpha+2s+1}}
    +
    m^{-(2\alpha+2s)}.
  \end{equation}
  Moreover, in the case when $\lambda$ is unknown, we have
  \begin{equation}
    \label{eq:common-lower-rate}
    \inf_{\hat{\beta}}
    \sup_{\lambda\in\caL_\alpha(c_\lambda,C_\lambda)}
    \sup_{\theta\in\Theta_s(R_0)}
    \E\caR(\hat{\beta};\theta,\lambda)
    \gtrsim
    n^{-\frac{2\alpha+2s}{2\alpha+2s+1}}
    +
    (nm)^{-\frac{2\alpha+2s}{4\alpha+2s+1}}
    +
    m^{-(2\alpha+2s)}
    +
    m^{-4\alpha}.
  \end{equation}
\end{theorem}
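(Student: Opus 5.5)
Since a maximum of lower bounds is equivalent to their sum up to a factor four, I would prove \cref{eq:common-lower-rate} one term at a time, and \cref{eq:common-oracle-lower-rate} likewise term by term.

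\emph{Statistical terms I and II.} For the two statistical terms I would reuse the van Trees argument behind \cref{thm:indep-lower}. I would fix a known $\lambda$ with $\lambda_r\asymp(1+|r|)^{-2\alpha}$ and place a product prior on $\theta_r$, $d/2<|r|\le d$, with $d\le m/4$. On the equal grid the vectors $(e_r(t_j))_j$ for $|r|<m/2$ are orthogonal with squared norm $m$. Hence, conditionally on the $Z$'s, the Fisher information for $\theta_r$ carried by $Y_i$ is governed by $\Var(\hat x_{ir})$, where $\hat x_{ir}$ is the discrete Fourier coefficient. This splits into two bounds, exactly as in the independent design case: the dense bound $n\lambda_r$, and the alternative bound $n\lambda_r^2/(\sigma_\delta^2/m)$, which reflects the fact that the posterior mean of $x_{ir}$ given $Z_i$ has variance $\lambda_r^2/(\lambda_r+\sigma_\delta^2/m)$. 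The aliased frequencies $r+km$ only increase the noise, so they can be conditioned away, which only helps the statistician. Choosing $d\asymp n^{1/(\nu+1)}$, respectively $d\asymp(nm)^{1/\kappa}$, then reproduces terms I and II, provided these choices are at most $m/4$. When they exceed $m/4$, the grid term III dominates anyway.

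\emph{Grid term III.} Term III comes from a two-point, or Assouad, aliasing construction, and it holds even with $\lambda$ known. I would take $\theta$ supported on the pair of frequencies $r_0\asymp m/4$ and $r_0'=r_0+m$ (same type, cosine), which have identical traces on the grid, and perturb the coefficients so that $\lambda_{r_0}\theta_{r_0}+\lambda_{r_0'}\theta_{r_0'}$ and the variance of $\langle X,\beta\rangle$ are fixed. The joint law of $(Y_i,Z_i)$ depends on $\theta$ only through the covariance of $Y_i$ with the grid values, which is $\sum_r\lambda_r\theta_r e_r(t_j)$, and through $\Var(Y_i)$. So if I choose $\theta^{(1)}$ and $\theta^{(2)}$ with equal aliased sums $\sum_k\lambda_{r+km}\theta_{r+km}$ and equal $\sum_r\lambda_r\theta_r^2$, the two data distributions coincide exactly. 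A concrete choice is $\theta^{(1)}=(a,b)$ and $\theta^{(2)}=(b\lambda_{r_0'}/\lambda_{r_0},\,a\lambda_{r_0}/\lambda_{r_0'})$, suitably matched; one may also use the sign flip on the $\pm$ index and add small compensating terms. With amplitude $\theta\asymp m^{-s}$ within the Sobolev ball, the prediction separation is $\gtrsim\lambda_{r_0}m^{-2s}\asymp m^{-(2\alpha+2s)}$.

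\emph{Eigenvalue-identification term IV.} For the unknown-$\lambda$ statement I would exhibit two pairs $(\lambda,\theta)$ and $(\lambda',\theta')$ that produce identical data laws. On the grid, $Z_i$ is Gaussian with covariance determined by the folded eigenvalues $\tilde\lambda_r=\sum_k\lambda_{r+km}$ (combining cosine and sine folding). I would therefore move mass $\eta\asymp m^{-2\alpha}$ from $\lambda_{r_0'}$ to $\lambda_{r_0}$ at a low frequency $r_0\asymp1$, with $r_0'=r_0+m$, keeping the folded sums and the class constraints intact. I would keep the cross-covariance $g$ on the grid and $\Var(Y)$ unchanged by taking $\theta$ supported at $r_0$ with $\lambda_{r_0}\theta_{r_0}=\lambda'_{r_0}\theta'_{r_0}$, correcting $\Var(Y)=\sum\lambda\theta^2$ with a tiny $\theta_{r_0'}$-term. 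Then $\theta_{r_0}-\theta'_{r_0}\asymp\eta$, and the prediction loss is at least $\lambda_{r_0}\eta^2\asymp m^{-4\alpha}$. Since the two laws are identical, the risk bound is at least half this separation.

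\emph{Main obstacle.} The delicate point is making these exact-indistinguishability constructions consistent: the aliased pair must simultaneously match the grid covariance, the cross-covariance and $\Var(Y)$ (and, for IV, the Gaussian law of $X$ on the grid), while staying inside $\caL_\alpha(c_\lambda,C_\lambda)$ and $\Theta_s(R_0)$. The remedy I would try is to enlarge the constants slightly, because $c_\lambda<C_\lambda$ leaves room to perturb each eigenvalue by a constant multiple, and to use the variance-correcting extra aliased coordinate at $r_0'$.
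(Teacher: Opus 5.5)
Your treatment of terms I and II follows the paper: the same van Trees argument, with the block Fisher bound evaluated on the deterministic grid. The paper's bound only uses $W_B\succeq\sigma_\delta^2 I_m$ and $\|\varphi_r\|_2^2\le Cm$, so it needs no restriction $d\le m/4$, but your case split (term III dominates otherwise) is also fine.

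For term III your construction differs from the paper's but is valid. The paper hides a compactly supported bump $g=\Sigma\beta$ between two grid points and compares $\pm\beta$; you alias two cosines $r_0\asymp m$ and $r_0+m$ instead. Your ``swap'' choice matches $\Var(Y)$ only when $\gamma_{r_0}=-\gamma_{r_0+m}$. In that case it is just the sign flip $\theta^{(2)}=-\theta^{(1)}$ with vanishing grid cross-covariance. That version works for every fixed $\lambda$ in the class, avoids bump-scaling estimates, and gives separation $4\sum_r\lambda_r\theta_r^2\asymp m^{-(2\alpha+2s)}$.

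The genuine gap is term IV. For the separation $\lambda_{r_0}(\theta_{r_0}-\theta'_{r_0})^2\asymp\gamma^2\eta^2$ to be of order $m^{-4\alpha}$, you need $\gamma=\gamma_{r_0}\asymp 1$ and $\eta\asymp m^{-2\alpha}$. But then $\theta'_{r_0}=\gamma/(\lambda_{r_0}+\eta)$ lowers $\sum_r\lambda_r\theta_r^2$ by $\gamma^2\eta/(\lambda_{r_0}\lambda'_{r_0})\asymp m^{-2\alpha}$.

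A coefficient at $r_0'=r_0+m$ cannot restore this inside $\Theta_s(R_0)$. Since $e_{r_0'}$ and $e_{r_0}$ coincide on the grid, any $\gamma'_{r_0'}=w$ must be compensated by $\gamma'_{r_0}=\gamma-w$ to keep the grid cross-covariance. The variance then changes by $(w^2-2\gamma w)/\lambda'_{r_0}+w^2/\lambda'_{r_0'}$, which reaches order $m^{-2\alpha}$ only if $|w|\gtrsim m^{-2\alpha}$. That forces $|\theta'_{r_0'}|=|w|/\lambda'_{r_0'}\gtrsim 1$ at a frequency of order $m$, which costs $m^{2s}$ in the Sobolev norm. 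So the ``tiny'' correction is tiny in $\gamma$ but not in $\theta$.

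More generally, with $\theta$ supported on one alias class and $|\theta_{r_0+m}|\lesssim m^{-s}$, exact matching of $\Var(Y)$ caps the low-frequency separation at $O(m^{-4\alpha-2s})$. That is already dominated by term III. Term IV is only informative when $s>\alpha$, which is exactly where your construction fails.

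Your other remedy, perturbing eigenvalues by constant factors, does not help either. The grid law of $X$ must coincide exactly, since the bound has to hold for all $n$. So mass can only be moved within an alias class, that is, by $O(m^{-2\alpha})$.

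The missing idea, which the paper uses, is to take two low-frequency alias classes with opposite transfers:
\[
\lambda_1^\pm=\bar\lambda_1\pm h,\quad \lambda_{m+1}^\pm=\bar\lambda_{m+1}\mp h,\quad \lambda_2^\pm=\bar\lambda_2\mp h,\quad \lambda_{m+2}^\pm=\bar\lambda_{m+2}\pm h,\qquad h=c_0m^{-2\alpha}.
\]
Put $\theta^\pm$ on $\{1,2\}$ with the same $\gamma_j=\eta\sqrt{\bar\lambda_j^2-h^2}$ under both hypotheses. Then $\gamma_1^2/\lambda_1^\pm+\gamma_2^2/\lambda_2^\pm=\eta^2(\bar\lambda_1\mp h)+\eta^2(\bar\lambda_2\pm h)$ does not depend on the sign. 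Hence the grid covariance, the grid cross-covariance and $\Var(Y)$ all match exactly without any high-frequency coefficient. Meanwhile $\theta_1^+-\theta_1^-\asymp h$ still gives separation $\asymp m^{-4\alpha}$.
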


\cref{thm:common-lower} is proved in Appendix~G.
The two statistical terms are obtained through the same van Trees strategy as in the case of independent design, with the Fisher information evaluated for the deterministic grid experiment.
The term \(m^{-(2\alpha+2s)}\) is proved by constructing alternatives whose induced cross-covariance functions vanish on the grid and are therefore indistinguishable no matter how large \(n\) is.
The \(m^{-4\alpha}\) term enters only when the eigenvalues are unknown; it is obtained from alternatives with the same law on the grid but different covariance scales for inversion.
These two lower bound constructions are qualitatively different.
The first keeps the covariance scale fixed and hides regression signal between grid points through the cross-covariance \(g=\Sigma\beta\).
The second varies the covariance scale itself, showing that even when the observed grid distribution is unchanged, the regression target in this inverse problem can differ enough to create the \(m^{-4\alpha}\) term.

\subsection{Adaptive Estimator}
\label{subsec:common-adaptive-upper}

We now construct an adaptive estimator for common design with unknown eigenvalues and unknown smoothness indices.
The construction follows the adaptive estimator under independent design in \cref{subsec:indep-adaptive-upper}, with modifications in the covariance pilot to account for the shared grid.
The main difficulty is that, under a common grid, the eigenvalues cannot be estimated in the same way as in the independent-design case.
When \(m\) is even, one may attempt to use an interlaced partition of the grid to construct an estimator for \(\lambda_r\) similar to \cref{eq:indep-adaptive-lambda-pilot}, but the same approach fails for odd \(m\) because of nonnegligible bias.
We therefore introduce a covariance pilot based on the difference between the active band and a fixed floor at high frequency; this pilot is accurate enough for blockwise adaptation.
The role of this pilot is twofold: it removes the measurement-noise contribution from grid Fourier coefficients and provides a scale for the inverse operation in the blockwise selection rule.

We split the subjects into two groups as before, denoted by \(I_\lambda\cup I_\gamma=\{1,\dots,n\}\) with \(n_\lambda\asymp n_\gamma\asymp n\).
With \(\bar{Z}_{ir}\) defined as in \cref{eq:cross-covariance-coefficient} (so here \(t_{ij}=t_j\)),
we define the pilot estimator for \(\lambda_r\) as the difference between the active band and a fixed floor at high frequency:
\begin{equation}
  \label{eq:common-adaptive-lambda-pilot}
  \check{\lambda}_r
  \coloneqq
  \frac{1}{n_\lambda}\sum_{i\in I_\lambda}
  \left(\bar{Z}_{ir}^2-\bar{Z}_{iH_m}^2 \right),
\end{equation}
where we take \( H_m = \floor{(m-1)/2} \).
This subtraction removes the additional variance contribution in \( \bar{Z}_{ir}^2 \) due to noisy observations.
At the same time, the frequency \(H_m\) is chosen not too high, to avoid excessive variance, and not too low, so that \(\lambda_{H_m}\) is negligible.

Under common design, the active band must be chosen more conservatively to ensure that the bias of \(\hat{\gamma}_r\) does not dominate the variance.
For a sufficiently small constant \(c_L\in(0,1/8)\), we define the active band cutoff as
\[
  L_m
  \coloneqq
  \floor{
    \log_2 \xk{c_L m\wedge n}
  }
  \qquad
  K_m \coloneqq 2^{L_m}.
\]
Additionally, the thresholding level must also be modified to reflect the different balance between bias, variance and additional bias created by the shared grid under common design.
Instead of \cref{eq:indep-adaptive-lambda-threshold}, we use
\begin{equation}
  \label{eq:common-adaptive-lambda-threshold}
  \zeta_{n,m}
  \coloneqq
  M_0 \left(
        \frac{1}{m}\sqrt{\frac{\ell_{n,m}^{\lambda}}{n_\lambda}}
        +
        \frac{\ell_{n,m}^{\lambda}}{n_\lambda}
  \right),
  \quad
  \ell_{n,m}^{\lambda} \coloneqq \log\bigl(n(K_m+1)\bigr),
\end{equation}
where \(M_0>0\) is sufficiently large.
The adaptive estimator for common design, still denoted by \(\hat{\beta}\), is then defined as in \cref{eq:indep-adaptive-estimator} with the above modifications.
The following upper bound states that the adaptive estimator attains the minimax rate.
The proof is given in Appendix~F.

\begin{theorem}[Adaptive upper bound under common design]
  \label{thm:common-adaptive-upper}
  Under \cref{assum:common-design}, uniformly over any compact subset of
  \(\{(\alpha,s):\alpha>1/2,\ s\ge0\}\), we have
  \begin{equation}
    \label{eq:common-adaptive-rate}
    \sup_{\lambda\in\caL_\alpha(c_\lambda,C_\lambda)}
    \sup_{\theta\in\Theta_s(R_0)}
    \E\caR(\hat{\beta};\theta,\lambda)
    \lesssim
    n^{-\frac{2\alpha+2s}{2\alpha+2s+1}}
    +
    (nm)^{-\frac{2\alpha+2s}{4\alpha+2s+1}}
    +
    m^{-(2\alpha+2s)}
    +
    m^{-4\alpha}.
  \end{equation}
\end{theorem}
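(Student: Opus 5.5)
The plan is to decompose the prediction risk blockwise and compare the adaptive estimator with the oracle bound of \cref{thm:common-oracle-upper}, adding one new error source: the eigenvalue pilot \cref{eq:common-adaptive-lambda-pilot}. We work with the grid Fourier coefficients. For \(|r|\le K_m\le c_L m\), the vectors \((e_r(t_j))_j\) are orthogonal on the equal grid. Aliasing then shows
\[
\E\bar Z_{ir}^2=\sum_{k\equiv \pm r \,(\mathrm{mod}\, m)}\lambda_k(\text{coefficients})+\sigma_\delta^2/m .
\]
Subtracting \(\bar Z_{iH_m}^2\) removes the noise term exactly, and leaves a bias of order \(\sum_{|k|\gtrsim m}\lambda_k\cdot(\cdot)\lesssim m^{-2\alpha}\), together with \(\lambda_{H_m}\asymp m^{-2\alpha}\). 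Hence
\[
|\E\check\lambda_r-\lambda_r|\lesssim m^{-2\alpha}, \qquad \check\lambda_r-\E\check\lambda_r\lesssim \lambda_r\sqrt{\log/n}+m^{-1}\sqrt{\log/n}+\log/n
\]
with high probability, by Gaussian chaos (Hanson–Wright) concentration plus a union bound over \(|r|\le K_m\). This matches the threshold \(\zeta_{n,m}\). Similarly, \(\hat\gamma_r\) has bias from aliasing. Following the proof of \cref{thm:common-oracle-upper}, the bias is \(\sum_{k\ne r,\,k\equiv\pm r}\lambda_k\theta_k\). Its contribution to \(\sum_r\lambda_r^{-1}|\text{bias}_r|^2\) over \(|r|\le K_m\) is \(\lesssim m^{-(2\alpha+2s)}\), using \(\lambda_k\lesssim |k|^{-2\alpha}\) with \(|k|\gtrsim m\), Cauchy–Schwarz against the Sobolev weights, and \(\lambda_r^{-1}\lesssim m^{2\alpha}\).

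Next we work on the high-probability event \(\mathcal{E}\) where all pilot deviations are controlled, and split the blocks into three types. (a) Blocks where \(\lambda\) is far above \(\zeta_{n,m}+m^{-2\alpha}\): there \(\check\lambda_r\asymp\lambda_r\), and the eigenvalue screen passes. (b) Blocks where \(\lambda\lesssim \zeta_{n,m}+m^{-2\alpha}\): these may be killed, and they cost at most the tail \(\sum_{r\in B_\ell}\lambda_r\theta_r^2\lesssim 2^{-\ell(2\alpha+2s)}\). (c) Intermediate blocks, handled like (a) up to constants. In case (a), write
\[
\hat\gamma_r/\check\lambda_r-\theta_r=(\hat\gamma_r-\gamma_r)/\check\lambda_r+\theta_r(\lambda_r-\check\lambda_r)/\check\lambda_r .
\]
The first term gives, in prediction norm, the oracle variance \(|B_\ell|(1/n+1/(nm\lambda_r))\) plus the aliasing bias. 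The second gives \(\lambda_r\theta_r^2(\check\lambda_r-\lambda_r)^2/\lambda_r^2\). Summed, this is at most \(\|\theta\|^2\sup_r\lambda_r(\log/n+m^{-4\alpha}/\lambda_r^2)\). Because retained \(r\) have \(\lambda_r\gtrsim m^{-2\alpha}\), the \(m^{-2\alpha}\) bias piece yields exactly \(\sum\lambda_r\theta_r^2 m^{-4\alpha}\lambda_r^{-2}\lesssim m^{-4\alpha}\lambda_{\min}^{-1}\theta^2\). I expect to refine this with Sobolev weights to get \(m^{-4\alpha}\) overall, using that the bias is really \(O(\lambda_{H_m}+\text{alias})\). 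This is where the \(m^{-4\alpha}\) term in \cref{eq:common-adaptive-rate} should emerge, and it is the step I regard as the main obstacle: the relative pilot error \(m^{-2\alpha}/\lambda_r\) is of order one near the screening boundary. I would first try to show that it suffices to keep blocks with \(\lambda_r\ge 2\zeta\) while bounding the pilot bias more sharply as \(\lambda_{H_m}-\text{(aliased mass)}\), so that the remaining error is controlled by the screen.

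The energy threshold \(\hat S_\ell\ge\hat V_\ell\) is then handled by the standard block-thresholding oracle inequality. On \(\mathcal{E}\), each retained block costs \(\lesssim \hat V_\ell+\text{bias}\). Each discarded block costs \(\lesssim \sum_{B_\ell}\lambda_r\theta_r^2\wedge \hat V_\ell\), via a chi-square-type deviation bound for \(\hat S_\ell\) with \(C_V\) large. Summing over \(\ell\le L_m\) gives
\[
\min_d\Bigl(d/n+d^{2\alpha+1}/(nm)+d^{-(2\alpha+2s)}\Bigr)+m^{-(2\alpha+2s)}+m^{-4\alpha}.
\]
Blocks beyond \(K_m\) add \(K_m^{-(2\alpha+2s)}\lesssim m^{-(2\alpha+2s)}+n^{-(2\alpha+2s)}\), which is negligible. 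Off \(\mathcal{E}\), the projection \(\Pi_R\) bounds the loss by \(R^2\sum_\ell\max_{B_\ell}\lambda_r\lesssim1\). Choosing the log factors in \(\zeta_{n,m}\) makes \(\mathbb{P}(\mathcal{E}^c)\lesssim (nm)^{-2}\), which is negligible. Optimizing \(d\) as in \cref{eq:common-oracle-d} completes the bound.
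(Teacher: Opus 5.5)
Your architecture matches the paper's: a pilot event, an eligibility screen, a block-thresholding oracle inequality, an aliasing bias of order $m^{-(2\alpha+2s)}$, a deterministic pilot bias $b_r\coloneqq\E\check\lambda_r-\lambda_r=O(m^{-2\alpha})$ as the source of $m^{-4\alpha}$, and projection for the bad event. However, the step you flag as the main obstacle is left open, and the fix you suggest cannot work.

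\textbf{Why your suggested fix fails.} For a general $\lambda\in\caL_\alpha(c_\lambda,C_\lambda)$ there is no cancellation in $b_r=(\widetilde\lambda_r-\lambda_r)-\widetilde\lambda_{H_m}$. Both pieces are of exact order $m^{-2\alpha}$, with constants governed by $c_\lambda,C_\lambda$, so a ``sharper'' bias bound gains nothing. Nor can the screen absorb this bias: $\zeta_{n,m}$ does not involve $m^{-2\alpha}$, and it is far below $m^{-2\alpha}$ once $n$ is large compared with $m^{4\alpha}$. Without further input, a block at $|r|\asymp m$ with $\lambda_r\asymp m^{-2\alpha}$ can have $2\zeta_{n,m}\le\E\check\lambda_r\ll\lambda_r$. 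This happens for suitable $\lambda$ in the class when $C_\lambda/c_\lambda$ is large. Such a block passes the screen and is inverted with a denominator much smaller than $\lambda_r$. Your cases (b) and (c) do not cover it: the block is neither necessarily killed nor comparable to case (a). The only control left is the projection, which bounds that block's loss by $\lambda_{2^\ell}(R+R_0)^2\asymp m^{-2\alpha}$. For $s>0$ and $n\gg m^{4\alpha}$, this exceeds the target rate.

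\textbf{The missing ingredient.} Such blocks do not exist in the active band. Because $K_m\le c_Lm$ with $c_L$ chosen small (depending only on $\underline\alpha,\bar\alpha,c_\lambda,C_\lambda$), every $|r|\le K_m$ satisfies $\lambda_r\ge c_\lambda(1+c_Lm)^{-2\alpha}\ge 2Cm^{-2\alpha}\ge 2|b_r|$. Hence $\E\check\lambda_r\asymp\lambda_r$ uniformly on the band; this is the paper's denominator-comparison lemma. On the good pilot event, every eligible block then has $\check\lambda_r\asymp\lambda_r$. Your Sobolev-weight refinement then closes the argument in one line:
\[
\sum_{|r|\le K_m}\frac{\theta_r^2 b_r^2}{\lambda_r}\lesssim m^{-4\alpha}\sum_{|r|\le K_m}(1+|r|)^{2\alpha}\theta_r^2\lesssim m^{-4\alpha}K_m^{(2\alpha-2s)_+}\lesssim m^{-4\alpha}+m^{-(2\alpha+2s)},
\]
using $K_m\lesssim m$. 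Note that for $s<\alpha$ this gives $m^{-(2\alpha+2s)}$, not $m^{-4\alpha}$ alone. The paper packages the same computation differently: it runs the thresholding argument against the observable target $\vartheta_r=\widetilde\gamma_r/\bar\lambda_r$ and bounds $\sum_{|r|\le K_m}\lambda_r|\vartheta_r-\theta_r|^2$ deterministically. Comparing directly with $\theta_r$, as you do, is equivalent once comparability holds.

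\textbf{Two smaller points you still need to supply.}
\begin{enumerate}
\item Every block up to the oracle cutoff $d_*$ must pass the screen, i.e.\ $\lambda_{d_*}/\zeta_{n,m}\to\infty$ in each regime.
\item The stochastic pilot terms $\theta_r^2\bigl(m^{-2}\ell^{\lambda}_{n,m}/n+(\ell^{\lambda}_{n,m}/n)^2\bigr)/\lambda_r$ were dropped from your displayed bound $\|\theta\|^2\sup_r\lambda_r(\cdots)$. They must be summed with the same Sobolev weights, or handled as in the paper: via the mean-square bound up to $d_*$ and the signal bound $\widetilde\Theta_\ell$ beyond it.
\end{enumerate}
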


\cref{thm:common-adaptive-upper} shows that the adaptive procedure matches all four terms in the lower bound when the eigenvalues are unknown under common design.
In particular, the fourth term in \cref{eq:common-adaptive-rate} should not be interpreted as a technical price of the adaptive construction.
It is the minimax obstruction created by unknown covariance scales on a shared grid, as reflected by the lower bound in \cref{eq:common-lower-rate}.
The last two terms have different origins: \(m^{-(2\alpha+2s)}\) is the term from fixed grid discretization already present with known eigenvalues, whereas \(m^{-4\alpha}\) is the additional cost of estimating a covariance scale that can be aliased on the common grid.
Keeping these terms separate is important because they dominate in different smoothness regimes and lead to different phase transitions.

\subsection{Discussion}
\label{subsec:common-discussion}

Denote the minimax risk under common design as
\[
  \caR_{n,m}^{\star,\mathrm{cd}}
  \coloneqq
  \inf_{\hat{\beta}}
  \sup_{\lambda\in\caL_\alpha(c_\lambda,C_\lambda)}
  \sup_{\theta\in\Theta_s(R_0)}
  \E\caR(\hat{\beta};\theta,\lambda).
\]
Combining \cref{thm:common-lower} and \cref{thm:common-adaptive-upper} yields the following minimax rate.

\begin{corollary}[Minimax rate under common design]
  \label{cor:common-minimax}
  Under \cref{assum:common-design},
  \begin{equation}
    \label{eq:common-minimax-rate}
    \caR_{n,m}^{\star,\mathrm{cd}}
    \asymp
    \underbrace{n^{-\frac{2\alpha+2s}{2\alpha+2s+1}}}_{\mathrm{I}}
    +
    \underbrace{(nm)^{-\frac{2\alpha+2s}{4\alpha+2s+1}}}_{\mathrm{II}}
    +
    \underbrace{m^{-(2\alpha+2s)}}_{\mathrm{III}}
    +
    \underbrace{m^{-4\alpha}}_{\mathrm{IV}}.
  \end{equation}
\end{corollary}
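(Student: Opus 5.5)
The corollary follows by sandwiching $\caR_{n,m}^{\star,\mathrm{cd}}$ between the lower bound of \cref{thm:common-lower} and the upper bound of \cref{thm:common-adaptive-upper}. Both are already stated with the same four-term right-hand side, so essentially only bookkeeping remains.

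For the lower bound, invoke the unknown-eigenvalue part \cref{eq:common-lower-rate} of \cref{thm:common-lower}. Its left-hand side is precisely
\[
  \inf_{\hat\beta}\sup_{\lambda\in\caL_\alpha(c_\lambda,C_\lambda)}\sup_{\theta\in\Theta_s(R_0)}\E\caR(\hat\beta;\theta,\lambda),
\]
which is the definition of $\caR_{n,m}^{\star,\mathrm{cd}}$. It is therefore bounded below by a constant times $\mathrm{I}+\mathrm{II}+\mathrm{III}+\mathrm{IV}$.

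For the upper bound, the adaptive estimator $\hat\beta$ of \cref{subsec:common-adaptive-upper} is one admissible estimator in the infimum. By \cref{eq:common-adaptive-rate}, its worst-case risk over $\caL_\alpha(c_\lambda,C_\lambda)\times\Theta_s(R_0)$ is at most a constant times the same four-term sum. The constant in \cref{thm:common-adaptive-upper} is uniform over compact sets of $(\alpha,s)$; here $(\alpha,s)$ is fixed, so we take the singleton compact set $\{(\alpha,s)\}$. This gives $\caR_{n,m}^{\star,\mathrm{cd}}\lesssim\mathrm{I}+\mathrm{II}+\mathrm{III}+\mathrm{IV}$.

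Combining the two inequalities yields \cref{eq:common-minimax-rate}. The only point to check is that the hidden constants depend only on the fixed model parameters $c_\lambda,C_\lambda,R_0,\sigma_\varepsilon,\sigma_\delta,\alpha,s$ and not on $(n,m)$. This matches the convention for $\lesssim$ fixed in the introduction, so I do not expect any step to be an obstacle here; the real difficulty lies in the two theorems being combined.
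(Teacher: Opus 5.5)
Your proposal is correct and is exactly the paper's argument: the corollary is obtained by combining the unknown-eigenvalue lower bound \cref{eq:common-lower-rate} of \cref{thm:common-lower}, whose left-hand side is by definition $\caR_{n,m}^{\star,\mathrm{cd}}$, with the adaptive upper bound \cref{eq:common-adaptive-rate} of \cref{thm:common-adaptive-upper}. The adaptive estimator does not use $\lambda$, so it is admissible in the infimum. As in both underlying theorems, the comparison is implicitly asserted for $n$ and $m$ sufficiently large.
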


The minimax rate in \cref{eq:common-minimax-rate} has four terms.
The first two terms are the same as in the case of independent design: the term (I) is the fully observed FLR benchmark, and the term (II) is the contribution caused by noisy point evaluations.
The last two terms are specific to the common grid.
The term (III) is the approximation error from the fixed grid that is already present in the oracle analysis on an equal grid,
whereas the term (IV) is the additional cost of identifying the covariance scale needed for inversion when the eigenvalues are unknown.
We next discuss several implications of this minimax rate.

\subsubsection{Phase transitions}
\label{subsubsec:common-phase-transitions}

\begin{figure}[t]
  \centering
  \includegraphics[width=1\textwidth]{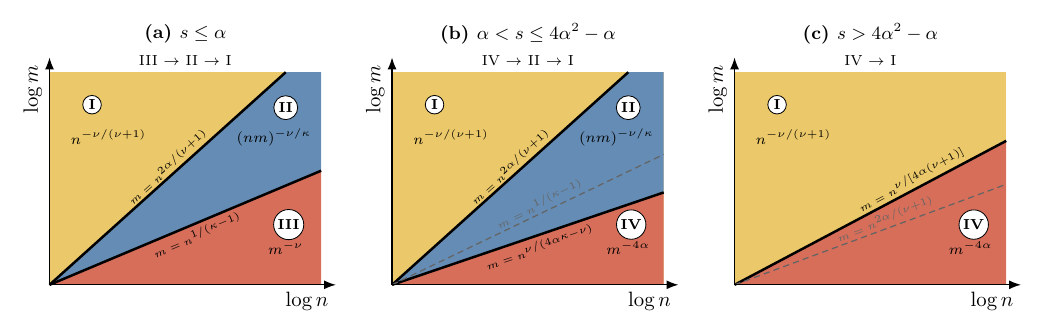}
  \caption{Phase diagram for the minimax rate under common design. The dominant term in \cref{eq:common-minimax-rate} changes with the grid size \(m\) within each trajectory, and the transition points depend on the relative size of \(s\) and \(\alpha\).}
  \label{fig:common-design-phase-diagram}
\end{figure}

We identify the phase transitions between the four rates across different regimes of \(m\) and \((\alpha,s)\).
To simplify the presentation, write
\[
  \nu\coloneqq 2\alpha+2s,
  \qquad
  \kappa\coloneqq 4\alpha+2s+1.
\]
\Cref{fig:common-design-phase-diagram} displays the following regimes:

\begin{enumerate}[label=(\alph*)]
  \item If \(s\le \alpha\), the impact of unknown eigenvalues is negligible compared to the approximation error from the fixed grid, so
  the rate coincides with the oracle rate and has the form
  \[
    \caR_{n,m}^{\star,\mathrm{cd}}
    \asymp
    \begin{cases}
      m^{-\nu},
      &
      m\le n^{1/(\kappa-1)},
      \\[2mm]
      (nm)^{-\nu/\kappa},
      &
      n^{1/(\kappa-1)} \le m\le n^{2\alpha/(\nu+1)},
      \\[2mm]
      n^{-\nu/(\nu+1)},
      &
      m\ge n^{2\alpha/(\nu+1)}.
    \end{cases}
  \]
  \item If \(\alpha<s\le 4\alpha^2-\alpha\), then the term for identifying unknown eigenvalues dominates the approximation error from the fixed grid at low resolution, while the sampling term still determines an intermediate regime.
  We have
  \[
    \caR_{n,m}^{\star,\mathrm{cd}}
    \asymp
    \begin{cases}
      m^{-4\alpha},
      &
      m\le n^{\nu/(4\alpha\kappa-\nu)},
      \\[2mm]
      (nm)^{-\nu/\kappa},
      &
      n^{\nu/(4\alpha\kappa-\nu)} \le m\le n^{2\alpha/(\nu+1)},
      \\[2mm]
      n^{-\nu/(\nu+1)},
      &
      m\ge n^{2\alpha/(\nu+1)}.
    \end{cases}
  \]
  In this range \(m^{-4\alpha}\) dominates \(m^{-\nu}\) at low resolution, but the sampling term still forms a nonempty intermediate regime.
  \item If \(s>4\alpha^2-\alpha\), then \(m^{-4\alpha}\) is the dominant term when $m$ is small.
  The transition is instead between the covariance identification term and the fully observed benchmark.
  We have
  \[
    \caR_{n,m}^{\star,\mathrm{cd}}
    \asymp
    \begin{cases}
      m^{-4\alpha},
      &
      m\le n^{\nu/[4\alpha(\nu+1)]},
      \\[2mm]
      n^{-\nu/(\nu+1)},
      &
      m\ge n^{\nu/[4\alpha(\nu+1)]}.
    \end{cases}
  \]
\end{enumerate}

\subsubsection{Comparison with independent design}

We compare the minimax rate under common design with the rate under independent design in \cref{cor:indep-minimax}.
The two statistical terms (I) and (II) coincide in both settings, showing the fundamental statistical difficulty of the problem.
However, the sampling geometry changes the nature of discretization,
so the discretization cost under common design contains two additional terms (III) and (IV).
Under common design, all trajectories are observed through the same grid projection,
so increasing \(n\) only reduces the stochastic error on this projection,
but it does not remove aliasing or errors in identifying the denominators created by the fixed grid.

The threshold for the dense regime is consequently different.
For independent design, the fully observed benchmark is reached once
\( m \gtrsim n^{\zeta_{\mr{ind}}} \) with \( \zeta_{\mr{ind}} = 2\alpha/(\nu+1) \).
For common design, the dense regime requires
\( m \gtrsim n^{\zeta_{\mr{ind}} \vee \zeta_{\mr{cd}}} \), where \( \zeta_{\mr{cd}} = \nu/[4\alpha(\nu+1)] \).
The second condition is active precisely in the high smoothness range \(s>4\alpha^2-\alpha\).

\subsubsection{Comparison with mean estimation}

It is instructive to compare \cref{eq:common-minimax-rate} with the corresponding minimax theory for functional mean or covariance estimation under discrete noisy observations
\citep{cai2011_OptimalMeanFunctionDiscrete,cai2010_NonparametricCovarianceFunction}.
In mean estimation, the rates under independent and common design take the schematic forms
\[
  n^{-1} + (mn)^{-(2\alpha-1)/(2\alpha)}
  \qquad\text{and}\qquad
  n^{-1}+m^{-(2\alpha-1)},
\]
respectively.
Both rates enter the dense regime at the same resolution scale \(m\asymp n^{1/(2\alpha-1)}\).
Moreover, throughout the range \(m\lesssim n^{1/(2\alpha-1)}\), the sampling term under independent design is no larger than the term under a common grid, while above this scale both sampling terms are dominated by the parametric term \(n^{-1}\).
Thus the phase transition for mean estimation remains essentially a transition between two regimes: a sampling-limited regime and a dense parametric regime.

The FLR rate under common design is more intricate because the regression problem is an inverse problem and the slope smoothness \(s\) enters the balance.
The sampling term is affected by inversion through \(\lambda_r^{-1}\), whereas the terms from the fixed grid and from eigenvalue identification have different dependence on \(s\).
Due to the joint dependence on \((\alpha,s)\), the sampling term (II) is no longer dominated by the additional grid terms (III) and (IV) at low resolution.
Consequently, the phase diagram can have up to four distinct regimes.

The term \(m^{-4\alpha}\) is specific to this FLR setting and has no analogue in mean estimation.
It depends only on the covariance decay and the grid resolution, not on the smoothness \(s\) of the slope.
Consequently, for sufficiently smooth slopes, the leading obstruction under common design is not the approximation of \(\beta\) but the identification of the covariance scale required to transform the cross-covariance into the slope.

\subsubsection{Source of the two grid error terms}

We next give further insight into the two additional grid error terms.
From the perspective of the upper bound, term (III) comes from the bias of the cross-covariance coefficient estimator \(\hat{\gamma}_r\) due to aliasing on the common grid.
Indeed, the shared grid makes \(\hat{\gamma}_r\) a biased estimator of the population coefficient \(\gamma_r\), and the bias can be large enough to dominate the variance at low frequencies.
Term (IV) enters through the need to invert the covariance operator in the regression problem:
under the fixed grid, eigenvalue estimation is biased, which leads to a nonnegligible error in the estimation of the slope function.

From the perspective of the lower bound, the two error terms correspond to two geometric obstructions.
The term (III) reflects the approximation error created by the fixed grid, since small bumps lying between the grid points cannot be detected by the observations on the grid.
When the eigenvalues are unknown, indistinguishability can also arise from the eigenvalues, yielding a separate \(m^{-4\alpha}\) identification cost (IV).

\subsubsection{Optimal sampling allocation}
\label{subsubsec:common-optimal-allocation}

Finally, consider the optimal allocation of the sampling budget \(nm=N\) under common design, where \(N\) denotes the total number of scalar point measurements and \(\nu,\kappa\) are as in the phase-transition display above.
Minimizing \cref{eq:common-minimax-rate} over \((n,m)\) gives the following regimes:
\begin{itemize}
  \item If \(s\le \alpha\), then
  \[
    \inf_{nm=N} \caR_{n,m}^{\star,\mathrm{cd}}
    \asymp
    N^{-\nu/\kappa},
    \quad\text{for}\quad
    N^{1/\kappa} \lesssim m\lesssim N^{2\alpha/\kappa}.
  \]
  \item If \(\alpha<s\le 4\alpha^2-\alpha\), then
  \[
    \inf_{nm=N} \caR_{n,m}^{\star,\mathrm{cd}}
    \asymp
    N^{-\nu/\kappa},
    \quad\text{for}\quad
    N^{\nu/(4\alpha\kappa)} \lesssim m\lesssim N^{2\alpha/\kappa}.
  \]
  \item If \(s>4\alpha^2-\alpha\), then the optimal allocation is given by
  \[
    m^\star \asymp N^{\nu/[4\alpha(\nu+1)+\nu]},
    \quad
    n^\star \asymp N^{4\alpha(\nu+1)/[4\alpha(\nu+1)+\nu]},
  \]
  and
  \[
    \inf_{nm=N} \caR_{n,m}^{\star,\mathrm{cd}}
    \asymp
    N^{-4\alpha\nu/[4\alpha(\nu+1)+\nu]}.
  \]
  In this high smoothness range, the optimal choice balances the fully observed term with the eigenvalue identification term, so a nonnegligible part of the budget must be spent on increasing the grid resolution.
\end{itemize}
Thus, under common design, an optimal allocation must reserve enough budget for the grid resolution itself.
Increasing the number of trajectories cannot compensate for a grid that is too coarse to control the \(m\)-only terms.
This contrasts with independent design, where the optimal rate under a fixed budget can be attained by allocating the budget primarily to the number of trajectories.
   \section{Numerical Experiments}
\label{sec:numerics}
\FloatBarrier

In this section, we present simulations that illustrate the qualitative implications of the theory and a real data example that compares the practical estimator with established benchmarks.
In the numerical implementation, the estimators use all training subjects for eigenvalue and coefficient estimation for better finite-sample performance.

\subsection{Simulations}
\label{subsec:numerics-simulations}

In the simulations, we use the same trigonometric coordinates as in \cref{eq:trig-basis}.
The data-generating model is truncated to \(\absx{r}\le K_0\) with \(K_0=64\), giving a total of 129 basis functions.
For a smoothness pair \((\alpha,s)\), the true covariance and slope coefficients are
\[
  \lambda_r = (1+\absx{r})^{-2\alpha}, \qquad
  \theta_r = c_s (1+\absx{r})^{-(s+1/2)}, \qquad \absx{r}\le K_0,
\]
where the constant \(c_s\) is chosen so that \(\sum_{\absx{r}\le K_0}(1+\absx{r})^{2s}\theta_r^2=1\).
This choice of $\theta_r$ lies on the boundary of the Sobolev ellipsoid in \cref{eq:class-theta} with radius one.
For each subject, we generate independent scores \(\xi_{ir}\sim N(0,\lambda_r)\), set \(X_i(t)=\sum_{\absx{r}\le K_0}\xi_{ir}e_r(t)\), and generate
\[
  Y_i = \sum_{\absx{r}\le K_0}\xi_{ir}\theta_r + \varepsilon_i,\qquad \varepsilon_i\sim N(0,0.5^2).
\]
The observed functional data are \(Z_{ij}=X_i(t_{ij})+\delta_{ij}\), with independent measurement errors \(\delta_{ij}\sim N(0,0.1^2)\) in the qualitative phase panels.
For common design, \(t_{ij}=(j-1)/m\) is a common equispaced grid for all subjects; for independent design, \(t_{ij}\) are independently sampled from the uniform distribution on \([0,1]\).
The reported excess prediction risk is computed against the true coefficients as \(\sum_{\absx{r}\le K_0}\lambda_r(\hat{\theta}_r-\theta_r)^2\).
The implementation under common design uses the high-frequency control pilot on the full grid in \cref{eq:common-adaptive-lambda-pilot}, with fixed tuning constants across all settings.

\begin{figure}[htpb]
  \centering
  \includegraphics[width=1\textwidth]{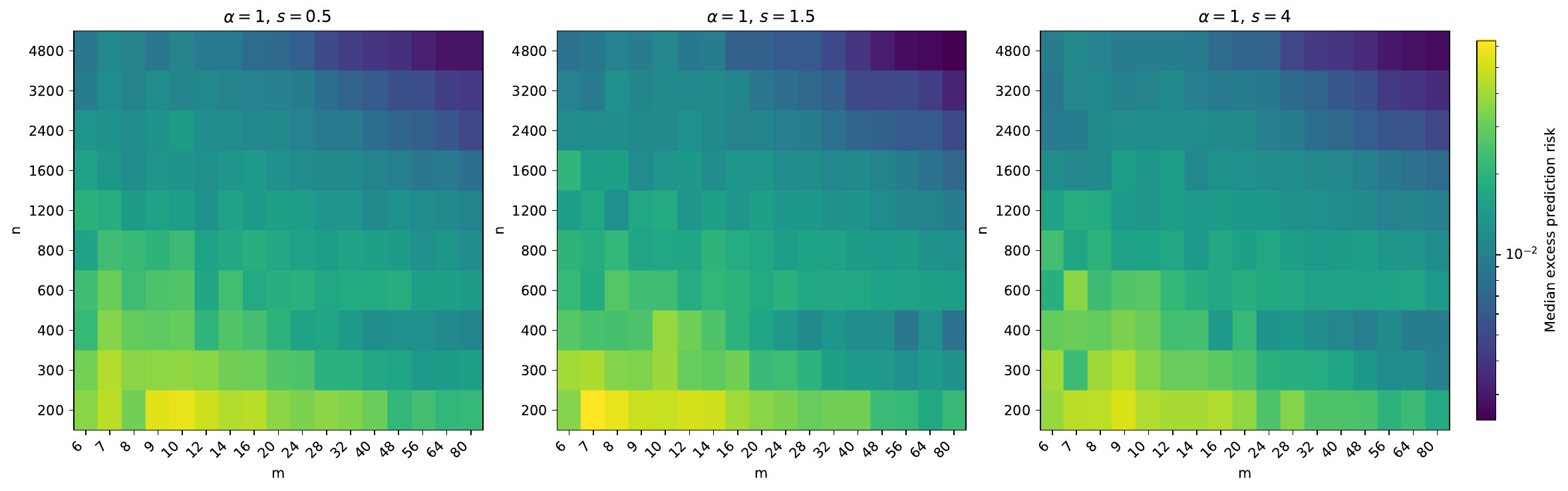}
  \vspace{0.5em}
  \includegraphics[width=1\textwidth]{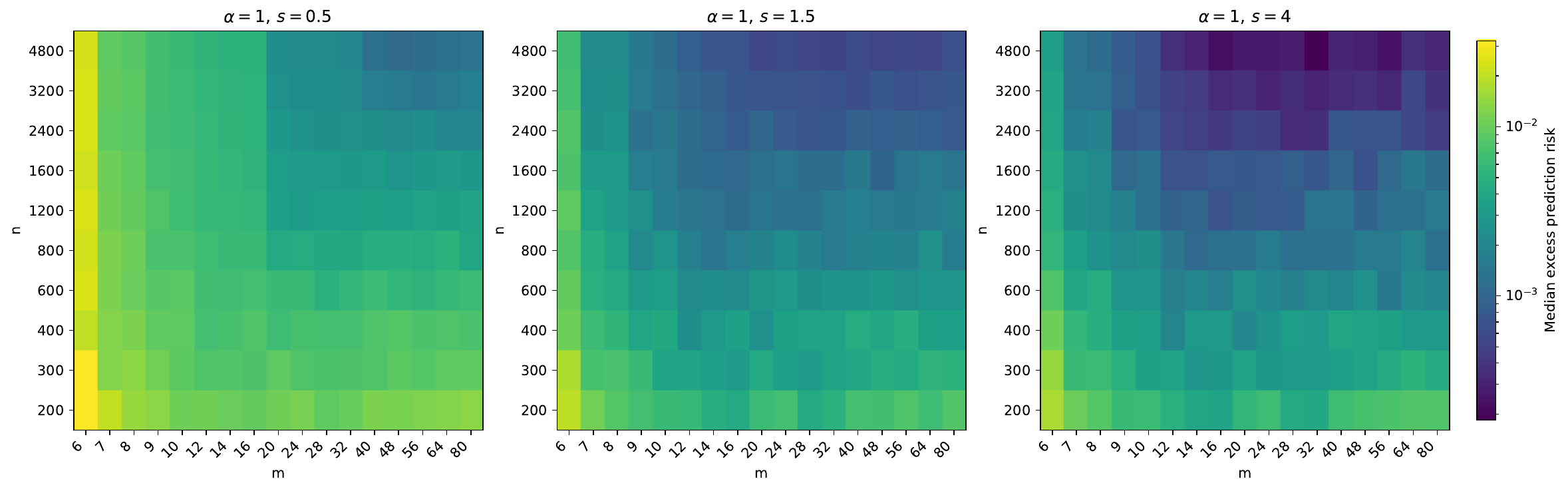}
  \caption{Simulation heatmaps under independent design (top) and common design (bottom).
  Each cell plots the median excess prediction risk over \(50\) repetitions.
  For common design, the three panels correspond to the three regimes in \Cref{subsubsec:common-phase-transitions}.
  }
  \label{fig:simulation-qualitative-heatmaps}
\end{figure}

We report the results for three smoothness pairs \((\alpha,s) = (1,0.5), (1,1.5), (1,4)\) under both independent and common design in \Cref{fig:simulation-qualitative-heatmaps}.
For independent design, the three panels display a similar qualitative pattern.
The risk decreases as one moves from the lower left corner to the upper right corner, reflecting the two-term structure of the rate under independent design in \cref{eq:indep-minimax-rate}.
The first term improves with the number of trajectories, while the second improves with the total number of noisy point evaluations.
Changing \(s\) modifies the theoretical exponents, but the overall geometry of the heatmaps remains comparable across the three smoothness pairs.

The common-design panels show a different behavior.
When \(m\) is small, increasing \(n\) alone produces only limited improvement, because the shared grid leaves an approximation or identification error that depends on \(m\) and cannot be averaged away over subjects.
The three panels also have visibly different decay patterns, in agreement with the three regimes in \cref{subsubsec:common-phase-transitions}.
For \((\alpha,s)=(1,0.5)\), the dominant effect at low resolution is the term \(m^{-(2\alpha+2s)}\) from the fixed grid, so the risk decreases mainly as the grid resolution \(m\) increases.
For \((\alpha,s)=(1,1.5)\) and \((1,4)\), the heatmaps exhibit a more mixed dependence on \(m\) and \(n\), consistent with the interaction between the sampling term, the term for identifying unknown eigenvalues, and the fully observed benchmark.

\begin{figure}[htpb]
  \centering
  \includegraphics[width=1\textwidth]{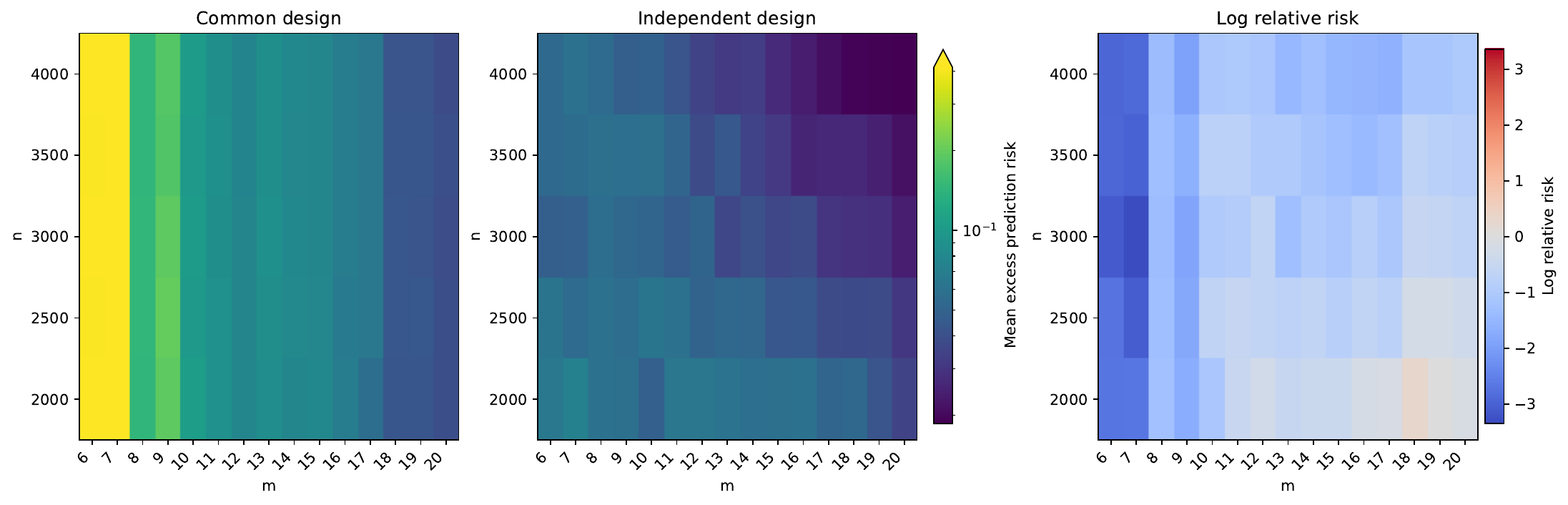}
  \caption{Matched comparison between common and independent design for \((\alpha,s)=(0.6,0.1)\).
  The two left panels show mean excess prediction risk, and the right panel shows \(\log_2(R_{\mathrm{IND}}/R_{\mathrm{CD}})\), so negative values (blue) favor independent design.
  }
  \label{fig:design-comparison-heatmap}
\end{figure}

\Cref{fig:design-comparison-heatmap} compares the two observation schemes on the same grid of $(m,n)$.
We choose relatively large $n$ to show the asymptotic behavior more clearly.
The common-design estimator remains strongly constrained by the grid when \(m=6\) or \(7\): increasing \(n\) alone gives little improvement because the same low-resolution grid is shared by all subjects.
As \(m\) increases, the common-design risk decreases and the gap between the two designs narrows.
The independent-design estimator is nevertheless smaller over most of the displayed grid, reflecting the additional fixed grid error terms under common design that cannot be averaged away across subjects.

\begin{figure}[htpb]
  \centering
  \includegraphics[width=1\textwidth]{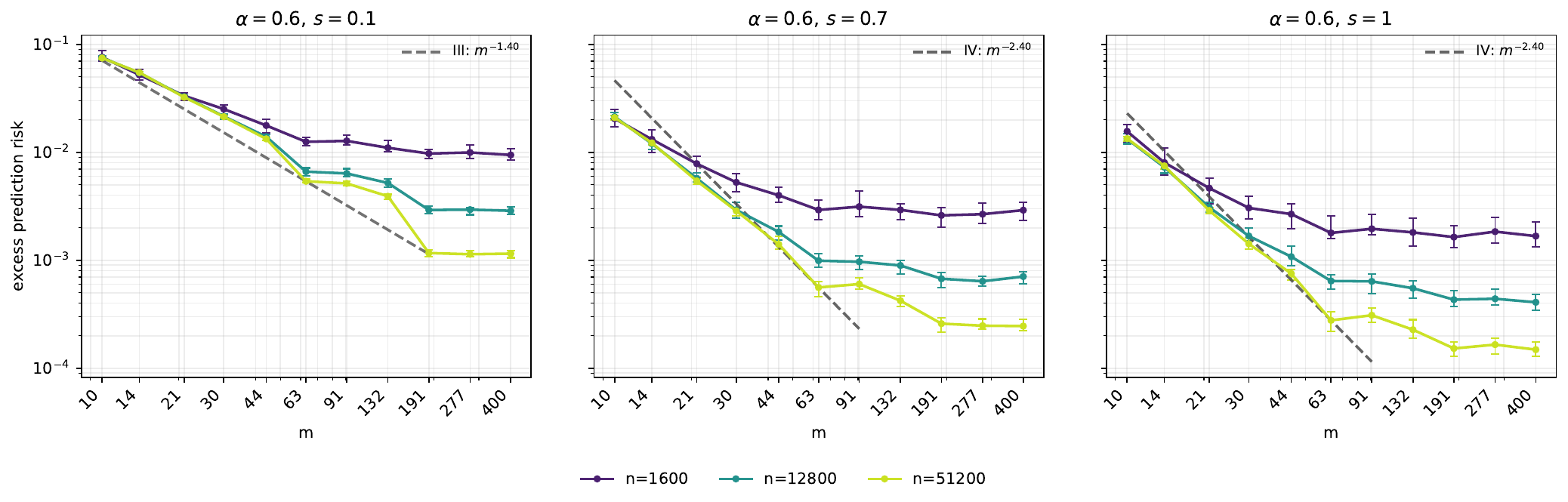}
  \vspace{0.5em}
  \includegraphics[width=1\textwidth]{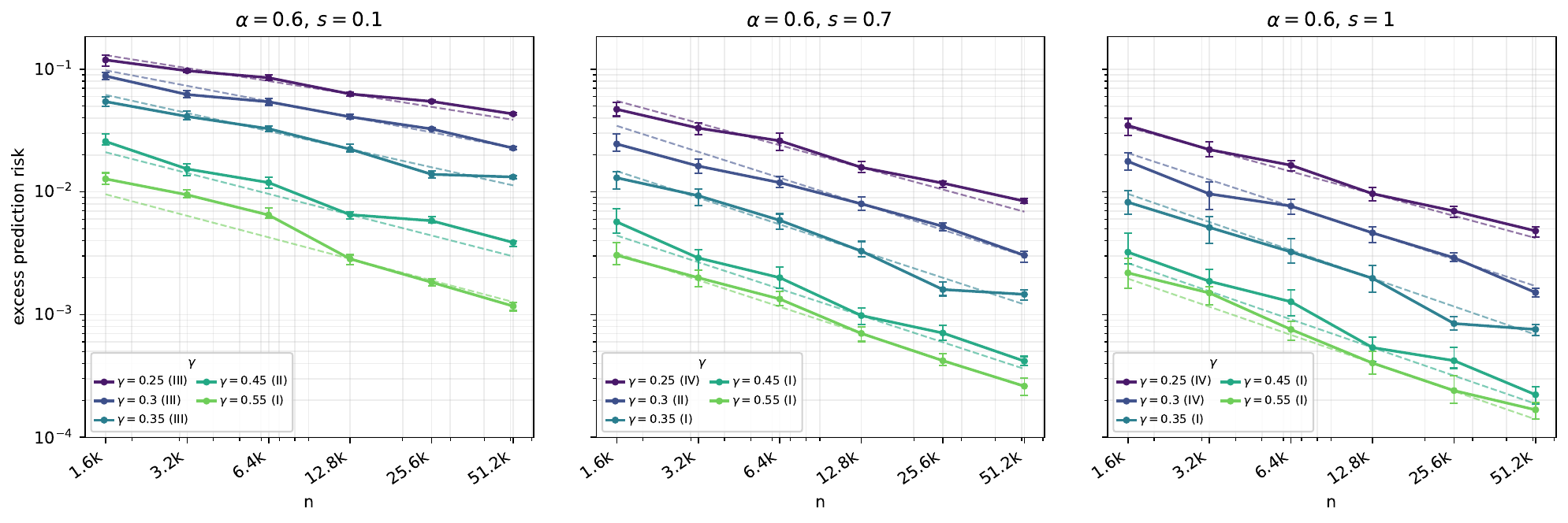}
  \caption{Common design rate comparisons as \(m\) varies (top) and along \(m=n^\gamma\) (bottom).
  Each panel uses \(50\) repetitions and plots the median excess prediction risk with interquartile error bars.
  The dashed reference lines show the low-resolution term predicted by \cref{eq:common-minimax-rate} in the top plots,
  and show the dominant rate in the bottom plots.
  }
  \label{fig:common-design-rate-comparisons}
\end{figure}

To further investigate the rate behavior under common design, we compare the empirical rates with the theoretical prediction in more detail.
To display the qualitative rate behavior more clearly and isolate the effect of biased coefficient and eigenvalue estimation, we use a version of the adaptive estimator with an oracle choice of the truncation level instead of the block thresholding procedure.
We consider three smoothness pairs \((\alpha,s)=(0.6,0.1),(0.6,0.7),(0.6,1)\), which lie in the three regimes in \cref{subsubsec:common-phase-transitions}, and report the results in \Cref{fig:common-design-rate-comparisons}.
The top part of \Cref{fig:common-design-rate-comparisons} plots the risk against \(m\) under various values of \(n\).
For each fixed \(n\), the risk decreases as \(m\) grows until the grid is fine enough that the part of the error depending on \(n\) becomes dominant.
After this point, the curves form a plateau, and the plateau level decreases as \(n\) increases.
This pattern matches the phase transition described by \cref{eq:common-minimax-rate}.
The dashed reference lines show the low-resolution terms predicted by the theory: \(m^{-(2\alpha+2s)}\) in the first panel and \(m^{-4\alpha}\) in the second and third panels, and the empirical curves align closely with these theoretical guides.
The bottom part of \Cref{fig:common-design-rate-comparisons} gives the corresponding comparison along paths \(m=n^\gamma\).
Different values of \(\gamma\) correspond to different regimes in the phase diagram and hence to different dominant terms in the rate.
As \(\gamma\) increases, the curves move from grid-constrained behavior toward the statistical terms, and the transition occurs at different values of \(\gamma\) across the three smoothness pairs.
The empirical curves again align with the theoretical rates shown by the dashed reference lines, supporting the predicted rate behavior in the different regimes.
Together, the two plots support the rate behavior predicted by the theory.

\begin{figure}[htpb]
  \centering
  \includegraphics[width=1\textwidth]{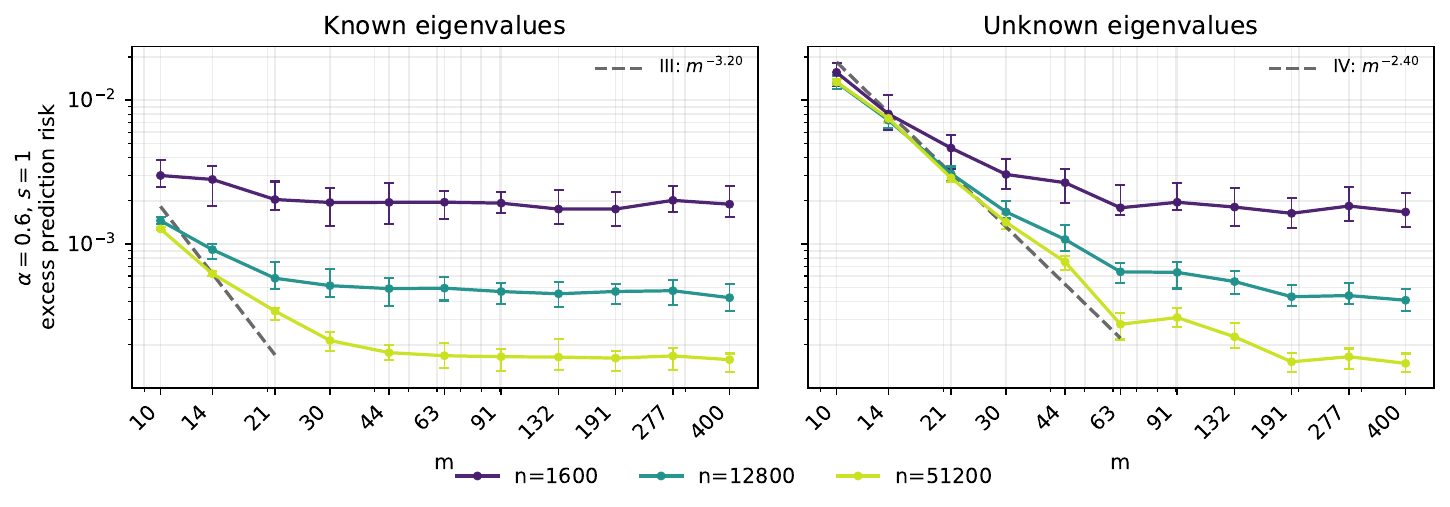}
  \caption[Known versus estimated eigenvalues under common design]{Known versus estimated eigenvalues under common design for \((\alpha,s)=(0.6,1)\).
  The two panels use the same vertical scale.
  }
  \label{fig:common-known-unknown-oracle-loglog-alpha06-s10}
\end{figure}

\Cref{fig:common-known-unknown-oracle-loglog-alpha06-s10} isolates the effect of estimating the eigenvalue sequence under common design.
It compares the oracle estimator, which uses the true eigenvalues, with the corresponding estimator that estimates the eigenvalues from data on the common grid.
The known eigenvalue estimator has smaller prediction error and reaches the dense plateau at a much smaller grid size \(m\).
The plateau levels are nevertheless comparable, as both estimators are then mainly governed by the dense FLR term, which does not depend on the grid resolution.
This contrast is consistent with the additional unknown eigenvalue term in \cref{eq:common-minimax-rate}.

\subsection{Real Data}
\label{subsec:numerics-real-data}

We next compare our estimators with other FLR methods on the wheat data set, a commonly used benchmark in the FLR literature.
The data contain \(100\) near-infrared spectra measured on a common grid of \(701\) wavelengths from \(1100\) nm to \(2500\) nm, with wheat protein content as the scalar response.
Following the experimental setup in \citet{zhou2023_DiscretelyObservedFLR}, we use the first \(80\) samples for training and the last \(20\) samples for testing, and compare performance under sparse observation settings with \(m=10\) and \(m=30\) observed wavelengths per curve.
For the estimator under common design, we use equispaced common subgrids with \(m=10,30\), so no subsampling randomness is introduced.
The estimator under independent design is evaluated under random subsampling within each curve with \(m=10,30\) over \(200\) repetitions.
The results and the published benchmarks from \citet{zhou2023_DiscretelyObservedFLR} are reported in \Cref{tab:wheat-real-data}.
In this evaluation, the estimator under common design has the smallest reported prediction error.
While seemingly contrary to the theory, this finite-sample behavior may reflect smaller hidden constants for equispaced fixed grids than for random subsampling.
The comparison should nevertheless be interpreted with the sampling protocol in mind: the common design estimator uses deterministic subgrids, whereas the independent design estimator and the published benchmarks are evaluated under random subsampling protocols.

\begin{table}[htpb]
  \centering
  \caption{Wheat data prediction error.
  CD for common design and IND for independent design are the two estimators from this paper.
  The benchmarks are quoted from Table 2 in \citet{zhou2023_DiscretelyObservedFLR}:
  Plug-in~\citep{hall2007_MethodologyConvergenceFLR}; IN: approximated least square method with integrated scores;
  PACE~\citep{yao2005_FLRLongitudinal}.
  Numbers in parentheses are standard deviations over repetitions; no standard deviation is reported for CD because the common subgrid is deterministic.
  }
  \label{tab:wheat-real-data}
  \resizebox{\textwidth}{!}{%
  \begin{tabular}{l c c c c c c}
    \hline
    \(m\) & CD & IND & Plug-in & IN & PACE & Zhou et al. \\
    \hline
    \(10\)  & {\(\bm{0.294}\)} & \(0.473\;(0.037)\) & \(0.781\;(0.840)\) & \(0.499\;(0.198)\) & \(1.517\;(9.112)\) & \(0.345\;(0.074)\) \\
    \(30\)  & {\(\bm{0.252}\)} & \(0.396\;(0.044)\) & \(0.513\;(0.925)\) & \(0.398\;(0.183)\) & \(0.363\;(0.244)\) & \(0.278\;(0.066)\) \\
    \hline
  \end{tabular}
  }
\end{table}
   \section{Discussion and Future Directions}
\label{sec:discussion}

This paper establishes sharp minimax rates for functional linear regression with noisy discrete observations under two canonical sampling designs. In the independent design, each functional trajectory is observed on its own random grid; in the common design, all trajectories are observed on a shared fixed grid. These two designs represent substantially different statistical regimes. In the independent design, the randomness of the observation grids provides a form of averaging across trajectories, while in the common design the shared sampling geometry creates additional identifiability and approximation issues. By treating the two settings separately, our results clarify how the within-trajectory resolution \(m\) interacts with the sample size \(n\), the smoothness of the slope function, and the decay of the covariance spectrum.

A central contribution of the paper is to isolate the precise cost of discretization in functional linear regression. Both sampling designs inherit the fully observed benchmark rate, corresponding to the idealized setting in which the entire functional predictor is available without discretization error. In addition, both designs contain a term reflecting the statistical cost of recovering the relevant cross-covariance information from noisy point evaluations. This term captures the effective loss of information caused by observing only \(m\) noisy measurements per trajectory rather than the full curve. Under the common design, however, two further effects appear. One is the deterministic approximation error induced by the fixed grid; the other is the cost of identifying or reliably estimating the unknown covariance eigenvalues on the grid. These additional terms are absent, or substantially attenuated, under the independent design because independent random grids provide more diversified sampling information across subjects.

Although the paper focuses on excess prediction risk, the same arguments extend naturally to the \(L^2\) estimation risk
\[
    \E \|\hat{\beta}-\beta\|_{L^2}^2 .
\]
In Fourier coordinates, prediction risk weights the squared coordinate error by \(\lambda_r\), while \(L^2\) risk removes this weight. Thus high-frequency components become more difficult to estimate under \(L^2\) loss. For \(s>0\), the independent-design estimation rate becomes
\[
    n^{-\frac{2s}{2\alpha+2s+1}}
    +
    (nm)^{-\frac{2s}{4\alpha+2s+1}} .
\]
Under the common design with unknown eigenvalues, the corresponding rate is
\[
    n^{-\frac{2s}{2\alpha+2s+1}}
    +
    (nm)^{-\frac{2s}{4\alpha+2s+1}}
    +
    m^{-2s}
    +
    m^{-4\alpha}.
\]
At the proof level, this change amounts to normalizing the cross-covariance estimation error by \(\lambda_r^2\) rather than \(\lambda_r\), and replacing the prediction-weighted block loss in the van Trees lower bounds by the unweighted block loss. The adaptive procedure also carries over: the eigenvalue screening step is unchanged, while the second thresholding step is based on the empirical \(L^2\) block energy
\[
    \sum_{r\in B_\ell} \frac{|\hat{\gamma}_r|^2}{\check{\lambda}_r^2},
\]
with the corresponding variance proxy adjusted accordingly.

Methodologically, the adaptive estimator is not specific to this FLR model. The main difficulty is that the loss depends jointly on the smoothness of the slope function and on the unknown covariance spectrum. The estimator must therefore identify frequency blocks containing estimable signal while also determining whether the covariance scale on those blocks can be stably inverted. This motivates the two-stage thresholding rule: an eigenvalue threshold first removes unstable blocks, and an empirical energy threshold then selects blocks whose signal exceeds the relevant noise level. This principle may be useful in other inverse or semiparametric problems where the effective loss is governed by an unknown nuisance operator, metric, or covariance structure.

The lower-bound arguments also provide tools beyond the present setting. Under the independent design, the discretization cost is characterized through Fisher information bounds for noisy point evaluations, showing that the loss from observing only finitely many measurements per trajectory is intrinsic rather than estimator-specific. Under the common design, the lower bounds exploit unidentifiability created by the interaction between the covariance function, the slope function, and the shared sampling grid. These mechanisms may be relevant for other functional problems in which discrete sampling and inverse structure interact.

Several limitations and open questions remain. The analysis considers two stylized acquisition schemes: independent random grids and a regular common grid. Extending sharp minimax theory to heterogeneous protocols, such as irregular, nonuniform, or partially deterministic grids, would broaden the scope of the results. Another direction is to relax the fixed trigonometric basis formulation. If the covariance eigenbasis must be estimated from discretely observed data, or if the smoothness scale of \(\beta\) is not aligned with the covariance geometry, additional costs may appear. A complete theory would need to combine the present analysis of the dependence on \(m\) with covariance eigenbasis recovery~\citep{cai2010_NonparametricCovarianceFunction} and the alignment effects between covariance geometry and slope regularity studied in fully observed FLR. Finally, it would be interesting to understand how these discretization phenomena affect downstream goals such as inference, testing, distributed learning, and privacy-constrained functional prediction.

\clearpage
\tableofcontents
\clearpage
\appendix

  \clearpage\section{Additional Notations and Preliminaries}
\label{sec:additional-notations-preliminaries}

This appendix records additional notation and elementary coordinate identities used throughout the appendices.

\subsection{Additional Notation}
\label{subsec:additional-notation}

The letters \(C\) and \(c\) denote positive generic constants whose values may change from line to line.
Unless explicitly stated otherwise, they may depend only on the fixed constants in the model and in the statement being proved.
They do not depend on \(n,m\), frequency indices, block indices, or the particular elements over which a stated uniform bound is taken.
Decorated constants, such as \(C_M\), may also depend on the displayed parameter.
Fixed tuning constants introduced in estimator or event definitions are not generic once chosen.

For a finite index set \(A\) and \(a=(a_r)_{r\in A}\in\R^A\), \(\norm{a}_p\) denotes the usual \(\ell^p(A)\) norm.
In particular,
\[
  \norm{a}_1=\sum_{r\in A} \absx{a_r},
  \qquad
  \norm{a}_2^2=\sum_{r\in A} \absx{a_r}^2,
  \qquad
  \norm{a}_\infty=\max_{r\in A} \absx{a_r}.
\]
For a measurable function \(h\) on \([0,1]\), \(\norm{h}_{L^q([0,1])}\) denotes the usual \(L^q\) norm, with the domain omitted when it is clear.

For a random variable \(W\) and \(\rho>0\), define the Orlicz norm
\[
  \norm{W}_{\psi_\rho}
  \coloneqq
  \inf\left\{
    K>0:
    \E\exp\left[
      \left(\frac{\absx{W}}{K}\right)^\rho
    \right]\le 2
  \right\}.
\]
Finite \(\psi_\rho\) norm is referred to as sub-Weibullity of order \(\rho\).
The cases \(\rho=2\) and \(\rho=1\) are the usual sub-Gaussian and
sub-exponential norms, respectively.
We use the standard moment characterization
\[
  \norm{W}_{\psi_\rho}
  \asymp
  \sup_{q\ge1} q^{-1/\rho} \norm{W}_{L^q},
  \qquad
  \rho>0,
\]
with comparison constants depending only on \(\rho\).
If \(W\in\R^p\), then \(\norm{W}_{\psi_\rho}\) denotes the directional norm
\[
  \norm{W}_{\psi_\rho}
  \coloneqq
  \sup_{u\in\bbS^{p-1}}\norm{\angx{u,W}}_{\psi_\rho},
  \qquad
  \rho>0,
\]
whenever a vector Orlicz norm is used.
For a sub-\(\sigma\)-field \(\mathcal{F}\) and a positive \(\mathcal{F}\)-measurable random variable \(A\), the notation
\[
  \norm{W\mid\mathcal{F}}_{\psi_\rho} \le A
\]
means that
\[
  \E\left[
    \exp\left\{
      \left(\frac{\absx{W}}{A}\right)^\rho
    \right\}
    \middle|
    \mathcal{F}
  \right]\le 2
  \qquad
  \text{almost surely}.
\]

\subsection{Fourier Coordinate Calculations}
\label{subsec:fourier-coordinate-calculations}

This subsection records the elementary identities regarding the Fourier basis used throughout the main paper and the appendices.

\paragraph*{Basis conventions.}
We use the standard trigonometric basis of \(L^2([0,1])\):
\[
  e_0(t)\coloneqq 1,\qquad
  e_r(t)\coloneqq \sqrt{2}\cos(2\pi r t),\qquad
  e_{-r}(t)\coloneqq \sqrt{2}\sin(2\pi r t),
  \quad r\ge1.
\]
If \(f=\sum_{r\in\bbZ} a_r e_r\) and \(h=\sum_{r\in\bbZ} b_r e_r\), then the \(L^2\) pairing satisfies
\[
  \int_0^1 f(t)h(t)\dd t
  =
  \sum_{r\in\bbZ} a_r b_r.
\]
Indeed, this follows from \(\int_0^1 e_r(t)e_s(t)\dd t=\mathbf{1}\{r=s\}\).
Moreover, let $H^s$ denote the periodic Sobolev space of order \(s\) on the unit interval.
Then, it is well known that the Sobolev norm of \(f\) is equivalent to the weighted \(\ell^2\) norm of its Fourier coefficients:
\[
  \norm{f}_{H^s}^2
  \asymp
  \sum_{r\in\bbZ}(1+\absx{r})^{2s} a_r^2.
\]

We recall that the covariance operator \(\Sigma\) is diagonalized by the trigonometric basis with eigenvalues \((\lambda_r)_{r\in\bbZ}\), namely \( \Sigma e_r = \lambda_r e_r \).
Hence, the Karhunen--Loève expansion of the predictor trajectories \(X_i\) is
\[
  X_i(t)=\sum_{r\in\bbZ} x_{ir} e_r(t),\quad x_{ir} \sim \mathcal{N}(0,\lambda_r),
\]
where the coordinates \((x_{ir})_{r\in\bbZ}\) are independent.

\paragraph*{Regression and risk identities.}
With the Fourier expansions of \(X_i\) and \(\beta\), the regression signal can be expressed as
\[
  Y_i
  =
  \int_0^1 X_i(t)\beta(t)\dd t + \varepsilon_i
  =
  \sum_{r \in\bbZ} x_{ir} \theta_r + \varepsilon_i.
\]
Since \(\varepsilon_i\) is centered and independent of \(X_i\), the cross-covariance function is defined as
\( g \coloneqq \Sigma \beta \).
With the definition of \( \Sigma \), we have
\begin{align*}
  g(t) = \int_0^1 K(t,s)\beta(s)\dd s = \E X_i(t) \int_0^1 X_i(s)\beta(s)\dd s = \E[Y_i X_i(t)].
\end{align*}
On the other hand, since \(\Sigma e_r = \lambda_r e_r\), the Fourier expansion of \(g\) is
\begin{equation}
  g = \sum_{r\in\bbZ} \gamma_r e_r = \sum_{r\in\bbZ} \lambda_r \theta_r e_r(t),\quad \gamma_r = \lambda_r \theta_r.
  \label{eq:cross-covariance-derivation}
\end{equation}

The prediction risk can also be expressed in terms of the Fourier expansion.
If \(\hat{\beta}=\sum_{r\in\bbZ} \hat{\theta}_r e_r\), then, conditionally on \(\hat{\beta}\),
\[
  \E\left[
  \left.
  \left(\int_0^1 X_\star(t)(\hat{\beta}-\beta)(t)\dd t\right)^2
  \right|\hat{\beta}
  \right]
  =
  \sum_{r\in\bbZ} \lambda_r (\hat{\theta}_r-\theta_r)^2.
\]
The expectation on the left is over the independent test trajectory \(X_\star\), with \(\hat{\beta}\) held fixed.
Averaging over the training data that produce \(\hat{\beta}\) gives the expected risk formula used in the main text.
Equivalently, if \(\hat{\gamma}_r=\lambda_r \hat{\theta}_r\), then
\begin{equation}
  \label{eq:cd-g-risk-identity}
  \E\caR(\hat{\beta};\theta,\lambda)
  =
  \sum_{r\in\bbZ} \lambda_r \E(\hat{\theta}_r-\theta_r)^2
  =
  \sum_{r\in\bbZ} \frac{\E(\hat{\gamma}_r-\gamma_r)^2}{\lambda_r}.
\end{equation}
If \(\lambda\in\caL_\alpha(c_\lambda,C_\lambda)\), then the same coefficient relation gives
\begin{equation}
  \label{eq:cd-g-sobolev-equivalence}
  \norm{g}_{H^{s+2\alpha}}^2
  \asymp
  \sum_{r\in\bbZ}(1+\absx{r})^{2(s+2\alpha)}\gamma_r^2
  \asymp
  \sum_{r\in\bbZ}(1+\absx{r})^{2s}\theta_r^2.
\end{equation}

\paragraph*{Random-design sampling identities.}
The independent-design analysis repeatedly uses the following consequences of uniform sampling.
If \(T\sim\mathrm{Unif}[0,1]\) is independent of \((X_i,Y_i,\delta)\), \(\delta\) is centered and independent of \((X_i,Y_i,T)\), and \(Z_i(T)=X_i(T)+\delta\), then
\begin{equation}
  \label{eq:random-design-score-projection}
  \E\left[
  Z_i(T)e_r(T)
  \mid X_i
  \right]
  =
  x_{ir}.
\end{equation}
Moreover, the conditional second moment is
\begin{equation}
  \label{eq:random-design-second-moment}
  \E\left[
  Z_i(T)^2
  \mid X_i
  \right]
  =
  \sum_{\ell\in\bbZ} x_{i\ell}^2+\sigma_\delta^2.
\end{equation}
Finally, the random-design cross-covariance score is unbiased:
\begin{equation}
  \label{eq:random-design-gamma}
  \E\left[
  Y_i Z_i(T)e_r(T)
  \right]
  =
  \gamma_r.
\end{equation}
   \clearpage\section{Oracle Upper Bound Under Independent Design}
\label{sec:upper-indep-oracle}

This section proves the baseline independent-design oracle upper bound.
The smoothness indices \((\alpha,s)\), the eigenvalue sequence \((\lambda_r)_{r\in\bbZ}\), and the truncation level are treated as known.
The detailed moment computations are also used by the fully adaptive construction in
\cref{sec:upper-indep-adaptive}.
We recall the oracle construction from the main text, \cref{subsec:indep-oracle-upper}, and restate the relevant quantities to fix notation for the proof.

\subsection[Cross-Covariance Estimation]{Cross-Covariance Coefficient Estimation}
\label{subsec:indep-oracle-gamma-estimation}

By the coordinate identities in \cref{eq:cross-covariance-derivation}, the cross-covariance function has Fourier coefficients \(\gamma_r=\lambda_r \theta_r\).
The cross-covariance coefficient estimator is
\begin{equation}
  \hat{\gamma}_r
  \coloneqq
  \frac{1}{nm}\sum_{i=1}^n \sum_{j=1}^m
  Y_i Z_{ij} e_r(t_{ij}),
  \qquad r\in\bbZ.
  \label{eq:gk-estimator}
\end{equation}

\begin{lemma}[Moment bounds for \(\hat{\gamma}_r\)]
\label{lem:indep-gamma-estimator-moments}
For each \(r \in \bbZ\),
\[
  \E[\hat{\gamma}_r]
  =
  \gamma_r
  =
  \lambda_r \theta_r.
\]
If \(\alpha > 1/2\), then there exists \(C\), independent of \(\sigma_\delta\), such that for all \(r \in \bbZ\),
\begin{equation}
  \E\absx{\hat{\gamma}_r-\gamma_r}^2
  \le
  \frac{C}{n}\left(\lambda_r + \frac{1+\sigma_\delta^2}{m}\right).
  \label{eq:var-gk-bound}
\end{equation}
\end{lemma}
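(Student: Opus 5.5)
Unbiasedness follows from the random-design identity \cref{eq:random-design-gamma}. Each summand \(Y_i Z_{ij} e_r(t_{ij})\) has the same structure as \(Y_i Z_i(T)e_r(T)\) there, with \(T=t_{ij}\) uniform and independent of everything else. Each therefore has mean \(\gamma_r\), and averaging over \(i,j\) gives \(\E\hat{\gamma}_r=\gamma_r\).

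For the variance, independence across subjects gives \(\Var(\hat{\gamma}_r)=n^{-1}\Var(Y_1\bar{Z}_{1r})\le n^{-1}\E[Y_1^2\bar{Z}_{1r}^2]\). So it suffices to show
\[
\E[Y_1^2\bar{Z}_{1r}^2]\le C\Bigl(\lambda_r+\frac{1+\sigma_\delta^2}{m}\Bigr).
\]
To do this, condition on \(\mathcal{F}\coloneqq\sigma(X_1,\varepsilon_1)\), so that \(Y_1\) is \(\mathcal{F}\)-measurable. Given \(\mathcal{F}\), the terms \(W_j\coloneqq Z_{1j}e_r(t_{1j})\) are i.i.d.\ with conditional mean \(x_{1r}\) by \cref{eq:random-design-score-projection}. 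Their conditional second moment is at most \(2(\sum_\ell x_{1\ell}^2+\sigma_\delta^2)\): use \(e_r^2\le2\) together with \cref{eq:random-design-second-moment}. Hence
\[
\E[\bar{Z}_{1r}^2\mid\mathcal{F}]\le x_{1r}^2+\frac{2}{m}\Bigl(\normx{X_1}_{L^2}^2+\sigma_\delta^2\Bigr).
\]
Multiplying by \(Y_1^2\) and taking expectations leaves two terms. The first is \(\E[Y_1^2x_{1r}^2]\), and the second is \(\frac{2}{m}\E[Y_1^2(\normx{X_1}_{L^2}^2+\sigma_\delta^2)]\).

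For the first term, write \(Y_1=\theta_r x_{1r}+U\) with \(U=\sum_{\ell\ne r}\theta_\ell x_{1\ell}+\varepsilon_1\) independent of \(x_{1r}\). Gaussian moments then give
\[
\E[Y_1^2x_{1r}^2]=3\theta_r^2\lambda_r^2+\lambda_r\E U^2\le 3\lambda_r\Bigl(\sum_\ell\lambda_\ell\theta_\ell^2+\sigma_\varepsilon^2\Bigr),
\]
and \(\sum_\ell\lambda_\ell\theta_\ell^2\le C_\lambda R_0^2\) uniformly over the classes. For the second term, apply Cauchy--Schwarz. Then \(\E Y_1^4\lesssim(\E Y_1^2)^2\), since \(Y_1\) is Gaussian, and \(\E\normx{X_1}_{L^2}^4\lesssim(\sum_\ell\lambda_\ell)^2<\infty\) because \(\alpha>1/2\) makes \(\sum_\ell\lambda_\ell\) converge. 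This bounds the second term by \(C(1+\sigma_\delta^2)/m\), with \(C\) depending only on \(C_\lambda,R_0,\sigma_\varepsilon,\alpha\). Since \(\sigma_\delta^2\) appears only linearly and is simply factored out, \(C\) does not depend on \(\sigma_\delta\).

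The main obstacle is organizing the conditional variance decomposition so that \(\lambda_r\) appears without a \(1/m\) factor while the noise and the energy of the other frequencies carry the \(1/m\) factor. Conditioning on \((X_1,\varepsilon_1)\) before averaging over the design points and measurement errors handles this cleanly. The remaining steps are routine Gaussian moment bounds.
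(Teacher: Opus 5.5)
Your proof is correct and takes essentially the same route as the paper. You condition on $(X_1,\varepsilon_1)$, isolate the structural term $\E[Y_1^2x_{1r}^2]\lesssim\lambda_r$, and bound the design/noise averaging term by $C(1+\sigma_\delta^2)/m$ using \cref{eq:random-design-second-moment}, Cauchy--Schwarz, and $\sum_\ell\lambda_\ell<\infty$. The remaining differences are cosmetic: you bound $\Var(Y_1\bar{Z}_{1r})$ by the raw second moment before conditioning instead of applying the total-variance formula, you compute $\E[Y_1^2x_{1r}^2]$ exactly rather than through fourth moments, and you keep the factor $2$ from $e_r^2\le 2$, which the paper's display silently drops.
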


\begin{proof}
  The unbiasedness follows from \cref{eq:random-design-gamma} applied to each point evaluation, followed by averaging over \((i,j)\).
  For the variance bound, write
  \[
    \bar{A}_{ir}
    \coloneqq
    \frac{1}{m}\sum_{j=1}^m
    Y_i Z_{ij} e_r(t_{ij}),
    \qquad
    \hat{\gamma}_r = \frac{1}{n}\sum_{i=1}^n \bar{A}_{ir}.
  \]
  By independence across subjects and the unbiasedness just proved,
  \[
    \E\absx{\hat{\gamma}_r-\gamma_r}^2
    = \frac{1}{n}\E\absx{\bar{A}_{1r}-\gamma_r}^2.
  \]
  We use the total-variance decomposition
  \[
    \Var(\bar{A})
    =
    \E[\Var(\bar{A} \mid x,\varepsilon)]
    +
    \Var(\E[\bar{A} \mid x,\varepsilon]).
  \]

  For the structural term, \cref{eq:random-design-score-projection} gives
  \[
    \E[\bar{A} \mid x,\varepsilon] = Y x_r,
  \]
  and hence
  \[
    \Var(\E[\bar{A} \mid x,\varepsilon])
    =
    \Var(Y x_r)
    \le
    \E\absx{Y}^2 \absx{x_r}^2.
  \]
  Also,
  \[
    \E[Y^2]
    =
    \sum_{\ell \in \bbZ} \lambda_\ell \absx{\theta_\ell}^2
    + \sigma_\varepsilon^2
    \le
    C(\alpha,s,R_0) + \sigma_\varepsilon^2,
  \]
  and Gaussian fourth moments imply
  \[
    \E\absx{Y}^2 \absx{x_r}^2
    \le
    (\E Y^4)^{1/2}(\E\absx{x_r}^4)^{1/2}
    \lesssim
    \lambda_r,
  \]
  so
  \[
    \Var(Y x_r)\le C\lambda_r.
  \]

  For the design-averaging term, conditioned on \((x,\varepsilon)\), the variables \(\{(t_j,\delta_j)\}_{j=1}^m\) are i.i.d. and \(Y\) is fixed.
  Therefore
  \[
    \begin{aligned}
      \Var(\bar{A}\mid x,\varepsilon)
      &=
      \frac{1}{m}\Var(Y Ze_r(t)\mid x,\varepsilon)\\
      \le
      \frac{Y^2}{m}\E[\absx{Z}^2 \mid x].
    \end{aligned}
  \]
  By \cref{eq:random-design-second-moment}, \(\E[Z^2 \mid x]=\sum_{\ell\in\bbZ} x_\ell^2+\sigma_\delta^2\).
  Using \(\sum_\ell \lambda_\ell < \infty\) and Gaussian fourth moments,
  \[
    \E[\Var(\bar{A}\mid x,\varepsilon)]
    \le
    \frac{C(1+\sigma_\delta^2)}{m}.
  \]

  Combining the structural and design-averaging bounds gives
  \[
    \Var(\bar{A}) \le C\lambda_r + \frac{C(1+\sigma_\delta^2)}{m},
  \]
  Dividing by \(n\) gives \cref{eq:var-gk-bound}.
\end{proof}

\subsection[Proof of the Oracle Independent-Design Upper Bound]{Proof of \texorpdfstring{\cref{thm:indep-oracle-upper}}{the oracle independent-design upper bound}}
\label{subsec:proof-indep-oracle-upper}

For an arbitrary cutoff \(d\ge1\), the estimator in \cref{eq:oracle-estimator} has coefficient form
\begin{equation}
  \tilde{\theta}_r
  \coloneqq
  \frac{\hat{\gamma}_r}{\lambda_r}\mathbf{1}\{\absx{r} \le d\},
  \qquad
  \tilde{\beta}
  =
  \sum_{r \in \bbZ} \tilde{\theta}_r e_r.
  \label{eq:oracle-theta-estimator}
\end{equation}
We first show the following bound, keeping the fixed noise variance explicit:
\begin{equation}
  \sup_{\lambda \in \caL_\alpha(c_\lambda,C_\lambda)}
  \sup_{\theta \in \Theta_s(R_0)}
  \E\caR(\tilde{\beta}; \theta, \lambda)
  \lesssim
  \frac{d}{n}
  +
  \frac{(1+\sigma_\delta^2)d^{2\alpha+1}}{nm}
  +
  d^{-(2\alpha+2s)}.
  \label{eq:oracle-risk-upper}
\end{equation}
Decompose the risk as
\[
  \E\caR(\tilde{\beta}; \theta, \lambda)
  =
  \sum_{\absx{r} \le d}\lambda_r \E\absx{\tilde{\theta}_r-\theta_r}^2
  +
  \sum_{\absx{r} > d}\lambda_r \absx{\theta_r}^2.
\]
The tail term is bounded by \cref{eq:upper-two-sided-tail-bias}.
For the estimation part, the unbiasedness in \cref{lem:indep-gamma-estimator-moments} gives
\[
  \sum_{\absx{r} \le d}\lambda_r \E\absx{\tilde{\theta}_r-\theta_r}^2
  =
  \sum_{\absx{r} \le d}
  \frac{\E\absx{\hat{\gamma}_r-\gamma_r}^2}{\lambda_r}.
\]
Using \cref{lem:indep-gamma-estimator-moments} and \(\lambda_r \ge c_\lambda(1+\absx{r})^{-2\alpha}\),
\[
  \frac{\E\absx{\hat{\gamma}_r-\gamma_r}^2}{\lambda_r}
  \le
  \frac{C}{n}\left(1 + \frac{1+\sigma_\delta^2}{m\lambda_r}\right)
  \le
  \frac{C}{n}\left(
  1 + \frac{(1+\sigma_\delta^2)(1+\absx{r})^{2\alpha}}{m}
  \right).
\]
Summing over \(\absx{r} \le d\),
\[
  \begin{aligned}
    \sum_{\absx{r} \le d}
    \frac{\E\absx{\hat{\gamma}_r-\gamma_r}^2}{\lambda_r}
    &\le
    \frac{C}{n}\left(
    d + \frac{1+\sigma_\delta^2}{m}\sum_{k \le d} k^{2\alpha}
    \right)\\
    &\le
    C\left(
    \frac{d}{n} + \frac{(1+\sigma_\delta^2)d^{2\alpha+1}}{nm}
    \right).
  \end{aligned}
\]
Combining the estimation bound with the tail bound gives \cref{eq:oracle-risk-upper}.

Let \(\tilde{\beta}^*\) be \(\tilde{\beta}\) constructed with \(d=d^*\), where
\[
  d^*
  \asymp
  n^{\frac{1}{2\alpha+2s+1}}
  \wedge
  \left(\frac{nm}{1+\sigma_\delta^2}\right)^{\frac{1}{4\alpha+2s+1}}.
\]
Substituting this choice into \cref{eq:oracle-risk-upper} gives
\[
  \sup_{\lambda \in \caL_\alpha(c_\lambda,C_\lambda)}
  \sup_{\theta \in \Theta_s(R_0)}
  \E\caR(\tilde{\beta}^*; \theta, \lambda)
  \lesssim
  n^{-\frac{2\alpha+2s}{2\alpha+2s+1}}
  +
  (1+\sigma_\delta^2)^{\frac{2\alpha+2s}{4\alpha+2s+1}}
  (nm)^{-\frac{2\alpha+2s}{4\alpha+2s+1}},
\]
yielding the desired result.
   \clearpage\section{Adaptive Upper Bound Under Independent Design}
\label{sec:upper-indep-adaptive}

This section proves the fully adaptive independent-design upper bound, \cref{thm:indep-adaptive-upper}.
The estimator does not know \((\lambda_r)_{r\in\bbZ}\), \(\alpha\), or \(s\).
The present proof gives the upper bound uniformly over compact subsets of
\(\{\alpha>1/2,\ s\ge0\}\).
Throughout the section, fix constants
\[
  \underline{\alpha}\le\bar{\alpha},
  \qquad
  \underline{s}\le\bar{s},
  \qquad
  \underline{\alpha}>1/2,
  \qquad
  \underline{s}\ge0,
\]
and work uniformly over
\[
  (\alpha,s)
  \in
  [\underline{\alpha},\bar{\alpha}]
  \times
  [\underline{s},\bar{s}].
\]
All implicit constants in this section may depend on these four endpoints.
The proof separates the oracle block-selection analysis with known \(\lambda\) from the pilot layer that controls the plug-in eigenvalue estimates \(\check{\lambda}_r\).
Auxiliary results shared across designs are collected in \cref{sec:auxiliary}; independent-design estimates are stated in this section.

\noindent\textbf{Proof roadmap.}
The proof is organized around the good pilot event \(\caE_n\).
On this event, blocks satisfying \(\caEelig{\ell}\) have stable denominators, and all blocks up to the oracle cutoff satisfy \(\caEelig{\ell}\).
The local block argument then compares the adaptive selector with the oracle block selector and leaves only an eigenvalue plug-in remainder.
The final aggregation sums the oracle variance up to the oracle cutoff, the Sobolev bias beyond it, the high-frequency tail beyond \(K_n\), the plug-in remainder, the exponential thresholding remainder, and the contribution of \((\caE_n)^c\).

\subsection{Setup}
\label{subsec:adaptive-setup}

The construction uses only the boundary constraints \(\alpha>1/2\)
and \(s\ge0\); the fixed compact rectangle above is used only for uniform
constants.
We recall the adaptive estimator from the main text, \cref{subsec:indep-adaptive-upper}, and restate it to fix notation for the proof.

Split the subjects into two disjoint subsets
\[
  \{1,\dots,n\}
  =
  I_\gamma \cup I_\lambda,
  \qquad
  n_\gamma \coloneqq \absx{I_\gamma},
  \qquad
  n_\lambda \coloneqq \absx{I_\lambda},
  \qquad
  n_\gamma \asymp n_\lambda \asymp n.
\]
The regression group \(I_\gamma\) estimates the cross-covariance coefficients and is reused for both block selection and the retained coefficients, while \(I_\lambda\) is reserved for the pilot covariance layer.
The analysis treats the oracle block selector with known \(\lambda\) and then controls the plug-in error from \(\check{\lambda}_r\).

Estimate frequencies only up to the dyadic cutoff
\begin{equation}
  L_n
  \coloneqq
  \left\lfloor
    \log_2 \left(
             C_L \min\left\{
                       n^{1/2},
      (nm)^{1/3}
    \right\}
  \right)
  \right\rfloor,
  \qquad
  K_n \coloneqq 2^{L_n}.
  \label{eq:indep-pilot-bandwidth}
\end{equation}
Here \(C_L>0\) is a fixed numerical constant.
Then
\[
  K_n
  \asymp
  \min\left\{n^{1/2},(nm)^{1/3}\right\}.
\]
Write
\[
  \bar{Z}_{ik}
  \coloneqq
  \frac{1}{m}\sum_{j=1}^m Z_{ij} e_k(t_{ij}).
\]
For \(\absx{k} \le K_n\), the cross-covariance estimator is
\begin{equation}
  \hat{\gamma}_k
  \coloneqq
  \frac{1}{n_\gamma}\sum_{i \in I_\gamma} Y_i \bar{Z}_{ik}.
  \label{eq:indep-adaptive-gk}
\end{equation}

For each \(i \in I_\lambda\), the observation indices are split into two disjoint
sets
\[
  J_i^{(1)} \cup J_i^{(2)} = \{1,\dots,m\},
  \qquad
  m_i^{(a)} \coloneqq \absx{J_i^{(a)}} \asymp m.
\]
The corresponding pilot half-scores are
\begin{equation}
  U_{ir}^{(a)}
  \coloneqq
  \frac{1}{m_i^{(a)}}\sum_{j \in J_i^{(a)}}
  Z_{ij} e_r(t_{ij}),
  \qquad
  r\in\bbZ,\quad a=1,2,
  \label{eq:indep-pilot-half-scores}
\end{equation}
The pilot covariance estimator is
\begin{equation}
  \check{\lambda}_r
  \coloneqq
  \frac{1}{n_\lambda}\sum_{i \in I_\lambda}
  U_{ir}^{(1)} U_{ir}^{(2)}.
  \label{eq:indep-pilot-estimator}
\end{equation}
The pilot floor is
\begin{equation}
  \zeta_{n,m}
  \coloneqq
  M_0 m^{-1} \sqrt{\frac{\log(nm)}{n_\lambda}},
  \label{eq:indep-pilot-floor}
\end{equation}
where \(M_0 > 0\) is a large constant.
Use the dyadic blocks
\[
  B_0 = \{0,\pm 1\},
  \qquad
  B_\ell = \{r:2^{\ell-1}<\absx{r}\le 2^\ell\},
  \qquad
  \ell \ge 1.
\]
The active blocks are
\[
  B_0,\dots,B_{L_n}.
\]
Frequencies \(\absx{k} > K_n\) are set to zero and handled through the separate tail \(\sum_{\absx{k} > K_n}\lambda_k \absx{\theta_k}^2\).

For each active block \(B_\ell\), the empirical prediction energy and plug-in variance proxy are
\begin{equation}
  \hat{S}_\ell
  \coloneqq
  \sum_{k \in B_\ell}
  \frac{\absx{\hat{\gamma}_k}^2}{\check{\lambda}_k},
  \qquad
  \hat{V}_\ell
  \coloneqq
  \sum_{k \in B_\ell}
  \frac{C_V}{n}\left(1+\frac{1}{m\check{\lambda}_k}\right),
  \label{eq:indep-adaptive-block-statistics}
\end{equation}
Define the block events
\begin{equation}
  \caEelig{\ell}
  \coloneqq
  \left\{
    \min_{k \in B_\ell} \check{\lambda}_k \ge 2\zeta_{n,m}
  \right\},
  \qquad
  \caEthr{\ell}
  \coloneqq
  \left\{\hat{S}_\ell \ge \hat{V}_\ell \right\},
  \qquad
  \caEkeep{\ell}
  \coloneqq
  \caEelig{\ell} \cap \caEthr{\ell}.
  \label{eq:indep-adaptive-events}
\end{equation}
Here \(C_V>0\) is a fixed, sufficiently large constant used in the variance bound and the selection rule.
The selected coefficients are
\begin{equation}
  \hat{\theta}_k
  \coloneqq
  \frac{\hat{\gamma}_k}{\check{\lambda}_k}\mathbf{1}_{\caEkeep{\ell}},
  \qquad
  k \in B_\ell,\quad 0\le\ell\le L_n.
  \label{eq:indep-adaptive-theta}
\end{equation}
Fix a constant \(R\ge R_0\) and let \(\Pi_R\) denote the Euclidean projection onto the closed Euclidean ball of radius \(R\):
\begin{equation}
  \Pi_R(u)
  \coloneqq
  \begin{cases}
    u,
    & \norm{u}_2 \le R,\\[1mm]
    R\dfrac{u}{\norm{u}_2},
    & \norm{u}_2 > R.
  \end{cases}
  \label{eq:indep-adaptive-projection-map}
\end{equation}
For each active block, write
\[
  \hat{\theta}_{B_\ell}
  \coloneqq
  (\hat{\theta}_k)_{k \in B_\ell}.
\]
The blockwise clipped vector is
\begin{equation}
  \bar{\theta}_{B_\ell}
  \coloneqq
  \Pi_R(\hat{\theta}_{B_\ell}),
  \qquad
  0 \le \ell \le L_n,
  \label{eq:indep-adaptive-block-clipping}
\end{equation}
and let \(\bar{\theta}_k\) denote the corresponding coordinates.
The final estimator is
\begin{equation}
  \hat{\beta}(t)
  \coloneqq
  \sum_{\absx{k}\le K_n}\bar{\theta}_k e_k(t).
  \label{eq:indep-adaptive-beta}
\end{equation}
Finally, for each active block, set the oracle signal energy
\begin{equation}
  \Theta_\ell
  \coloneqq
  \sum_{k \in B_\ell} \lambda_k \absx{\theta_k}^2.
  \label{eq:indep-block-signal-energy}
\end{equation}

\subsection{Eigenvalue estimation}
\label{subsec:pilot-estimation-eigenvalues}

In this section we focus on the pilot covariance estimator \(\check{\lambda}_k\) from \cref{eq:indep-pilot-estimator},
where the pilot half-scores \(U_{ik}^{(a)}\) are defined in \cref{eq:indep-pilot-half-scores}.

\begin{lemma}
  \label{lem:indep-pilot-moments}
  For every \(k \in \bbZ\),
  \begin{equation}
    \E(\check{\lambda}_k - \lambda_k)^2
    \le
    \frac{C}{n_\lambda}\left(
                         \lambda_k^2 + \frac{\lambda_k}{m} + \frac{1}{m^2}
    \right).
    \label{eq:indep-pilot-mse}
  \end{equation}
\end{lemma}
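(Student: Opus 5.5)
We show that \(\check{\lambda}_k\) is unbiased, so that its mean squared error equals \(n_\lambda^{-1}\Var(U_{1k}^{(1)}U_{1k}^{(2)})\), and then bound the second moment of a single product by conditioning on the latent curve. Write \(U^{(a)}=U_{1k}^{(a)}\) for one subject \(i\in I_\lambda\), and let \(x=(x_{1r})_r\) be its Fourier scores.

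Conditionally on \(x\), the two half-scores are built from disjoint sets of i.i.d.\ pairs \((t_{1j},\delta_{1j})\). Hence they are conditionally independent. By \cref{eq:random-design-score-projection}, each has conditional mean \(x_k\), so
\[
  \E[U^{(1)}U^{(2)}\mid x]=x_k^2,
  \qquad
  \E[U^{(1)}U^{(2)}]=\lambda_k .
\]
Since the subjects in \(I_\lambda\) are i.i.d.,
\[
  \E(\check{\lambda}_k-\lambda_k)^2=\frac{1}{n_\lambda}\Var(U^{(1)}U^{(2)})\le \frac{1}{n_\lambda}\E\bigl[(U^{(1)})^2(U^{(2)})^2\bigr].
\]

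For the second moment we use conditional independence once more:
\[
  \E\bigl[(U^{(1)})^2(U^{(2)})^2\mid x\bigr]=\prod_{a=1,2}\E\bigl[(U^{(a)})^2\mid x\bigr].
\]
For each \(a\), decompose
\[
  \E\bigl[(U^{(a)})^2\mid x\bigr]=x_k^2+\frac{1}{m^{(a)}}\Var\bigl(Z e_k(t)\mid x\bigr).
\]
Since \(\absx{e_k}\le\sqrt2\), \cref{eq:random-design-second-moment} gives
\[
  \Var\bigl(Z e_k(t)\mid x\bigr)\le 2\Bigl(\sum_\ell x_\ell^2+\sigma_\delta^2\Bigr)\eqqcolon 2Q .
\]
With \(m^{(a)}\asymp m\), this yields
\[
  \E\bigl[(U^{(1)})^2(U^{(2)})^2\mid x\bigr]\le \Bigl(x_k^2+\frac{CQ}{m}\Bigr)^2\le 2x_k^4+\frac{C x_k^2 Q}{m}+\frac{CQ^2}{m^2}.
\]

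It remains to take expectations over the Gaussian scores:
\begin{itemize}
  \item \(\E x_k^4=3\lambda_k^2\).
  \item \(\E Q^2\le C\) because \(\sum_\ell\lambda_\ell<\infty\) (as \(\alpha>1/2\)), using Gaussian fourth moments of \(\sum_\ell x_\ell^2\).
  \item For the cross term, write \(x_k^2Q=x_k^4+x_k^2\bigl(\sum_{\ell\ne k}x_\ell^2+\sigma_\delta^2\bigr)\). Independence of \(x_k\) from the remaining scores gives
  \[
    \E x_k^2Q\le 3\lambda_k^2+\lambda_k\Bigl(\sum_\ell\lambda_\ell+\sigma_\delta^2\Bigr)\le C\lambda_k,
  \]
  since \(\lambda_k\le C_\lambda\) is bounded.
\end{itemize}
Combining these three bounds gives \(\E\bigl[(U^{(1)})^2(U^{(2)})^2\bigr]\le C(\lambda_k^2+\lambda_k/m+1/m^2)\), and dividing by \(n_\lambda\) yields \cref{eq:indep-pilot-mse}.

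The main care point is the cross term: it must produce \(\lambda_k/m\) rather than merely \(1/m\). This is why we split off \(x_k\) and use its independence from the other scores, rather than applying Cauchy--Schwarz crudely. Everything else is routine.
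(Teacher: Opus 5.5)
Your proof is correct and follows essentially the same route as the paper: you condition on the latent curve and use conditional independence of the two half-scores together with the bound $\E[(U^{(a)}-x_k)^2\mid X]\le Cm^{-1}(\sum_\ell x_\ell^2+\sigma_\delta^2)$. You also use the independence of $x_k$ from the remaining scores, so that the cross term contributes $\lambda_k/m$ rather than $1/m$. The only difference is organizational: you bound the uncentered moment $\E[(U^{(1)}U^{(2)})^2]$ through the factorized conditional second moments, whereas the paper expands $U^{(1)}U^{(2)}-\lambda_k$ into the four pieces $x_k^2-\lambda_k$, $x_k\eta^{(1)}$, $x_k\eta^{(2)}$, $\eta^{(1)}\eta^{(2)}$ and bounds each separately.
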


\begin{proof}
  Write
  \begin{equation}
    U_{ik}^{(1)} = x_{ik} + \eta_{ik}^{(1)},
    \qquad
    U_{ik}^{(2)} = x_{ik} + \eta_{ik}^{(2)}.
    \label{eq:indep-pilot-decomposition}
  \end{equation}
  Because \(J_i^{(1)}\) and \(J_i^{(2)}\) are disjoint and the design/noise
  pairs are independent across \(j\),
  \[
    \E[U_{ik}^{(1)} \mid X_i] = \E[U_{ik}^{(2)} \mid X_i] = x_{ik},
    \qquad
    \E[\eta_{ik}^{(a)} \mid X_i] = 0.
  \]
  By \cref{assum:indep-design}, conditional on \(X_i\), the pairs
  \(\{(t_{ij},\delta_{ij}) : j \in J_i^{(a)}\}\) are independent and
  \(t_{ij} \sim \mathrm{Unif}[0,1]\). Hence
  \begin{align*}
    \E\bigl[\absx{\eta_{ik}^{(a)}}^2 \mid X_i \bigr]
    &=
    \frac{1}{(m_i^{(a)})^2}
    \sum_{j \in J_i^{(a)}}
    \Var\bigl(Z_{ij} e_k(t_{ij}) \mid X_i \bigr)
    \\
    &\le
    \frac{C}{m_i^{(a)}}\E\bigl[\absx{Z_{ij}}^2 \mid X_i \bigr]
    \\
    &\le
    \frac{C}{m}\E\bigl[\absx{Z_{ij}}^2 \mid X_i \bigr].
  \end{align*}
  Since \(Z_{ij}=X_i(t_{ij})+\delta_{ij}\), boundedness of the basis gives
  \[
    \E\bigl[\absx{Z_{ij}}^2 \mid X_i \bigr]
    \le
    2\int_0^1 \absx{X_i(t)}^2 \dd t + 2\sigma_\delta^2
    =
    2\sum_{\ell \in \bbZ} \absx{x_{i\ell}}^2 + 2\sigma_\delta^2.
  \]
  Therefore
  \begin{equation}
    \E\bigl[\absx{\eta_{ik}^{(a)}}^2 \mid X_i \bigr]
    \le
    \frac{C}{m}
    \left(
      \sum_{\ell \in \bbZ} \absx{x_{i\ell}}^2 + \sigma_\delta^2
    \right),
    \label{eq:indep-pilot-eta-second-moment}
  \end{equation}
  and in particular \(\E\absx{\eta_{ik}^{(a)}}^2 \le C/m\).
  Therefore
  \[
    \E\bigl[U_{ik}^{(1)} U_{ik}^{(2)} \mid X_i \bigr] = x_{ik}^2,
  \]
  hence
  \[
    \E\bigl[U_{ik}^{(1)} U_{ik}^{(2)} \bigr]
    = \E x_{ik}^2 = \lambda_k,
  \]
  so the summands in \(\check{\lambda}_k-\lambda_k\) are centered.

  Expanding,
  \begin{equation}
    U_{ik}^{(1)} U_{ik}^{(2)} - \lambda_k
    =
    (x_{ik}^2 - \lambda_k)
    +
    x_{ik} \eta_{ik}^{(1)}
    +
    x_{ik} \eta_{ik}^{(2)}
    +
    \eta_{ik}^{(1)} \eta_{ik}^{(2)}.
    \label{eq:indep-pilot-expansion}
  \end{equation}
  We bound the four second moments separately. Since \(x_{ik}\) is
  Gaussian with \(\E x_{ik}^2=\lambda_k\),
  \begin{equation}
    \E(x_{ik}^2-\lambda_k)^2 \le C\lambda_k^2.
    \label{eq:indep-pilot-gaussian-square}
  \end{equation}
  Next, using \cref{eq:indep-pilot-eta-second-moment},
  \begin{equation}
    \begin{aligned}
      \E\bigl[\absx{x_{ik}}^2 \absx{\eta_{ik}^{(1)}}^2 \bigr]
      &=
      \E\left[
          \absx{x_{ik}}^2
          \E\bigl[\absx{\eta_{ik}^{(1)}}^2 \mid X_i \bigr]
      \right]\\
      &\le
      \frac{C}{m}
      \E\left[
          \absx{x_{ik}}^2
          \left(
            \sum_{\ell \in \bbZ} \absx{x_{i\ell}}^2 + \sigma_\delta^2
          \right)
      \right].
    \end{aligned}
    \label{eq:indep-pilot-mixed-start}
  \end{equation}
  Because the Fourier coordinates are independent,
  \[
    \E\left[\absx{x_{ik}}^2
        \sum_{\ell \in \bbZ} \absx{x_{i\ell}}^2 \right]
    =
    \sum_{\ell \ne k} \E\absx{x_{ik}}^2 \E\absx{x_{i\ell}}^2
    + \E\absx{x_{ik}}^4
    =
    \lambda_k \sum_{\ell \ne k} \lambda_\ell + C\lambda_k^2
    \le
    C\lambda_k,
  \]
  since \(\sum_{\ell \in \bbZ} \lambda_\ell < \infty\). Thus
  \begin{equation}
    \E\bigl[\absx{x_{ik}}^2 \absx{\eta_{ik}^{(1)}}^2 \bigr]
    \le
    \frac{C\lambda_k}{m}.
    \label{eq:indep-pilot-mixed-bound-1}
  \end{equation}
  The same argument gives
  \begin{equation}
    \E\bigl[\absx{x_{ik}}^2 \absx{\eta_{ik}^{(2)}}^2 \bigr]
    \le
    \frac{C\lambda_k}{m}.
    \label{eq:indep-pilot-mixed-bound-2}
  \end{equation}
  Finally, conditional on \(X_i\), the two halves are independent, so
  \begin{equation}
    \begin{aligned}
      \E\bigl[\absx{\eta_{ik}^{(1)} \eta_{ik}^{(2)}}^2 \bigr]
      &=
      \E\left[
          \E\bigl[\absx{\eta_{ik}^{(1)}}^2 \mid X_i \bigr]
          \E\bigl[\absx{\eta_{ik}^{(2)}}^2 \mid X_i \bigr]
      \right]\\
      &\le
      \frac{C}{m^2}
      \E\left[
          \left(
            \sum_{\ell \in \bbZ} \absx{x_{i\ell}}^2 + \sigma_\delta^2
          \right)^2
      \right]
      \le
      \frac{C}{m^2},
    \end{aligned}
    \label{eq:indep-pilot-product-bound}
  \end{equation}
  where the last step again uses \(\sum_{\ell \in \bbZ} \lambda_\ell < \infty\)
  and Gaussian fourth moments.
  Therefore, combining the four terms,
  \[
    \E\absx{U_{ik}^{(1)} U_{ik}^{(2)}-\lambda_k}^2
    \le
    C\left(
       \lambda_k^2 + \frac{\lambda_k}{m} + \frac{1}{m^2}
    \right),
  \]
  Averaging over \(i \in I_\lambda\) yields
  \cref{eq:indep-pilot-mse}.
\end{proof}

\begin{lemma}
  \label{lem:indep-pilot-halfscore-tail}
  Let \(U_{ik}^{(1)},U_{ik}^{(2)}\) be defined by
  \cref{eq:indep-pilot-half-scores}.
  Then
  \[
    \norm{U_{ik}^{(1)}}_{\psi_1} + \norm{U_{ik}^{(2)}}_{\psi_1}
    \le
    C\left(\lambda_k^{1/2} + m^{-1/2} \right).
  \]
\end{lemma}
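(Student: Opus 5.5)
Fix \(a\in\{1,2\}\) and use the decomposition \cref{eq:indep-pilot-decomposition}, \(U_{ik}^{(a)}=x_{ik}+\eta_{ik}^{(a)}\). By the triangle inequality for \(\norm{\cdot}_{\psi_1}\), it suffices to bound the two pieces separately. The score \(x_{ik}\sim\mathcal N(0,\lambda_k)\) is Gaussian, so \(\norm{x_{ik}}_{\psi_1}\lesssim\norm{x_{ik}}_{\psi_2}\lesssim\lambda_k^{1/2}\). The remaining task is therefore to show \(\norm{\eta_{ik}^{(a)}}_{\psi_1}\lesssim m^{-1/2}\).

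Write \(\eta_{ik}^{(a)}=(m_i^{(a)})^{-1}\sum_{j\in J_i^{(a)}}\xi_j\) with \(\xi_j\coloneqq Z_{ij}e_k(t_{ij})-x_{ik}\). Conditionally on \(X_i\), the \(\xi_j\) are i.i.d. and centered by \cref{eq:random-design-score-projection}. Each is bounded in absolute value by
\[
\sqrt2\,\normx{X_i}_\infty+\sqrt2\,\absx{\delta_{ij}}+\absx{x_{ik}},
\]
so it is conditionally sub-Gaussian with parameter
\[
A_i\coloneqq C\bigl(\normx{X_i}_\infty+\sigma_\delta\bigr),
\]
using \(\absx{x_{ik}}\le\normx{X_i}_{L^2}\le\normx{X_i}_\infty\). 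The conditional Hoeffding bound for sub-Gaussian sums then gives
\[
\norm{\eta_{ik}^{(a)}\mid X_i}_{\psi_2}\le C A_i/\sqrt{m_i^{(a)}}\le C A_i m^{-1/2}.
\]

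To remove the conditioning, integrate moments. For \(q\ge1\), the conditional bound yields
\[
\E\bigl[\absx{\eta}^q\mid X_i\bigr]^{1/q}\le C\sqrt q\,A_i m^{-1/2},
\qquad\text{hence}\qquad
\normx{\eta}_{L^q}\le C\sqrt q\, m^{-1/2}\normx{A_i}_{L^q}.
\]
The key input is that \(\normx{X_i}_\infty\) has Gaussian tails. Because \(X\) is a centered Gaussian process with continuous paths, Fernique's theorem (or Borell--TIS) gives \(\normx{\normx{X}_\infty}_{\psi_2}<\infty\), so \(\normx{A_i}_{L^q}\le C\sqrt q\). Combining the two factors gives \(\normx{\eta}_{L^q}\le Cq\,m^{-1/2}\). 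By the moment characterization of \(\psi_1\), this is exactly \(\norm{\eta}_{\psi_1}\lesssim m^{-1/2}\): the product of the two \(\sqrt q\) factors explains why only \(\psi_1\) rather than \(\psi_2\) is attainable.

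The main obstacle is making this final step uniform in the eigenvalue class. Fernique's constant must depend only on \(\E\normx{X}_\infty\), and this quantity can be bounded uniformly over \(\caL_\alpha\) with \(\alpha>1/2\) by Dudley's entropy bound applied to the trigonometric series. If that route proves awkward, I would avoid the sup norm altogether. Instead I would bound \(\E[\absx{\sum_j\xi_j}^q\mid X_i]\) via Rosenthal's inequality, using conditional moments \(\E[\absx{X_i(t)}^q\mid X_i]\) computed with \(t\sim\mathrm{Unif}\). This gives
\[
\E\absx{X_i(t_{ij})}^q\le (Cq)^{q/2}\Bigl(\textstyle\sum_\ell\lambda_\ell\Bigr)^{q/2},
\]
since \(X(t)\) is Gaussian with variance \(\sum_\ell\lambda_\ell e_\ell(t)^2\le 2\sum_\ell\lambda_\ell\). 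The fixed-\(X\) dependence then enters only through these integrated moments.
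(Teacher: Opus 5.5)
Your main argument is correct and is essentially the paper's proof. Both condition on $X_i$ and apply Hoeffding to get a conditional $\psi_2$ bound of order $\normx{X_i}_\infty/\sqrt{m}$. Both then integrate the resulting $\sqrt{q}$ moment bound against the uniform $\psi_2$ control of $\normx{X_i}_\infty$; the paper proves that control in \cref{lem:indep-process-supnorm} by exactly the Dudley-plus-Borell--TIS route you describe. Both finally read off $\psi_1$ from the $q\,m^{-1/2}$ moments.

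The only difference is cosmetic. The paper splits $\eta_{ik}^{(a)}$ into a bounded design-fluctuation part and the noise part $\frac{1}{m_i^{(a)}}\sum_{j\in J_i^{(a)}}\delta_{ij}e_k(t_{ij})$, which it treats as conditionally Gaussian given the design points. You instead absorb the noise into sub-Gaussian summands with parameter $\normx{X_i}_\infty+\sigma_\delta$. The Rosenthal fallback is not needed.
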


\begin{proof}
  It suffices to control \(\norm{U_{ik}^{(1)}}_{\psi_1}\) since the same argument applies to \(U_{ik}^{(2)}\).
  Write \(U_{ik}^{(1)}=x_{ik}+\eta_{ik}^{(1)}\) and decompose
  \[
    m_i^{(1)} \coloneqq \absx{J_i^{(1)}},
    \qquad
    \eta_{ik}^{(1)}
    =
    A_{ik}^{(1)} + B_{ik}^{(1)},
  \]
  where
  \begin{align*}
    A_{ik}^{(1)}
    & \coloneqq
    \frac{1}{m_i^{(1)}}\sum_{j \in J_i^{(1)}}
    \left(
      X_i(t_{ij})e_k(t_{ij}) - x_{ik}
    \right),\\
    B_{ik}^{(1)}
    &\coloneqq
    \frac{1}{m_i^{(1)}}\sum_{j \in J_i^{(1)}}
    \delta_{ij} e_k(t_{ij}).
  \end{align*}

  Conditional on \(X_i\), the summands in \(A_{ik}^{(1)}\) are independent,
  centered, and satisfy
  \[
    \absx{X_i(t_{ij})e_k(t_{ij}) - x_{ik}}
    \le
    \absx{X_i(t_{ij})}
    +
    \absx{x_{ik}}
    \le
    2\norm{X_i}_\infty.
  \]
  Hoeffding's inequality therefore yields
  \[
    \norm{A_{ik}^{(1)} \mid X_i}_{\psi_2}
    \le
    C\frac{\norm{X_i}_\infty}{\sqrt{m_i^{(1)}}}
    \le
    C\frac{\norm{X_i}_\infty}{\sqrt{m}}.
  \]
  Hence, for every \(q \ge 1\),
  \[
    \left(
      \E\left[\absx{A_{ik}^{(1)}}^q \mid X_i \right]
    \right)^{1/q}
    \le
    C\sqrt{q} \frac{\norm{X_i}_\infty}{\sqrt{m}}.
  \]
  Taking expectations and using \cref{lem:indep-process-supnorm},
  \[
    \norm{A_{ik}^{(1)}}_{L^q}
    \le
    C\frac{q}{\sqrt{m}},
    \qquad
    q \ge 1.
  \]
  By the moment characterization of the \(\psi_1\) norm,
  \[
    \norm{A_{ik}^{(1)}}_{\psi_1} \le C m^{-1/2}.
  \]

  Conditional on the design points, \(B_{ik}^{(1)}\) is centered Gaussian with
  variance
  \[
    \Var(B_{ik}^{(1)} \mid t_{i\cdot})
    =
    \frac{\sigma_\delta^2}{(m_i^{(1)})^2}
    \sum_{j \in J_i^{(1)}}\absx{e_k(t_{ij})}^2
    \le
    \frac{C}{m_i^{(1)}}
    \le
    \frac{C}{m},
  \]
  so \(\norm{B_{ik}^{(1)}}_{\psi_2} \le C m^{-1/2}\), hence also
  \(\norm{B_{ik}^{(1)}}_{\psi_1} \le C m^{-1/2}\). Therefore
  \[
    \norm{\eta_{ik}^{(1)}}_{\psi_1} \le C m^{-1/2}.
  \]

  Finally, \(x_{ik} \sim \mathcal{N}(0,\lambda_k)\), so
  \(\norm{x_{ik}}_{\psi_1} \le C\lambda_k^{1/2}\). The triangle inequality
  gives the bound for \(U_{ik}^{(1)}\); the same argument applies to \(U_{ik}^{(2)}\).
\end{proof}

\begin{lemma}[Pilot cross-product concentration]
  \label{lem:indep-pilot-cross-concentration}
  For every \(M > 0\), there exist finite constants \(C_M\) and \(N_M\)
  such that the following holds.
  If \(\absx{k}\le K_n\) and \(n_\lambda \ge N_M\), then
  \[
    \Pr\left\{
         \absx{\check{\lambda}_k - \lambda_k}
         >
         C_M \left(\lambda_k + \frac{1}{m}\right)
         \sqrt{\frac{\log n_\lambda}{n_\lambda}}
    \right\}
    \le
    C_M n_\lambda^{-M}.
  \]
\end{lemma}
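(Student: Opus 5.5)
The plan is to write $\check{\lambda}_k-\lambda_k=n_\lambda^{-1}\sum_{i\in I_\lambda}W_{ik}$ with $W_{ik}\coloneqq U_{ik}^{(1)}U_{ik}^{(2)}-\lambda_k$. By the proof of \cref{lem:indep-pilot-moments}, these summands are independent across $i$ and centered, with $\E W_{ik}^2\le C(\lambda_k^2+\lambda_k/m+m^{-2})\le C(\lambda_k+m^{-1})^2$. Their tails come from \cref{lem:indep-pilot-halfscore-tail}. Since $\norm{U_{ik}^{(a)}}_{\psi_1}\le C(\lambda_k^{1/2}+m^{-1/2})$, the product inequality for Orlicz norms, $\norm{UV}_{\psi_{1/2}}\le \norm{U}_{\psi_1}\norm{V}_{\psi_1}$, gives $\norm{W_{ik}}_{\psi_{1/2}}\le C(\lambda_k^{1/2}+m^{-1/2})^2\le C'(\lambda_k+m^{-1})$. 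Subtracting the mean changes this only by a constant factor. So after normalizing by $\sigma_k\coloneqq\lambda_k+1/m$, the variables $W_{ik}/\sigma_k$ are i.i.d., centered, and have uniformly bounded $\psi_{1/2}$ norm.

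Next I apply a Bernstein-type inequality for sums of independent sub-Weibull variables of order $\rho=1/2$. One option is the bound of Kuchibhotla--Chakrabortty; another is to truncate and use the moment characterization directly. It gives, for $t>0$,
\[
  \Pr\Bigl\{\Bigl|\sum_{i\in I_\lambda}W_{ik}/\sigma_k\Bigr|>t\Bigr\}
  \le 2\exp\Bigl(-c\min\Bigl\{\frac{t^2}{n_\lambda},\ t^{1/2}\Bigr\}\Bigr).
\]
The first term in the minimum comes from the variance bound $n_\lambda\sup_i\E(W_{ik}/\sigma_k)^2\le Cn_\lambda$, and the second is the heavy-tail term; for $\rho<1$ there may also be a $(\log n_\lambda)^{1/\rho}$ correction on the maximum scale. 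Take $t=C_M\sqrt{n_\lambda\log n_\lambda}$. The Gaussian part then gives $\exp(-cC_M^2\log n_\lambda)\le n_\lambda^{-M}$ once $C_M$ is large. The heavy-tail part gives $\exp(-c\,C_M^{1/2}(n_\lambda\log n_\lambda)^{1/4})$, which is $\le n_\lambda^{-M}$ as soon as $n_\lambda\ge N_M$. Dividing by $n_\lambda$ yields the deviation $C_M(\lambda_k+1/m)\sqrt{\log n_\lambda/n_\lambda}$.

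If a packaged sub-Weibull Bernstein inequality is not invoked, I would instead use truncation. Set $\tau=(C\log n_\lambda)^2$ and split $W_{ik}/\sigma_k$ at level $\tau$. The $\psi_{1/2}$ tail gives $\Pr(\max_i|W_{ik}|/\sigma_k>\tau)\le n_\lambda e^{-c\sqrt{\tau}}\le n_\lambda^{-M}$. The truncated parts have mean $O(n_\lambda^{-M})$, variance $\le C$ and range $\le 2\tau$. Classical Bernstein then gives a deviation of order $\sqrt{n_\lambda\log n_\lambda}+\tau\log n_\lambda$, and the second term is negligible relative to the first for $n_\lambda\ge N_M$.

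The main obstacle is verifying the $\psi_1$ input uniformly in $k$. \cref{lem:indep-pilot-halfscore-tail} provides it, but that lemma relies on moment bounds for $\norm{X_i}_\infty$ (\cref{lem:indep-process-supnorm}); I assume these as stated. A second delicate point is ensuring the heavy-tail term never dominates. This is why the statement only claims the bound for $n_\lambda\ge N_M$, and why the constants need no dependence on $k$ or $m$: after normalization by $\sigma_k$, every bound is uniform. Since the bound is uniform over all $k$, the restriction $\absx{k}\le K_n$ is not needed for the single-$k$ statement; it matters only when a union bound over the active band is taken later.
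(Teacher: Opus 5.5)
Your proposal is correct and follows essentially the same route as the paper. The paper likewise combines the $\psi_1$ half-score bound of \cref{lem:indep-pilot-halfscore-tail} with the product tail bound \cref{prop:indep-product-tail} to get a $\psi_{1/2}$ bound of order $\lambda_k+1/m$ on the centered cross-products, and then applies the truncation-based sub-Weibull Bernstein inequality \cref{lem:upper-subweibull-bernstein} with $L\asymp(\log n_\lambda)^2$ and $u\asymp(\lambda_k+1/m)\sqrt{\log n_\lambda/n_\lambda}$, which is exactly your second (truncation) route; you are also right that the restriction $\absx{k}\le K_n$ plays no role for a single $k$.
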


\begin{proof}
  Define the centered cross-product
  \[
    X_{ik}^\lambda
    \coloneqq
    U_{ik}^{(1)} U_{ik}^{(2)} - \lambda_k.
  \]
  Combining \cref{lem:indep-pilot-halfscore-tail} and \cref{prop:indep-product-tail} with \(\alpha_A = \alpha_B = 1\) gives
  \[
    \E \exp\zk{\xk{\frac{X_{ik}^\lambda}{C(\lambda_k + 1/m)}}^{1/2}} \le C.
  \]
  Since
  \[
    \check{\lambda}_k - \lambda_k
    =
    \frac{1}{n_\lambda}\sum_{i \in I_\lambda} X_{ik}^\lambda,
  \]
  the result follows from applying
  \cref{lem:upper-subweibull-bernstein} with \(\alpha = 1/2\),
  \(K \asymp \lambda_k + m^{-1}\),
  \(L \asymp (\log n_\lambda)^2\), and
  \[
    u
    \asymp \left(\lambda_k+\frac{1}{m}\right)
    \sqrt{\frac{\log n_\lambda}{n_\lambda}},
  \]
  where the constants depend on \(M\) through the choice of \(C_M\) and \(N_M\).
\end{proof}

With the pilot floor \cref{eq:indep-pilot-floor} and bandwidth
\cref{eq:indep-pilot-bandwidth}, define the good pilot event
\begin{equation}
  \caE_n
  \coloneqq
  \bigcap_{\absx{k}\le K_n}
  \left\{
    \absx{\check{\lambda}_k - \lambda_k}
    \le
    \frac{1}{2}\max\{\lambda_k,\zeta_{n,m}\}
  \right\}.
  \label{eq:indep-good-event}
\end{equation}

\begin{lemma}[Uniform pilot control]
  \label{lem:indep-pilot-uniform-control}
  For \(M_0\) sufficiently large, there exist constants \(C,N_0 < \infty\) such
  that, for all \(n,m \ge N_0\),
  \begin{equation}
    \Pr((\caE_n)^c)\le Cn^{-20}.
    \label{eq:indep-good-event-prob}
  \end{equation}
\end{lemma}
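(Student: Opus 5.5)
The plan is a union bound over the at most \(2K_n+1\) active frequencies, combined with the concentration bound of \cref{lem:indep-pilot-cross-concentration}. For each \(\absx{k}\le K_n\) we must show that
\[
  \absx{\check{\lambda}_k-\lambda_k}\le \tfrac12\max\{\lambda_k,\zeta_{n,m}\}
\]
holds with probability at least \(1-Cn^{-21}\), say. Since \(K_n\lesssim n^{1/2}\), the union bound then gives \(\Pr((\caE_n)^c)\le Cn^{-20}\).

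\textbf{Step 1: concentration for each \(k\).} Fix \(k\) and apply \cref{lem:indep-pilot-cross-concentration} with \(M\) large, say \(M=22\); this is possible because \(n_\lambda\asymp n\). With probability at least \(1-C_Mn_\lambda^{-M}\ge 1-Cn^{-22}\),
\[
  \absx{\check{\lambda}_k-\lambda_k}\le C_M\Bigl(\lambda_k+\frac1m\Bigr)\sqrt{\frac{\log n_\lambda}{n_\lambda}}.
\]

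\textbf{Step 2: compare with \(\tfrac12\max\{\lambda_k,\zeta_{n,m}\}\).} We split the deviation into two pieces.
\begin{itemize}
  \item The \(\lambda_k\)-part. For \(n\ge N_0\) large we have \(C_M\sqrt{\log n_\lambda/n_\lambda}\le 1/4\), so \(C_M\lambda_k\sqrt{\log n_\lambda/n_\lambda}\le \lambda_k/4\).
  \item The \(1/m\)-part. The deviation term \(C_M m^{-1}\sqrt{\log n_\lambda/n_\lambda}\) is at most \(\tfrac14\zeta_{n,m}=\tfrac{M_0}{4}m^{-1}\sqrt{\log(nm)/n_\lambda}\) as soon as \(M_0\ge 4C_M\), because \(\log n_\lambda\le\log(nm)\).
\end{itemize}
Adding the two pieces, the deviation is at most \(\tfrac14\lambda_k+\tfrac14\zeta_{n,m}\le\tfrac12\max\{\lambda_k,\zeta_{n,m}\}\).

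This is where the requirement "for \(M_0\) sufficiently large" enters. The order of quantifiers matters: \(M\) is fixed first, which determines \(C_M\) and \(N_M\); then \(M_0\) is chosen; and \(N_0\) is taken large enough that \(n_\lambda\ge N_M\) and the \(\lambda_k\)-absorption holds.

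\textbf{Step 3: union bound.} Summing the failure probabilities over \(\absx{k}\le K_n\) gives
\[
  \Pr((\caE_n)^c)\le (2K_n+1)\,Cn^{-22}\le Cn^{-20}.
\]

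\textbf{Main obstacle.} The real work lies in \cref{lem:indep-pilot-cross-concentration}, which we take as given. What remains to check is that its constants are uniform in \(k\) over the active band and do not depend on \(m\). The lemma is stated for \(\absx{k}\le K_n\) with constants \(C_M,N_M\) independent of \(k\), and the heavy-tail parameter of the product \(U^{(1)}U^{(2)}\) is uniform by \cref{lem:indep-pilot-halfscore-tail}, so no further care is needed beyond the bookkeeping of \(M_0\) relative to \(C_M\).
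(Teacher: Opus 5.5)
Your proof is correct and follows the paper's route: apply \cref{lem:indep-pilot-cross-concentration} to each frequency with a fixed large $M$, choose $M_0$ relative to the resulting constant $C_M$, and take a union bound over $\absx{k}\le K_n$ using $K_n\lesssim n^{1/2}$. The only difference is in the comparison step: you absorb the deviation additively as $\tfrac14\lambda_k+\tfrac14\zeta_{n,m}\le\tfrac12\max\{\lambda_k,\zeta_{n,m}\}$, whereas the paper splits into three cases by the size of $\lambda_k$ relative to $\zeta_{n,m}$ and $m^{-1}$, and your version is slightly cleaner.
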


\begin{proof}
  Put
  \[
    a_n \coloneqq \sqrt{\frac{\log n_\lambda}{n_\lambda}},
    \qquad
    b_{n,m} \coloneqq \sqrt{\frac{\log(nm)}{n_\lambda}}.
  \]
  For \(\absx{k}\le K_n\), write
  \[
    t_k \coloneqq \frac{1}{2}\max\{\lambda_k,\zeta_{n,m}\},
    \qquad
    K_k^\lambda \coloneqq \lambda_k+\frac{1}{m}.
  \]
  Since \(m\ge1\), we have \(a_n \le b_{n,m}\), and
  \(\zeta_{n,m}=M_0 m^{-1} b_{n,m}\). By
  \cref{lem:indep-pilot-cross-concentration} with \(M=24\), for all
  sufficiently large \(n\),
  \[
    \Pr\left\{
         \absx{\check{\lambda}_k-\lambda_k}
         >
         C K_k^\lambda a_n
    \right\}
    \le
    Cn_\lambda^{-24}
  \]
  uniformly over \(\absx{k}\le K_n\).
  We now show that, after choosing \(M_0\)
  large and then \(N_0\) large,
  \[
    C K_k^\lambda a_n \le t_k
    \qquad
    \text{for every }\absx{k}\le K_n.
  \]

  If \(\lambda_k < \zeta_{n,m}\), then
  \[
    C K_k^\lambda a_n
    \le
    C m^{-1} \left(1+M_0 b_{n,m} \right)a_n
    \le
    \frac{1}{2}M_0 m^{-1} b_{n,m}
    =
    t_k,
  \]
  where the last inequality uses \(a_n \le b_{n,m}\), \(a_n \to0\), and \(M_0\)
  large. If \(\zeta_{n,m} \le\lambda_k \le m^{-1}\), then
  \[
    C K_k^\lambda a_n
    \le
    C m^{-1} a_n
    \le
    \frac{1}{2}\lambda_k
    =
    t_k,
  \]
  after increasing \(M_0\), because
  \(\lambda_k \ge M_0 m^{-1} b_{n,m}\). Finally, if \(\lambda_k>m^{-1}\), then
  \[
    C K_k^\lambda a_n
    \le
    C\lambda_k a_n
    \le
    \frac{1}{2}\lambda_k
    =
    t_k
  \]
  for all \(n\ge N_0\).

  Hence, for all sufficiently large \(n,m\),
  \[
    \Pr\left\{
         \absx{\check{\lambda}_k-\lambda_k}
         >
         t_k
    \right\}
    \le
    Cn_\lambda^{-24}
  \]
  uniformly over \(\absx{k}\le K_n\). Since \(K_n \le C_L n^{1/2}\) and
  \(n_\lambda \asymp n\), the union bound gives
  \[
    \Pr((\caE_n)^c)
    \le
    C K_n n_\lambda^{-24}
    \le
    Cn^{-20}.
  \]
\end{proof}

\subsection{Eligible Blocks}
\label{subsec:indep-eligible-blocks}

We say that a block \(B_\ell\) is eligible if \(\caEelig{\ell}\) holds, i.e. if the pilot eigenvalue estimates on the block exceed the pilot floor.
In this subsection, we show that the eigenvalues can be well estimated over the whole interested range until the oracle band.
Recall the eligibility event \(\caEelig{\ell}\) from  \cref{eq:indep-adaptive-events}.

\begin{lemma}
  \label{lem:indep-eligible-blocks}
  On the event \(\caE_n\) from \cref{eq:indep-good-event}, the following two
  implications hold for every active block \(B_\ell\):
  \begin{enumerate}[label=(\roman*)]
    \item if \(\min_{k\in B_\ell} \lambda_k \ge 4\zeta_{n,m}\), then
    \(\caEelig{\ell}\) occurs;
    \item if \(\caEelig{\ell}\) occurs, then, for every \(k\in B_\ell\),
    \[
      \frac{1}{2}\lambda_k
      \le
      \check{\lambda}_k
      \le
      \frac{3}{2}\lambda_k.
    \]
  \end{enumerate}
\end{lemma}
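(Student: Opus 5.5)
The argument is deterministic on \(\caE_n\) and uses only the defining inequality \(\absx{\check{\lambda}_k-\lambda_k}\le \tfrac12\max\{\lambda_k,\zeta_{n,m}\}\) for every \(\absx{k}\le K_n\). Every active block satisfies \(B_\ell\subset\{\absx{k}\le K_n\}\) because \(\ell\le L_n\), so this bound is available for all \(k\in B_\ell\). We treat the two implications in turn, each by a case analysis on whether \(\lambda_k\) lies above or below the floor \(\zeta_{n,m}\).

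For (i), fix \(k\in B_\ell\) with \(\lambda_k\ge 4\zeta_{n,m}\). Then \(\max\{\lambda_k,\zeta_{n,m}\}=\lambda_k\), so on \(\caE_n\) we get \(\check{\lambda}_k\ge \lambda_k-\tfrac12\lambda_k=\tfrac12\lambda_k\ge 2\zeta_{n,m}\). Taking the minimum over \(k\in B_\ell\) gives \(\caEelig{\ell}\).

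For (ii), assume \(\caEelig{\ell}\), so \(\check{\lambda}_k\ge 2\zeta_{n,m}\) for all \(k\in B_\ell\). The first step is to show that \(\lambda_k\ge\zeta_{n,m}\). Suppose instead that \(\lambda_k<\zeta_{n,m}\). Then the max equals \(\zeta_{n,m}\), and \(\caE_n\) gives
\[
  \check{\lambda}_k\le\lambda_k+\tfrac12\zeta_{n,m}<\tfrac32\zeta_{n,m}<2\zeta_{n,m},
\]
which contradicts eligibility. Hence \(\lambda_k\ge\zeta_{n,m}\), so \(\max\{\lambda_k,\zeta_{n,m}\}=\lambda_k\), and \(\caE_n\) reads \(\absx{\check{\lambda}_k-\lambda_k}\le\tfrac12\lambda_k\). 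This is exactly \(\tfrac12\lambda_k\le\check{\lambda}_k\le\tfrac32\lambda_k\).

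No real obstacle arises here. The only point requiring care is the contradiction step in (ii), which rules out the regime where \(\lambda_k\) lies below the floor; the constant \(2\) in the eligibility threshold is what makes that step go through, since \(\tfrac32\zeta_{n,m}<2\zeta_{n,m}\). We also note that the bound \(\check{\lambda}_k\ge\tfrac12\lambda_k>0\) ensures that the divisions in \cref{eq:indep-adaptive-theta} are well-defined on retained blocks.
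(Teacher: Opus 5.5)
Your proposal is correct and follows essentially the same argument as the paper: part (i) uses \(\caE_n\) to get \(\check{\lambda}_k\ge\tfrac12\lambda_k\ge 2\zeta_{n,m}\), and part (ii) first rules out \(\lambda_k<\zeta_{n,m}\) by contradiction via \(\check{\lambda}_k<\tfrac32\zeta_{n,m}\), then reads off the two-sided bound from \(\caE_n\). Your remark that \(B_\ell\subset\{\absx{k}\le K_n\}\) for active blocks is a small detail the paper leaves implicit.
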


\begin{proof}
  Assume \(\caE_n\).
  If \(\min_{k\in B_\ell} \lambda_k \ge4\zeta_{n,m}\), then, for every
  \(k\in B_\ell\), the definition of \(\caE_n\) gives
  \[
    \check{\lambda}_k
    \ge
    \lambda_k-\frac{1}{2}\lambda_k
    =
    \frac{1}{2}\lambda_k
    \ge
    2\zeta_{n,m},
  \]
  so \(\caEelig{\ell}\) occurs.

  Now assume that \(\caEelig{\ell}\) occurs.
  If some \(k \in B_\ell\) satisfied
  \(\lambda_k < \zeta_{n,m}\), then by the definition of \(\caE_n\),
  \[
    \check{\lambda}_k
    \le
    \lambda_k + \frac{1}{2}\zeta_{n,m}
    <
    \frac{3}{2}\zeta_{n,m},
  \]
  contradicting \(\caEelig{\ell}\).
  Hence every \(k \in B_\ell\) satisfies \(\lambda_k \ge \zeta_{n,m}\).
  On \(\caE_n\), this implies, for every \(k\in B_\ell\),
  \[
    \absx{\check{\lambda}_k - \lambda_k}
    \le
    \frac{1}{2}\lambda_k,
  \]
  which proves the second implication.
\end{proof}

\begin{lemma}[Oracle band lies above the pilot floor]
  \label{lem:indep-oracle-band-floor}
  Let
  \[
    d_*(\alpha,s) \asymp n^{1/(2\alpha+2s+1)} \wedge (nm)^{1/(4\alpha+2s+1)}
  \]
  Then, uniformly over the fixed range of \((\alpha,s)\), for all sufficiently large \(n,m\),
  \begin{equation}
    d_*(\alpha,s)\le K_n,
    \label{eq:indep-oracle-band-in-pilot}
  \end{equation}
  Moreover, if \(L_*\) is chosen so that
  \[
    2^{L_*-1}<d_*(\alpha,s)\le 2^{L_*},
  \]
  then
  \begin{equation}
    \min_{0\le \ell\le L_*} \min_{k\in B_\ell} \lambda_k
    \ge
    4\zeta_{n,m}.
    \label{eq:indep-oracle-band-above-floor}
  \end{equation}
\end{lemma}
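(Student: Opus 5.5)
The plan is to reduce both claims to explicit power comparisons in \(n\) and \(m\), with exponents bounded away from the critical values uniformly over the compact rectangle \([\underline{\alpha},\bar{\alpha}]\times[\underline{s},\bar{s}]\).

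\textbf{First claim, \cref{eq:indep-oracle-band-in-pilot}.} Recall \(K_n\ge \tfrac12 C_L\min\{n^{1/2},(nm)^{1/3}\}\). Since \(\alpha>1/2\) and \(s\ge0\), we have \(2\alpha+2s+1\ge 2\underline{\alpha}+1>2\), so
\[
  n^{1/(2\alpha+2s+1)}\le n^{1/(2\underline{\alpha}+1)}=o(n^{1/2}).
\]
Likewise \(4\alpha+2s+1\ge4\underline{\alpha}+1>3\), so \((nm)^{1/(4\alpha+2s+1)}=o((nm)^{1/3})\). Taking the minimum of each side, and using \(\min(a,b)\le\min(a',b')\) whenever \(a\le a'\) and \(b\le b'\), gives \(d_*\le c\min\{n^{1/2},(nm)^{1/3}\}\cdot o(1)\le K_n\) for large \(n\). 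The gap in the exponents is uniform over the compact rectangle, so the threshold on \(n\) is uniform.

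\textbf{Second claim, \cref{eq:indep-oracle-band-above-floor}.} On blocks \(\ell\le L_*\) we have \(|k|\le 2^{L_*}<2d_*\). Hence
\[
  \min_k\lambda_k\ge c_\lambda(1+2d_*)^{-2\alpha}\ge c\, d_*^{-2\alpha}.
\]
It therefore suffices to show \(d_*^{2\alpha}\le c' m\sqrt{n/\log(nm)}\), since \(\zeta_{n,m}\asymp M_0 m^{-1}\sqrt{\log(nm)/n}\). Using \(d_*\le (nm)^{1/(4\alpha+2s+1)}\),
\[
  d_*^{2\alpha}\le (nm)^{2\alpha/(4\alpha+2s+1)}.
\]
Write \(\rho=2\alpha/(4\alpha+2s+1)\). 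Then \(\rho<1/2\), uniformly on the compact range, with \(\rho\le \tfrac12-\epsilon\) for some fixed \(\epsilon>0\) since \(2s+1\ge1\). It follows that
\[
  (nm)^{\rho}\le n^{1/2-\epsilon}\,m^{1/2}\le n^{1/2-\epsilon}\,m.
\]
This is at most \(m\sqrt{n}/(4M_0C\sqrt{\log(nm)})\) once \(n^{\epsilon}\gtrsim\sqrt{\log(nm)}\).

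\textbf{Main obstacle.} The only delicate point is the \(\log(nm)\) factor when \(m\) is huge relative to \(n\), where \(\sqrt{\log(nm)}\) need not be \(\lesssim n^\epsilon\). To handle it, I keep the spare factor \(m^{1/2}\) rather than discarding it, and instead require
\[
  n^{1/2-\epsilon}m^{1/2}\le c\,m\sqrt{n}/\sqrt{\log(nm)},
\]
that is, \(\log(nm)\le c\, n^{2\epsilon}m\). This holds for all large \(n,m\) because \(\log(nm)\le \log n+\log m\), with \(\log n\le n^{2\epsilon}\) and \(\log m\le m\) eventually. The constants depend only on the rectangle endpoints, \(c_\lambda\), \(M_0\) and \(C_L\), which gives the uniformity.
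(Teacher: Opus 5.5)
Your proof is correct. It handles the first claim exactly as the paper does, but it reaches the second claim by a simpler route than the paper's case analysis.

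\textbf{How the paper proves the second claim.} It splits into two regimes.
\begin{itemize}
  \item Dense regime \(m\ge n^{2\alpha/(2\alpha+2s+1)}\). It uses \(d_*\asymp n^{1/(2\alpha+2s+1)}\) together with monotonicity of \(m\mapsto m/\sqrt{\log(nm)}\). This gives \(\lambda_{d_*}/\zeta_{n,m}\gtrsim\sqrt{n/\log n}\).
  \item Sparse regime. It uses \(d_*\asymp(nm)^{1/(4\alpha+2s+1)}\). It absorbs the logarithm through \(\log(nm)\lesssim\log n\), which is valid only because \(m<n\) there.
\end{itemize}

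\textbf{How your argument differs.} You note that the upper bound \(d_*\lesssim(nm)^{1/(4\alpha+2s+1)}\) holds in every regime, since \(d_*\) is a minimum. So the sparse-regime ratio \(n^{1/2-\rho}m^{1-\rho}/\sqrt{\log(nm)}\), with \(\rho=2\alpha/(4\alpha+2s+1)\), is available throughout. Two features then absorb \(\sqrt{\log n}\) and \(\sqrt{\log m}\) separately:
\begin{itemize}
  \item the uniform margin \(1/2-\rho=(2s+1)/(2(4\alpha+2s+1))\) on the power of \(n\);
  \item the surplus factor \(m^{1-\rho}\ge m^{1/2}\).
\end{itemize}
This removes both the case split and the need for \(\log(nm)\lesssim\log n\).

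\textbf{What each approach buys.} Your version is shorter. It also shows that only the upper size of the cutoff matters, so the conclusion holds for any cutoff of order at most \((nm)^{1/(4\alpha+2s+1)}\). The paper's split tracks the actual size of \(d_*\) in each regime and so gives a larger margin when \(m\) is well into the dense regime. That extra margin is not needed for this lemma.
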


\begin{proof}
  Since the proof is uniform over the fixed compact rectangle with
  \(\underline{\alpha}>1/2\) and \(\underline{s}\ge0\), there exist
  \(\epsilon_1,\epsilon_2>0\) such that
  \[
    \frac{1}{2\alpha+2s+1}\le \frac{1}{2}-\epsilon_1,
    \qquad
    \frac{1}{4\alpha+2s+1}\le \frac{1}{3}-\epsilon_2
  \]
  throughout the rectangle.
  Therefore
  \[
    d_*(\alpha,s)
    \lesssim
    n^{1/2-\epsilon_1}
    \wedge
    (nm)^{1/3-\epsilon_2}
    \le
    K_n
  \]
  for all sufficiently large \(n,m\), because the ratios
  \(d_*/n^{1/2}\) and \(d_*/(nm)^{1/3}\) tend to zero uniformly, while
  \(C_L>0\) is fixed.

  In the dense regime \(m\ge n^{2\alpha/(2\alpha+2s+1)}\), we have
  \(d_* \asymp n^{1/(2\alpha+2s+1)}\), so
  \[
    \frac{\lambda_{d_*}}{\zeta_{n,m}}
    \gtrsim
    \frac{n^{-2\alpha/(2\alpha+2s+1)}}{m^{-1} \sqrt{\log(nm)/n}}
    =
    \frac{mn^{1/2-2\alpha/(2\alpha+2s+1)}}{\sqrt{\log(nm)}}.
  \]
  In this regime, for all sufficiently large \(n,m\), the lower bound
  \(n^{2\alpha/(2\alpha+2s+1)}\) exceeds \(2\), and the map
  \(m \mapsto m/\sqrt{\log(nm)}\) is increasing on \([2,\infty)\).
  Therefore the last display is bounded below by its value at
  \(m = n^{2\alpha/(2\alpha+2s+1)}\), namely
  \[
    \frac{n^{1/2}}{
      \sqrt{\log\left(n^{1+2\alpha/(2\alpha+2s+1)} \right)}
    }
    \asymp
    \frac{n^{1/2}}{\sqrt{\log n}}
    \to
    \infty.
  \]
  This divergence is uniform over the fixed range of \((\alpha,s)\).

  In the sparse regime \(m<n^{2\alpha/(2\alpha+2s+1)}\), we have
  \(d_* \asymp (nm)^{1/(4\alpha+2s+1)}\), so
  \[
    \frac{\lambda_{d_*}}{\zeta_{n,m}}
    \gtrsim
    \frac{(nm)^{-2\alpha/(4\alpha+2s+1)}}{m^{-1} \sqrt{\log(nm)/n}}
    =
    \frac{
      n^{1/2-2\alpha/(4\alpha+2s+1)}
      m^{1-2\alpha/(4\alpha+2s+1)}
    }{\sqrt{\log(nm)}}.
  \]
  The exponents satisfy
  \[
    \frac{1}{2} - \frac{2\alpha}{4\alpha+2s+1}
    =
    \frac{2s+1}{2(4\alpha+2s+1)} > 0,
    \qquad
    1 - \frac{2\alpha}{4\alpha+2s+1} > 0,
  \]
  and both exponents are bounded away from zero over the fixed range of
  \((\alpha,s)\).
  Since \(\log(nm)\lesssim\log n\) in the sparse regime, this ratio again diverges uniformly.
  Hence, uniformly over the fixed range of \((\alpha,s)\),
  \[
    \frac{\lambda_{d_*(\alpha,s)}}{\zeta_{n,m}}\to\infty.
  \]

  Let \(0 \le \ell \le L_*\) and \(k \in B_\ell\).
  By the definition of \(L_*\),
  we have
  \[
    \absx{k}
    \le
    2^\ell
    \le
    2^{L_*}
    \le
    2d_*.
  \]
  Hence
  \[
    \lambda_k
    \ge
    c_\lambda (2d_*)^{-2\alpha}
    \gtrsim
    \lambda_{d_*}.
  \]
  Since \(\lambda_{d_*}/\zeta_{n,m} \to\infty\) uniformly, for all sufficiently large \(n,m\),
  \[
    \min_{k \in B_\ell} \lambda_k
    \ge
    4\zeta_{n,m}.
  \]
\end{proof}

\subsection{Block estimation}
\label{subsec:indep-oracle-block-statistics}

Now we proceed with the block energy $\hat{S}_\ell$.
Let us define the deterministic noise scales for each coordinate and block:
\begin{equation}
  v_k \coloneqq \frac{C_V}{n}\left(1 + \frac{1}{m\lambda_k}\right),
  \qquad
  V_\ell \coloneqq \sum_{k \in B_\ell} v_k,
  \qquad
  \bar{v}_\ell \coloneqq \max_{k \in B_\ell} v_k.
  \label{eq:indep-block-noise-scale}
\end{equation}
Since \(\lambda_k \asymp (1+\absx{k})^{-2\alpha}\), dyadic blocks satisfy
\begin{equation}
  v_k \asymp \bar{v}_\ell,
  \qquad
  V_\ell \asymp \absx{B_\ell}\bar{v}_\ell,
  \qquad
  k \in B_\ell.
  \label{eq:indep-block-comparability}
\end{equation}

It is easy to see that the estimation variance for each coordinate is bounded by the noise scale \(v_k\):

\begin{lemma}[Single-coordinate variance]
  \label{lem:indep-block-variance-envelope}
  \[
    \lambda_k
    \E\absx{
      \frac{\hat{\gamma}_k}{\lambda_k} - \theta_k
    }^2
    \le
    C v_k.
  \]
\end{lemma}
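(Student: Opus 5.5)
The bound is a direct rewriting of the variance estimate in \cref{lem:indep-gamma-estimator-moments}, applied to the regression subsample \(I_\gamma\). The estimator \(\hat{\gamma}_k\) in \cref{eq:indep-adaptive-gk} has the same form as \cref{eq:gk-estimator}, with \(n\) replaced by \(n_\gamma\). It therefore satisfies \(\E\hat{\gamma}_k=\gamma_k=\lambda_k\theta_k\) and
\[
  \E\absx{\hat{\gamma}_k-\gamma_k}^2\le \frac{C}{n_\gamma}\left(\lambda_k+\frac{1+\sigma_\delta^2}{m}\right).
\]
The proof of that lemma uses only the subjects in the average, their independence, and \cref{assum:indep-design}. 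It applies verbatim to the index set \(I_\gamma\).

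The next step is the algebraic identity
\[
  \lambda_k\E\absx{\frac{\hat{\gamma}_k}{\lambda_k}-\theta_k}^2=\frac{\E\absx{\hat{\gamma}_k-\gamma_k}^2}{\lambda_k}.
\]
It holds because \(\hat{\gamma}_k/\lambda_k-\theta_k=(\hat{\gamma}_k-\gamma_k)/\lambda_k\), and it is the same manipulation as in \cref{eq:cd-g-risk-identity}. Dividing the variance bound by \(\lambda_k\) gives
\[
  \frac{C}{n_\gamma}\left(1+\frac{1+\sigma_\delta^2}{m\lambda_k}\right).
\]

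To finish, use \(n_\gamma\asymp n\) to replace \(1/n_\gamma\) by \(C/n\). The fixed factor \(1+\sigma_\delta^2\) is absorbed into the generic constant, which may depend on \(\sigma_\delta\). Comparing with \cref{eq:indep-block-noise-scale} gives exactly \(C v_k\), since \(C_V\) is a fixed positive constant and \(C\) may depend on it.

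There is no real obstacle here. The only point to check is that \cref{lem:indep-gamma-estimator-moments} gives a uniform \(C\), not depending on \(k\) or on \((\lambda,\theta)\) within the classes. Its proof used only \(\sum_\ell\lambda_\ell<\infty\), \(\E Y^2\lesssim 1\) uniformly over \(\Theta_s(R_0)\), and Gaussian fourth moments. All of these are uniform over the compact range of \((\alpha,s)\) fixed in this section, because \(\sum_\ell\lambda_\ell\le C_\lambda\sum_\ell(1+\absx{\ell})^{-2\underline{\alpha}}\).
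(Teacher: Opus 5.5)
Your proposal is correct and is essentially the paper's proof, which simply applies \cref{lem:indep-gamma-estimator-moments} with \(n\) replaced by \(n_\gamma\). Your rewriting of the left side as \(\E\absx{\hat{\gamma}_k-\gamma_k}^2/\lambda_k\), the use of \(n_\gamma\asymp n\), and the comparison with \(v_k\) are exactly the steps left implicit in that one-line argument.
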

\begin{proof}
  Apply \cref{lem:indep-gamma-estimator-moments} with \(n\) replaced by \(n_\gamma\).
\end{proof}

For block thresholding, we need a more refined control for a block relative to the block noise scale \(V_\ell\).
Paralleling $\hat{S}_l$ in \cref{eq:indep-adaptive-block-statistics},
we define the block vector
\begin{equation}
  \label{eq:indep-block-noise-coordinate-definition}
  \Delta_{\ell}
  \coloneqq
  \xk{
    \frac{\hat{\gamma}_k - \lambda_k \theta_k}{\sqrt{\lambda_k}}
  }_{k \in B_\ell}.
\end{equation}

\begin{lemma}[Block concentration]
  \label{lem:indep-block-noise-truncation}
  For every fixed \(\tau>0\), after choosing \(C_V\) sufficiently large in
  \cref{eq:indep-block-noise-scale}, there exist constants \(C,c,N_1>0\) such
  that, for all \(n\ge N_1\),
  \begin{equation}
    \E\left[
        \norm{\Delta_\ell}_2^2
        \mathbf{1}\{\norm{\Delta_\ell}_2^2>\tau V_\ell\}
    \right]
    \le
    C V_\ell \omega_\ell,
    \label{eq:indep-block-noise-truncated}
  \end{equation}
  where
  \begin{equation}
    \omega_\ell
    \coloneqq
    e^{-c\absx{B_\ell}}
    +
    e^{-c(nm)^{1/4}}
    +
    e^{-c n^{1/3}}.
    \label{eq:indep-block-truncation-tail}
  \end{equation}
\end{lemma}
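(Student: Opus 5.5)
We reduce the truncated second moment to a tail-probability bound for \(\norm{\Delta_\ell}_2^2\), which we then integrate. Write \(\hat{\gamma}_k-\gamma_k=n_\gamma^{-1}\sum_{i\in I_\gamma}\xi_{ik}\) with \(\xi_{ik}\coloneqq Y_i\bar{Z}_{ik}-\gamma_k\). For each \(i\), stack the normalized vector \(W_i\coloneqq(\xi_{ik}/\sqrt{\lambda_k})_{k\in B_\ell}\). Then \(\Delta_\ell=n_\gamma^{-1}\sum_i W_i\) is an average of i.i.d. centered random vectors in \(\R^{\absx{B_\ell}}\).

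\textbf{Decomposition of a single vector.} Decompose \(\bar{Z}_{ik}=x_{ik}+\eta_{ik}\) as in \cref{eq:indep-pilot-decomposition}, with \(\eta_{ik}=A_{ik}+B_{ik}\) (design averaging plus measurement noise). This gives
\[
  \xi_{ik}=(Y_ix_{ik}-\gamma_k)+Y_i\eta_{ik}.
\]
The first piece is a product of Gaussians. Its covariance across \(k\in B_\ell\), after normalizing by \(\sqrt{\lambda_k}\), has operator norm \(O(1)\) and trace \(O(\absx{B_\ell})\), using \(\E Y^2\lesssim1\) and independence of the scores.

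For the second piece, conditional on \((X_i,\varepsilon_i)\), the vector \((\eta_{ik})_k\) is an average of \(m\) i.i.d. bounded-times-\(\norm{X_i}_\infty\) vectors plus a Gaussian vector. Its directional \(\psi_2\) norm is \(\lesssim(\norm{X_i}_\infty+\sigma_\delta)/\sqrt{m}\), because the functions \(e_k\), \(k\in B_\ell\), are orthonormal, so \(\sum_k u_ke_k(t)\) has bounded \(L^2\) mass. After normalization, each coordinate scale is \(\lesssim(m\lambda_k)^{-1/2}\asymp(n\bar{v}_\ell/C_V)^{1/2}\) up to the dense part. Hence \(\E\norm{W_i}_2^2\lesssim nV_\ell/C_V\), and \(\E\norm{\Delta_\ell}_2^2\le C V_\ell/C_V\). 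Choosing \(C_V\) large makes this mean at most \(\tau V_\ell/4\).

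\textbf{Concentration.} Next we show \(\Pr(\norm{\Delta_\ell}_2^2>\tau V_\ell+x)\) is small. We first condition on the event that \(\max_i\norm{X_i}_\infty\) and \(\max_i\absx{Y_i}\) are at most \(\sqrt{\log n}\) times their scales, using \cref{lem:indep-process-supnorm}.

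The norm \(\norm{\Delta_\ell}_2=\sup_{u\in\bbS^{\absx{B_\ell}-1}}\angx{u,\Delta_\ell}\) is handled by a \(1/2\)-net of size \(5^{\absx{B_\ell}}\). For each fixed \(u\), we apply the sub-Weibull Bernstein inequality \cref{lem:upper-subweibull-bernstein} to \(n^{-1}\sum_i\angx{u,W_i}\), whose summands are products of sub-Gaussian and sub-exponential factors, hence sub-Weibull of order \(1/2\) via \cref{prop:indep-product-tail}. The directional variance is \(\lesssim\bar{v}_\ell\), so the Gaussian regime gives the deviation bound \(\exp(-cx/\bar{v}_\ell)\). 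At level \(x\asymp\tau V_\ell\asymp\tau\absx{B_\ell}\bar{v}_\ell\), this beats the net cardinality and yields \(e^{-c\absx{B_\ell}}\).

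The heavy-tail regime of Bernstein produces terms of the form \(\exp(-c(n x/\text{scale})^{1/2})\). Comparing the scale of the \(\eta\)-part, \((nm\lambda_k)^{-1/2}\)-type quantities, with the cutoff \(K_n\lesssim(nm)^{1/3}\wedge n^{1/2}\) converts these into the remainders \(e^{-c(nm)^{1/4}}\) and \(e^{-cn^{1/3}}\) appearing in \(\omega_\ell\). Here we use \(\lambda_k^{-1}\lesssim K_n^{2\alpha}\) only through the ratio to \(\bar{v}_\ell\), so \(\alpha\) does not enter the exponent.

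\textbf{Integration.} Write
\[
  \E[\norm{\Delta_\ell}_2^2\mathbf{1}\{\cdot\}]=\tau V_\ell\Pr(\norm{\Delta_\ell}_2^2>\tau V_\ell)+\int_{\tau V_\ell}^\infty\Pr(\norm{\Delta_\ell}_2^2>x)\dd x.
\]
The tail bounds above integrate to \(CV_\ell\omega_\ell\). The complement of the truncation event for \(\norm{X_i}_\infty\) and \(Y_i\) is handled by Cauchy--Schwarz with a crude fourth-moment bound \(\E\norm{\Delta_\ell}_2^4\lesssim\mathrm{poly}(n)V_\ell^2\). Its probability is \(O(n^{-M})\), which is absorbed after enlarging constants. (If needed, one can instead truncate at \(n^{1/6}\)-type levels, which yields exactly the \(e^{-cn^{1/3}}\) term.)

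\textbf{Main obstacle.} The main obstacle is the second step: obtaining a Bernstein bound for a sum of order-\(1/2\) sub-Weibull vectors whose heavy-tail remainder is uniform over blocks up to \(K_n\) and matches the stated \(\omega_\ell\). I would first try conditioning on the latent curves, so that \(\eta\) becomes genuinely sub-Gaussian and the net argument runs in the clean Gaussian regime, leaving only the scalar sub-exponential factors \(Y_i\) to be handled by truncation.
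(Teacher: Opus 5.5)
\textbf{Verdict: genuine gap.} Your decomposition \(\xi_{ik}=(Y_ix_{ik}-\gamma_k)+Y_i\eta_{ik}\) is the paper's \(\Delta_\ell=\Delta_\ell^a+\Delta_\ell^b\). For the structural part, your plan is the paper's: directional \(\psi_1\) control of a degree-two Gaussian chaos, a \(1/2\)-net, Bernstein, and tail integration. The gap is in the sampling part. Your claim that the normalized \(\angx{u,\eta_i}\) is sub-Gaussian at scale \((\norm{X_i}_\infty+\sigma_\delta)/\sqrt{m\lambda_{2^\ell}}\), because \(\sum_k u_ke_k\) has bounded \(L^2\) mass, is false. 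Write \(\Psi_{\ell,u}=\sum_{k\in B_\ell}u_ke_k/\sqrt{\lambda_k}\). Orthonormality gives only the variance bound \(\norm{\Psi_{\ell,u}}_{L^2}^2\lesssim 1/\lambda_{2^\ell}\). Hoeffding constants and the Bernstein range are governed by \(\sup_t\absx{\Psi_{\ell,u}(t)}\), which is of order \(\sqrt{\absx{B_\ell}/\lambda_{2^\ell}}\) (spread \(u\) over the cosines and look near \(t=0\)). So conditioning on the curves makes \(\eta\) sub-Gaussian only at a scale \(\sqrt{\absx{B_\ell}}\) too large, and a net argument at that scale loses a factor \(\absx{B_\ell}\) in \(\norm{\Delta_\ell^b}_2^2\).

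A correct per-direction Bernstein bound, with envelope \(L\) for \(\absx{Y_i}(\norm{X_i}_\infty+\absx{\delta_{ij}})\), gives at level \(\norm{\Delta_\ell^b}_2^2\asymp sV_\ell^b\) an exponent of order \(\min\{s\absx{B_\ell},\sqrt{s\,nm}/L\}\). The union bound over \(5^{\absx{B_\ell}}\) net points therefore closes only if \(\absx{B_\ell}\lesssim\sqrt{nm}/L\). Getting the remainder \(e^{-c(nm)^{1/4}}\) forces \(L\gtrsim(nm)^{1/4}\), which rules out every block with \(\absx{B_\ell}\gg(nm)^{1/4}\). Such blocks are active when \(m\) is small relative to \(n\); for bounded \(m\), \(K_n\asymp n^{1/3}\gg n^{1/4}\). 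Note also that \cref{lem:upper-subweibull-bernstein} is not variance-sensitive: its Gaussian regime is in the Orlicz scale, not the directional variance you invoke.

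Your bad-event step also misses the stated \(\omega_\ell\). A \(\sqrt{\log n}\) truncation leaves \(V_\ell\,O(n^{-M})\), but \(\omega_\ell\) is super-polynomially small for large blocks, so this is not absorbed. Raising the truncation to \(n^{1/6}\)-type levels only worsens the net condition \(\absx{B_\ell}\lesssim\sqrt{nm}/L\). A careful version of your route with \(L\asymp(nm)^{1/6}\) does close, since \(K_n\lesssim(nm)^{1/3}\). But it yields \(e^{-c(nm)^{1/6}}\) in place of \(e^{-c(nm)^{1/4}}\): enough for \cref{thm:indep-adaptive-upper}, not the lemma as stated.

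The missing idea is to avoid the net for \(\Delta_\ell^b\). The paper conditions on \(\mathcal{F}_{X,\delta}=\sigma\{X_i,Y_i,\delta_{ij}\}\), so only the design points are random. The centered summands are then independent over all \(nm\) pairs and bounded on \(\mathcal{G}=\{H_{n,m}\le C_H\}\cap\{L_{n,m}\le L_0\}\). The paper then applies the Hilbert-space Bousquet inequality (\cref{lem:upper-hilbert-bousquet}) to \(\norm{\widetilde\Delta_\ell}_2\) directly. There \(\absx{B_\ell}\) enters only through the conditional mean and the envelope, while the weak variance sees only \(\norm{\Psi_{\ell,u}}_{L^2}\). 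This gives tail exponent \((\absx{B_\ell}\wedge(nm)^{1/4})\sqrt{s}\) with no \(5^{\absx{B_\ell}}\) cost. The bad events contribute \(e^{-c(nm)^{1/4}}\) (union bound for \(L_{n,m}\) at level \((nm)^{1/4}\)) and \(e^{-cn^{1/3}}\) (truncated sub-Weibull Bernstein for \(H_{n,m}\) with truncation \(\asymp n^{2/3}\)).
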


Expanding the definition of $\Delta_\ell$, we have
\[
  \Delta_{\ell,k}
  =
  \frac{1}{n_\gamma}\sum_{i\in I_\gamma}
  \xk{
    \frac{Y_i}{m}\sum_{j=1}^m
    (X_i(t_{ij})+\delta_{ij})\frac{e_k(t_{ij})}{\sqrt{\lambda_k}}
    -
    \sqrt{\lambda_k}\theta_k
  }.
\]
We further decompose it into a structural component and a noise component.
Note that \(\int_0^1 X_i(t)e_k(t)\dd t=x_{ik}\).
We use decomposition
\begin{equation}
  \Delta_\ell
  =
  \Delta_\ell^{a}
  +
  \Delta_\ell^{b},
  \label{eq:indep-block-noise-component-decomposition}
\end{equation}
where
summands are
\begin{align}
  \Delta^{a}_{\ell,k}
  &\coloneqq
  \frac{1}{n_\gamma}\sum_{i\in I_\gamma}
  \xk{Y_i \frac{x_{ik}}{\sqrt{\lambda_k}}
  -
  \sqrt{\lambda_k}\theta_k}
  \label{eq:indep-block-structural-component-definition}
  \\
  \Delta^{b}_{\ell,k}
  &\coloneqq
  \frac{1}{n_\gamma}\sum_{i\in I_\gamma}
  \frac{Y_i}{m \sqrt{\lambda_k}} \sum_{j=1}^m
  \zk{
    \xk{X_i(t_{ij}) + \delta_{ij}}e_k(t_{ij}) - x_{ik}
  }.
  \label{eq:indep-block-B-component-definition}
\end{align}

For later componentwise estimates, define the natural block scales
\[
  V_\ell^{a}
  \coloneqq
  \frac{\absx{B_\ell}}{n},
  \qquad
  V_\ell^{b}
  \coloneqq
  \frac{1}{nm}\sum_{k\in B_\ell} \frac{1}{\lambda_k}.
\]
Then \(V_\ell^{a}+V_\ell^{b} \lesssim V_\ell\), with the fixed
constant \(C_V\) in \cref{eq:indep-block-noise-scale} absorbed into the
comparison constant.
We shall repeatedly use the following deterministic reduction: if \(U=U_1+U_2\)
in a Hilbert space, then, for every \(t>0\),
\begin{equation}
  \norm{U}^2 \mathbf{1}\{\norm{U}^2>t\}
  \le
  4\norm{U_1}^2 \mathbf{1}\{\norm{U_1}^2>t/4\}
  +
  4\norm{U_2}^2 \mathbf{1}\{\norm{U_2}^2>t/4\}.
  \label{eq:indep-two-term-truncation-reduction}
\end{equation}
This uses no independence; in applications we may further drop either indicator
on the right-hand side.

\begin{proposition}[Structural block component]
  \label{prop:indep-block-structural-truncation}
  There exist constants \(\tau_A,C,c,N_A>0\), depending only on
  \((R_0,\sigma_\varepsilon,C_\lambda)\), such that, for all \(n\ge N_A\), every
  active block \(B_\ell\), and every \(\tau\ge\tau_A\),
  \begin{equation}
    \E\left[
        \norm{\Delta_\ell^{a}}_2^2
        \mathbf{1}\{\norm{\Delta_\ell^{a}}_2^2>\tau V_\ell^{a}\}
    \right]
    \le
    C V_\ell^{a} e^{-c\absx{B_\ell}}.
    \label{eq:indep-block-structural-truncated}
  \end{equation}
\end{proposition}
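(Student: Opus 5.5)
The plan is to express \(\Delta_\ell^{a}\) as an average of i.i.d. random vectors in \(\R^{B_\ell}\) and to control it by a truncated-moment argument built on a sub-Weibull tail bound for \(\norm{\Delta_\ell^a}_2\). For \(i\in I_\gamma\) set
\[
  W_{i}\coloneqq\Bigl(Y_i\,\tfrac{x_{ik}}{\sqrt{\lambda_k}}-\sqrt{\lambda_k}\theta_k\Bigr)_{k\in B_\ell},
  \qquad
  \Delta_\ell^a=\frac{1}{n_\gamma}\sum_{i\in I_\gamma}W_i .
\]
Write \(\xi_{ik}\coloneqq x_{ik}/\sqrt{\lambda_k}\); these are i.i.d. \(\mathcal N(0,1)\), and \(Y_i=\sum_r\sqrt{\lambda_r}\theta_r\xi_{ir}+\varepsilon_i\). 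The vector \(W_i\) is centered. For any unit \(u\in\R^{B_\ell}\), \(\angx{u,W_i}=Y_i\angx{u,\xi_{i,B_\ell}}-\E[\cdot]\) is a centered product of two Gaussians. The first has variance \(\E Y^2\le C(R_0,\sigma_\varepsilon,C_\lambda)\), using \(\sum\lambda_r\theta_r^2\le C_\lambda R_0^2\). The second has unit variance. Hence \(\norm{W_i}_{\psi_1}\le C\) in the directional sense (cf. \cref{prop:indep-product-tail}), and \(\E\norm{W_i}_2^2\le C\absx{B_\ell}\), so that \(\E\norm{\Delta_\ell^a}_2^2\le C\absx{B_\ell}/n\asymp CV_\ell^a\).

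For the tail, I would exploit the Gaussian structure conditionally instead of a generic vector Bernstein bound. Decompose \(Y_i=\rho^\top\xi_{i,B_\ell}+\tilde Y_i\), with \(\tilde Y_i\) independent of \(\xi_{i,B_\ell}\) and \(\norm{\rho}_2^2=\sum_{k\in B_\ell}\lambda_k\theta_k^2\le C\). Then \(W_i=\tilde Y_i\xi_{i,B_\ell}+(\xi\xi^\top-I)\rho\). Conditionally on \((\tilde Y_i)_i\), the first average \(\frac1{n}\sum\tilde Y_i\xi_{i,B_\ell}\) is Gaussian in \(\R^{B_\ell}\) with covariance \(\frac{1}{n^2}\sum\tilde Y_i^2\, I\). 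By Gaussian norm concentration (Borell–TIS), for \(t\ge2\),
\[
  \Pr\Bigl(\bigl\|\tfrac1n\textstyle\sum\tilde Y_i\xi_i\bigr\|_2^2> t\,\bar{Y}^2\tfrac{\absx{B_\ell}}{n}\Bigr)\le e^{-c t\absx{B_\ell}},
  \qquad \bar Y^2\coloneqq\tfrac1n\textstyle\sum\tilde Y_i^2 .
\]
The scalar \(\bar Y^2\) is an average of sub-exponential variables with bounded mean. A Bernstein bound gives \(\Pr(\bar Y^2>2\E\tilde Y^2+s)\le e^{-cns}\), and since \(\absx{B_\ell}\le K_n\lesssim n^{1/2}\), this failure probability is at most \(e^{-c\absx{B_\ell}}\) times a harmless factor.

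The second piece is \(\frac1n\sum(\xi_i\xi_i^\top-I)\rho\). Its \(k\)-th coordinate is \(\frac1n\sum(\xi_{ik}^2-1)\rho_k+\frac1n\sum\xi_{ik}\sum_{k'\ne k}\xi_{ik'}\rho_{k'}\). I would write this as \(\frac1n\sum\xi_i\,(\xi_i^\top\rho)-\rho\) and treat it like the first piece. The scalar \(\xi_i^\top\rho\) is Gaussian with variance \(\le C\), but it is not independent of \(\xi_i\). To handle this, decompose \(\xi_i=\frac{\rho}{\norm{\rho}^2}(\xi_i^\top\rho)+P^\perp\xi_i\). The component along \(P^\perp\) is again conditionally Gaussian given the scalars \(\xi_i^\top\rho\), so the same argument applies. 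The component along \(\rho\) is a one-dimensional sub-exponential average of size \(O(\norm\rho^2/\sqrt n)\) with tails \(e^{-c\min(t^2,t)n}\). Combining the pieces through \cref{eq:indep-two-term-truncation-reduction}, I expect
\[
  \Pr\bigl(\norm{\Delta_\ell^a}_2^2>tV_\ell^a\bigr)\le C e^{-ct\absx{B_\ell}}
  \qquad\text{for } t\ge\tau_A .
\]

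Finally, integrate the tail bound:
\[
  \E\bigl[\norm{\Delta}^2\mathbf 1\{\norm\Delta^2>\tau V\}\bigr]
  =\tau V\Pr(\norm\Delta^2>\tau V)+\int_{\tau V}^\infty\Pr(\norm\Delta^2>x)\,dx ,
\]
which gives at most \(CV_\ell^a e^{-c\absx{B_\ell}}\). The main obstacle is making the tail uniform over large \(t\), beyond the range where the Bernstein control of \(\bar Y^2\) holds. For very large \(t\) I would fall back on the crude bound \(\norm{\Delta}_2^2\le\absx{B_\ell}\max_k\Delta_k^2\) together with the coordinatewise \(\psi_{1}\)-tails and a union bound. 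This yields \(e^{-c\sqrt{tn\absx{B_\ell}}}\)-type decay, which is more than enough once \(t\ge \tau_A\), since \(\absx{B_\ell}\lesssim n^{1/2}\).
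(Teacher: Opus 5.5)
Your argument is correct, but it reaches the tail bound by a different route from the paper. The paper uses the Gaussian structure only once. It observes that each marginal \(\angx{u,\eta_{i,\ell}}\) is a degree-two Gaussian chaos with bounded \(L^2\) norm, so its \(\psi_1\)-norm is at most \(C\) uniformly in \(u\). It then applies the net-based vector Bernstein inequality \cref{lem:upper-vector-bernstein-psi-one} to get \(\Pr\{\norm{\Delta_\ell^a}_2^2>Cn^{-1}(x+x^2/n)\}\le Ce^{-cx}\) for all \(x\ge\absx{B_\ell}\). This is fed directly into the generic tail-integration lemma \cref{lem:upper-bernstein-tail-integration} with \(b=M=1/n\), \(q=2\). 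Because that single mixed sub-Gaussian/sub-exponential tail holds for every \(x\), no regime splitting is needed.

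You instead condition. Splitting off \(\tilde Y_i\), and for the quadratic piece the scalar \(\xi_i^\top\rho=\norm{\rho}_2 g_i\), reduces everything to scaled \(\chi^2\) variables. What remains are scalar Bernstein bounds for \(\bar Y^2\) and for the rank-one term of norm \(\norm{\rho}_2\absx{\frac1n\sum_i(g_i^2-1)}\). This avoids nets and makes the chaos structure explicit. However, it only gives \(\Pr(\norm{\Delta_\ell^a}_2^2>tV_\ell^a)\le Ce^{-ct\absx{B_\ell}}\) for \(t\lesssim n/\absx{B_\ell}\). Beyond that, the \(e^{-cn}\) failure probability for \(\bar Y^2\) and the genuinely sub-exponential tail of \(g_i^2-1\) dominate, so the fallback you describe is genuinely needed.

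The coordinatewise union bound in that fallback gives \(\absx{B_\ell}\exp\{-c\min(t,\sqrt{tn})\}\), not \(e^{-c\sqrt{tn\absx{B_\ell}}}\). This still suffices, because you switch at \(t\asymp n/\absx{B_\ell}\gtrsim\absx{B_\ell}\) (using \(\absx{B_\ell}\lesssim n^{1/2}\)). The remaining integral is then \(O(V_\ell^a\absx{B_\ell}e^{-c\absx{B_\ell}})\), which is absorbed after shrinking \(c\).

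In short, your route is more hands-on and relies on exact Gaussianity of the scores and the response. The paper's route needs only directional \(\psi_1\) control, which is why the same two auxiliary lemmas also handle the common-design block bound.
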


\begin{proof}
  Put \(p_\ell=\absx{B_\ell}\). For \(i\in I_\gamma\), define
  \(\eta_{i,\ell} \in\R^{B_\ell}\) by
  \[
    (\eta_{i,\ell})_k
    \coloneqq
    Y_i \frac{x_{ik}}{\sqrt{\lambda_k}} - \sqrt{\lambda_k}\theta_k,
    \qquad k\in B_\ell.
  \]
  Then \(\Delta_\ell^{a}=n_\gamma^{-1} \sum_{i\in I_\gamma} \eta_{i,\ell}\).
  To make the hypercontractive step explicit, fix
  \(u\in\bbS^{p_\ell-1}\) and put
  \[
    W_{i,u}
    \coloneqq
    \sum_{k\in B_\ell} u_k \frac{x_{ik}}{\sqrt{\lambda_k}},
    \qquad
    T_i
    \coloneqq
    Y_i-\varepsilon_i
    =
    \sum_{r\in\bbZ} x_{ir} \theta_r.
  \]
  Then
  \[
    \angx{u,\eta_{i,\ell}}
    =
    \sum_{k\in B_\ell} u_k
    \left[
      Y_i \frac{x_{ik}}{\sqrt{\lambda_k}}
      -
      \sqrt{\lambda_k}\theta_k
    \right]
    =
    Y_i W_{i,u}-\E(Y_i W_{i,u}).
  \]
  The normalized score \(W_{i,u}\) is a centered Gaussian linear form with
  \(\E\absx{W_{i,u}}^2=1\). Also
  \[
    \E\absx{T_i}^2
    =
    \sum_{r\in\bbZ} \lambda_r \absx{\theta_r}^2
    \le
    C_\lambda R_0^2.
  \]
  Since \(T_i\), \(W_{i,u}\), and \(\varepsilon_i\) are jointly Gaussian linear
  forms, Gaussian fourth-moment equivalence gives
  \[
    \E\absx{Y_i W_{i,u}}^2
    \le
    2\bigl(\E\absx{T_i}^4 \bigr)^{1/2}
    \bigl(\E\absx{W_{i,u}}^4 \bigr)^{1/2}
    +
    2\sigma_\varepsilon^2 \E\absx{W_{i,u}}^2
    \le
    C(R_0,\sigma_\varepsilon,C_\lambda).
  \]
  Therefore \(Y_i W_{i,u}-\E(Y_i W_{i,u})\) is a centered Gaussian polynomial of
  degree at most two with \(L^2\)-norm bounded by
  \(C(R_0,\sigma_\varepsilon,C_\lambda)\), uniformly in \(u\) and \(B_\ell\).
  Hypercontractivity for degree-two Gaussian chaoses gives, for \(q\ge2\),
  \[
    \norm{Y_i W_{i,u}-\E(Y_i W_{i,u})}_{L^q}
    \le
    Cq \norm{Y_i W_{i,u}-\E(Y_i W_{i,u})}_{L^2}.
  \]
  By the moment characterization of the \(\psi_1\) norm, uniformly in \(u\),
  \[
    \norm{\angx{u,\eta_{i,\ell}}}_{\psi_1} \le C,
    \qquad u\in\bbS^{p_\ell-1}.
  \]
  Applying \cref{lem:upper-vector-bernstein-psi-one} to
  \((\eta_{i,\ell})_{i\in I_\gamma}\), using
  \(n_\gamma \asymp n\), gives, for a constant \(C_A\) and every \(x\ge1\),
  \[
    \Pr\left\{
         \norm{\Delta_\ell^{a}}_2
      >
      C_A \left(
           \sqrt{\frac{p_\ell+x}{n}}
           +
           \frac{p_\ell+x}{n}
      \right)
    \right\}
    \le
    Ce^{-cx}.
  \]
  For \(x\ge p_\ell\), and since active blocks satisfy \(p_\ell \le n\) for all
  large \(n\), this implies
  \[
    \Pr\left\{
         \norm{\Delta_\ell^{a}}_2^2
      >
      C_A \frac{1}{n}\left(x+\frac{x^2}{n}\right)
    \right\}
    \le
    Ce^{-cx}.
  \]
  Apply \cref{lem:upper-bernstein-tail-integration} with
  \[
    Z=\norm{\Delta_\ell^{a}}_2^2,\qquad
    p=p_\ell,\qquad
    b=\frac{1}{n},\qquad
    M=\frac{1}{n},\qquad
    q=2,\qquad
    V=V_\ell^{a}.
  \]
  Since \(p_\ell \le n\),
  \[
    \frac{1}{n}\left(p_\ell+\frac{p_\ell^2}{n}\right)
    \le
    2V_\ell^{a}.
  \]
  Therefore the hypotheses of
  \cref{lem:upper-bernstein-tail-integration} hold for all
  \(\tau\ge\tau_A\), with \(\tau_A\) sufficiently large. This proves
  \cref{eq:indep-block-structural-truncated}.
\end{proof}

\begin{proposition}[Sampling block component]
  \label{prop:indep-block-design-truncation}
  There exist constants \(\tau_X,C,c,N_X>0\) such that, for all
  \(n\ge N_X\), every active block \(B_\ell\), and every \(\tau\ge\tau_X\),
  \begin{equation}
    \E\left[
        \norm{\Delta_\ell^{b}}_2^2
        \mathbf{1}\{\norm{\Delta_\ell^{b}}_2^2>\tau V_\ell^{b}\}
    \right]
    \le
    C V_\ell^{b}
    \left[
      e^{-c\absx{B_\ell}}
      +
      e^{-c(nm)^{1/4}}
      +
      e^{-c n^{1/3}}
    \right].
    \label{eq:indep-block-design-truncated}
  \end{equation}
\end{proposition}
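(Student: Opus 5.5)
The natural route mirrors the structural-component proof (Proposition on \(\Delta_\ell^a\)): establish a Bernstein-type tail for \(\norm{\Delta_\ell^b}_2\) and feed it into \cref{lem:upper-bernstein-tail-integration}. The difference is that \(\Delta_\ell^b\) has heavier tails, because \(Y_i\) multiplies a design/noise average whose conditional scale involves \(\norm{X_i}_\infty\). Write \(\Delta_\ell^b=n_\gamma^{-1}\sum_{i\in I_\gamma}\xi_{i,\ell}\) with
\[
  (\xi_{i,\ell})_k = \frac{Y_i}{\sqrt{\lambda_k}}\,\eta_{ik},\qquad
  \eta_{ik}=\frac1m\sum_{j=1}^m\zk{(X_i(t_{ij})+\delta_{ij})e_k(t_{ij})-x_{ik}}.
\]
Conditional on \((X_i,\varepsilon_i)\), \(\eta_{ik}\) is a centered average of \(m\) i.i.d. terms. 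The vectors \(\xi_{i,\ell}\) are i.i.d. and centered by \cref{eq:random-design-score-projection}.

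\textbf{Step 1: directional Orlicz bound.} For \(u\in\bbS^{p_\ell-1}\), set \(w_k=u_k/\sqrt{\lambda_k}\). Then \(\angx{u,\xi_{i,\ell}}=Y_i\, m^{-1}\sum_j\zk{Z_{ij}\sum_k w_ke_k(t_{ij})-\sum_k w_kx_{ik}}\). Conditionally on \(X_i\), the summands are independent and centered. Their conditional variance is at most \(C(\norm{X_i}_\infty^2+\sigma_\delta^2)\norm{w}_2^2\), by orthonormality of the \(e_k\) under uniform \(t\). They are bounded in magnitude by \(C(\norm{X_i}_\infty+\absx{\delta_{ij}})\sqrt{p_\ell}\norm{w}_2\).

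Conditional Bernstein then gives a mixed sub-Gaussian/sub-exponential tail. Its variance scale is \(\norm{w}_2^2/m\le\max_k\lambda_k^{-1}/m\asymp \bar v_\ell n\), and its range scale is \(\sqrt{p_\ell}\norm{w}_2/m\). Multiplying by \(Y_i\) (Gaussian) and integrating out \(\norm{X_i}_\infty\) with \cref{lem:indep-process-supnorm} yields a sub-Weibull bound of order \(1/2\). This bound holds after truncation at a level where the range part is negligible.

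I would rather split: on the event that all \(\absx{Y_i}\), \(\norm{X_i}_\infty\), and \(\absx{\delta_{ij}}\) are at most \(\log\)-type levels, the summands are bounded. The complement has probability \(e^{-cn^{1/3}}\) or \(e^{-c(nm)^{1/4}}\), which is where those two terms in \cref{eq:indep-block-design-truncated} come from. The bounded part gets a Bernstein tail directly.

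\textbf{Step 2: vector concentration.} Apply a vector Bernstein bound over \(i\) and \(j\) jointly, treating the \(nm\) pairs \((i,j)\) as conditionally independent given \(X\) and \(\varepsilon\). Use \cref{lem:upper-vector-bernstein-psi-one}, or a covering argument over an \(1/2\)-net of \(\bbS^{p_\ell-1}\) of size \(5^{p_\ell}\). This yields
\[
  \Pr\dk{\norm{\Delta_\ell^b}_2^2> C V_\ell^b\,\xk{1+\tfrac{x}{p_\ell}}+ C\bar v_\ell\, x\,\rho_{n,m}(x)}\le Ce^{-cx}+Ce^{-c(nm)^{1/4}}+Ce^{-cn^{1/3}},
\]
for \(x\ge p_\ell\). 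Here \(V_\ell^b\asymp p_\ell\bar v^b_\ell\) by the dyadic comparability \cref{eq:indep-block-comparability}, and \(\rho_{n,m}(x)\) is the lower-order sub-exponential correction, which is \(\lesssim 1\) for active blocks because \(K_n\lesssim (nm)^{1/3}\).

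\textbf{Step 3: integrate the tail.} Apply \cref{lem:upper-bernstein-tail-integration} with \(Z=\norm{\Delta_\ell^b}_2^2\), \(V=V_\ell^b\), and \(\tau_X\) large. The pure tail contributes \(V_\ell^b e^{-c\absx{B_\ell}}\). On the bad truncation events, Cauchy--Schwarz gives \(\E[Z^2]^{1/2}\Pr(\text{bad})^{1/2}\), where \(\E Z^2\) is polynomial in \(n\) and \(m\). The resulting polynomial factor is absorbed into \(e^{-c(nm)^{1/4}}+e^{-cn^{1/3}}\) after reducing \(c\). Keeping the factor relative to \(V_\ell^b\) is fine since \(V_\ell^b\ge (nm)^{-1}\).

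\textbf{Main obstacle.} The main obstacle is Step 2. The range term \(\sqrt{p_\ell}\norm{w}_2/m\) must remain dominated by the variance term uniformly over active blocks. This is exactly where the cutoff \(K_n\lesssim(nm)^{1/3}\wedge n^{1/2}\) must be used, together with a careful choice of the truncation levels for \(Y_i\) and \(\norm{X_i}_\infty\).
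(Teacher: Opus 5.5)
Your outer frame (truncate, get a Bernstein-type block tail, feed it to \cref{lem:upper-bernstein-tail-integration}, handle bad events by Cauchy--Schwarz) is sensible. Conditioning only on \((X,\varepsilon)\), so that the \(\delta_{ij}\) stay random, is a legitimate alternative to the paper's split \(\Delta_\ell^b=\widetilde\Delta_\ell+Q_\ell\). The gap is Step 2, which you leave open, and the tools you name do not close it at the stated rate.

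\Cref{lem:upper-vector-bernstein-psi-one} uses a single scale \(K\) for both variance and range. Applied over subjects with your Step 1 bound, whose range part is \(\sqrt{p_\ell}\norm{w}_2/m\), it gives \(\norm{\Delta_\ell^b}_2^2\lesssim V_\ell^b(1+p_\ell/m)\) at \(x=p_\ell\). The cutoff \(p_\ell\le K_n\lesssim(nm)^{1/3}\) does not imply \(p_\ell\lesssim m\). Applied over the \(nm\) pairs, \(K\) is the range itself, which is worse still.

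A net with scalar Bernstein pays the range in all \(5^{p_\ell}\) directions. Relative to \(\sqrt{V_\ell^b}\) this is a term \((p_\ell+x)L/\sqrt{nm}\), where \(L\) is your truncation level for \(\absx{Y_i}(\norm{X_i}_\infty+\absx{\delta_{ij}})\). For \(p_\ell\asymp(nm)^{1/3}\) this forces \(L\lesssim(nm)^{1/6}\), so the envelope bad event is only \(e^{-c(nm)^{1/6}}\)-small, not \(e^{-c(nm)^{1/4}}\). With the log-type levels you propose it is only polynomially small. Then your Step 3 claim that the polynomial factor \(\E[Z^2]^{1/2}\) is absorbed into \(e^{-c(nm)^{1/4}}+e^{-cn^{1/3}}\) is false, because \(\Pr(\text{bad})\) is itself only polynomial.

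There is a second problem. Given \((X,\varepsilon)\), the directional variance is governed by the average \(n^{-1}\sum_i Y_i^2(\norm{X_i}_\infty^2+\sigma_\delta^2)\). Bounding it through individual log-level truncations costs a polylog factor, which breaks the fixed threshold \(\tau V_\ell^b\). You need this average to be \(O(1)\) off an event of probability \(e^{-cn^{1/3}}\). That is the paper's \(H_{n,m}\), controlled via \cref{lem:upper-subweibull-bernstein} with truncation at \(n^{2/3}\), and it is the actual source of the \(e^{-cn^{1/3}}\) term. As written, your route yields at most a weaker variant (a polynomial remainder, or \((nm)^{1/6}\) in place of \((nm)^{1/4}\)). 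That might suffice downstream, but it is not the proposition.

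The missing idea is to avoid paying the dimension in the range term. The paper conditions on \((X,Y,\delta)\), so only the design points are random and the summands are bounded in \(\ell^2\) by \(R\asymp L_{n,m}\sqrt{p_\ell/\lambda_{2^\ell}}/(nm)\). The Hilbert-space Bousquet inequality (\cref{lem:upper-hilbert-bousquet}) then charges only \(Rx\). Relative to \(\sqrt{V_\ell^b}\), this is \(xL_{n,m}/\sqrt{nm}\asymp x(nm)^{-1/4}\) on \(\{L_{n,m}\lesssim(nm)^{1/4}\}\), and \(p_\ell\) cancels. This gives \(\Pr\{\norm{\widetilde\Delta_\ell}_2^2>sV_\ell^b,\ \mathcal{G}\}\le Ce^{-c\kappa_\ell\sqrt{s}}\) with \(\kappa_\ell=p_\ell\wedge(nm)^{1/4}\), and \(e^{-c\kappa_\ell}\le e^{-cp_\ell}+e^{-c(nm)^{1/4}}\). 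Blocks larger than \((nm)^{1/4}\) are thus absorbed by the \(e^{-c(nm)^{1/4}}\) term. The cutoff \(K_n\lesssim(nm)^{1/3}\) that you expected to need is never used.
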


\begin{proof}
  Put \(p_\ell=\absx{B_\ell}\), \(\lambda_\ell^\circ=\lambda_{2^\ell}\), and
  \(N_\gamma=n_\gamma m\). Throughout the proof we use
  \(n_\gamma \asymp n\) and dyadic comparability of \(\lambda_k\) on
  \(B_\ell\). Define
  \[
    \Phi_\ell(t)
    =
    \left(\frac{e_k(t)}{\sqrt{\lambda_k}}\right)_{k\in B_\ell},
    \qquad
    \bar{\Phi}_\ell=\int_0^1 \Phi_\ell(t)\dd t,
    \qquad
    G_{i,\ell}=\int_0^1 X_i(t)\Phi_\ell(t)\dd t .
  \]
  Then \(G_{i,\ell}=(x_{ik}/\sqrt{\lambda_k})_{k\in B_\ell}\). Decompose
  \[
    \Delta_\ell^{b}=\widetilde\Delta_\ell+Q_\ell,
  \]
  with
  \[
    \widetilde\Delta_\ell
    =
    \frac{1}{N_\gamma}\sum_{i\in I_\gamma} \sum_{j=1}^m
    Y_i \bigl[
      X_i(t_{ij})\Phi_\ell(t_{ij})-G_{i,\ell}
      +\delta_{ij} \bigl(\Phi_\ell(t_{ij})-\bar{\Phi}_\ell \bigr)
      \bigr]
  \]
  and
  \[
    Q_\ell
    =
    \bar{\Phi}_\ell \frac{1}{N_\gamma}
    \sum_{i\in I_\gamma} \sum_{j=1}^m Y_i \delta_{ij}.
  \]
  This is exactly the preceding definition of \(\Delta_\ell^{b}\), with
  \(\delta_{ij} \Phi_\ell(t_{ij})\) split into
  \(\delta_{ij}(\Phi_\ell(t_{ij})-\bar{\Phi}_\ell)+\delta_{ij} \bar{\Phi}_\ell\).

  Since the trigonometric basis is uniformly bounded,
  \begin{equation}
    \norm{\Phi_\ell(t)}_2^2
    =
    \sum_{k\in B_\ell} \frac{1}{\lambda_k}
    \lesssim
    \frac{p_\ell}{\lambda_\ell^\circ},
    \qquad
    \norm{\Phi_\ell(t)-\bar{\Phi}_\ell}_2^2
    \lesssim
    \frac{p_\ell}{\lambda_\ell^\circ}.
    \label{eq:indep-design-component-Phi-envelope}
  \end{equation}
  Condition on
  \[
    \mathcal{F}_{X,\delta}
    =
    \sigma\{X_i,Y_i,\delta_{ij}:i\in I_\gamma,\ 1\le j\le m\}.
  \]
  In this proof, \(\E_t\) and \(\Pr_t\) denote conditional expectation and
  probability over the design points \(t_{ij}\), given \(\mathcal{F}_{X,\delta}\).
  Define
  \[
    \zeta_{ij}
    =
    Y_i \bigl[
      X_i(t_{ij})\Phi_\ell(t_{ij})-G_{i,\ell}
      +
      \delta_{ij} \bigl(\Phi_\ell(t_{ij})-\bar{\Phi}_\ell \bigr)
      \bigr],
    \qquad
    \xi_{ij}=N_\gamma^{-1} \zeta_{ij}.
  \]
  Then
  \(\widetilde\Delta_\ell=\sum_{i\in I_\gamma} \sum_{j=1}^m \xi_{ij}\),
  and, conditionally on \(\mathcal{F}_{X,\delta}\), the
  \(\xi_{ij}\)'s are independent and centered.

  Set
  \[
    H_{n,m}
    =
    \frac{1}{n_\gamma m}\sum_{i\in I_\gamma} \sum_{j=1}^m
    Y_i^2 \left(\norm{X_i}_\infty^2+\delta_{ij}^2 \right),
    \qquad
    L_{n,m}
    =
    \max_{i\in I_\gamma,\ 1\le j\le m}
    \absx{Y_i}\left(\norm{X_i}_\infty+\absx{\delta_{ij}}\right).
  \]
  We first compute the conditional quantities required by Bousquet. Since the
  \(\xi_{ij}\)'s are conditionally independent and centered,
  \[
    \E_t \norm{\widetilde\Delta_\ell}_2^2
    =
    \sum_{i\in I_\gamma} \sum_{j=1}^m
    \E_t \norm{\xi_{ij}}_2^2.
  \]
  By \cref{eq:indep-design-component-Phi-envelope},
  \[
    \sum_{i,j} \E_t \norm{\zeta_{ij}}_2^2
    \le
    C\frac{p_\ell}{\lambda_\ell^\circ}
    \sum_{i,j} Y_i^2 \left(\norm{X_i}_\infty^2+\delta_{ij}^2 \right),
  \]
  where subtracting \(G_{i,\ell}\) can only reduce the \(X_i(t)\Phi_\ell(t)\)
  second moment. Hence
  \begin{equation}
    \E_t \norm{\widetilde\Delta_\ell}_2^2
    \le
    C H_{n,m} \frac{p_\ell}{nm\lambda_\ell^\circ},
    \qquad
    \E_t \norm{\widetilde\Delta_\ell}_2
    \le
    C\sqrt{H_{n,m} \frac{p_\ell}{nm\lambda_\ell^\circ}}.
    \label{eq:indep-design-component-conditional-second}
  \end{equation}

  For the weak variance, for every \(\norm{u}_2=1\),
  define the scalar block function
  \[
    \Psi_{\ell,u}(t)
    =
    \sum_{k\in B_\ell}
    u_k \frac{e_k(t)}{\sqrt{\lambda_k}},
    \qquad
    \bar{\Psi}_{\ell,u}=\int_0^1 \Psi_{\ell,u}(t)\dd t .
  \]
  By Parseval,
  \begin{equation}
    \norm{\Psi_{\ell,u}}_{L^2([0,1])}^2
    \lesssim
    \frac{1}{\lambda_\ell^\circ},
    \qquad
    \norm{\Psi_{\ell,u}-\bar{\Psi}_{\ell,u}}_{L^2([0,1])}^2
    \lesssim
    \frac{1}{\lambda_\ell^\circ}.
    \label{eq:indep-design-component-direction-L2}
  \end{equation}
  Hence
  \[
    \sum_{i,j} \E_t \absx{\angx{u,\xi_{ij}}}^2
    \le
    C\frac{H_{n,m}}{nm\lambda_\ell^\circ}.
  \]
  Therefore
  \begin{equation}
    \sigma_{w,\ell}^2
    \coloneqq
    \sup_{\norm{u}_2=1}
    \sum_{i,j} \E_t \absx{\angx{u,\xi_{ij}}}^2
    \le
    C\frac{H_{n,m}}{nm\lambda_\ell^\circ}.
    \label{eq:indep-design-component-conditional-weak-variance}
  \end{equation}
  Finally,
  \begin{equation}
    \max_{i,j} \norm{\xi_{ij}}_2
    \le
    C\frac{L_{n,m}}{nm}
    \sqrt{\frac{p_\ell}{\lambda_\ell^\circ}}.
    \label{eq:indep-design-component-conditional-envelope}
  \end{equation}

  We now control \(H_{n,m}\) and \(L_{n,m}\). By
  \cref{lem:indep-process-supnorm}, \(\norm{\norm{X_i}_\infty}_{\psi_2} \le C\);
  also \(\norm{Y_i}_{\psi_2} \le C\), and \(\delta_{ij}\) is Gaussian with
  variance \(\sigma_\delta^2\). Thus the summands defining \(H_{n,m}\) are
  subject-level averages of uniformly sub-Weibull variables of order \(1/2\).
  Applying \cref{lem:upper-subweibull-bernstein} at the subject level with
  \(\alpha=1/2\), a fixed deviation level, and truncation parameter
  \(L\asymp n^{2/3}\), followed by Cauchy--Schwarz and the boundedness of
  \(\E(H_{n,m})^2\), gives a constant \(C_H\) such that
  \begin{equation}
    \Pr\{H_{n,m}>C_H\}\le Ce^{-c n^{1/3}},
    \qquad
    \E\left[H_{n,m} \mathbf{1}\{H_{n,m}>C_H\}\right]
    \le
    Ce^{-c n^{1/3}}.
    \label{eq:indep-design-component-H-tail}
  \end{equation}
  Choose \(L_0=K\log(enm)+K(nm)^{1/4}\), with \(K\) sufficiently large.
  A union
  bound gives
  \begin{equation}
    \Pr\{L_{n,m}>L_0\}\le Ce^{-c(nm)^{1/4}}.
    \label{eq:indep-design-component-L-tail}
  \end{equation}
  Let
  \[
    \mathcal{G}
    =
    \{H_{n,m} \le C_H\}\cap\{L_{n,m} \le L_0\}.
  \]

  On \(\mathcal{G}\), the conditional mean, weak variance, and envelope are
  bounded by
  \[
    M_\ell=C\sqrt{\frac{p_\ell}{nm\lambda_\ell^\circ}},
    \qquad
    \sigma_{w,\ell}^2 \le \frac{C}{nm\lambda_\ell^\circ},
    \qquad
    E_\ell=C\frac{(nm)^{1/4}}{nm}
    \sqrt{\frac{p_\ell}{\lambda_\ell^\circ}},
  \]
  for all large \(n\).
  Applying \cref{lem:upper-hilbert-bousquet} conditionally
  on \(\mathcal{F}_{X,\delta}\), and then integrating over
  \(\mathcal{G}\), gives, for every \(x\ge1\),
  \begin{equation}
    \Pr\left\{
         \norm{\widetilde\Delta_\ell}_2
      >
      C\left[
         \sqrt{\frac{p_\ell+x}{nm\lambda_\ell^\circ}}
         +
         \sqrt{
           \frac{(nm)^{1/4}p_\ell}{(nm)^{3/2}\lambda_\ell^\circ}x}
         +
         \frac{(nm)^{1/4}}{nm}
         \sqrt{\frac{p_\ell}{\lambda_\ell^\circ}} x
      \right],
         \ \mathcal{G}
    \right\}
    \le
    Ce^{-cx}.
    \label{eq:indep-design-component-good-tail-raw}
  \end{equation}
  By dyadic comparability,
  \(V_\ell^{b} \asymp p_\ell/(nm\lambda_\ell^\circ)\).
  Hence, after taking
  \(\tau_X\) sufficiently large, \cref{eq:indep-design-component-good-tail-raw}
  implies, with \(\kappa_\ell=p_\ell \wedge(nm)^{1/4}\),
  \begin{equation}
    \Pr\left\{
         \norm{\widetilde\Delta_\ell}_2^2>s V_\ell^{b},\ \mathcal{G}
    \right\}
    \le
    C e^{-c\kappa_\ell \sqrt s},
    \qquad s\ge\tau_X.
    \label{eq:indep-design-component-good-tail}
  \end{equation}
  Layer-cake integration yields, for every \(\tau\ge\tau_X\),
  \begin{equation}
    \E\left[
        \norm{\widetilde\Delta_\ell}_2^2
        \mathbf{1}\{\norm{\widetilde\Delta_\ell}_2^2>\tau V_\ell^{b}\}
        \mathbf{1}_{\mathcal{G}}
    \right]
    \le
    C V_\ell^{b} e^{-c\kappa_\ell}.
    \label{eq:indep-design-component-good-truncated}
  \end{equation}

  On \(\mathcal{G}^c\), \cref{eq:indep-design-component-conditional-second}
  and \cref{eq:indep-design-component-H-tail} control the event
  \(\{H_{n,m}>C_H\}\).
  For the envelope bad event, the same block bounds and
  Gaussian fourth moments give
  \[
    \E\norm{\widetilde\Delta_\ell}_2^4 \le C(V_\ell^{b})^2.
  \]
  Thus, by Cauchy--Schwarz and \cref{eq:indep-design-component-L-tail},
  \begin{equation}
    \E\left[
        \norm{\widetilde\Delta_\ell}_2^2 \mathbf{1}_{\mathcal{G}^c}
    \right]
    \le
    C V_\ell^{b} \left(e^{-c n^{1/3}}+e^{-c\kappa_\ell} \right).
    \label{eq:indep-design-component-bad-truncated}
  \end{equation}
  Combining
  \cref{eq:indep-design-component-good-truncated,eq:indep-design-component-bad-truncated}
  gives
  \begin{equation}
    \E\left[
        \norm{\widetilde\Delta_\ell}_2^2
        \mathbf{1}\{\norm{\widetilde\Delta_\ell}_2^2>\tau V_\ell^{b}\}
    \right]
    \le
    C V_\ell^{b} \left(e^{-c\kappa_\ell}+e^{-c n^{1/3}}\right).
    \label{eq:indep-centered-B-truncated}
  \end{equation}

  It remains to treat \(Q_\ell\).
  Since
  \[
    \int_0^1 e_k(t)\dd t=\mathbf{1}\{k=0\},
  \]
  we have \(\bar{\Phi}_\ell=0\) unless \(\ell=0\).
  If \(\ell=0\), then
  \(\norm{\bar{\Phi}_0}_2^2 \le C\) and, conditioning on \((Y_i)_{i\in I_\gamma}\),
  \[
    \E_\delta \norm{Q_0}_2^2
    \le
    \frac{C}{n_\gamma m}\frac{1}{n_\gamma}\sum_{i\in I_\gamma} Y_i^2.
  \]
  Hence \(\E\norm{Q_\ell}_2^2 \le C V_\ell^{b} \mathbf{1}\{\ell=0\}\), and
  this is bounded by \(C V_\ell^{b} e^{-c\kappa_\ell}\), after adjusting
  constants, because \(\kappa_0\) is bounded.

  Finally, applying
  \cref{eq:indep-two-term-truncation-reduction} to
  \(\Delta_\ell^{b}=\widetilde\Delta_\ell+Q_\ell\), and dropping the indicator
  on the \(Q_\ell\) term, gives
  \[
    \norm{\Delta_\ell^{b}}_2^2
    \mathbf{1}\{\norm{\Delta_\ell^{b}}_2^2>\tau V_\ell^{b}\}
    \le
    4\norm{\widetilde\Delta_\ell}_2^2
    \mathbf{1}\{\norm{\widetilde\Delta_\ell}_2^2>\tau V_\ell^{b}/4\}
    +
    4\norm{Q_\ell}_2^2 .
  \]
  Increasing \(\tau_X\) by a fixed factor and using
  \cref{eq:indep-centered-B-truncated} gives the bound with factor
  \(e^{-c\kappa_\ell}+e^{-c n^{1/3}}\). Since
  \(e^{-c\kappa_\ell} \le e^{-cp_\ell}+e^{-c(nm)^{1/4}}\), this proves
  \cref{eq:indep-block-design-truncated}.
\end{proof}

\begin{proof}[Proof of \cref{lem:indep-block-noise-truncation}]
  The claim follows by applying
  \cref{eq:indep-two-term-truncation-reduction} to
  \(\Delta_\ell=\Delta_\ell^a+\Delta_\ell^b\), then using
  \cref{prop:indep-block-structural-truncation,prop:indep-block-design-truncation}
  after the allowed enlargement of \(C_V\), and absorbing fixed numerical
  constants into \(C\) and \(c\).
\end{proof}

\subsection{Eligible-Block Risk}
\label{subsec:full-adaptive-upper-bound}
This subsection is the local oracle-to-adaptive comparison.
On the good pilot event, eligibility makes the plug-in denominators comparable with the true eigenvalues, so the adaptive block rule can be compared with the oracle block rule.
The lemma below shows that a block satisfying \(\caEelig{\ell}\) pays the usual oracle thresholding cost, an exponentially small selection remainder, and the extra eigenvalue plug-in remainder \(B_\ell^\lambda\).
For an event \(\mathcal{A}\), write
\[
  R_\ell(\mathcal{A})
  \coloneqq
  \sum_{k \in B_\ell} \lambda_k
  \E\bigl[\absx{\hat{\theta}_k-\theta_k}^2 \mathbf{1}_{\mathcal{A}} \bigr].
\]

\begin{lemma}[Eligible-block risk decomposition]
  \label{lem:indep-eligible-block-risk}
  For every active block \(B_\ell\), the localized risk
  \(R_\ell(\caE_n \cap\caEelig{\ell})\)
  satisfies
  \begin{equation}
    R_\ell(\caE_n \cap\caEelig{\ell})
    \le
    C\min\{\Theta_\ell,V_\ell\}
    +
    C V_\ell \omega_\ell
    +
    B_\ell^\lambda,
    \label{eq:indep-eligible-block-risk}
  \end{equation}
  where
  \begin{equation}
    B_\ell^\lambda
    \coloneqq
    C\sum_{k \in B_\ell}
    \absx{\theta_k}^2
    \E\left[
        \frac{(\check{\lambda}_k-\lambda_k)^2}{\lambda_k}
        \mathbf{1}_{\caE_n \cap\caEelig{\ell}}
    \right].
    \label{eq:indep-plugin-remainder}
  \end{equation}
\end{lemma}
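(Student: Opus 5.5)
Throughout we work on \(\caE_n\cap\caEelig{\ell}\). By \cref{lem:indep-eligible-blocks}(ii), \(\tfrac12\lambda_k\le\check{\lambda}_k\le\tfrac32\lambda_k\) for all \(k\in B_\ell\) on this event. We decompose the plug-in coefficient error as
\[
  \frac{\hat{\gamma}_k}{\check{\lambda}_k}-\theta_k
  =
  \frac{\hat{\gamma}_k-\lambda_k\theta_k}{\check{\lambda}_k}
  +
  \theta_k\frac{\lambda_k-\check{\lambda}_k}{\check{\lambda}_k}.
\]
With the comparability of \(\check\lambda_k\) and \(\lambda_k\), this gives
\[
  \lambda_k\Bigl|\frac{\hat{\gamma}_k}{\check{\lambda}_k}-\theta_k\Bigr|^2
  \le
  C\frac{|\hat{\gamma}_k-\lambda_k\theta_k|^2}{\lambda_k}
  +
  C|\theta_k|^2\frac{(\check{\lambda}_k-\lambda_k)^2}{\lambda_k}.
\]
The second term, summed over \(k\in B_\ell\) and integrated on \(\caE_n\cap\caEelig{\ell}\), is exactly \(B_\ell^\lambda\). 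This bound holds whether or not the block is kept: if it is kept, the bound applies directly; if it is killed, the risk is \(\Theta_\ell\).

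We then handle kept and killed blocks separately. On the killed part \(\caEelig{\ell}\setminus\caEthr{\ell}\), the risk is \(\Theta_\ell\,\Pr(\cdot)\). On the kept part, it is at most \(C\norm{\Delta_\ell}_2^2+B_\ell^\lambda\), using \(\Delta_\ell\) from \cref{eq:indep-block-noise-coordinate-definition}. On the eligible event, the plug-in statistics compare with oracle ones: \(\hat V_\ell\) lies between \(c V_\ell\) and \(CV_\ell\) with the constant \(C_V\) built in. We also have \(\hat S_\ell\asymp\sum_k|\hat\gamma_k|^2/\lambda_k=\norm{\sqrt{\Theta}+\Delta_\ell}_2^2\), where the first vector has coordinates \(\sqrt{\lambda_k}\theta_k\), so its norm is \(\Theta_\ell^{1/2}\). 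With these comparisons the usual oracle block-thresholding case analysis applies, splitting on the size of \(\Theta_\ell\) relative to \(V_\ell\).

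In the case \(\Theta_\ell\ge c_1V_\ell\), a kept block costs \(\E\norm{\Delta_\ell}_2^2\lesssim V_\ell\) by \cref{lem:indep-block-variance-envelope}. A killed block requires \(\norm{\sqrt\Theta+\Delta_\ell}^2\le CV_\ell\). If \(\Theta_\ell\ge C'V_\ell\), this forces \(\norm{\Delta_\ell}_2^2\ge c\Theta_\ell\ge\tau V_\ell\), so the killed risk is \(\Theta_\ell\mathbf 1\{\norm{\Delta_\ell}^2>c\Theta_\ell\}\le C\norm{\Delta_\ell}^2\mathbf 1\{\norm{\Delta_\ell}^2>\tau V_\ell\}\). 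By \cref{lem:indep-block-noise-truncation} this term is at most \(CV_\ell\omega_\ell\). If instead \(\Theta_\ell\lesssim V_\ell\), the killed risk is at most \(\Theta_\ell=\min\{\Theta_\ell,V_\ell\}\) up to constants.

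In the case \(\Theta_\ell\le c_1V_\ell\) with \(c_1\) small, keeping the block requires \(\norm{\Delta_\ell}_2^2\ge \tau V_\ell\) with \(\tau\) large once \(C_V\) is large. The kept risk is then \(C\E[\norm{\Delta_\ell}^2\mathbf 1\{\norm{\Delta_\ell}^2>\tau V_\ell\}]\le CV_\ell\omega_\ell\). The killed risk is at most \(\Theta_\ell\). Collecting the cases gives \cref{eq:indep-eligible-block-risk}.

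The main obstacle is the independence and dependence bookkeeping. \(\hat V_\ell\) and the eligibility event depend on \(I_\lambda\), while \(\Delta_\ell\) depends only on \(I_\gamma\). We will handle this by conditioning on the \(I_\lambda\) data: on \(\caE_n\cap\caEelig{\ell}\), the thresholds are sandwiched by deterministic multiples of \(V_\ell\). We therefore replace the random events by deterministic-threshold events on \(\Delta_\ell\) before applying \cref{lem:indep-block-noise-truncation}. This requires choosing \(C_V\), and hence \(\tau\), after fixing the comparability constant \(3/2\). Note that the projection \(\Pi_R\) is not needed here, since it only reduces the error when \(R\ge R_0\).
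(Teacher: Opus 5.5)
Your proposal is correct and follows essentially the same route as the paper. You use the same split of $\hat\theta_k-\theta_k$ on $\caE_n\cap\caEelig{\ell}$ into a noise term, a plug-in term absorbed into $B_\ell^\lambda$, and a killed-signal term, together with the comparisons $\hat S_\ell\asymp\sum_{k\in B_\ell}\absx{\hat\gamma_k}^2/\lambda_k$ and $\hat V_\ell\asymp V_\ell$, and the weak/intermediate/strong case analysis closed by \cref{lem:indep-block-noise-truncation}.

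Two small corrections. The ``dependence bookkeeping'' is not a real obstacle: no conditioning on the $I_\lambda$ data is needed, only pointwise inclusions of the localized keep/kill events into $\{\norm{\Delta_\ell}_2^2>\tau V_\ell\}$. Also, the resulting $\tau$ is a fixed constant (about $1/6$ in the weak case), not a large one, because $V_\ell$ already carries $C_V$. This is harmless, since \cref{lem:indep-block-noise-truncation} holds for every fixed $\tau>0$.
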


\begin{proof}
  By the definitions of \(\caEelig{\ell}\), \(\caEthr{\ell}\), and
  \(\caEkeep{\ell}\) in \cref{eq:indep-adaptive-events}, on
  \(\caE_n \cap\caEelig{\ell}\),
  \begin{equation}
    \caEkeep{\ell}\cap \caE_n \cap\caEelig{\ell}
    =
    \caEthr{\ell}\cap \caE_n \cap\caEelig{\ell},
    \qquad
    (\caEkeep{\ell})^c\cap \caE_n \cap\caEelig{\ell}
    =
    (\caEthr{\ell})^c\cap \caE_n \cap\caEelig{\ell}.
    \label{eq:indep-adaptive-keep-threshold-on-good}
  \end{equation}
  For \(k \in B_\ell\),
  \begin{equation}
    \hat{\theta}_k - \theta_k
    =
    \mathbf{1}_{\caEkeep{\ell}}
    \frac{\hat{\gamma}_k - \lambda_k \theta_k}{\check{\lambda}_k}
    +
    \mathbf{1}_{\caEkeep{\ell}}
    \theta_k \frac{\lambda_k-\check{\lambda}_k}{\check{\lambda}_k}
    -
    \mathbf{1}_{(\caEkeep{\ell})^c}\theta_k.
    \label{eq:indep-adaptive-error-decomposition}
  \end{equation}
  Using \(\absx{a+b+c}^2 \le 3(\absx{a}^2+\absx{b}^2+\absx{c}^2)\), we get
  \[
    R_\ell(\caE_n \cap\caEelig{\ell})\le T_{1\ell}+T_{2\ell}+T_{3\ell},
  \]
  where
  \begin{align*}
    T_{1\ell}
    &\coloneqq
    C\sum_{k \in B_\ell} \lambda_k
    \E\left[
        \mathbf{1}_{\caEkeep{\ell}}
        \frac{\absx{\hat{\gamma}_k-\lambda_k \theta_k}^2}{\check{\lambda}_k^2}
        \mathbf{1}_{\caE_n \cap\caEelig{\ell}}
    \right],\\
    T_{2\ell}
    &\coloneqq
    C\sum_{k \in B_\ell} \lambda_k \absx{\theta_k}^2
    \E\left[
        \mathbf{1}_{\caEkeep{\ell}}
        \frac{(\check{\lambda}_k-\lambda_k)^2}{\check{\lambda}_k^2}
        \mathbf{1}_{\caE_n \cap\caEelig{\ell}}
    \right],\\
    T_{3\ell}
    &\coloneqq
    C\Theta_\ell
    \Pr((\caEkeep{\ell})^c \cap \caE_n \cap\caEelig{\ell}).
  \end{align*}

  Introduce the oracle threshold statistic and signal vector
  \begin{equation}
    S_\ell
    \coloneqq
    \sum_{k \in B_\ell} \frac{\absx{\hat{\gamma}_k}^2}{\lambda_k},
    \qquad
    \mu_\ell
    \coloneqq
    \bigl(\sqrt{\lambda_k}\theta_k \bigr)_{k\in B_\ell}.
    \label{eq:indep-adaptive-oracle-threshold-quantities}
  \end{equation}
  Here \(S_\ell\) is the oracle analogue of the observable statistic
  \(\hat{S}_\ell\) from \cref{eq:indep-adaptive-block-statistics}.
  The definitions in
  \cref{eq:indep-block-signal-energy,eq:indep-block-noise-coordinate-definition,eq:indep-adaptive-oracle-threshold-quantities}
  give
  \begin{equation}
    S_\ell
    =
    \norm{\mu_\ell+\Delta_\ell}_2^2,
    \qquad
    \norm{\mu_\ell}_2^2=\Theta_\ell.
    \label{eq:indep-adaptive-oracle-signal-noise}
  \end{equation}

  We next record the two consequences of eligibility used below. First, the
  second implication in \cref{lem:indep-eligible-blocks} yields, on
  \(\caE_n \cap\caEelig{\ell}\),
  \begin{equation}
    \check{\lambda}_k \asymp \lambda_k,
    \qquad
    \hat{V}_\ell \asymp V_\ell,
    \qquad
    \hat{S}_\ell \asymp S_\ell.
    \label{eq:indep-adaptive-comparability}
  \end{equation}
  In particular, the empirical threshold transfers to the oracle scale as
  \begin{equation}
    \{S_\ell \ge 3V_\ell\}\cap \caE_n \cap\caEelig{\ell}
    \subseteq
    \caEkeep{\ell}\cap \caE_n \cap\caEelig{\ell}
    \subseteq
    \left\{S_\ell \ge \frac{1}{3}V_\ell \right\}
    \cap \caE_n \cap\caEelig{\ell}.
    \label{eq:indep-adaptive-threshold-transfer}
  \end{equation}
  Indeed, on \(\caE_n \cap\caEelig{\ell}\),
  \[
    \frac{2}{3} S_\ell \le \hat{S}_\ell \le 2S_\ell,
    \qquad
    \frac{2}{3} V_\ell \le \hat{V}_\ell \le 2V_\ell,
  \]
  and \cref{eq:indep-adaptive-keep-threshold-on-good} identifies
  \(\caEkeep{\ell}\) with \(\caEthr{\ell}\) on
  \(\caE_n \cap\caEelig{\ell}\).

  Also, we establish a crude bound on \(T_{1\ell}\) that holds in all regimes.
  \cref{lem:indep-block-variance-envelope} gives
  \[
    \E\norm{\Delta_\ell}_2^2
    =
    \sum_{k\in B_\ell}
    \lambda_k
    \E\absx{\frac{\hat{\gamma}_k}{\lambda_k}-\theta_k}^2
    \le
    C V_\ell.
  \]
  On \(\caE_n \cap\caEelig{\ell}\),
  \(\lambda_k/\check{\lambda}_k^2 \le C/\lambda_k\).
  Therefore
  \begin{equation}
    T_{1\ell}
    \le
    C\E\left[
         \norm{\Delta_\ell}_2^2
         \mathbf{1}_{\caEkeep{\ell}}
         \mathbf{1}_{\caE_n \cap\caEelig{\ell}}
    \right]
    \le
    C\E\norm{\Delta_\ell}_2^2
    \le
    C V_\ell.
    \label{eq:indep-adaptive-t1-basic}
  \end{equation}

  Now, we split into three regimes based on the signal strength \(\Theta_\ell\).

  \textbf{Weak block: \(\Theta_\ell \le \gamma_- V_\ell\).}
  If \(\Theta_\ell \le \gamma_- V_\ell\), choose \(C_->0\) and then
  \(\gamma_->0\) so that \(2\gamma_-+2C_- \le 1/3\).
  On the event
  \(\norm{\Delta_\ell}_2^2 \le C_-V_\ell\),
  \cref{eq:indep-adaptive-oracle-signal-noise} gives
  \[
    S_\ell
    \le
    2\Theta_\ell + 2\norm{\Delta_\ell}_2^2
    \le
    (2\gamma_-+2C_-)V_\ell
    \le
    \frac{1}{3}V_\ell,
  \]
  hence, by \cref{eq:indep-adaptive-threshold-transfer},
  \(\caEkeep{\ell}\cap\caE_n \cap\caEelig{\ell}\) implies
  \(\norm{\Delta_\ell}_2^2>C_-V_\ell\).
  By \cref{lem:indep-block-noise-truncation} with \(\tau=C_-\),
  \[
    T_{1\ell}
    \le
    C\E\left[
         \norm{\Delta_\ell}_2^2
         \mathbf{1}\{\norm{\Delta_\ell}_2^2>C_-V_\ell\}
    \right]
    \le
    C V_\ell \omega_\ell.
  \]
  Since \(T_{3\ell} \le C\Theta_\ell\),
  \[
    T_{1\ell}+T_{3\ell}
    \le
    C\Theta_\ell
    +
    C V_\ell \omega_\ell.
  \]

  \textbf{Strong block: \(\Theta_\ell \ge \gamma_+ V_\ell\).}
  Choose \(\gamma_+\ge12\).
  If \(\Theta_\ell \ge \gamma_+ V_\ell\), then on the event
  \(\norm{\Delta_\ell}_2^2 \le \Theta_\ell/4\),
  \cref{eq:indep-adaptive-oracle-signal-noise} gives
  \[
    S_\ell
    =
    \norm{\mu_\ell+\Delta_\ell}_2^2
    \ge
    \frac{1}{4}\Theta_\ell
    \ge
    3V_\ell.
  \]
  Hence, by \cref{eq:indep-adaptive-threshold-transfer},
  \((\caEkeep{\ell})^c \cap \caE_n \cap\caEelig{\ell}\) can only occur if
  \(\norm{\Delta_\ell}_2^2>\Theta_\ell/4\).
  On this event, \(\Theta_\ell \le 4\norm{\Delta_\ell}_2^2\).
  Since \(\Theta_\ell/4\ge (\gamma_+/4)V_\ell\) in the strong regime,
  \cref{lem:indep-block-noise-truncation} with \(\tau=\gamma_+/4\) yields
  \[
    T_{3\ell}
    \le
    C\E\left[
         \norm{\Delta_\ell}_2^2
         \mathbf{1}\{\norm{\Delta_\ell}_2^2>\Theta_\ell/4\}
    \right]
    \le
    C\E\left[
         \norm{\Delta_\ell}_2^2
         \mathbf{1}\{\norm{\Delta_\ell}_2^2>(\gamma_+/4)V_\ell\}
    \right]
    \le
    C V_\ell \omega_\ell.
  \]
  Using \cref{eq:indep-adaptive-t1-basic} to control \(T_{1\ell}\), we get
  \[
    T_{1\ell}+T_{3\ell}
    \le
    C V_\ell
    +
    C V_\ell \omega_\ell.
  \]

  \textbf{Intermediate block:
    \(\gamma_- V_\ell < \Theta_\ell < \gamma_+ V_\ell\).}
  If \(\gamma_- V_\ell < \Theta_\ell < \gamma_+ V_\ell\), then
  \(\Theta_\ell \asymp V_\ell\), and \cref{eq:indep-adaptive-t1-basic} gives
  \[
    T_{1\ell}+T_{3\ell}
    \le
    C(V_\ell+\Theta_\ell)
    \le
    C\min\{\Theta_\ell,V_\ell\}.
  \]

  Combining the three regimes yields
  \begin{equation}
    T_{1\ell}+T_{3\ell}
    \le
    C\min\{\Theta_\ell,V_\ell\}
    +
    C V_\ell \omega_\ell.
    \label{eq:indep-adaptive-t13}
  \end{equation}
  Finally, \cref{eq:indep-adaptive-comparability} implies
  \[
    \frac{\lambda_k}{\check{\lambda}_k^2}
    \mathbf{1}_{\caE_n \cap\caEelig{\ell}}
    \le
    \frac{C}{\lambda_k}
    \mathbf{1}_{\caE_n \cap\caEelig{\ell}},
  \]
  so
  \[
    T_{2\ell}
    \le
    C\sum_{k \in B_\ell}
    \absx{\theta_k}^2
    \E\left[
        \frac{(\check{\lambda}_k-\lambda_k)^2}{\lambda_k}
        \mathbf{1}_{\caE_n \cap\caEelig{\ell}}
    \right]
    =
    B_\ell^\lambda.
  \]
  Combining this with \cref{eq:indep-adaptive-t13} proves
  \cref{eq:indep-eligible-block-risk}.
\end{proof}

\subsection[Proof of the Adaptive Independent-Design Upper Bound]{Proof of \texorpdfstring{\cref{thm:indep-adaptive-upper}}{the adaptive independent-design upper bound}}
\label{subsec:proof-indep-adaptive-upper}

It is enough to prove the bound uniformly over fixed rectangles of the form above.
For \((\alpha,s)\) in this rectangle, write
\[
  \mathfrak{r}_{n,m}(\alpha,s)
  \coloneqq
  n^{-\frac{2\alpha+2s}{2\alpha+2s+1}}
  +
  (nm)^{-\frac{2\alpha+2s}{4\alpha+2s+1}}.
\]
Fix such \((\alpha,s)\), \(\lambda\in\caL_\alpha(c_\lambda,C_\lambda)\), and \(\theta\in\Theta_s(R_0)\).

\textbf{Step 1: oracle cutoff and risk split.}
Let \(d_* = d_*(\alpha,s)\) be the oracle-rate cutoff from
\cref{lem:indep-oracle-band-floor}, and choose \(L_*\) so that
\begin{equation}
  2^{L_*-1} < d_* \le 2^{L_*}.
  \label{eq:indep-adaptive-oracle-level}
\end{equation}
By \cref{lem:indep-oracle-band-floor}, every block with index
\(0 \le \ell \le L_*\) has eigenvalues above the pilot floor.
By the first implication in \cref{lem:indep-eligible-blocks}, every such block is eligible on \(\caE_n\).

We split the risk into its contributions on \(\caE_n\) and \((\caE_n)^c\).
Write
\[
  \caR(\hat{\beta};\theta,\lambda;\caE_n)
  \coloneqq
  \E\left[
      \sum_{k\in\bbZ} \lambda_k
      \absx{\bar{\theta}_k-\theta_k}^2 \mathbf{1}_{\caE_n}
  \right].
\]
Steps 2--6 bound this good-event term; Step 7 controls the complementary
bad-event contribution.

\textbf{Step 2: good-event block reduction.}
For every active block, the Sobolev constraint and \(s\ge0\) imply
\[
  \norm{\theta_{B_\ell}}_2
  \le
  \norm{\theta}_2
  \le
  R_0,
  \qquad
  0\le \ell\le L_n.
\]
Since \(R\ge R_0\), \(\theta_{B_\ell}\) belongs to the Euclidean ball onto which
\(\hat{\theta}_{B_\ell}\) is projected.
Hence
\[
  \norm{\bar{\theta}_{B_\ell}-\theta_{B_\ell}}_2
  \le
  \norm{\hat{\theta}_{B_\ell}-\theta_{B_\ell}}_2,
  \qquad
  0 \le \ell \le L_n.
\]
Since the eigenvalues are comparable inside each dyadic block,
\[
  \sum_{k \in B_\ell}
  \lambda_k \absx{\bar{\theta}_k-\theta_k}^2
  \le
  C
  \sum_{k \in B_\ell}
  \lambda_k \absx{\hat{\theta}_k-\theta_k}^2,
  \qquad
  0 \le \ell \le L_n.
\]
Applying this blockwise and then adding the deterministic tail beyond \(K_n\)
gives
\begin{align*}
  \caR(\hat{\beta};\theta,\lambda;\caE_n)
  &\le
  C
  \sum_{\ell=0}^{L_n}
  \sum_{k \in B_\ell}
  \lambda_k
  \E\bigl[
    \absx{\hat{\theta}_k-\theta_k}^2
    \mathbf{1}_{\caE_n}
    \bigr]
  +
  \sum_{\absx{k} > K_n}\lambda_k \absx{\theta_k}^2
  \\
  &\le
  C
  \sum_{\ell=0}^{L_n} R_\ell(\caE_n)
  +
  \sum_{\absx{k} > K_n}\lambda_k \absx{\theta_k}^2.
\end{align*}
We now insert the local block risk bounds, separating the oracle range from
the above-oracle range.
For \(0 \le \ell \le L_*\),
eligibility holds on \(\caE_n\), so
\(R_\ell(\caE_n)=R_\ell(\caE_n \cap\caEelig{\ell})\).
\cref{lem:indep-eligible-block-risk} gives
\begin{equation}
  R_\ell(\caE_n)
  \le
  C V_\ell
  +
  B_\ell^\lambda
  +
  C V_\ell \omega_\ell.
  \label{eq:indep-adaptive-good-small-blocks}
\end{equation}
For \(L_* < \ell \le L_n\), decompose
\[
  R_\ell(\caE_n)
  =
  R_\ell(\caE_n \cap\caEelig{\ell})
  +
  R_\ell(\caE_n \cap(\caEelig{\ell})^c).
\]
On \((\caEelig{\ell})^c\), the block is killed, so the second term is bounded
by \(\Theta_\ell\). On \(\caE_n \cap\caEelig{\ell}\),
\cref{lem:indep-eligible-block-risk} applies, and the second implication in
\cref{lem:indep-eligible-blocks} ensures
\(\check{\lambda}_k \asymp \lambda_k\). Thus
\[
  B_\ell^\lambda
  \le
  C\sum_{k \in B_\ell} \lambda_k \absx{\theta_k}^2
  =
  C\Theta_\ell.
\]
Therefore, in all cases
\begin{equation}
  R_\ell(\caE_n)
  \le
  C\Theta_\ell
  +
  C V_\ell \omega_\ell,
  \qquad
  L_* < \ell \le L_n.
  \label{eq:indep-adaptive-good-large-blocks}
\end{equation}
Combining the last two displays gives the good-event decomposition
\begin{equation}
  \begin{aligned}
    \caR(\hat{\beta};\theta,\lambda;\caE_n)
    \lesssim
    &\underbrace{\sum_{\ell=0}^{L_*} V_\ell}_{\text{oracle variance}}
    +
    \underbrace{\sum_{\ell=L_*+1}^{L_n} \Theta_\ell}_{\text{oracle bias}}\\
    &+
    \underbrace{\sum_{\ell=0}^{L_*} B_\ell^\lambda}_{\text{plug-in remainder}}
    +
    \underbrace{\sum_{\ell=0}^{L_n} V_\ell \omega_\ell}_{\text{thresholding remainder}}
    +
    \underbrace{\sum_{\absx{k} > K_n}\lambda_k \absx{\theta_k}^2}_{\text{tail beyond }K_n}.
  \end{aligned}
  \label{eq:indep-adaptive-good-event-risk}
\end{equation}

\textbf{Step 3: oracle terms.}
The oracle variance satisfies, by dyadic comparability,
\begin{equation}
  \sum_{\ell=0}^{L_*} V_\ell
  \asymp
  \sum_{\absx{k} \le d_*}
  \frac{1}{n}\left(1+\frac{1}{m\lambda_k}\right)
  \asymp
  \frac{d_*}{n} + \frac{d_*^{2\alpha+1}}{nm}.
  \label{eq:indep-adaptive-variance-term}
\end{equation}
The oracle bias satisfies, by \cref{eq:upper-two-sided-tail-bias},
\begin{equation}
  \sum_{\ell=L_*+1}^{L_n} \Theta_\ell
  \le
  \sum_{\absx{k} > d_*}\lambda_k \absx{\theta_k}^2
  \lesssim
  d_*^{-(2\alpha+2s)}.
  \label{eq:indep-adaptive-bias-term}
\end{equation}
The high-frequency tail is no larger than the oracle bias scale. Since
\(K_n \ge d_*\) by \cref{lem:indep-oracle-band-floor},
\cref{eq:upper-two-sided-tail-bias} gives
\begin{equation}
  \sum_{\absx{k} > K_n}\lambda_k \absx{\theta_k}^2
  \lesssim
  K_n^{-(2\alpha+2s)}
  \lesssim
  d_*^{-(2\alpha+2s)}.
  \label{eq:indep-adaptive-tail-Kn}
\end{equation}

\textbf{Step 4: thresholding remainder.}
By definition of \(V_\ell\) and the eigenvalue bound
\(\lambda_k \gtrsim (1+\absx{k})^{-2\alpha}\), for \(\ell \ge 1\),
\[
  V_\ell
  \lesssim
  \frac{\absx{B_\ell}}{n}
  +
  \frac{1}{nm}\sum_{k \in B_\ell}(1+\absx{k})^{2\alpha}
  \lesssim
  \frac{2^\ell}{n}
  +
  \frac{2^{(2\bar{\alpha}+1)\ell}}{nm},
\]
while \(V_0 \lesssim n^{-1} + (nm)^{-1}\).
Since \(\absx{B_\ell}=2^\ell\)
for \(\ell \ge 1\), we obtain
\begin{equation}
  \begin{aligned}
    \sum_{\ell=0}^{L_n} V_\ell \omega_\ell
    & \le
    \sum_{\ell=0}^{L_n} V_\ell e^{-c\absx{B_\ell}}
    +
    \left(e^{-c(nm)^{1/4}}+e^{-c n^{1/3}}\right)
    \sum_{\ell=0}^{L_n} V_\ell \\
    & \lesssim n^{-1} + (nm)^{-1}
    =
    o\bigl(\mathfrak{r}_{n,m}(\alpha,s)\bigr).
  \end{aligned}
  \label{eq:indep-adaptive-exponential-remainder}
\end{equation}

\textbf{Step 5: plug-in remainder.}
Summing \cref{eq:indep-plugin-remainder} over \(\ell=0,\dots,L_*\) and using
\cref{lem:indep-pilot-moments},
\begin{equation}
  \sum_{\ell=0}^{L_*} B_\ell^\lambda
  \le
  \frac{C}{n_\lambda}
  \sum_{\absx{k} \le C d_*}
  \left(
    \lambda_k \absx{\theta_k}^2
  +
    \frac{\absx{\theta_k}^2}{m}
    +
    \frac{\absx{\theta_k}^2}{m^2 \lambda_k}
  \right)
  =: r_\lambda(d_*).
  \label{eq:indep-adaptive-plugin-remainder}
\end{equation}
We bound the three pieces separately.
First,
\[
  \sum_{k \in \bbZ} \lambda_k \absx{\theta_k}^2
  \le
  C_\lambda \sum_{k \in \bbZ}(1+\absx{k})^{-2\alpha}
  \absx{\theta_k}^2
  \le
  C_\lambda \sum_{k \in \bbZ}(1+\absx{k})^{2s}
  \absx{\theta_k}^2
  \le
  C.
\]
Second,
\[
  \sum_{k \in \bbZ} \absx{\theta_k}^2
  \le
  \sum_{k \in \bbZ}(1+\absx{k})^{2s}\absx{\theta_k}^2
  \le
  R_0^2.
\]
Third, since \(\lambda_k^{-1} \lesssim (1+\absx{k})^{2\alpha}\),
\[
  \sum_{\absx{k} \le d}\frac{\absx{\theta_k}^2}{\lambda_k}
  \lesssim
  d^{(2\alpha-2s)_+}
\]
by \cref{eq:upper-two-sided-partial-sum}.
Therefore
\begin{equation}
  r_\lambda(d_*)
  \lesssim
  \frac{1}{n_\lambda}
  +
  \frac{1}{n_\lambda m}
  +
  \frac{d_*^{(2\alpha-2s)_+}}{n_\lambda m^2}.
  \label{eq:indep-adaptive-plugin-bound}
\end{equation}
In the dense regime \(d_* \asymp n^{1/(2\alpha+2s+1)}\) and
\(m \ge n^{2\alpha/(2\alpha+2s+1)}\), each term in
\cref{eq:indep-adaptive-plugin-bound} is at most a constant multiple of
\(n^{-(2\alpha+2s)/(2\alpha+2s+1)}\).
In the sparse regime
\(d_* \asymp (nm)^{1/(4\alpha+2s+1)}\), each term is at most a constant multiple of
\((nm)^{-(2\alpha+2s)/(4\alpha+2s+1)}\).
Hence
\begin{equation}
  r_\lambda(d_*) \lesssim \mathfrak{r}_{n,m}(\alpha,s).
  \label{eq:indep-adaptive-plugin-rate}
\end{equation}

\textbf{Step 6: good-event bound.}
Combining
\cref{eq:indep-adaptive-good-event-risk,eq:indep-adaptive-variance-term,eq:indep-adaptive-bias-term,eq:indep-adaptive-tail-Kn,eq:indep-adaptive-exponential-remainder,eq:indep-adaptive-plugin-rate},
there exists \(N_0\) such that for all \(n,m \ge N_0\),
\begin{equation}
  \caR(\hat{\beta};\theta,\lambda;\caE_n)
  \le
  C\left(
     \frac{d_*}{n}
     +
     \frac{d_*^{2\alpha+1}}{nm}
     +
     d_*^{-(2\alpha+2s)}
  \right)
  \le
  C\mathfrak{r}_{n,m}(\alpha,s).
  \label{eq:indep-adaptive-good-final}
\end{equation}

\textbf{Step 7: bad-event contribution.}
Since \(\norm{\bar{\theta}_{B_\ell}}_2 \le R\) by construction and
\(\norm{\theta_{B_\ell}}_2 \le R_0\) by the Sobolev constraint and \(s\ge0\),
\[
  \sum_{k \in B_0} \lambda_k \absx{\bar{\theta}_k-\theta_k}^2
  \le
  C
\]
and, for \(1 \le \ell \le L_n\),
\[
  \sum_{k \in B_\ell}
  \lambda_k \absx{\bar{\theta}_k-\theta_k}^2
  \le
  \max_{k \in B_\ell} \lambda_k
  \norm{\bar{\theta}_{B_\ell}-\theta_{B_\ell}}_2^2
  \le
  C_\lambda (R+R_0)^2 2^{-2\alpha(\ell-1)}
  \le
  C 2^{-2\underline{\alpha}(\ell-1)}.
\]
Therefore
\[
  \sum_{k \in \bbZ} \lambda_k \absx{\bar{\theta}_k-\theta_k}^2
  \le
  C\sum_{\ell=0}^{L_n}2^{-2\underline{\alpha}(\ell-1)_+}
  +
  \sum_{\absx{k} > K_n}\lambda_k \absx{\theta_k}^2
  \le
  C.
\]
Hence \cref{lem:indep-pilot-uniform-control} implies
\begin{equation}
  \E\left[
      \sum_{k \in \bbZ} \lambda_k
      \absx{\bar{\theta}_k-\theta_k}^2 \mathbf{1}_{(\caE_n)^c}
  \right]
  \le
  Cn^{-20}
  =
  o\bigl(\mathfrak{r}_{n,m}(\alpha,s)\bigr).
  \label{eq:indep-adaptive-bad-event}
\end{equation}

Therefore, after possibly enlarging \(N_0\), for all \(n,m \ge N_0\),
combining \cref{eq:indep-adaptive-good-final,eq:indep-adaptive-bad-event}
gives
\[
  \E\caR(\hat{\beta};\theta,\lambda)
  \le
  C\mathfrak{r}_{n,m}(\alpha,s).
\]
This proves \cref{thm:indep-adaptive-upper}.
   \clearpage\section{Lower Bound Under Independent Design}
\label{sec:lower-indep}

The main tool for establishing the lower bound is van Trees' inequality as stated below.

\begin{lemma}[Van Trees inequality]
\label{lem:finite-dimensional-van-trees}
Let \(\vartheta\in\R^d\) have a regular prior \(\pi\) with Fisher information matrix \(J_\pi\).
Let \(I_T(\vartheta)\) be the Fisher information matrix of the observation law.
Then every estimator \(\hat{\vartheta}\) satisfies
\begin{equation}
  \E_\pi \E[
    (\hat{\vartheta}-\vartheta)(\hat{\vartheta}-\vartheta)^\top
  ]
  \succeq
  \bigl(\E_\pi I_T(\vartheta)+J_\pi \bigr)^{-1}.
  \label{eq:finite-dimensional-van-trees}
\end{equation}
Consequently, for every coordinate \(r\),
\begin{equation}
  \E_\pi \E\absx{\hat{\vartheta}_r-\vartheta_r}^2
  \ge
  \frac{1}{\E_\pi I_{T,rr}(\vartheta)+(J_\pi)_{rr}}.
  \label{eq:finite-dimensional-van-trees-coordinate}
\end{equation}
\end{lemma}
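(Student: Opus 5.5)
The plan is the classical Bayesian Cramér--Rao argument, carried out first in one dimension and then lifted to vectors. Assume that \(\pi\) has a \(C^1\) density on a product of compact intervals that vanishes on the boundary, so that \(J_\pi=\E_\pi[\nabla\log\pi\,\nabla\log\pi^\top]\) is finite. Assume also that the observation law has a density \(p(x\mid\vartheta)\) that is differentiable in quadratic mean, with score \(\nabla_\vartheta\log p\) and Fisher information \(I_T(\vartheta)\). Let \(q(x,\vartheta)=p(x\mid\vartheta)\pi(\vartheta)\) be the joint density and \(\psi(x,\vartheta)=\nabla_\vartheta\log q=\nabla_\vartheta\log p+\nabla\log\pi\) the joint score. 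The score \(\nabla_\vartheta\log p\) has conditional mean zero given \(\vartheta\). Hence the cross term vanishes and \(\E_q[\psi\psi^\top]=\E_\pi I_T(\vartheta)+J_\pi\).

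The key identity is \(\E_q[(\hat{\vartheta}(X)-\vartheta)\psi^\top]=I_d\). To prove it, integrate by parts in \(\vartheta\). For each fixed \(x\), we have \(\int(\hat{\vartheta}_r(x)-\vartheta_r)\,\partial_{\vartheta_s}q(x,\vartheta)\,d\vartheta=\int\delta_{rs}\,q(x,\vartheta)\,d\vartheta\), because \(q\) vanishes on the boundary of the prior support and \(\partial_{\vartheta_s}(\hat{\vartheta}_r-\vartheta_r)=-\delta_{rs}\). Integrating this over \(x\) gives the identity. We may assume the estimator has finite second moment, since otherwise the bound is trivial.

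Next apply the matrix Cauchy--Schwarz inequality. Write \(A=\hat{\vartheta}-\vartheta\) and \(G=\E_q[\psi\psi^\top]\). For any deterministic vectors \(u,v\), we have \(\E[(u^\top A)(v^\top\psi)]^2\le \E[(u^\top A)^2]\,v^\top Gv\), and the identity gives \(u^\top v\) on the left. Choosing \(v=G^{-1}u\) yields \(u^\top\E[AA^\top]u\ge u^\top G^{-1}u\), which is exactly \cref{eq:finite-dimensional-van-trees}. If \(G\) is singular, apply the argument to \(G+\epsilon I\) and let \(\epsilon\downarrow0\). The coordinate bound \cref{eq:finite-dimensional-van-trees-coordinate} follows from \((G^{-1})_{rr}\ge 1/G_{rr}\) for positive definite \(G\), a consequence of Cauchy--Schwarz. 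Alternatively, rerun the scalar argument using only the \(r\)-th component of \(\psi\).

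The main obstacle is justifying the interchange of differentiation and integration. This needs \(q(x,\cdot)\) to be absolutely continuous along coordinate lines with vanishing boundary values. I would handle it with a quadratic-mean-differentiability argument and Fubini, following the standard proof of Gill and Levit.

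In the later application the priors will be products of smooth compactly supported bumps, for example \(\cos^2\) densities, and the Gaussian experiments are smooth in \(\vartheta\). These regularity conditions therefore hold there, and I would simply cite the inequality as classical rather than re-derive these technicalities.
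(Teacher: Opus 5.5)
Your proposal is correct and follows the same route as the paper. The paper cites the matrix van Trees inequality, whose standard proof is exactly your joint-score identity $\E_q[(\hat{\vartheta}-\vartheta)\psi^\top]=I_d$ combined with Cauchy--Schwarz, and then derives the coordinate bound from $(A^{-1})_{rr}\ge A_{rr}^{-1}$ via a Schur complement rather than your Cauchy--Schwarz argument. Your $G+\epsilon I$ perturbation is unnecessary, since that same identity already forces $G\succ 0$: any $v$ with $v^\top G v=0$ has $v^\top\psi=0$ almost surely, so $v=\E_q[(\hat{\vartheta}-\vartheta)\psi^\top]v=0$.
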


\begin{proof}
  \Cref{eq:finite-dimensional-van-trees} is the standard matrix Van Trees inequality~\citep{tsybakov2009_IntroductionNonparametric}.
  The coordinate bound follows because, for any positive definite matrix \(A\), \((A^{-1})_{rr}\ge A_{rr}^{-1}\), by the Schur complement.
\end{proof}

\subsection{Block Fisher Information Bound}
\label{subsec:indep-fisher-information-bound}

Let \(B\subset\bbZ\) be any finite block with \(\abs{B}=d\).
Suppose that the true parameter \(\theta\) is supported on this block, namely
\[
  \theta_k=0,\quad \forall k \notin B.
\]
Then the model is a Gaussian block model with nuisance coordinates outside the block.
The following lemma bounds the corresponding block Fisher information.

Let \((x_k)_{k\in\bbZ}\) be the Fourier scores of the predictor \(X\), so
\[
  X(t)=\sum_{k\in\bbZ} x_k e_k(t).
\]
For the block \(B\), write
\[
  x_B=(x_r)_{r\in B}\in\R^d,
  \qquad
  \theta_B=(\theta_{r})_{r\in B}\in\R^d,
  \qquad
  \Lambda_B=\mr{diag}(\lambda_r)_{r\in B}.
\]
For \(t=(t_1,\dots,t_m)\), define
\[
  \varphi_r(t)=\big(e_r(t_1),\dots,e_r(t_m)\big)^\top\in\R^m,
  \qquad
  \Phi_B(t)=[\varphi_r(t)]_{r\in B}\in\R^{m\times d}.
\]
With this notation, the FLR model can be written as
\[
  Y = \sum_{r \in B} \theta_r x_r + \varepsilon
  = \theta_B^\top x_B + \varepsilon,
\]
and the observation vector is
\begin{equation}
  Z=\Phi_B(t)x_B+\xi_B(t),
  \label{eq:indep-block-observation}
\end{equation}
where the nuisance vector \(\xi_B(t)\) is given by
\[
  \xi_B(t)=
  \sum_{k\in\bbZ\setminus B}
  x_k \big(e_k(t_1),\dots,e_k(t_m)\big)^\top+\delta.
\]
Since \(\alpha>\tfrac{1}{2}\) and the trigonometric basis is uniformly bounded,
\(\sum_k \lambda_k<\infty\). Thus \(\xi_B(t)\) is a well-defined centered
Gaussian vector independent of \(x_B\) conditional on \(t\), with covariance
\begin{equation}
  W_B(t)\coloneqq\mr{Cov}(\xi_B(t)\mid t)
  \succeq
  \sigma_\delta^2 I_m.
  \label{eq:indep-block-nuisance-cov}
\end{equation}
Thus, conditional on \(t\), the observation is a fixed Gaussian block
experiment with design matrix \(\Phi_B(t)\) and nuisance covariance \(W_B(t)\).

\begin{lemma}[Noisy Gaussian block Fisher bound]
\label{lem:noisy-gaussian-block-fisher}
Fix a deterministic matrix
\(\Phi_B=[\varphi_r]_{r\in B}\in\R^{m\times d}\) and a covariance matrix
\(W_B \succeq\sigma_\delta^2 I_m\). Consider the Gaussian block experiment
\[
  x_B \sim \mathcal{N}(0,\Lambda_B),
  \qquad
  Z=\Phi_B x_B+\xi_B,
  \qquad
  Y=\theta_B^\top x_B+\varepsilon,
\]
where \(\xi_B \sim \mathcal{N}(0,W_B)\) is independent of \(x_B\), and
\(\varepsilon\sim \mathcal{N}(0,\sigma_\varepsilon^2)\) is independent of
\((x_B,\xi_B)\). Let \(I^{\Phi,W}(\theta_B)\) be the Fisher information for
this single-observation experiment. Assume
\[
  \max_{r\in B} \norm{\varphi_r}_2^2 \le C_\Phi m.
\]
Then, for every \(r\in B\),
\begin{equation}
  I_{rr}^{\Phi,W}(\theta_B)
  \le
  C\min\left\{\lambda_r,\frac{m}{\sigma_\delta^2}\lambda_r^2 \right\}
  +
  C\lambda_r \theta_B^\top \Lambda_B \theta_B.
  \label{eq:noisy-gaussian-block-fisher}
\end{equation}
\end{lemma}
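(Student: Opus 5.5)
The plan is to exploit the fact that the law of \(Z\) does not involve \(\theta_B\). I will factor the joint Gaussian density of \((Z,Y)\) as \(p(Z)\,p_{\theta_B}(Y\mid Z)\), so that the Fisher information equals the expected Fisher information of the conditional law of \(Y\) given \(Z\).

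\textbf{Step 1: the conditional law.} Put \(G\coloneqq \Phi_B\Lambda_B\Phi_B^\top+W_B\succeq \sigma_\delta^2 I_m\). Standard Gaussian conditioning gives
\[
  x_B\mid Z\sim\mathcal{N}\bigl(\mu(Z),S\bigr),\qquad
  \mu(Z)=\Lambda_B\Phi_B^\top G^{-1}Z,\qquad
  S=\Lambda_B-\Lambda_B\Phi_B^\top G^{-1}\Phi_B\Lambda_B .
\]
It follows that \(Y\mid Z\sim\mathcal{N}\bigl(\theta_B^\top\mu(Z),\,v(\theta_B)\bigr)\) with \(v(\theta_B)=\theta_B^\top S\theta_B+\sigma_\varepsilon^2\), and \(0\preceq S\preceq\Lambda_B\).

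\textbf{Step 2: Fisher information of a Gaussian.} For a one-dimensional Gaussian with mean \(a(\theta)\) and variance \(v(\theta)\), the \((r,r)\) Fisher entry is \((\partial_r a)^2/v+(\partial_r v)^2/(2v^2)\). Here \(\partial_r a=\mu_r(Z)\) and \(\partial_r v=2(S\theta_B)_r\). Taking the expectation over \(Z\) gives
\[
  I_{rr}^{\Phi,W}(\theta_B)=\frac{\E\mu_r(Z)^2}{v}+\frac{2(S\theta_B)_r^2}{v^2}.
\]

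\textbf{Step 3: the mean term.} First, \(\E\mu_r(Z)^2=(\Lambda_B\Phi_B^\top G^{-1}\Phi_B\Lambda_B)_{rr}\). This is at most \(\Var(x_r)=\lambda_r\), since \(\mu_r\) is a conditional expectation of \(x_r\). It is also equal to \(\lambda_r^2\varphi_r^\top G^{-1}\varphi_r\), which is at most \(\lambda_r^2\norm{\varphi_r}_2^2/\sigma_\delta^2\le C_\Phi m\lambda_r^2/\sigma_\delta^2\) by \(G\succeq\sigma_\delta^2 I_m\) and the column bound on \(\Phi_B\). Dividing by \(v\ge\sigma_\varepsilon^2\) yields the \(\min\{\lambda_r,m\lambda_r^2/\sigma_\delta^2\}\) term.

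\textbf{Step 4: the variance term.} By Cauchy--Schwarz for the PSD form \(S\), we have \((S\theta_B)_r^2\le S_{rr}\,\theta_B^\top S\theta_B\). Using \(S\preceq\Lambda_B\) gives \(S_{rr}\le\lambda_r\) and \(\theta_B^\top S\theta_B\le\theta_B^\top\Lambda_B\theta_B\). Together with \(v\ge\sigma_\varepsilon^2\), this produces \(C\lambda_r\theta_B^\top\Lambda_B\theta_B\).

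The only delicate point is Step 3: the \(m\)-dependent branch must come out with \(\lambda_r^2\) rather than \(\lambda_r\). This is handled by writing the posterior-mean variance as \(\lambda_r^2\varphi_r^\top G^{-1}\varphi_r\) and bounding \(G^{-1}\preceq\sigma_\delta^{-2}I_m\), which uses only the measurement-noise floor \eqref{eq:indep-block-nuisance-cov}.
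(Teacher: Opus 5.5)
Your proposal is correct and follows the paper's route. Both arguments condition on \(Z\), use the Gaussian conditional Fisher information, bound the mean term as \(\lambda_r^2\varphi_r^\top G^{-1}\varphi_r\) in two ways, and treat the variance term with the PSD Cauchy--Schwarz inequality and \(S\preceq\Lambda_B\). The one difference is cosmetic: you get \(\E\mu_r(Z)^2\le\lambda_r\) because conditional expectation contracts second moments, whereas the paper derives the equivalent \(\lambda_r\varphi_r^\top G^{-1}\varphi_r\le 1\) from \(G\succeq\lambda_r\varphi_r\varphi_r^\top\).
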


\begin{proof}
  The law of \(Z\) does not depend on \(\theta_B\), so the Fisher information of the joint observation \((Y,Z)\) is the expectation of the conditional Fisher information of \(Y\mid Z\), namely,
  \[
    I^{\Phi,W}(\theta_B)
    =
    \E_Z[I_{\mathrm{cond}}(\theta_B \mid Z)].
  \]
  To compute the conditional distribution, write
  \[
    V_B=\Phi_B \Lambda_B \Phi_B^\top+W_B.
  \]
  Gaussian conditioning gives
  \[
    x_B \mid Z
    \sim
    N\bigl(\mu_B(Z),\Sigma_B \bigr),
  \]
  where
  \[
    \mu_B(Z)=\Lambda_B \Phi_B^\top V_B^{-1} Z,
    \qquad
    \Sigma_B=\Lambda_B-\Lambda_B \Phi_B^\top V_B^{-1} \Phi_B \Lambda_B.
  \]
  Hence
  \[
    Y\mid Z,\theta_B
    \sim
    N\bigl(
      \theta_B^\top \mu_B(Z),
      \sigma_\varepsilon^2+\theta_B^\top \Sigma_B \theta_B
    \bigr).
  \]

  Consequently, the conditional Fisher information is
  \[
    I_{\mathrm{cond}}(\theta_B \mid Z)
    =
    \frac{1}{v_B(\theta_B)}\mu_B \mu_B^\top
    +
    \frac{2}{v_B(\theta_B)^2}
    \Sigma_B \theta_B \theta_B^\top \Sigma_B,
  \]
  where \(v_B(\theta_B)=
  \sigma_\varepsilon^2+\theta_B^\top \Sigma_B \theta_B\). Since
  \(v_B(\theta_B)\ge\sigma_\varepsilon^2\) and
  \(\Sigma_B \preceq\Lambda_B\),
  \begin{equation}
    \E_Z[I_{\mathrm{cond}}(\theta_B \mid Z)]
    \preceq
    \frac{1}{\sigma_\varepsilon^2}\E_Z[\mu_B \mu_B^\top]
    +
    \frac{2}{\sigma_\varepsilon^4}
    \Sigma_B \theta_B \theta_B^\top \Sigma_B
    \label{eq:noisy-block-conditional-fisher-bound}
  \end{equation}
  Since \(\mu_B(Z)=\Lambda_B \Phi_B^\top V_B^{-1} Z\) and
  \(\E[ZZ^\top]=V_B\), a direct calculation gives
  \[
    \E_Z[\mu_B \mu_B^\top]
    =
    \Lambda_B \Phi_B^\top V_B^{-1} \Phi_B \Lambda_B.
  \]
  Thus the mean component contributes at most
  \[
    \frac{1}{\sigma_\varepsilon^2} \xk{\E_Z[\mu_B \mu_B^\top]}_{rr}
    \leq C\lambda_r^2 \varphi_r^\top V_B^{-1} \varphi_r.
  \]
  Since \(V_B \succeq W_B \succeq\sigma_\delta^2 I_m\),
  \begin{equation}
    \varphi_r^\top V_B^{-1} \varphi_r
    \le
    \sigma_\delta^{-2} \norm{\varphi_r}_2^2
    \le
    C\sigma_\delta^{-2} m.
    \label{eq:noisy-block-mean-noise-bound}
  \end{equation}
  On the other hand, since \(V_B \succeq\lambda_r \varphi_r \varphi_r^\top\),
  \(V_B^{-1/2} \lambda_r \varphi_r \varphi_r^\top V_B^{-1/2} \preceq I_m\).
  Taking traces gives
  \begin{equation}
    \lambda_r \varphi_r^\top V_B^{-1} \varphi_r \le1.
    \label{eq:noisy-block-mean-rank-one-bound}
  \end{equation}
  Combining
  \cref{eq:noisy-block-mean-noise-bound,eq:noisy-block-mean-rank-one-bound}
  gives
  \begin{equation}
    \frac{1}{\sigma_\varepsilon^2}\xk{\E_Z[\mu_B \mu_B^\top]}_{rr}
    \le
    C\min\left\{\lambda_r,\frac{m}{\sigma_\delta^2}\lambda_r^2 \right\}.
    \label{eq:noisy-block-mean-component-bound}
  \end{equation}

  For the variance component, the PSD Cauchy--Schwarz inequality gives
  \begin{equation}
    \frac{2}{\sigma_\varepsilon^4}
    e_r^\top \Sigma_B \theta_B \theta_B^\top \Sigma_B e_r
    =
    \frac{2}{\sigma_\varepsilon^4}\absx{(\Sigma_B \theta_B)_r}^2
    \le
    C(e_r^\top \Sigma_B e_r)(\theta_B^\top \Sigma_B \theta_B)
    \le
    C\lambda_r \theta_B^\top \Lambda_B \theta_B.
    \label{eq:noisy-block-variance-component-bound}
  \end{equation}
  Plugging
  \cref{eq:noisy-block-mean-component-bound,eq:noisy-block-variance-component-bound}
  into the \(rr\) entry of
  \cref{eq:noisy-block-conditional-fisher-bound} proves
  \cref{eq:noisy-gaussian-block-fisher}.
\end{proof}

\subsection{Prior Distribution}
\label{subsec:lower-prior-distribution}

For the lower-bound prior, we will consider prior distribution of $\theta$ over the block
\begin{equation}
  B_d \coloneqq\{d+1,\dots,2d\},
  \label{eq:indep-block-definition}
\end{equation}
and set \( \theta_k=0,\quad k\notin B_d. \)
For an estimator \(\hat{\beta}=\sum_{r\in\bbZ} \hat{\theta}_r e_r\), the full prediction risk dominates the block risk
\begin{equation}
  \E\caR(\hat{\beta};\theta,\lambda)
  =
  \sum_{k\in\bbZ} \lambda_k \E\absx{\hat{\theta}_k-\theta_k}^2
  \ge
  \sum_{r\in B_d} \lambda_r
  \E\absx{\hat{\theta}_{r}-\theta_{r}}^2.
  \label{eq:lower-risk-block-reduction}
\end{equation}

Take the one-dimensional prior
\[
  p(u)=\cos^2 \left(\frac{\pi u}{2}\right)\mathbf{1}_{\{\absx{u}\le1\}},
\]
whose Fisher information is \(\pi^2\). Define the product prior \(\pi_d\) on
the block by
\begin{equation}
  \theta_r=a u_r,\quad r\in B_d,\qquad
  u_r \overset{\mathrm{iid}}{\sim}p(\cdot),
  \qquad
  a\coloneqq c_\pi R_0(2d)^{-s-\frac{1}{2}},
  \label{eq:lower-block-prior}
\end{equation}
where \(c_\pi>0\) is a sufficiently small fixed constant depending only on
\(s\). Translated to Fourier coefficients as above, this prior is supported on
\(\Theta_s(R_0)\), since
\begin{equation}
  \sum_{k\in\bbZ}(1+\absx{k})^{2s}\absx{\theta_k}^2
  \le
  C\sum_{r\in B_d} r^{2s} a^2 u_r^2
  \le
  Cd(2d)^{2s}a^2
  \le
  R_0^2.
  \label{eq:lower-block-prior-sobolev}
\end{equation}
Moreover,
\begin{equation}
  (J_{\pi_d})_{rr}=\frac{\pi^2}{a^2}\asymp d^{2s+1},
  \qquad r\in B_d.
  \label{eq:lower-block-prior-fisher}
\end{equation}

\begin{lemma}[Prior energy on the block]
\label{lem:lower-prior-energy}
Under \cref{eq:lower-block-prior},
\begin{equation}
  \E_{\pi_d}[\theta_B^\top \Lambda_B \theta_B]
  \le
  C R_0^2 d^{-(2\alpha+2s)}.
  \label{eq:lower-prior-energy}
\end{equation}
\end{lemma}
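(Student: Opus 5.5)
The plan is a direct computation, since the prior is an explicit product measure on the block \(B_d=\{d+1,\dots,2d\}\). First expand the quadratic form coordinatewise:
\[
  \theta_B^\top \Lambda_B \theta_B
  =
  \sum_{r\in B_d} \lambda_r \theta_r^2
  =
  a^2 \sum_{r\in B_d} \lambda_r u_r^2 .
\]
Taking expectation under \(\pi_d\) and using \(\absx{u_r}\le 1\) on the support of \(p\) (or \(\E u_r^2<1\)), we get
\[
  \E_{\pi_d}[\theta_B^\top \Lambda_B \theta_B]\le a^2\sum_{r\in B_d}\lambda_r .
\]

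Next, use the eigenvalue class \(\caL_\alpha(c_\lambda,C_\lambda)\). For \(r\in B_d\) we have \(1+\absx{r}\ge d+1> d\), so \(\lambda_r\le C_\lambda (1+r)^{-2\alpha}\le C_\lambda d^{-2\alpha}\). Since \(\absx{B_d}=d\), it follows that
\[
  \sum_{r\in B_d}\lambda_r\le C_\lambda d^{1-2\alpha}.
\]

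Finally, insert \(a^2=c_\pi^2R_0^2(2d)^{-2s-1}\) from \cref{eq:lower-block-prior}:
\[
  \E_{\pi_d}[\theta_B^\top \Lambda_B \theta_B]
  \le
  c_\pi^2 C_\lambda R_0^2\, 2^{-2s-1} d^{-2s-1} d^{1-2\alpha}
  \le
  C R_0^2 d^{-(2\alpha+2s)} .
\]
Here \(C\) depends only on \(C_\lambda\), \(c_\pi\) and \(s\).

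There is no real obstacle. The only point to check is that the lower end \(d+1\) of the block controls all eigenvalues on it uniformly, which holds because \(B_d\) excludes the low frequencies. The bound is uniform in \(\theta\) on the support of the prior, so it holds pointwise, not just in expectation.
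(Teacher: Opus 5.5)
Your proposal is correct and follows the paper's own proof: both bound \(\lambda_r\lesssim d^{-2\alpha}\) on \(B_d\), use \(|B_d|=d\) and \(\E u_r^2\le 1\) to get \(C a^2 d^{1-2\alpha}\), and substitute \(a^2\asymp R_0^2 d^{-2s-1}\). Your remark that the bound in fact holds pointwise on the support of \(\pi_d\), since \(|u_r|\le 1\), is also correct.
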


\begin{proof}
  On \(B_d\), \(\lambda_r \asymp d^{-2\alpha}\). Hence
  \[
    \E_{\pi_d}[\theta_B^\top \Lambda_B \theta_B]
    =
    \sum_{r\in B_d} \lambda_r a^2 \E u_r^2
    \le
    C a^2 d^{1-2\alpha}
    \le
    C R_0^2 d^{-(2\alpha+2s)}.
  \]
\end{proof}

\subsection[Proof of the independent-design lower bound]{Proof of \cref{thm:indep-lower}}
\label{subsec:proof-indep-lower}

Fix
\[
  \lambda_r=\bar{c}(1+\absx{r})^{-2\alpha}\in
  \caL_\alpha(c_\lambda,C_\lambda),
  \qquad r\in\bbZ,
  \qquad \bar{c}\in[c_\lambda,C_\lambda].
\]
For a block size \(d\), take the prior \(\pi_d\) defined in \cref{eq:lower-block-prior} on \(B=B_d\).
By \cref{eq:lower-block-prior-sobolev}, \(\pi_d\) is supported on \(\Theta_s(R_0)\).
Combining this support property with \cref{lem:finite-dimensional-van-trees,eq:lower-risk-block-reduction,eq:lower-block-prior-fisher} and \(\lambda_r \asymp d^{-2\alpha}\) on \(B_d\) gives
\begin{align}
  \inf_{\hat{\beta}}
  \sup_{\theta\in\Theta_s(R_0)}
  \E\caR(\hat{\beta};\theta,\lambda)
  &\ge
  \inf_{\hat{\beta}}
  \E_{\pi_d}\E\caR(\hat{\beta};\theta,\lambda)
  \nonumber\\
  &\ge
  \sum_{r\in B_d}
  \lambda_r
  \xk{\E_{\pi_d} I_{T,rr}(\theta_B)+(J_{\pi_d})_{rr}}^{-1}
  \nonumber\\
  &\ge
  c d^{1-2\alpha}
  \zk{\sup_{r\in B_d} \E_{\pi_d} I_{T,rr}(\theta_B)
    +
    Cd^{2s+1}}^{-1}
  \label{eq:indep-van-trees-reduction}
\end{align}

It remains to bound \(I_{T,rr}(\theta_B)\).
Since the law of each \(t_i\) is independent of \(\theta_B\), and the subjects are independent, the Fisher information of the full observation is
\begin{equation}
  I_T(\theta_B)
  =
  \sum_{i=1}^n
  \E_{t_i} \left[
    I^{\Phi_B(t_i),W_B(t_i)}(\theta_B)
  \right].
  \label{eq:indep-random-design-fisher}
\end{equation}
For each fixed \(t_i\), the block observation satisfies the assumptions of \cref{lem:noisy-gaussian-block-fisher} with \(\Phi_B=\Phi_B(t_i)\) and \(W_B=W_B(t_i)\).
The trigonometric basis is uniformly bounded, so \(\norm{\varphi_r(t_i)}_2^2 \le Cm\) for every \(r\in B_d\) and \(t_i\).
Thus \cref{lem:noisy-gaussian-block-fisher,eq:indep-random-design-fisher} yield, for every \(r\in B_d\),
\begin{equation}
  I_{T,rr}(\theta_B)
  \lesssim
  n \lambda_r
  \zk{\min(1, m\lambda_r) + \theta_B^\top \Lambda_B \theta_B}.
  \label{eq:indep-block-fisher-min-bound}
\end{equation}

Averaging \cref{eq:indep-block-fisher-min-bound} with respect to \(\pi_d\), using \cref{lem:lower-prior-energy}, and using \(\lambda_r \asymp d^{-2\alpha}\) on \(B_d\) yields
\begin{equation}
  \begin{aligned}
    \E_{\pi_d} I_{T,rr}(\theta_B)
    &\le
    Cn\lambda_r
    \zk{\min(1,m\lambda_r)+\E_{\pi_d}[\theta_B^\top \Lambda_B \theta_B]}
    \\
    &\le
    Cn d^{-2\alpha} \min(1,m d^{-2\alpha})
    +
    Cn d^{-(4\alpha+2s)},
    \qquad r\in B_d.
  \end{aligned}
  \label{eq:indep-n-term-fisher}
\end{equation}

The \(n\)-term follows from \(\min(1,m d^{-2\alpha})\le1\) in \cref{eq:indep-n-term-fisher}.
Substituting the resulting bound \(\sup_{r\in B_d} \E_{\pi_d} I_{T,rr}(\theta_B)\le Cn d^{-2\alpha}\) into \cref{eq:indep-van-trees-reduction} gives
\[
  \inf_{\hat{\beta}}
  \sup_{\theta\in\Theta_s(R_0)}
  \E\caR(\hat{\beta};\theta,\lambda)
  \ge
  c \frac{d^{1-2\alpha}}{n d^{-2\alpha}+d^{2s+1}}.
\]
Choosing
\[
  d\asymp n^{\frac{1}{2\alpha+2s+1}}
\]
therefore yields
\begin{equation}
  \inf_{\hat{\beta}}
  \sup_{\theta\in\Theta_s(R_0)}
  \E\caR(\hat{\beta};\theta,\lambda)
  \ge
  c n^{-\frac{2\alpha+2s}{2\alpha+2s+1}}.
  \label{eq:indep-n-rate-lower}
\end{equation}

The term due to noisy measurements follows from \(\min(1,m d^{-2\alpha})\le m d^{-2\alpha}\) in \cref{eq:indep-n-term-fisher}.
The second term is lower order because \(n d^{-(4\alpha+2s)}/(nm d^{-4\alpha})=d^{-2s}/m\le1\).
Thus
\begin{equation}
  \sup_{r\in B_d} \E_{\pi_d} I_{T,rr}(\theta_B)
  \le
  Cnm d^{-4\alpha}.
  \label{eq:indep-nm-term-fisher}
\end{equation}
Substituting \cref{eq:indep-nm-term-fisher} into \cref{eq:indep-van-trees-reduction} gives
\[
  \inf_{\hat{\beta}}
  \sup_{\theta\in\Theta_s(R_0)}
  \E\caR(\hat{\beta};\theta,\lambda)
  \ge
  c \frac{d^{1-2\alpha}}{nm d^{-4\alpha}+d^{2s+1}}.
\]
Choosing
\[
  d\asymp(nm)^{\frac{1}{4\alpha+2s+1}}
\]
yields
\begin{equation}
  \inf_{\hat{\beta}}
  \sup_{\theta\in\Theta_s(R_0)}
  \E\caR(\hat{\beta};\theta,\lambda)
  \ge
  c (nm)^{-\frac{2\alpha+2s}{4\alpha+2s+1}}.
  \label{eq:indep-nm-rate-lower}
\end{equation}

Combining \cref{eq:indep-n-rate-lower,eq:indep-nm-rate-lower} and decreasing the constant \(c\) if necessary gives the desired lower bound.
   \clearpage\section{Oracle Upper Bound Under Common Design}
\label{sec:upper-common-oracle}

This section proves the oracle upper bound under common design.
Throughout this section, we assume that \(t_{ij} = t_j=(j-1)/m\) without further mention.
The smoothness indices \((\alpha,s)\), the eigenvalue sequence \((\lambda_r)_{r\in\bbZ}\), and the cutoff are treated as known.
We first recall the oracle construction from the main text in \cref{subsec:common-oracle-upper}.
Recall that
\[
  \hat{\gamma}_r
  \coloneqq
  \frac{1}{n}\sum_{i=1}^n Y_i \bar{Z}_{ir},\quad
  \bar{Z}_{ir} = \frac{1}{m}\sum_{j=1}^m Z_{ij} e_r(t_j),\quad
  r \in \bbZ,
\]
and the estimator is given by
\[
  \tilde{\beta}
  =
  \sum_{\absx{r} \leq d} \tilde{\theta}_r e_r,\quad
  \tilde{\theta}_r = \frac{\hat{\gamma}_r}{\lambda_r},
\]
where \(d \le m/4 \) is a cutoff parameter.

\subsection{Fixed Grid Computation}

We start with computing Fourier coefficients related to the equally spaced grid.
Define the grid Gram coefficients
\begin{equation}
  \label{eq:cd-discrete-orthogonality}
  D_{r\ell}^{(m)}
  \coloneqq
  \frac{1}{m}\sum_{j=1}^m e_\ell(t_j)e_r(t_j),
  \qquad r,\ell\in\bbZ.
\end{equation}
Then, for any $f(t) = \sum_{\ell\in\bbZ} a_\ell e_\ell(t)$, we have
\begin{equation}
  \label{eq:cd-discrete-fourier}
  \frac{1}{m} \sum_{j=1}^m f(t_j) e_r(t_j) = \sum_{\ell\in\bbZ} D_{r\ell}^{(m)} a_\ell,\quad \forall r\in\bbZ.
\end{equation}

The following lemma gives Gram coefficients under equally spaced grids, which are the key to the Fourier calculations in the common-design setting.

\begin{lemma}
  \label{lem:grid-gram}
  For \(p,q\ge1\),
  \begin{equation}
    \label{eq:grid-gram}
    D_{p q}^{(m)}
    =
    \mathbf{1}\{p-q\equiv0\}
    +
    \mathbf{1}\{p+q\equiv0\},
    \quad
    D_{-p,-q}^{(m)}
    =
    \mathbf{1}\{p-q\equiv0\}
    -
    \mathbf{1}\{p+q\equiv0\},
  \end{equation}
  where congruences are modulo \(m\), while
  \[
    \begin{aligned}
      D_{p,-q}^{(m)}&=D_{-p,q}^{(m)}=0,\\
      D_{0q}^{(m)}&=D_{q0}^{(m)}=\sqrt{2}\cdot \mathbf{1}\{q\equiv0\},\\
      D_{0,-q}^{(m)}&=D_{-q,0}^{(m)}=0,\qquad D_{00}^{(m)}=1.
    \end{aligned}
  \]
  In particular, \(D_{rr}^{(m)}=1\) for \(\absx{r}<m/2\).
\end{lemma}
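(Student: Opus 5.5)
The plan is to compute everything through the discrete orthogonality relation for complex exponentials on the equally spaced grid. For any integer \(k\),
\[
  \frac{1}{m}\sum_{j=1}^m e^{2\pi i k t_j}
  =
  \frac{1}{m}\sum_{j=0}^{m-1} e^{2\pi i k j/m}
  =
  \mathbf{1}\{k\equiv 0 \pmod m\}.
\]
This follows from the geometric series when \(m\nmid k\), and it is trivial otherwise. Taking real and imaginary parts gives the two real identities we need:
\[
  \frac{1}{m}\sum_{j=1}^m \cos(2\pi k t_j)=\mathbf{1}\{k\equiv0\},
  \qquad
  \frac{1}{m}\sum_{j=1}^m \sin(2\pi k t_j)=0 .
\]

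Every entry of \(D^{(m)}\) is then obtained from product-to-sum formulas. For \(p,q\ge1\):
\begin{align*}
  e_p e_q &= 2\cos(2\pi pt)\cos(2\pi qt)=\cos(2\pi(p-q)t)+\cos(2\pi(p+q)t),\\
  e_{-p}e_{-q} &= 2\sin(2\pi pt)\sin(2\pi qt)=\cos(2\pi(p-q)t)-\cos(2\pi(p+q)t),\\
  e_p e_{-q} &= \sin(2\pi(p+q)t)-\sin(2\pi(p-q)t).
\end{align*}
Averaging each line over the grid and applying the two identities yields \cref{eq:grid-gram}, together with \(D^{(m)}_{p,-q}=D^{(m)}_{-p,q}=0\). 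The latter holds because grid averages of sines vanish, and symmetry of \(D^{(m)}\) handles the transposed entry.

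The entries involving index \(0\) are handled the same way. Since \(e_0e_q=\sqrt2\cos(2\pi qt)\), we get \(D^{(m)}_{0q}=\sqrt2\,\mathbf{1}\{q\equiv0\}\). Since \(e_0e_{-q}=\sqrt2\sin(2\pi qt)\), we get \(D^{(m)}_{0,-q}=0\). Finally \(D^{(m)}_{00}=1\) directly.

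For the last claim, take \(1\le\absx{r}<m/2\) and put \(p=q=\absx{r}\). Then \(p-q=0\equiv0\), while \(0<2p<m\) means \(p+q\not\equiv0\). Both formulas therefore give \(D^{(m)}_{rr}=1\), and \(r=0\) is immediate.

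The only point needing care is bookkeeping rather than analysis. We must keep the sign conventions straight (positive indices are cosines, negative are sines), and we must check that the sine identity holds for every integer \(k\), including \(k\equiv0\) and negative \(k\), which it does since \(\sin(2\pi kj/m)\) sums to the imaginary part of a sum that is real. I do not expect any genuine obstacle.
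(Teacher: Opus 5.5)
Your proposal is correct and follows the paper's proof essentially step by step. Both use discrete orthogonality of complex exponentials on the equispaced grid, take real and imaginary parts, apply the product-to-sum identities, and use the fact that \(0<2\absx{r}<m\) rules out \(2\absx{r}\equiv 0 \pmod m\) for the diagonal claim; your explicit appeal to the symmetry of \(D^{(m)}\) for the transposed entries only spells out what the paper leaves implicit.
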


\begin{proof}
  Let \(u_k(t)=\exp(2\pi\mathrm{i}kt)\). Since \(t_j=(j-1)/m\), the grid values \(u_k(t_j)\) run through powers of an \(m\)-th root of unity as \(j\) varies.
  Therefore
  \[
    \frac{1}{m}\sum_{j=1}^m u_k(t_j)
    =
    \mathbf{1}\{k\equiv 0\pmod m\}.
  \]
  Taking real and imaginary parts gives
  \[
    \frac{1}{m}\sum_{j=1}^m \cos(2\pi k t_j)
    =
    \mathbf{1}\{k\equiv0\pmod m\},
    \qquad
    \frac{1}{m}\sum_{j=1}^m \sin(2\pi k t_j)
    =
    0.
  \]
  Recall that
  \[
    e_0(t)=1,\quad e_k(t)=\sqrt{2}\cos(2\pi k t),\quad e_{-k}(t)=\sqrt{2}\sin(2\pi k t),\quad k\ge1.
  \]
  For \(p,q\ge1\),
  the product identities then yield
  \[
    \begin{aligned}
      e_p(t)e_q(t)
      &=
      \cos(2\pi(p-q)t)+\cos(2\pi(p+q)t),\\
      e_{-p}(t)e_{-q}(t)
      &=
      \cos(2\pi(p-q)t)-\cos(2\pi(p+q)t),
    \end{aligned}
  \]
  and
  \[
    e_p(t)e_{-q}(t)
    =
    \sin(2\pi(p+q)t)+\sin(2\pi(q-p)t).
  \]
  Averaging these identities over the grid gives the formulas for \(D_{pq}^{(m)}\), \(D_{-p,-q}^{(m)}\), and \(D_{p,-q}^{(m)}=D_{-p,q}^{(m)}=0\).
  Since \(e_0\equiv1\), the same grid averages give \(D_{0q}^{(m)}=D_{q0}^{(m)}=\sqrt{2}\mathbf{1}\{q\equiv0\}\), \(D_{0,-q}^{(m)}=D_{-q,0}^{(m)}=0\), and \(D_{00}^{(m)}=1\).

  Finally,
  if \(0<\absx{r}<m/2\), then \(2\absx{r}\not\equiv0\pmod m\), so the displayed identities give \(D_{rr}^{(m)}=1\).
  The case \(r=0\) is \(D_{00}^{(m)}=1\).
\end{proof}

It is also easy to see from \cref{lem:grid-gram} that the grid Fourier vectors \(\{(e_r(t_j))_{j=1}^m:\absx{r}\le m\}\) span \(\R^m\).
Hence \(z\in\R^m\) is determined by its grid Fourier coefficients \((m^{-1} \sum_{j=1}^m z_j e_r(t_j))_{\absx{r}\le m}\).

Now, since $Z_{ij} = X_i(t_j) + \delta_{ij}$, we can write
\begin{equation}
  \label{eq:grid-zbar}
  \bar{Z}_{ir}
  \coloneqq
  \frac{1}{m}\sum_{j=1}^m Z_{ij} e_r(t_j)
  =
  \widetilde{x}_{ir}+\bar{\delta}_{ir},
\end{equation}
where the aliased score and averaged noise are
\begin{equation}
  \label{eq:grid-x-tilde}
  \widetilde{x}_{ir}
  \coloneqq \frac{1}{m}\sum_{j=1}^m X_i(t_j) e_r(t_j) =  \sum_{\ell\in\bbZ} D_{r\ell}^{(m)} x_{i\ell},
  \qquad
  \bar{\delta}_{ir}
  \coloneqq
  \frac{1}{m}\sum_{j=1}^m \delta_{ij} e_r(t_j),
\end{equation}
where we use \cref{eq:cd-discrete-fourier} in the first equality.

Similarly, using \cref{eq:cd-discrete-fourier}, the aliased cross-covariance coefficient is given by
\begin{equation}
  \label{eq:cd-aliased-gamma}
  \widetilde\gamma_r
  \coloneqq
  \E[Y_i \bar{Z}_{ir}]
  = \frac{1}{m} \sum_{j=1}^m g(t_j) e_r(t_j)
  =
  \sum_{\ell\in\bbZ} D_{r\ell}^{(m)} \gamma_\ell,
\end{equation}
which also shows that
\begin{equation}
  \label{eq:grid-gamma-mean}
  \E[\hat{\gamma}_r] = \widetilde{\gamma}_r.
\end{equation}

\subsection{Variance and Bias Bounds}
\label{subsec:cd-variance-bias-bounds}

We introduce the aliased score variance for \(r \in \bbZ\):
\begin{equation}
  \label{eq:cd-lambda-tilde}
  \widetilde\lambda_r
  \coloneqq
  \sum_{\ell\in\bbZ} \left(D_{r\ell}^{(m)} \right)^2\lambda_\ell.
\end{equation}

\begin{proposition}
  \label{prop:cd-grid-coefficient-covariance}
  For \(\absx{r},\absx{s}<m/2\), we have
  \begin{align}
    \label{eq:grid-x-covariance}
    \E \xk{\widetilde{x}_{ir} \widetilde{x}_{is}} &=  \widetilde\lambda_r \mathbf{1}\{r=s\}, \\
    \label{eq:grid-noise-covariance}
    \E \xk{\bar{\delta}_{ir} \bar{\delta}_{is}} &= \frac{\sigma_\delta^2}{m}\mathbf{1}\{r=s\},
  \end{align}
  and thus
  \begin{equation}
    \label{eq:grid-zbar-covariance}
    \E \xk{\bar{Z}_{ir} \bar{Z}_{is}} = \left(\widetilde\lambda_r+\frac{\sigma_\delta^2}{m}\right)\mathbf{1}\{r=s\}.
  \end{equation}
\end{proposition}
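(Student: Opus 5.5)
The plan is to compute both covariances directly from the representations \cref{eq:grid-x-tilde} and then read off the answer from the grid Gram formulas in \cref{lem:grid-gram}. The identity \cref{eq:grid-zbar-covariance} will follow immediately, because $\bar{Z}_{ir}=\widetilde{x}_{ir}+\bar{\delta}_{ir}$ and the families $\{X_i\}$ and $\{\delta_{ij}\}$ are independent and centered, so the cross terms $\E(\widetilde{x}_{ir}\bar{\delta}_{is})$ vanish.

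\textbf{Noise term.} Since the $\delta_{ij}$ are i.i.d.\ $\mathcal{N}(0,\sigma_\delta^2)$,
\[
  \E(\bar{\delta}_{ir}\bar{\delta}_{is})
  =
  \frac{\sigma_\delta^2}{m^2}\sum_{j=1}^m e_r(t_j)e_s(t_j)
  =
  \frac{\sigma_\delta^2}{m}D^{(m)}_{rs}.
\]
It therefore suffices to show that $D^{(m)}_{rs}=\mathbf{1}\{r=s\}$ whenever $\absx{r},\absx{s}<m/2$. This follows from \cref{lem:grid-gram} in four cases.
\begin{itemize}
  \item Mixed cosine and sine indices give zero.
  \item If $p,q\ge1$ and $p,q<m/2$, then $0<p+q<m$, so $p+q\not\equiv0$. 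Also $\absx{p-q}<m/2$, so $p-q\equiv0$ only when $p=q$.
  \item For an index $0$ paired with a cosine index $q$, we have $q\not\equiv 0$ since $0<q<m/2$.
  \item The diagonal entries equal $1$, which is the last statement of \cref{lem:grid-gram}.
\end{itemize}

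\textbf{Score term.} First I justify exchanging sum and expectation. The entries satisfy $\absx{D^{(m)}_{r\ell}}\le 2$, the scores $x_{i\ell}$ are independent $\mathcal{N}(0,\lambda_\ell)$, and $\sum_\ell\lambda_\ell<\infty$. Hence $\sum_\ell D^{(m)}_{r\ell}x_{i\ell}$ converges in $L^2$, and its covariances may be computed termwise:
\[
  \E(\widetilde{x}_{ir}\widetilde{x}_{is})
  =
  \sum_{\ell\in\bbZ}D^{(m)}_{r\ell}D^{(m)}_{s\ell}\lambda_\ell .
\]
For $r=s$ this is $\widetilde\lambda_r$ by definition \cref{eq:cd-lambda-tilde}.

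For $r\ne s$, I show that every summand vanishes, i.e.\ no frequency $\ell$ aliases onto both $r$ and $s$. Again there are four cases from \cref{lem:grid-gram}.
\begin{itemize}
  \item If $r$ is a cosine index or $0$ and $s$ is a sine index (or vice versa), then $D^{(m)}_{r\ell}$ can be nonzero only for $\ell$ a cosine index or $0$, while $D^{(m)}_{s\ell}$ requires $\ell$ a sine index. So the product is always zero.
  \item If $r=p\ge1$ and $s=q\ge1$ are both cosines, a nonzero product for $\ell$ a cosine index requires $\ell\equiv\pm p$ and $\ell\equiv\pm q \pmod m$. This gives $p\equiv\pm q \pmod m$, and by the noise-term computation this forces $p=q$. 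For $\ell=0$ the product would need $p\equiv0$, which is impossible. The same argument applies to two sine indices.
  \item If $r=0$ and $s=q\ge1$ is a cosine, a nonzero product needs $\ell\equiv 0$ together with $\ell\equiv \pm q$, which gives $q\equiv 0$. This is impossible since $0<q<m/2$.
\end{itemize}

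\textbf{Main obstacle.} The only step needing care is this case bookkeeping in the $r\neq s$ score computation, in particular keeping track of the signs in the congruences $\ell\equiv\pm p$, and including the index $\ell=0$ and the factor $\sqrt2$ in $D^{(m)}_{0q}$. Once all cases are enumerated, each one reduces to the fact that distinct nonnegative integers below $m/2$ are neither congruent nor negatives of each other modulo $m$.
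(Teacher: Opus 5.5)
Your proposal is correct and follows the same route as the paper. You write the score covariance as $\sum_{\ell}D^{(m)}_{r\ell}D^{(m)}_{s\ell}\lambda_\ell$, use the congruence formulas of \cref{lem:grid-gram} to show that the alias supports of distinct $r,s$ with $|r|,|s|<m/2$ are disjoint, and reduce the noise term to $\frac{\sigma_\delta^2}{m}D^{(m)}_{rs}$ — and your explicit check that $D^{(m)}_{rs}=\mathbf{1}\{r=s\}$ off the diagonal, together with the $L^2$ justification for computing the covariance termwise, fills in details that the paper's proof leaves implicit.
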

\begin{proof}
  For \( \widetilde{x}_{ir} \) in \cref{eq:grid-x-tilde}, since $x_{i\ell}$'s are independent Gaussian with variance $\lambda_\ell$, we have
  \[
    \E \xk{\widetilde{x}_{ir} \widetilde{x}_{is}} = \sum_{\ell \in \bbZ} D_{r\ell}^{(m)} D_{s\ell}^{(m)} \lambda_\ell.
  \]
  The congruence formulas in \cref{lem:grid-gram} show that, for \(\absx{r},\absx{s}<m/2\), the sets \(\{\ell:D_{r\ell}^{(m)}\ne0\}\) and \(\{\ell:D_{s\ell}^{(m)}\ne0\}\) are disjoint unless \(r=s\).
  Hence, \( D_{r\ell}^{(m)} D_{s\ell}^{(m)} \) is non-zero only if $r = s$ and thus \cref{eq:grid-x-covariance} holds.

  On the other hand, \cref{eq:grid-noise-covariance} follows from the independence of the noise and the fact that \(D_{rr}^{(m)}=1\) for \(\absx{r}<m/2\).
  Finally, \cref{eq:grid-zbar-covariance} follows from the independence of the signal and noise.
\end{proof}

The aliased score \(\widetilde{x}_{ir}\) and its variance \(\widetilde\lambda_r\) are defined in \cref{eq:grid-x-tilde,eq:cd-lambda-tilde}.

\begin{lemma}[Alias-variance equivalence]
  \label{lem:cd-alias-variance-equivalence}
  If \(\absx{r} \le d \le m/4\), then
  \begin{equation}
    \widetilde{\lambda}_r \asymp \lambda_r.
    \label{eq:cd-tilde-comparable}
  \end{equation}
\end{lemma}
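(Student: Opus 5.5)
The plan is to prove the two inequalities separately, starting from the definition \(\widetilde{\lambda}_r=\sum_{\ell}(D_{r\ell}^{(m)})^2\lambda_\ell\) and the explicit Gram coefficients in \cref{lem:grid-gram}.

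\emph{Lower bound.} Since \(\absx{r}\le d\le m/4<m/2\), \cref{lem:grid-gram} gives \(D_{rr}^{(m)}=1\). Keeping only the diagonal term \(\ell=r\) yields \(\widetilde{\lambda}_r\ge\lambda_r\).

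\emph{Upper bound: which terms survive.} First I identify the support of \(\ell\mapsto D_{r\ell}^{(m)}\). By \cref{lem:grid-gram}, for \(r=p\ge1\) the nonzero coefficients occur only at cosine indices \(\ell=q\) with \(q\equiv \pm p \pmod m\), and possibly at \(\ell=0\) when \(p\equiv0\); the latter is excluded since \(0<p<m\). The sine case \(r=-p\) is symmetric, and \(r=0\) picks up \(\ell=0\) together with the cosines \(q\equiv0\). In every case \(\absx{D_{r\ell}^{(m)}}\le 2\), so
\[
  \widetilde{\lambda}_r\le \lambda_r+4\sum_{\ell\ne r,\ \absx{\ell}\equiv\pm \absx{r}\ (\mathrm{mod}\ m)}\lambda_\ell .
\]

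\emph{Upper bound: size of the aliases.} Every alias \(\ell\ne r\) has \(\absx{\ell}=km\pm\absx{r}\) with \(k\ge1\). Because \(\absx{r}\le m/4\), this gives \(\absx{\ell}\ge km-m/4\ge \tfrac34 km\). The eigenvalue bounds in \cref{eq:class-lambda} then give
\[
  \sum_{\ell\ne r}\lambda_\ell\lesssim C_\lambda\sum_{k\ge1}(km)^{-2\alpha}\lesssim m^{-2\alpha},
\]
where the sum over \(k\) converges since \(\alpha>1/2\). On the other hand, \(\lambda_r\ge c_\lambda(1+\absx{r})^{-2\alpha}\ge c_\lambda(1+m/4)^{-2\alpha}\gtrsim m^{-2\alpha}\). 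Hence the alias tail is \(\lesssim\lambda_r\), and \(\widetilde{\lambda}_r\le C\lambda_r\).

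The main point needing care is the alias bookkeeping: confirming from \cref{lem:grid-gram} that each residue class contributes a bounded number of indices per \(k\), with coefficients bounded by \(2\) (\(\sqrt2\) for \(\ell=0\)). It is also worth noting that the condition \(d\le m/4\) is exactly what keeps every alias at distance \(\gtrsim m\), so that the ratio of alias mass to \(\lambda_r\) is bounded uniformly in \(r\).
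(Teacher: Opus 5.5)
Your proof is correct and is essentially the paper's argument: the diagonal term $D_{rr}^{(m)}=1$ gives $\widetilde{\lambda}_r\ge\lambda_r$, and since every off-diagonal alias has $\absx{\ell}\gtrsim km$ with only $O(1)$ aliases per shell, the alias mass is $O(m^{-2\alpha})$, which is $\lesssim\lambda_r$ because $\absx{r}\le m/4$. One cosmetic point: the index set in your displayed sum, as written, also contains $\ell=-r$ (for which $k=0$), but $D_{r,-r}^{(m)}=0$ by \cref{lem:grid-gram}, so it is not a genuine alias and the bound is unaffected.
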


\begin{proof}
  The alias variance contains the latent low-frequency variance plus contributions from the other frequencies with the same grid trace.
  Since \(\lambda_k \asymp (1 + \absx{k})^{-2\alpha}\) with \(\alpha > 1/2\), and
  \(D_{rr}^{(m)}=1\) by \cref{lem:grid-gram},
  \[
    \widetilde{\lambda}_r
    =
    \lambda_r
    +\sum_{\ell\ne r} \left(D_{r\ell}^{(m)} \right)^2\lambda_\ell
    \le
    \lambda_r
    +
    C\sum_{\ell\ne r:D_{r\ell}^{(m)} \ne0}
    (1+\absx{\ell})^{-2\alpha}.
  \]
  The congruence formulas in \cref{lem:grid-gram} show that,
  for \(\absx{r} \le m/4\), every off-diagonal aliased index satisfies \(\absx{\ell}\ge m/2\), and each grid shell contains only \(O(1)\) aliases.
  Hence
  \[
    \sum_{\ell\ne r:D_{r\ell}^{(m)} \ne0}
    (1+\absx{\ell})^{-2\alpha}
    \le
    C m^{-2\alpha}.
  \]
  Since \(\absx{r} \le m/4\), we also have \(\lambda_r \asymp (1+\absx{r})^{-2\alpha} \ge c m^{-2\alpha}\).
  Therefore \(\widetilde{\lambda}_r \le C \lambda_r\), while the lower bound \(\widetilde{\lambda}_r \ge \lambda_r\) is immediate from the diagonal term.
\end{proof}

\begin{lemma}
  \label{lem:cd-var-gamma}
  There exists \(C < \infty\), independent of \(\sigma_\delta\), such that, for every \(\absx{r} \le d \le m/4\),
  \begin{equation}
    \E\absx{\hat{\gamma}_r-\widetilde{\gamma}_r}^2
    =
    \Var(\hat{\gamma}_r)
    \le
    \frac{C}{n}\left(\lambda_r + \frac{\sigma_\delta^2}{m}\right).
    \label{eq:cd-var-gamma}
  \end{equation}
\end{lemma}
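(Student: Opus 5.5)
We bound the variance of the single-subject summand \(A_{ir}\coloneqq Y_i\bar{Z}_{ir}\) and then divide by \(n\). The subjects are i.i.d. and \(\E[\hat{\gamma}_r]=\widetilde{\gamma}_r\) by \cref{eq:grid-gamma-mean}, so the first equality in \cref{eq:cd-var-gamma} is immediate and
\[
  \Var(\hat{\gamma}_r)=\frac{1}{n}\Var(Y_1\bar{Z}_{1r})\le\frac{1}{n}\E\bigl[Y_1^2\bar{Z}_{1r}^2\bigr].
\]
It therefore suffices to show \(\E[Y^2\bar{Z}_r^2]\le C(\lambda_r+\sigma_\delta^2/m)\) for \(\absx{r}\le d\le m/4\), with \(C\) not depending on \(\sigma_\delta\).

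Use the decomposition \cref{eq:grid-zbar}, \(\bar{Z}_r=\widetilde{x}_r+\bar{\delta}_r\), together with \((a+b)^2\le 2a^2+2b^2\). This gives two pieces.

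\emph{Noise piece.} The averaged noise \(\bar{\delta}_r\) depends only on \(\delta\), which is independent of \(Y\). Hence
\[
  \E[Y^2\bar{\delta}_r^2]=\E[Y^2]\,\E[\bar{\delta}_r^2]=\E[Y^2]\,\frac{\sigma_\delta^2}{m}
\]
by \cref{eq:grid-noise-covariance}, which applies since \(\absx{r}<m/2\). Moreover
\[
  \E Y^2=\sum_\ell\lambda_\ell\theta_\ell^2+\sigma_\varepsilon^2\le C_\lambda R_0^2+\sigma_\varepsilon^2,
\]
because \(s\ge0\) and \(\lambda_\ell\le C_\lambda\). This bound is free of \(\sigma_\delta\), so the noise piece is at most \(C\sigma_\delta^2/m\).

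\emph{Signal piece.} Here \(Y\) and \(\widetilde{x}_r=\sum_\ell D^{(m)}_{r\ell}x_\ell\) are jointly Gaussian linear forms in \((x,\varepsilon)\). The series converges in \(L^2\) because only \(O(1)\) aliases lie in each grid shell, by \cref{lem:grid-gram}, and \(\sum\lambda_\ell<\infty\). By Cauchy--Schwarz and Gaussian fourth-moment equivalence,
\[
  \E[Y^2\widetilde{x}_r^2]\le(\E Y^4)^{1/2}(\E\widetilde{x}_r^4)^{1/2}\le 3\,\E[Y^2]\,\widetilde{\lambda}_r.
\]
By \cref{eq:grid-x-covariance} and \cref{lem:cd-alias-variance-equivalence}, \(\widetilde{\lambda}_r\asymp\lambda_r\) when \(\absx{r}\le m/4\), so the signal piece is at most \(C\lambda_r\).

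Adding the two pieces and dividing by \(n\) gives \cref{eq:cd-var-gamma}. The constant depends only on \(C_\lambda,R_0,\sigma_\varepsilon\) and the constant in \cref{lem:cd-alias-variance-equivalence}; in particular it is independent of \(\sigma_\delta\), because \(\sigma_\delta\) enters only through the exact factor \(\sigma_\delta^2/m\).

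The only step that is not routine is justifying that \(\widetilde{x}_r\) is a well-defined Gaussian variable and that \(\widetilde{\lambda}_r\lesssim\lambda_r\). Both reduce to the alias counting already done in \cref{lem:cd-alias-variance-equivalence}, so I expect no real obstacle.
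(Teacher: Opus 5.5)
Your proof is correct and follows essentially the paper's argument. You split into an aliased-signal term, bounded via Gaussian fourth moments and $\widetilde\lambda_r\lesssim\lambda_r$ from \cref{lem:cd-alias-variance-equivalence}, and a measurement-noise term, bounded via independence of $\delta$ from $Y$ and $D^{(m)}_{rr}=1$; the only cosmetic difference is that the paper organizes this split through the law of total variance conditional on $(x,\varepsilon)$, whereas you bound $\Var(Y\bar Z_r)\le\E[Y^2\bar Z_r^2]$ and use $(a+b)^2\le 2a^2+2b^2$, which costs only a factor of $2$.
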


\begin{proof}
  The identity \(\E[\hat{\gamma}_r]=\widetilde{\gamma}_r\) follows directly from \cref{eq:cd-aliased-gamma}.
  Define \(A_{ir}\coloneqq Y_i\bar{Z}_{ir}\), so that \(\hat{\gamma}_r=n^{-1}\sum_{i=1}^n A_{ir}\).
  Since the subjects are independent,
  \[
    \Var(\hat{\gamma}_r) = \frac{1}{n}\Var(A_{1r}).
  \]
  We use the total-variance decomposition. Conditional on the Fourier scores and \(\varepsilon\),
  \[
    \E[A_{1r} \mid (x_{1\ell})_{\ell\in\bbZ},\varepsilon] = Y \widetilde{x}_r.
  \]
  Therefore
  \[
    \Var\bigl(\E[A_{1r} \mid (x_{1\ell})_{\ell\in\bbZ},\varepsilon]\bigr)
    =
    \Var(Y\widetilde{x}_r)
    \le
    \E\absx{Y}^2 \absx{\widetilde{x}_r}^2.
  \]
  As in the independent-design proof, \(\E Y^4 \le C\). Since \(\widetilde{x}_r\) is Gaussian with variance \(\widetilde{\lambda}_r\),
  \[
    \E\absx{Y}^2 \absx{\widetilde{x}_r}^2
    \le
    C \widetilde{\lambda}_r
    \le
    C \lambda_r
  \]
  by \cref{lem:cd-alias-variance-equivalence}.

  For the conditional variance term, the grid is deterministic, so the only randomness left after conditioning is the measurement noise:
  \[
    A_{1r}
    =
    \frac{Y}{m}\sum_{j=1}^m \bigl(X(t_j) + \delta_j \bigr)e_r(t_j).
  \]
  Hence
  \[
    \Var(A_{1r} \mid (x_{1\ell})_{\ell\in\bbZ},\varepsilon)
    =
    \frac{\absx{Y}^2}{m^2}
    \Var\Bigl(\sum_{j=1}^m \delta_j e_r(t_j) \Bigr)
    =
    \frac{\sigma_\delta^2 \absx{Y}^2}{m^2}
    \sum_{j=1}^m e_r(t_j)^2
    =
    \frac{\sigma_\delta^2}{m}\absx{Y}^2.
  \]
  The last equality uses \(D_{rr}^{(m)}=m^{-1} \sum_{j=1}^m e_r(t_j)^2=1\) for \(\absx{r}\le d\le m/4\).
  Taking expectations and using \(\E Y^2 \le C\),
  \[
    \E[\Var(A_{1r} \mid (x_{1\ell})_{\ell\in\bbZ},\varepsilon)]
    \le
    \frac{C\sigma_\delta^2}{m}.
  \]
  Therefore \(\Var(A_{1r}) \le C\lambda_r + C\sigma_\delta^2/m\), and dividing by \(n\) proves \cref{eq:cd-var-gamma}.
\end{proof}

\begin{lemma}[Aliasing bias]
  \label{lem:cd-aliasing-bias}
  Let
  \[
    a_r \coloneqq \widetilde{\gamma}_r - \gamma_r
    =
    \sum_{\ell\ne r} D_{r\ell}^{(m)} \gamma_\ell.
  \]
  If \(d \le m/4\), then
  \begin{equation}
    \sum_{\absx{r} \le d} \frac{\absx{a_r}^2}{\lambda_r}
    \le
    C R_0^2 m^{-(2\alpha+2s)}.
    \label{eq:cd-alias-sum}
  \end{equation}
\end{lemma}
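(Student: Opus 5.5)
The plan is to bound each alias bias coefficient \(a_r\) by Cauchy--Schwarz over its alias set, and then sum over \(\absx{r}\le d\) using the fact that alias sets of distinct low frequencies are disjoint. By \cref{lem:grid-gram}, for \(\absx{r}\le d\le m/4\) the nonzero off-diagonal entries \(D_{r\ell}^{(m)}\) occur only for \(\ell\) with \(\absx{\ell}\equiv \pm\absx{r}\pmod m\) (with the same cosine/sine type, or \(\ell\) a cosine index when \(r=0\)), \(\ell\ne r\). Every such \(\ell\) satisfies \(\absx{\ell}\ge m-\absx{r}\ge 3m/4\). Moreover \(\absx{D_{r\ell}^{(m)}}\le 2\). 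Write \(A_r\coloneqq\{\ell\ne r: D_{r\ell}^{(m)}\ne 0\}\). As in the proof of \cref{prop:cd-grid-coefficient-covariance}, the sets \(A_r\) for distinct \(\absx{r}<m/2\) are pairwise disjoint.

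For fixed \(r\), I would apply Cauchy--Schwarz with weights \((1+\absx{\ell})^{2(s+2\alpha)}\):
\[
  \absx{a_r}^2
  \le 4\Bigl(\sum_{\ell\in A_r}(1+\absx{\ell})^{2(s+2\alpha)}\gamma_\ell^2\Bigr)
  \Bigl(\sum_{\ell\in A_r}(1+\absx{\ell})^{-2(s+2\alpha)}\Bigr).
\]
The alias set \(A_r\) consists of \(O(1)\) indices in each shell \(\absx{\ell}\in[km-\absx{r},km+\absx{r}]\), \(k\ge1\), so \(\absx{\ell}\gtrsim km\) there. Since \(2(s+2\alpha)>1\), the second factor is therefore \(\lesssim \sum_{k\ge1}(km)^{-2(s+2\alpha)}\lesssim m^{-2(s+2\alpha)}\). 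Dividing by \(\lambda_r\gtrsim (1+\absx{r})^{-2\alpha}\ge c\,m^{-2\alpha}\) (valid because \(\absx{r}\le m/4\)) gives
\[
  \frac{\absx{a_r}^2}{\lambda_r}\lesssim m^{2\alpha}\,m^{-2s-4\alpha}\sum_{\ell\in A_r}(1+\absx{\ell})^{2(s+2\alpha)}\gamma_\ell^2 .
\]

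Summing over \(\absx{r}\le d\) and using disjointness of the \(A_r\), the total is at most \(m^{-(2\alpha+2s)}\sum_{\ell\in\bbZ}(1+\absx{\ell})^{2(s+2\alpha)}\gamma_\ell^2\). By \cref{eq:cd-g-sobolev-equivalence}, this is \(\lesssim m^{-(2\alpha+2s)}\sum_\ell(1+\absx{\ell})^{2s}\theta_\ell^2\le C R_0^2m^{-(2\alpha+2s)}\), using \(\gamma_\ell^2(1+\absx{\ell})^{4\alpha}\le C_\lambda^2\theta_\ell^2\). This is exactly \cref{eq:cd-alias-sum}.

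The main obstacle is the bookkeeping in reading \cref{lem:grid-gram}. Two facts must be checked carefully: that each alias set has only a bounded number of elements per shell \(k\), which gives the \(\absx{\ell}\gtrsim km\) bound, and that \(A_r\cap A_{r'}=\emptyset\) for \(r\ne r'\) in the band. The second fact matters because without it summing over \(r\) could reuse the same \(\gamma_\ell\) up to \(d\) times, which would lose a factor of \(d\). The special case \(r=0\), whose cosine aliases \(\ell\equiv0\) carry coefficient \(\sqrt2\), is handled in the same way.
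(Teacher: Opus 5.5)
Your proposal is correct and follows the paper's proof essentially step for step. Both arguments use the same ingredients in the same order. The first is a weighted Cauchy--Schwarz over the alias set with weights \((1+\absx{\ell})^{\pm 2(s+2\alpha)}\). The second is an alias-tail bound of order \(m^{-2(s+2\alpha)}\), which comes from having \(O(1)\) aliases per shell with \(\absx{\ell}\gtrsim m\). The third is the bound \(\lambda_r^{-1}\lesssim m^{2\alpha}\) for \(\absx{r}\le m/4\). Finally, both sum over \(\absx{r}\le d\) and convert via \(\gamma_\ell=\lambda_\ell\theta_\ell\) to the Sobolev norm of \(\theta\).

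The only difference is in how the sum over \(r\) is controlled. You show that the alias sets \(A_r\) are exactly disjoint, whereas the paper only remarks that each \(\ell\) is counted a bounded number of times. Your disjointness claim is correct and slightly sharper, but it plays the same role in the argument.
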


\begin{proof}
  Fix \(\absx{r} \le d \le m/4\). By Cauchy--Schwarz,
  \begin{equation}
    \absx{a_r}^2
    \le
    \Bigl(
    \sum_{\ell\ne r:D_{r\ell}^{(m)} \ne0}
    \left(D_{r\ell}^{(m)} \right)^2
    (1+\absx{\ell})^{-2(s+2\alpha)}
    \Bigr)
    \Bigl(
    \sum_{\ell\ne r:D_{r\ell}^{(m)} \ne0}
    (1+\absx{\ell})^{2(s+2\alpha)}\absx{\gamma_\ell}^2
    \Bigr).
    \label{eq:cd-alias-bias}
  \end{equation}
  The first factor is a high-frequency alias tail.
  The congruence formulas in \cref{lem:grid-gram} show that,
  for \(\absx{r} \le m/4\), every aliased index \(\ell\ne r\) satisfies \(\absx{\ell}\ge m/2\), with only \(O(1)\) aliases in each grid shell.
  Hence
  \[
    \sum_{\ell\ne r:D_{r\ell}^{(m)} \ne0}
    \left(D_{r\ell}^{(m)} \right)^2
    (1+\absx{\ell})^{-2(s+2\alpha)}
    \le
    C m^{-2(s+2\alpha)}.
  \]
  Also, for \(\absx{r} \le d \le m/4\),
  \[
    \lambda_r^{-1}
    \le
    C(1+\absx{r})^{2\alpha}
    \le
    C m^{2\alpha}.
  \]
  Combining these bounds with \cref{eq:cd-alias-bias},
  \[
    \frac{\absx{a_r}^2}{\lambda_r}
    \le
    C m^{-(2\alpha+2s)}
    \sum_{\ell\ne r:D_{r\ell}^{(m)} \ne0}
    (1+\absx{\ell})^{2(s+2\alpha)}\absx{\gamma_\ell}^2.
  \]
  Summing over \(\absx{r} \le d\), each coefficient index is counted only a bounded number of times.
  Indeed, the congruence formulas in \cref{lem:grid-gram} leave only \(O(1)\) choices of \(r\) with \(\absx{r}\le d<m/2\) for each fixed \(\ell\).
  Therefore
  \begin{align*}
    \sum_{\absx{r} \le d} \frac{\absx{a_r}^2}{\lambda_r}
    & \lesssim
    m^{-(2\alpha+2s)}
    \sum_{\ell \in \bbZ}(1+\absx{\ell})^{2(s+2\alpha)}\absx{\gamma_\ell}^2 \\
    & \lesssim
    m^{-(2\alpha+2s)} \sum_{\ell \in \bbZ}(1+\absx{\ell})^{2(s+2\alpha)} \lambda_\ell^2 \theta_\ell^2 \\
    & \lesssim
    m^{-(2\alpha+2s)} \sum_{\ell \in \bbZ}(1+\absx{\ell})^{2s} \theta_\ell^2 \\
    & \lesssim
    m^{-(2\alpha+2s)},
  \end{align*}
  where we use the conditions $\lambda \in \caL_\alpha(c_\lambda,C_\lambda)$ and \(\theta \in \Theta_s(R_0)\).
\end{proof}

\subsection[Proof of the Oracle Common-Design Upper Bound]{Proof of \texorpdfstring{\cref{thm:common-oracle-upper}}{the oracle common-design upper bound}}
\label{subsec:cd-main-upper-bound}

For an arbitrary cutoff \(1\le d\le m/4\), we first show the following bound, keeping the fixed noise variance explicit:
\begin{equation}
  \sup_{\lambda \in \caL_\alpha(c_\lambda,C_\lambda)}
  \sup_{\theta \in \Theta_s(R_0)}
  \E\caR(\tilde{\beta}; \theta, \lambda)
  \lesssim
  \frac{d}{n}
  +
  \frac{\sigma_\delta^2 d^{2\alpha+1}}{nm}
  +
  d^{-(2\alpha+2s)}
  +
  m^{-(2\alpha+2s)}.
  \label{eq:cd-upper-oracle}
\end{equation}
Decompose the risk as
\[
  \E\caR(\tilde{\beta}; \theta, \lambda)
  =
  \sum_{\absx{r} \le d}
  \lambda_r\E\absx{\tilde{\theta}_r-\theta_r}^2
  +
  \sum_{\absx{r} > d}\lambda_r\absx{\theta_r}^2.
\]
The tail term is bounded by \cref{eq:upper-two-sided-tail-bias}.
For the estimation part, using \(\gamma_r=\lambda_r\theta_r\) gives
\[
  \sum_{\absx{r} \le d}
  \lambda_r\E\absx{\tilde{\theta}_r-\theta_r}^2
  =
  \sum_{\absx{r} \le d}
  \frac{\E\absx{\hat{\gamma}_r-\gamma_r}^2}{\lambda_r}.
\]
Write \(a_r=\widetilde{\gamma}_r-\gamma_r\). Since \(\E[\hat{\gamma}_r]=\widetilde{\gamma}_r\),
\[
  \E\absx{\hat{\gamma}_r-\gamma_r}^2
  \le
  2\E\absx{\hat{\gamma}_r-\widetilde{\gamma}_r}^2
  +
  2\absx{a_r}^2.
\]
The deterministic aliasing contribution is bounded by \cref{lem:cd-aliasing-bias}.
For the stochastic contribution, \cref{lem:cd-var-gamma} and
\(\lambda_r^{-1}\le C(1+\absx{r})^{2\alpha}\) imply
\[
  \begin{aligned}
    \sum_{\absx{r} \le d}
    \frac{\E\absx{\hat{\gamma}_r-\widetilde{\gamma}_r}^2}{\lambda_r}
    &\le
    \frac{C}{n}\sum_{\absx{r} \le d}
    \left(1+\frac{\sigma_\delta^2}{m\lambda_r}\right)\\
    &\le
    C\left(
       \frac{d}{n}
       +
       \frac{\sigma_\delta^2 d^{2\alpha+1}}{nm}
    \right).
  \end{aligned}
\]
Combining the stochastic bound, the aliasing bound, and the tail bound proves \cref{eq:cd-upper-oracle}.

Let \(\tilde{\beta}^*\) be \(\tilde{\beta}\) constructed with \(d=d^*\), where
\[
  d^*
  \asymp
  m
  \wedge
  n^{\frac{1}{2\alpha+2s+1}}
  \wedge
  \left(\frac{nm}{1+\sigma_\delta^2}\right)^{\frac{1}{4\alpha+2s+1}}.
\]
The implicit constant is chosen sufficiently small that \(d^*\le m/4\).
Substituting this cutoff into \cref{eq:cd-upper-oracle} gives
\begin{equation}
  \sup_{\lambda \in \caL_\alpha(c_\lambda,C_\lambda)}
  \sup_{\theta \in \Theta_s(R_0)}
  \E\caR(\tilde{\beta}^*; \theta, \lambda)
  \lesssim
  n^{-\frac{2\alpha+2s}{2\alpha+2s+1}}
  +
  (1+\sigma_\delta^2)^{\frac{2\alpha+2s}{4\alpha+2s+1}}
  (nm)^{-\frac{2\alpha+2s}{4\alpha+2s+1}}
  +
  m^{-(2\alpha+2s)}.
  \label{eq:cd-upper-final}
\end{equation}
Indeed, if the grid constraint is active, then \(d^*\asymp m\), and the inequalities defining this case imply
\[
  \frac{m}{n}\lesssim m^{-(2\alpha+2s)},
  \qquad
  \frac{\sigma_\delta^2 m^{2\alpha+1}}{nm}
  \le
  \frac{(1+\sigma_\delta^2)m^{2\alpha}}{n}
  \lesssim
  m^{-(2\alpha+2s)}.
\]
If the \(n\)-balancing cutoff is active, then \(d^*\asymp n^{1/(2\alpha+2s+1)}\).
The terms \(d^*/n\) and \((d^*)^{-(2\alpha+2s)}\) are both of order
\(n^{-(2\alpha+2s)/(2\alpha+2s+1)}\), while the measurement-noise term is bounded by the same order because
\[
  d^*
  \le
  \left(\frac{nm}{1+\sigma_\delta^2}\right)^{\frac{1}{4\alpha+2s+1}}.
\]
If the measurement-noise balancing cutoff is active, then
\[
  d^*
  \asymp
  \left(\frac{nm}{1+\sigma_\delta^2}\right)^{\frac{1}{4\alpha+2s+1}},
\]
so \((d^*)^{-(2\alpha+2s)}\) and \((1+\sigma_\delta^2)(d^*)^{2\alpha+1}/(nm)\) are both of order
\[
  (1+\sigma_\delta^2)^{\frac{2\alpha+2s}{4\alpha+2s+1}}
  (nm)^{-\frac{2\alpha+2s}{4\alpha+2s+1}},
\]
and the \(d^*/n\) term is bounded by the same order because \(d^*\le n^{1/(2\alpha+2s+1)}\).
This proves \cref{eq:cd-upper-final}.
After absorbing the fixed measurement-noise variance into the implicit constant,
\cref{eq:cd-upper-oracle,eq:cd-upper-final} give
\cref{eq:common-oracle-terms,eq:common-oracle-d,eq:common-oracle-rate}.
   \clearpage\section{Adaptive Upper Bound Under Common Design}
\label{sec:upper-common-adaptive}

This section proves the fully adaptive upper bound for the equal-grid common design.
The construction and proof are parallel to the independent-design adaptive
estimator in \cref{sec:upper-indep-adaptive}.
The differences are the common-design denominator pilot, which subtracts a
fixed high-frequency floor, and the fixed-grid terms caused by aliasing on the
shared grid, already isolated in \cref{sec:upper-common-oracle}.
We first recall the setup in \cref{subsec:cdeq-adaptive-setup}, estimate the
common-design denominator in \cref{subsec:cdeq-pilots}, identify eligible
blocks in \cref{subsec:cdeq-eligible-blocks}, carry out the block-estimation
analysis in \cref{subsec:cdeq-oracle}, establish the eligible-block risk bound
in \cref{subsec:cdeq-eligible-block-risk}, aggregate the proof of the main result in \cref{subsec:cdeq-main-aggregation}.
Moreover, \cref{subsec:cdeq-interlaced-pilot}  gives an alternative denominator pilot that is available only when \(m\) is even.

\subsection{Setup}
\label{subsec:cdeq-adaptive-setup}

We use the Fourier notation recorded in
\cref{subsec:fourier-coordinate-calculations}, including
\(\gamma_r=\lambda_r \theta_r\), and the common-grid quantities from
\cref{sec:upper-common-oracle}: \(\bar{Z}_{ir}\) in \cref{eq:grid-zbar},
\(\widetilde\gamma_r\) in \cref{eq:cd-aliased-gamma}, and
\(\widetilde\lambda_r\) in \cref{eq:cd-lambda-tilde}.
Throughout this section, \cref{assum:common-design} is in force.
The empirical coefficient \(\hat{\gamma}_r\) below estimates the aliased
quantity \(\widetilde\gamma_r\), not the population coefficient \(\gamma_r\).
Set
\[
  H_m \coloneqq\left\lfloor\frac{m-1}{2}\right\rfloor .
\]
The denominator target is
\begin{equation}
  \bar{\lambda}_r
  \coloneqq
  \widetilde\lambda_r-\widetilde\lambda_{H_m}.
  \label{eq:cdeq-denominator-target}
\end{equation}
The active band is kept below the grid scale, so that \(\absx{r}\le K_m\) lies
in the low-frequency region where \(\bar{\lambda}_r\) is positive and
comparable with both \(\lambda_r\) and \(\widetilde\lambda_r\).
The denominator pilot estimates \(\bar{\lambda}_r\) by subtracting the fixed
high-frequency floor \(\bar{Z}_{iH_m}^2\).
The two full-grid coefficients have the same additive measurement-noise
variance, so this subtraction removes the measurement-noise bias.
Fix constants \(\underline\alpha,\bar{\alpha},\underline s,\bar{s}\) and consider
\[
  (\alpha,s)
  \in
  [\underline\alpha,\bar{\alpha}]
  \times
  [\underline s,\bar{s}],
  \qquad
  \underline\alpha>\frac{1}{2},
  \qquad
  \underline s\ge 0.
\]
Write
\[
  \nu\coloneqq 2\alpha+2s,
  \qquad
  \kappa\coloneqq 4\alpha+2s+1.
\]

Split the subjects into two disjoint subsets
\begin{equation}
  \{1,\dots,n\}
  =
  I_\gamma \cup I_\lambda,
  \qquad
  n_\gamma \asymp n_\lambda \asymp n.
  \label{eq:cdeq-subject-split}
\end{equation}
Choose also a sufficiently small constant
\[
  c_L \in(0,1/8),
\]
depending only on
\((\underline\alpha,\bar{\alpha},c_\lambda,C_\lambda)\), and set
\[
  L_m
  \coloneqq
  \left\lfloor
    \log_2 \xk{c_L m\wedge n}
  \right\rfloor,
  \qquad
  K_m \coloneqq 2^{L_m}.
\]
Thus, \(K_m \asymp m\wedge n\) and \(K_m<m/8\).
Set the active-band logarithm
\[
  \ell_{n,m}^{\lambda} \coloneqq \log\bigl(n(K_m+1)\bigr).
\]
Fix once and for all an integer \(m_{\mathrm{ad}}\) large enough that
\[
  c_L m\ge1
  \quad
  \text{for all } m\ge m_{\mathrm{ad}}.
\]
Since the final upper bound is claimed for sufficiently large \(m\), the analysis is carried out on the range \(m\ge m_{\mathrm{ad}}\), and the theorem constant \(m_0\) is chosen at least this large.
For the pilot floor, set
\begin{equation}
  \zeta_{n,m}
  \coloneqq
  M_0 \left(
        \frac{1}{m}\sqrt{\frac{\ell_{n,m}^{\lambda}}{n_\lambda}}
        +
        \frac{\ell_{n,m}^{\lambda}}{n_\lambda}
  \right),
  \label{eq:cdeq-pilot-floor}
\end{equation}
where \(M_0>0\) is a large constant.
For \(\absx{r}\le K_m\), define the denominator pilot
\begin{equation}
  \check{\lambda}_r
  \coloneqq
  \frac{1}{n_\lambda}\sum_{i\in I_\lambda}
  \left(\bar{Z}_{ir}^2-\bar{Z}_{iH_m}^2 \right).
  \label{eq:cdeq-lambda-pilot}
\end{equation}
Use the two-sided dyadic blocks
\[
  B_0=\{0,\pm 1\},
  \qquad
  B_\ell=\{r:2^{\ell-1}<\absx{r}\le 2^\ell\},
  \qquad
  \ell\ge 1.
\]
The active blocks are
\[
  B_0,\dots,B_{L_m}.
\]
Frequencies \(\absx{r}>K_m\) are set to zero.

On the regression group, define
\begin{equation}
  \hat{\gamma}_r
  \coloneqq
  \frac{1}{n_\gamma}\sum_{i\in I_\gamma} Y_i \bar{Z}_{ir},
  \qquad
  \absx{r}\le K_m.
  \label{eq:cdeq-gamma-estimator}
\end{equation}

Define the pilot eligibility event
\begin{equation}
  \caEelig{\ell}
  \coloneqq
  \left\{
    \min_{r\in B_\ell} \check{\lambda}_r
    \ge
    2\zeta_{n,m}
  \right\}.
  \label{eq:cdeq-eligibility-event}
\end{equation}
On eligible blocks, define
\begin{equation}
  \hat{V}_\ell
  \coloneqq
  \frac{C_V}{n}
  \sum_{r\in B_\ell}
  \left(1+\frac{1}{m\check{\lambda}_r}\right),
  \qquad
  \hat{S}_\ell
  \coloneqq
  \sum_{r\in B_\ell}
  \frac{\absx{\hat{\gamma}_r}^2}{\check{\lambda}_r},
  \label{eq:cdeq-block-stats}
\end{equation}
and define the block events
\begin{equation}
  \caEthr{\ell}
  \coloneqq
  \{\hat{S}_\ell \ge \hat{V}_\ell\},
  \qquad
  \caEkeep{\ell}
  \coloneqq
  \caEelig{\ell}\cap\caEthr{\ell},
  \label{eq:cdeq-block-events}
\end{equation}
where \(C_V>0\) is a sufficiently large constant used in the variance bound and the selection rule.
Ineligible blocks are discarded; equivalently, take \(\caEthr{\ell}=\caEkeep{\ell}=\varnothing\) without evaluating the ratios in \(\hat{S}_\ell\), \(\hat{V}_\ell\), or \(\hat{\theta}_r\).
The thresholded coordinates are defined on retained blocks by
\begin{equation}
  \hat{\theta}_r
  \coloneqq
  \frac{\hat{\gamma}_r}{\check{\lambda}_r},
  \qquad
  r\in B_\ell,
  \quad
  \caEkeep{\ell}\ \text{holds},
  \quad
  0\le\ell\le L_m,
  \label{eq:cdeq-theta-thresholded}
\end{equation}
and are set to zero on discarded blocks and for \(\absx{r}>K_m\).
As in the independent-design construction, apply the same blockwise radius
projection \(\Pi_R\), with \(R\ge R_0\), from \cref{eq:indep-adaptive-projection-map}.
Writing \(\hat{\theta}_{B_\ell}=(\hat{\theta}_r)_{r\in B_\ell}\), define
\begin{equation}
  \bar{\theta}_{B_\ell}
  \coloneqq
  \Pi_R(\hat{\theta}_{B_\ell}),
  \qquad
  0\le\ell\le L_m.
  \label{eq:cdeq-theta}
\end{equation}
Let \(\bar{\theta}_r\) denote the corresponding projected coordinates, and set
\(\bar{\theta}_r=0\) for \(\absx{r}>K_m\). The final estimator is
\begin{equation}
  \hat{\beta}(t)
  \coloneqq
  \sum_{\absx{r}\le K_m} \bar{\theta}_r e_r(t).
  \label{eq:cdeq-beta}
\end{equation}

\subsection{Denominator estimation}
\label{subsec:cdeq-pilots}

\begin{lemma}[Denominator comparison]
  \label{lem:cdeq-denominator-comparison}
  Choose \(c_L>0\) sufficiently small. Then, uniformly over
  \(\absx{r}\le K_m\),
  \begin{equation}
    \widetilde\lambda_{H_m} \le C m^{-2\alpha},
    \qquad
    \absx{\bar{\lambda}_r-\lambda_r}\le C m^{-2\alpha},
    \qquad
    \bar{\lambda}_r \asymp\lambda_r \asymp\widetilde\lambda_r.
    \label{eq:cdeq-alias-variance-equivalence}
  \end{equation}
\end{lemma}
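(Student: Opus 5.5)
The plan is to compute \(\widetilde\lambda_{H_m}\) directly from the alias structure in \cref{lem:grid-gram}, and then to compare \(\widetilde\lambda_r\) with \(\lambda_r\) for \(\absx{r}\le K_m<m/8\) using \cref{lem:cd-alias-variance-equivalence}. By definition \cref{eq:cd-lambda-tilde}, \(\widetilde\lambda_{H_m}=\sum_\ell (D^{(m)}_{H_m\ell})^2\lambda_\ell\).

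\textbf{Bound on \(\widetilde\lambda_{H_m}\).} Since \(H_m\ge1\) (for \(m\ge3\)), \cref{lem:grid-gram} shows that the nonzero entries \(D^{(m)}_{H_m\ell}\) occur only for \(\ell\ge0\) with \(\ell\equiv\pm H_m \pmod m\). They also include \(\ell=0\) when \(H_m\equiv0\), which cannot happen for \(1\le H_m<m\). Each such entry has absolute value at most \(2\). Every such \(\ell\) satisfies \(\absx{\ell}\ge \min(H_m,\,m-H_m)\ge (m-2)/2\gtrsim m\). Moreover each shell \([km,(k+1)m)\) contains \(O(1)\) of them. Summing \((1+\absx{\ell})^{-2\alpha}\) over these indices gives \(\widetilde\lambda_{H_m}\le C\sum_{k\ge0}(1+km+m/2)^{-2\alpha}\le Cm^{-2\alpha}\), using \(\alpha>1/2\). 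One caveat: when \(m\) is odd, \(m-H_m=H_m+1\), so the aliases sit near \(\pm m/2\), still of order \(m\).

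\textbf{Comparison \(\absx{\bar\lambda_r-\lambda_r}\le Cm^{-2\alpha}\).} Write \(\bar\lambda_r-\lambda_r=(\widetilde\lambda_r-\lambda_r)-\widetilde\lambda_{H_m}\). The first term is the off-diagonal alias sum, which is bounded by \(Cm^{-2\alpha}\) exactly as in the proof of \cref{lem:cd-alias-variance-equivalence}, since \(\absx{r}\le K_m\le m/4\). The second term is controlled by the previous step.

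\textbf{Two-sided equivalence.} \cref{lem:cd-alias-variance-equivalence} already gives \(\widetilde\lambda_r\asymp\lambda_r\). For \(\bar\lambda_r\), the upper bound \(\bar\lambda_r\le\widetilde\lambda_r\le C\lambda_r\) is immediate. For the lower bound, use \(\lambda_r\ge c_\lambda(1+K_m)^{-2\alpha}\ge c_\lambda(2c_L m)^{-2\alpha}\), so that
\[
  \bar\lambda_r\ge\lambda_r-Cm^{-2\alpha}\ge\lambda_r\bigl(1-C c_\lambda^{-1}(2c_L)^{2\alpha}\bigr)\ge\lambda_r/2 .
\]
This last step is the one real obstacle: it requires choosing \(c_L\) small enough, depending only on \((\underline\alpha,\bar\alpha,c_\lambda,C_\lambda)\), for the absorption to work. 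The constant \(C\) from the alias sums must therefore be made explicit and uniform over \(\alpha\in[\underline\alpha,\bar\alpha]\), which holds because \(\sum_k k^{-2\alpha}\le\sum_k k^{-2\underline\alpha}\). The remaining care is the bookkeeping for the small \(m\), which is excluded by taking \(m\ge m_{\mathrm{ad}}\).
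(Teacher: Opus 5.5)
Your proposal is correct and follows essentially the same route as the paper: it uses the congruence structure of the grid Gram coefficients to show that both the off-diagonal alias sum for $|r|\le K_m$ and the whole of $\widetilde\lambda_{H_m}$ are $O(m^{-2\alpha})$, since all aliases have magnitude $\gtrsim m$ with $O(1)$ per shell. It then absorbs this remainder into $\lambda_r\ge c_\lambda(1+K_m)^{-2\alpha}$ by taking $c_L$ small; your lower-bound step is marginally sharper because $\widetilde\lambda_r\ge\lambda_r$ leaves only $\widetilde\lambda_{H_m}$ to absorb, but this is a cosmetic difference.
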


\begin{proof}
  By \cref{lem:grid-gram}, \(D_{rr}^{(m)}=1\).
  The congruence formulas there imply that, if \(\absx{r}\le K_m<m/4\), every off-diagonal aliased index has magnitude at least \(cm\), and there are only \(O(1)\) aliases in each grid shell.
  Hence
  \[
    \absx{\widetilde\lambda_r-\lambda_r}
    \le
    C\sum_{\ell\ne r:D_{r\ell}^{(m)} \ne0}(1+\absx{\ell})^{-2\alpha}
    \le
    C\sum_{a\ge1}(am)^{-2\alpha}
    \le
    C m^{-2\alpha}.
  \]
  Again by the same congruence formulas, every index in the support of \(D_{H_m,\ell}^{(m)}\) has magnitude at least \(cm\), since \(H_m \asymp m\) and \(H_m<m/2\).
  Therefore
  \[
    \widetilde\lambda_{H_m}
    =
    \sum_{\ell\in\bbZ} \left(D_{H_m,\ell}^{(m)} \right)^2\lambda_\ell
    \le
    C\sum_{a\ge0}(m(a+1))^{-2\alpha}
    \le
    C m^{-2\alpha}.
  \]
  Combining the two displays with
  \(\bar{\lambda}_r=\widetilde\lambda_r-\widetilde\lambda_{H_m}\) gives
  \[
    \absx{\bar{\lambda}_r-\lambda_r}
    \le
    C m^{-2\alpha}.
  \]
  Finally, \(\lambda_r \ge c_\lambda(1+K_m)^{-2\alpha}\). Choosing \(c_L\)
  sufficiently small makes the \(Cm^{-2\alpha}\) remainder at most
  \(\lambda_r/2\), uniformly over the parameter rectangle. This proves
  \(\bar{\lambda}_r \asymp\lambda_r\), and
  \(\widetilde\lambda_r \asymp\lambda_r\) follows from the first display. The
  bound \(\widetilde\lambda_{H_m} \le C\bar{\lambda}_r\) follows as well from
  \(\widetilde\lambda_{H_m} \lesssim m^{-2\alpha} \lesssim\lambda_r \asymp
  \bar{\lambda}_r\) on the active band.
\end{proof}

\begin{proposition}[Low-frequency denominator pilot]
  \label{prop:cdeq-lambda-pilot}
  There exist constants \(C,c>0\), depending only on
  \(\underline\alpha,\bar{\alpha},c_\lambda,C_\lambda,\sigma_\delta\), such that the
  following hold for every \(\absx{r}\le K_m\).
  \begin{itemize}
    \item The mean-square error obeys
    \begin{equation}
      \E\bigl(\check{\lambda}_r-\bar{\lambda}_r \bigr)^2
      \le
      \frac{C}{n_\lambda}
      \left(
        \bar{\lambda}_r^2
      +
        \frac{\bar{\lambda}_r}{m}
        +
        \frac{1}{m^2}
      \right)
      \label{eq:cdeq-lambda-pilot-mse}
    \end{equation}
    \item After enlarging \(M_0\) if necessary, the event
    \begin{equation}
      \caE_n^\lambda
      \coloneqq
      \bigcap_{\absx{r}\le K_m}
      \left\{
        \absx{\check{\lambda}_r-\bar{\lambda}_r}
        \le
        \frac{1}{2}\max\{\bar{\lambda}_r,\zeta_{n,m}\}
      \right\}
      \label{eq:cdeq-lambda-good-event}
    \end{equation}
    satisfies, for all \(n\ge n_0\) and \(m\ge m_0\),
    \begin{equation}
      \Pr\bigl((\caE_n^\lambda)^c\bigr)
      \le
      C\bigl[n(K_m+1)\bigr]^{-20},
      \label{eq:cdeq-lambda-good-event-prob}
    \end{equation}
    where \(C,n_0,m_0<\infty\) depend only on
    \(\underline\alpha,\bar{\alpha},c_\lambda,C_\lambda,\sigma_\delta\).
  \end{itemize}
\end{proposition}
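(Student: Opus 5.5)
The plan is to follow the independent-design pilot analysis (\cref{lem:indep-pilot-moments,lem:indep-pilot-halfscore-tail,lem:indep-pilot-cross-concentration,lem:indep-pilot-uniform-control}), with the common-grid covariance calculus from \cref{prop:cd-grid-coefficient-covariance} replacing uniform-design averaging. For \(i\in I_\lambda\) set \(W_{ir}\coloneqq \bar{Z}_{ir}^2-\bar{Z}_{iH_m}^2\). Unbiasedness comes first. Since \(\absx{r}\le K_m<m/8\) and \(H_m<m/2\), \cref{eq:grid-zbar-covariance} gives \(\E\bar{Z}_{ir}^2=\widetilde\lambda_r+\sigma_\delta^2/m\) and \(\E\bar{Z}_{iH_m}^2=\widetilde\lambda_{H_m}+\sigma_\delta^2/m\), using \(D_{H_mH_m}^{(m)}=1\) from \cref{lem:grid-gram}. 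Hence \(\E W_{ir}=\bar{\lambda}_r\), and \(\check{\lambda}_r-\bar{\lambda}_r\) is an average of \(n_\lambda\) i.i.d.\ centered variables.

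For the MSE bound \cref{eq:cdeq-lambda-pilot-mse} I will use that \((\bar{Z}_{ir},\bar{Z}_{iH_m})\) is a centered Gaussian vector. When \(r\ne\pm H_m\) its covariance is diagonal by \cref{eq:grid-zbar-covariance}, and the case \(r=H_m\) cannot occur in the active band. Therefore
\[
\Var(W_{ir})=2\bigl(\widetilde\lambda_r+\sigma_\delta^2/m\bigr)^2+2\bigl(\widetilde\lambda_{H_m}+\sigma_\delta^2/m\bigr)^2 .
\]
\cref{lem:cdeq-denominator-comparison} gives \(\widetilde\lambda_r\asymp\bar{\lambda}_r\) and \(\widetilde\lambda_{H_m}\lesssim m^{-2\alpha}\le m^{-1}\), because \(\alpha>1/2\). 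Expanding the squares yields \(\Var(W_{ir})\lesssim \bar{\lambda}_r^2+\bar{\lambda}_r/m+1/m^2\), and dividing by \(n_\lambda\) gives \cref{eq:cdeq-lambda-pilot-mse}.

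For the good event, each \(W_{ir}\) is a difference of squares of Gaussians with variances at most \(K_r\coloneqq\bar{\lambda}_r+1/m\) (up to constants). So \(W_{ir}-\bar{\lambda}_r\) is sub-exponential with \(\psi_1\) norm \(\lesssim K_r\). This is better than the \(\psi_{1/2}\) control available in the independent design, so ordinary Bernstein applies:
\[
\Pr\Bigl\{\absx{\check{\lambda}_r-\bar{\lambda}_r}>C K_r\bigl(\sqrt{x/n_\lambda}+x/n_\lambda\bigr)\Bigr\}\le 2e^{-x}.
\]
I will take \(x=C'\ell_{n,m}^\lambda\) with \(C'\) large so that the tail is \([n(K_m+1)]^{-22}\), and then verify \(CK_r(\sqrt{x/n_\lambda}+x/n_\lambda)\le\frac12\max\{\bar{\lambda}_r,\zeta_{n,m}\}\) by cases, as in \cref{lem:indep-pilot-uniform-control}. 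The \(\bar{\lambda}_r\)-part of \(K_r\) is handled by \(\ell^\lambda_{n,m}/n_\lambda\to0\). The \(1/m\)-part contributes \(m^{-1}\sqrt{\ell/n_\lambda}+m^{-1}\ell/n_\lambda\), which is dominated by \(\zeta_{n,m}\) once \(M_0\) is large. This is the reason for the extra term \(\ell/n_\lambda\) in \cref{eq:cdeq-pilot-floor}. When \(\bar{\lambda}_r\ge\zeta_{n,m}\) the same bound is at most \(\bar{\lambda}_r/2\). A union bound over the \(2K_m+1\) active frequencies then gives \cref{eq:cdeq-lambda-good-event-prob}.

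The main obstacle is the case analysis for the linear Bernstein term \(K_r\,\ell/n_\lambda\). The condition \(\ell/n_\lambda\to0\) needs \(\log(nK_m)\ll n\), which holds for large \(n\) because \(K_m\le n\). Beyond that, I must check that constants depend only on the stated parameters uniformly over the rectangle, which follows from the uniformity in \cref{lem:cdeq-denominator-comparison}.
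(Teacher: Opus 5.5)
Your proposal is correct and follows essentially the same route as the paper: unbiasedness from the diagonal grid covariance \cref{eq:grid-zbar-covariance}, a Gaussian (Isserlis) variance computation combined with \cref{lem:cdeq-denominator-comparison}, a $\psi_1$ Bernstein bound at level $x\asymp \ell_{n,m}^{\lambda}$, and a union bound over the $2K_m+1$ active frequencies. One harmless misattribution: the piece $m^{-1}\ell_{n,m}^{\lambda}/n_\lambda$ is already dominated by $m^{-1}\sqrt{\ell_{n,m}^{\lambda}/n_\lambda}$ once $\ell_{n,m}^{\lambda}\le n_\lambda$, so this proposition does not need the extra $\ell_{n,m}^{\lambda}/n_\lambda$ term in $\zeta_{n,m}$; the paper uses that term in \cref{lem:cdeq-bad-event} to obtain $\zeta_{n,m}^{-1}\lesssim n$.
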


\begin{proof}
  Let
  \[
    U_{ir} \coloneqq \bar{Z}_{ir},
    \qquad
    V_i \coloneqq \bar{Z}_{iH_m},
    \qquad
    Q_{ir} \coloneqq U_{ir}^2-V_i^2.
  \]
  Since \(\absx{r}\le K_m<H_m<m/2\), \cref{eq:grid-zbar-covariance} gives
  \[
    \E U_{ir}^2=\widetilde\lambda_r+\frac{\sigma_\delta^2}{m},
    \qquad
    \E V_i^2=\widetilde\lambda_{H_m}+\frac{\sigma_\delta^2}{m},
    \qquad
    \E[U_{ir} V_i]=0.
  \]
  Therefore \(\E Q_{ir}=\bar{\lambda}_r\). Isserlis' formula gives
  \[
    \Var(Q_{ir})
    \le
    C\left[
       \left(\widetilde\lambda_r+\frac{1}{m}\right)^2
      +
       \left(\widetilde\lambda_{H_m}+\frac{1}{m}\right)^2
    \right].
  \]
  By \cref{lem:cdeq-denominator-comparison},
  \(\widetilde\lambda_r \asymp\bar{\lambda}_r\) and
  \(\widetilde\lambda_{H_m} \le C\bar{\lambda}_r\) on the active band. Hence
  \[
    \Var(Q_{ir})
    \le
    C\left(
       \bar{\lambda}_r^2
      +
       \frac{\bar{\lambda}_r}{m}
       +
       \frac{1}{m^2}
    \right).
  \]
  The summands are independent over \(i\), so
  \cref{eq:cdeq-lambda-pilot-mse} follows.

  Since \(Q_{ir}-\bar{\lambda}_r\) is a centered quadratic polynomial of a
  Gaussian vector,
  \[
    \norm{Q_{ir}-\bar{\lambda}_r}_{\psi_1}
    \le
    C\left(\Var(U_{ir})+\Var(V_i)\right)
    \le
    C\left(\bar{\lambda}_r+\frac{1}{m}\right).
  \]
  Bernstein's inequality gives, for every \(x\ge1\),
  \[
    \Pr\left\{
         \absx{\check{\lambda}_r-\bar{\lambda}_r}
         >
         C\left[
            \left(\bar{\lambda}_r+\frac{1}{m}\right)\sqrt{\frac{x}{n_\lambda}}
            +
            \left(\bar{\lambda}_r+\frac{1}{m}\right)\frac{x}{n_\lambda}
      \right]
    \right\}
    \le
    2e^{-cx}.
  \]
  Take \(x=M_1 \ell_{n,m}^{\lambda}\) with \(M_1\) sufficiently large and apply a
  union bound over \(\absx{r}\le K_m\). Since \(K_m \le n\) and
  \(n_\lambda \asymp n\), the terms multiplied by \(\bar{\lambda}_r\) are at most
  \(\bar{\lambda}_r/4\) for all sufficiently large \(n\). After enlarging \(M_0\),
  the remaining terms are at most \(\zeta_{n,m}/4\). Therefore
  \cref{eq:cdeq-lambda-good-event-prob} follows.
\end{proof}

\subsection{Eligible Blocks}
\label{subsec:cdeq-eligible-blocks}

\begin{lemma}[Eligible blocks have stable denominators]
  \label{lem:cdeq-eligible-blocks}
  Recall the eligibility event \(\caEelig{\ell}\) from
  \cref{eq:cdeq-eligibility-event}.
  On the event \(\caE_n^\lambda\), for every block satisfying \(\caEelig{\ell}\)
  and every \(r\in B_\ell\),
  \[
    \frac{1}{2}\bar{\lambda}_r
    \le
    \check{\lambda}_r
    \le
    \frac{3}{2}\bar{\lambda}_r.
  \]
\end{lemma}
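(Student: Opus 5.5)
This is the common-design analogue of the second implication in \cref{lem:indep-eligible-blocks}, with \(\lambda_r\) replaced by the denominator target \(\bar{\lambda}_r\) and \(\caE_n\) by \(\caE_n^\lambda\) from \cref{eq:cdeq-lambda-good-event}. I will argue by contradiction followed by a direct bound, using only the definition of \(\caE_n^\lambda\) and the eligibility threshold \(2\zeta_{n,m}\) in \cref{eq:cdeq-eligibility-event}. No probabilistic input is needed, since everything is deterministic on the intersection of the two events.

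\textbf{Step 1: every \(\bar{\lambda}_r\) in the block is at least \(\zeta_{n,m}\).} Fix a block with \(\caEelig{\ell}\) and work on \(\caE_n^\lambda\). Suppose some \(r\in B_\ell\) had \(\bar{\lambda}_r<\zeta_{n,m}\). Then \(\max\{\bar{\lambda}_r,\zeta_{n,m}\}=\zeta_{n,m}\), so \(\caE_n^\lambda\) gives
\[
  \check{\lambda}_r\le \bar{\lambda}_r+\tfrac12\zeta_{n,m}<\tfrac32\zeta_{n,m}<2\zeta_{n,m}.
\]
This contradicts \(\min_{u\in B_\ell}\check{\lambda}_u\ge 2\zeta_{n,m}\). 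Note that \(\zeta_{n,m}>0\), which is what makes the final strict inequality valid. Also, \(\bar{\lambda}_r\) may be negative a priori (it is a difference \(\widetilde\lambda_r-\widetilde\lambda_{H_m}\)). The contradiction argument covers that case too, because a negative value is below \(\zeta_{n,m}\). So I do not need to invoke \cref{lem:cdeq-denominator-comparison} here.

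\textbf{Step 2: the two-sided bound.} Once \(\bar{\lambda}_r\ge\zeta_{n,m}\) for all \(r\in B_\ell\), the maximum in \(\caE_n^\lambda\) equals \(\bar{\lambda}_r\). Hence \(|\check{\lambda}_r-\bar{\lambda}_r|\le\frac12\bar{\lambda}_r\), which rearranges to \(\frac12\bar{\lambda}_r\le\check{\lambda}_r\le\frac32\bar{\lambda}_r\).

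The only place needing care is that \(\caE_n^\lambda\) is defined over \(|r|\le K_m\). Every active block \(B_\ell\) with \(\ell\le L_m\) lies within \(|r|\le 2^{L_m}=K_m\), so the event applies to every index used. With that checked, I do not expect any genuine obstacle.
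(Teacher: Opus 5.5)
Your proposal is correct and matches the paper's proof: on \(\caE_n^\lambda\), an index with \(\bar{\lambda}_r<\zeta_{n,m}\) would force \(\check{\lambda}_r<\tfrac32\zeta_{n,m}\), contradicting eligibility, so \(\bar{\lambda}_r\ge\zeta_{n,m}\) and the defining bound of \(\caE_n^\lambda\) gives \(\absx{\check{\lambda}_r-\bar{\lambda}_r}\le\tfrac12\bar{\lambda}_r\). Your remarks that a possibly negative \(\bar{\lambda}_r\) is covered by the same contradiction, and that every active block lies inside \(\absx{r}\le K_m\), are valid checks that the paper leaves implicit.
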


\begin{proof}
  On \(\caE_n^\lambda\), if \(\bar{\lambda}_r<\zeta_{n,m}\), then
  \[
    \check{\lambda}_r
    \le
    \bar{\lambda}_r+\frac{1}{2}\zeta_{n,m}
    <
    \frac{3}{2}\zeta_{n,m},
  \]
  contradicting \(\caEelig{\ell}\). Hence such an
  index cannot lie inside an eligible block. Therefore eligibility forces
  \(\bar{\lambda}_r \ge \zeta_{n,m}\), and then
  \[
    \absx{\check{\lambda}_r-\bar{\lambda}_r}
    \le
    \frac{1}{2}\bar{\lambda}_r
  \]
  by \cref{eq:cdeq-lambda-good-event}. This gives the displayed comparability.
\end{proof}

\subsection{Block estimation}
\label{subsec:cdeq-oracle}

Define the observable low-frequency parameter
\begin{equation}
  \vartheta_r
  \coloneqq
  \frac{\widetilde\gamma_r}{\bar{\lambda}_r},
  \qquad
  \absx{r}\le K_m.
  \label{eq:cdeq-observable-parameter}
\end{equation}
For each active block, write
\begin{equation}
  \widetilde\Theta_\ell
  \coloneqq
  \sum_{r\in B_\ell} \bar{\lambda}_r \absx{\vartheta_r}^2
  =
  \sum_{r\in B_\ell}
  \frac{\absx{\widetilde\gamma_r}^2}{\bar{\lambda}_r},
  \label{eq:cdeq-block-signal}
\end{equation}
and
\begin{equation}
  V_\ell
  \coloneqq
  \frac{C_V}{n}
  \sum_{r\in B_\ell}
  \left(1+\frac{1}{m\bar{\lambda}_r}\right).
  \label{eq:cdeq-block-variance}
\end{equation}
The corresponding oracle-denominator block statistic is
\[
  S_\ell^\circ
  \coloneqq
  \sum_{r\in B_\ell} \frac{\absx{\hat{\gamma}_r}^2}{\bar{\lambda}_r}.
\]
For later use, write
\[
  \mu_{\ell,r}
  \coloneqq
  \frac{\widetilde\gamma_r}{\sqrt{\bar{\lambda}_r}}
  =
  \sqrt{\bar{\lambda}_r}\vartheta_r,
  \qquad
  \Delta_{\ell,r}
  \coloneqq
  \frac{\hat{\gamma}_r-\widetilde\gamma_r}{\sqrt{\bar{\lambda}_r}},
  \qquad
  r\in B_\ell.
\]
Then
\[
  S_\ell^\circ=\norm{\mu_\ell+\Delta_\ell}_2^2,
  \qquad
  \norm{\mu_\ell}_2^2=\widetilde\Theta_\ell.
\]

\begin{lemma}[Observable parameter versus target parameter]
  \label{lem:cdeq-observable-target-bias}
  There exists \(C<\infty\), depending only on
  \(\alpha,s,R_0,c_\lambda,C_\lambda\), such that for every integer
  \(1\le d\le K_m\),
  \begin{equation}
    \sum_{\absx{r}\le d}\lambda_r \absx{\vartheta_r-\theta_r}^2
    \le
    C\left(
       m^{-(2\alpha+2s)}
       +
       m^{-4\alpha} d^{(2\alpha-2s)_+}
    \right).
    \label{eq:cdeq-observable-target-bias}
  \end{equation}
\end{lemma}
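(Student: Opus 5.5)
We write the error as a sum of an aliasing part and a denominator-mismatch part. Since \(\gamma_r=\lambda_r\theta_r\) and \(\vartheta_r=\widetilde\gamma_r/\bar{\lambda}_r\), with \(a_r=\widetilde\gamma_r-\gamma_r\) as in \cref{lem:cd-aliasing-bias}, we have
\[
  \vartheta_r-\theta_r
  =
  \frac{a_r}{\bar{\lambda}_r}
  +
  \theta_r\,\frac{\lambda_r-\bar{\lambda}_r}{\bar{\lambda}_r}.
\]
Hence
\[
  \lambda_r\absx{\vartheta_r-\theta_r}^2
  \le
  2\frac{\lambda_r}{\bar{\lambda}_r^2}\absx{a_r}^2
  +
  2\frac{\lambda_r}{\bar{\lambda}_r^2}\absx{\theta_r}^2(\lambda_r-\bar{\lambda}_r)^2 .
\]
For \(\absx{r}\le d\le K_m\), \cref{lem:cdeq-denominator-comparison} gives \(\bar{\lambda}_r\asymp\lambda_r\). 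Therefore \(\lambda_r/\bar{\lambda}_r^2\lesssim 1/\lambda_r\), and it suffices to bound two sums:
\[
  \sum_{\absx{r}\le d}\frac{\absx{a_r}^2}{\lambda_r}
  \quad\text{and}\quad
  \sum_{\absx{r}\le d}\frac{\absx{\theta_r}^2(\lambda_r-\bar{\lambda}_r)^2}{\lambda_r}.
\]

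\emph{First sum.} Since \(d\le K_m<m/8\le m/4\), \cref{lem:cd-aliasing-bias} applies directly and gives \(\sum_{\absx{r}\le d}\absx{a_r}^2/\lambda_r\lesssim R_0^2m^{-(2\alpha+2s)}\). This produces the first term in \cref{eq:cdeq-observable-target-bias}.

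\emph{Second sum.} \cref{lem:cdeq-denominator-comparison} also gives \(\absx{\bar{\lambda}_r-\lambda_r}\le Cm^{-2\alpha}\) uniformly on the active band. The sum is therefore at most
\[
  Cm^{-4\alpha}\sum_{\absx{r}\le d}\frac{\absx{\theta_r}^2}{\lambda_r}
  \lesssim
  m^{-4\alpha}\sum_{\absx{r}\le d}(1+\absx{r})^{2\alpha}\absx{\theta_r}^2 .
\]
It remains to bound the partial sum \(\sum_{\absx{r}\le d}(1+\absx{r})^{2\alpha-2s}(1+\absx{r})^{2s}\theta_r^2\), and we split on the sign of \(2\alpha-2s\).
\begin{itemize}
  \item If \(s\ge\alpha\), the weight \((1+\absx{r})^{2\alpha-2s}\) is at most \(1\), so the sum is at most \(R_0^2\).
  \item If \(s<\alpha\), the weight is maximized at \(\absx{r}=d\), giving a bound \(\lesssim d^{2\alpha-2s}R_0^2\).
\end{itemize}
This is exactly the bound \(d^{(2\alpha-2s)_+}\) stated in \cref{eq:upper-two-sided-partial-sum}, which we would cite. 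Combining the two sums yields \cref{eq:cdeq-observable-target-bias}.

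We expect no step to be a real obstacle. The one point needing care is to check that both cited lemmas hold uniformly over \(\absx{r}\le K_m\), that is, the comparability \(\bar{\lambda}_r\asymp\lambda_r\) and the \(O(m^{-2\alpha})\) mismatch. Both are guaranteed by the choice \(c_L<1/8\) made in the setup.
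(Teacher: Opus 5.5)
Your proposal is correct and follows the paper's proof essentially line by line. You use the same split $\vartheta_r-\theta_r = a_r/\bar{\lambda}_r + \theta_r(\lambda_r-\bar{\lambda}_r)/\bar{\lambda}_r$. The aliasing sum is controlled by \cref{lem:cd-aliasing-bias}, and the denominator-mismatch sum by the $O(m^{-2\alpha})$ bound from \cref{lem:cdeq-denominator-comparison} together with the partial-sum bound \cref{eq:upper-two-sided-partial-sum}.
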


\begin{proof}
  Write
  \[
    a_r
    \coloneqq
    \widetilde\gamma_r-\gamma_r
    =
    \sum_{\ell\ne r} D_{r\ell}^{(m)} \gamma_\ell,
    \qquad
    b_r
    \coloneqq
    \bar{\lambda}_r-\lambda_r.
  \]
  Then
  \[
    \vartheta_r-\theta_r
    =
    \frac{a_r}{\bar{\lambda}_r}
    -
    \theta_r \frac{b_r}{\bar{\lambda}_r}.
  \]
  By \cref{eq:cdeq-alias-variance-equivalence},
  \[
    \lambda_r \absx{\vartheta_r-\theta_r}^2
    \le
    C\frac{\absx{a_r}^2}{\lambda_r}
    +
    C\absx{\theta_r}^2 \frac{b_r^2}{\lambda_r}.
  \]
  Summing the first term over \(\absx{r}\le d\) and using
  \cref{lem:cd-aliasing-bias} with the same \(d\) gives
  \[
    \sum_{\absx{r}\le d}\frac{\absx{a_r}^2}{\lambda_r}
    \le
    C m^{-(2\alpha+2s)}.
  \]
  For the second term, \cref{lem:cdeq-denominator-comparison} implies
  \(b_r \lesssim m^{-2\alpha}\) uniformly over \(\absx{r}\le K_m<m/8\), so
  \[
    \absx{\theta_r}^2 \frac{b_r^2}{\lambda_r}
    \lesssim
    m^{-4\alpha}(1+\absx{r})^{2\alpha}\absx{\theta_r}^2.
  \]
  Summing over \(\absx{r}\le d\) and using
  \cref{eq:upper-two-sided-partial-sum}, the second term is bounded by
  \(Cm^{-4\alpha} d^{(2\alpha-2s)_+}\). Combining the two bounds proves
  \cref{eq:cdeq-observable-target-bias}.
\end{proof}

\begin{lemma}[Observable coefficient sums]
  \label{lem:cdeq-observable-sums}
  There exists \(C<\infty\), depending only on
  \(\alpha,s,R_0,c_\lambda,C_\lambda\), such that for every integer
  \(1\le d\le K_m\),
  \begin{equation}
    \sum_{\absx{r}\le d}\absx{\vartheta_r}^2
    \le
    C,
    \qquad
    \sum_{\absx{r}\le d}\frac{\absx{\vartheta_r}^2}{\bar{\lambda}_r}
    \le
    C d^{(2\alpha-2s)_+}.
    \label{eq:cdeq-observable-sums}
  \end{equation}
\end{lemma}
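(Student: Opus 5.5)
We decompose $\vartheta_r$ around $\theta_r$ and reuse the two bounds already established for this difference. Write $\vartheta_r=\theta_r+(\vartheta_r-\theta_r)$, so that $\absx{\vartheta_r}^2\le 2\absx{\theta_r}^2+2\absx{\vartheta_r-\theta_r}^2$. Each of the two sums in \cref{eq:cdeq-observable-sums} then splits into a ``target'' part involving $\theta$ and a ``perturbation'' part involving $\vartheta-\theta$. The target parts are controlled by the Sobolev constraint. The perturbation parts are controlled by \cref{lem:cdeq-observable-target-bias} together with the denominator comparison in \cref{lem:cdeq-denominator-comparison}.

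For the first sum, the target part satisfies $\sum_{\absx{r}\le d}\absx{\theta_r}^2\le R_0^2$ because $s\ge0$. For the perturbation part we use $\lambda_r\gtrsim(1+\absx{r})^{-2\alpha}\ge c\,d^{-2\alpha}$ on $\absx{r}\le d$. This gives
\[
  \sum_{\absx{r}\le d}\absx{\vartheta_r-\theta_r}^2\le C d^{2\alpha}\sum_{\absx{r}\le d}\lambda_r\absx{\vartheta_r-\theta_r}^2 .
\]
By \cref{eq:cdeq-observable-target-bias} the right-hand side is at most $C d^{2\alpha}\bigl(m^{-(2\alpha+2s)}+m^{-4\alpha}d^{(2\alpha-2s)_+}\bigr)$. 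Since $d\le K_m<m/8$, we have $d^{2\alpha}m^{-(2\alpha+2s)}\le 1$ and $d^{2\alpha+(2\alpha-2s)_+}m^{-4\alpha}\le (d/m)^{4\alpha-2s\wedge 2\alpha}\cdot(\text{bounded})\le C$. The first bound in \cref{eq:cdeq-observable-sums} follows.

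For the second sum, \cref{lem:cdeq-denominator-comparison} gives $\bar{\lambda}_r\asymp\lambda_r$, so it suffices to bound $\sum_{\absx{r}\le d}\absx{\vartheta_r}^2/\lambda_r$. The target part is $\sum_{\absx{r}\le d}\absx{\theta_r}^2/\lambda_r\lesssim\sum_{\absx{r}\le d}(1+\absx{r})^{2\alpha}\theta_r^2\lesssim d^{(2\alpha-2s)_+}$ by \cref{eq:upper-two-sided-partial-sum}, exactly as in Step~5 of the independent-design proof. For the perturbation part, $1/\lambda_r\le Cd^{2\alpha}$ on $\absx{r}\le d$, and therefore
\[
  \sum_{\absx{r}\le d}\frac{\absx{\vartheta_r-\theta_r}^2}{\lambda_r}\le C d^{4\alpha}\sum_{\absx{r}\le d}\lambda_r\absx{\vartheta_r-\theta_r}^2\le C\bigl(d^{4\alpha}m^{-(2\alpha+2s)}+(d/m)^{4\alpha}d^{(2\alpha-2s)_+}\bigr).
\]
The second term is at most $Cd^{(2\alpha-2s)_+}$ because $d\le m$. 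The first term needs a case split. If $s\ge\alpha$, then $d^{4\alpha}m^{-(2\alpha+2s)}\le (d/m)^{4\alpha}\le1$. If $s<\alpha$, then $d^{4\alpha}m^{-(2\alpha+2s)}\le d^{4\alpha}d^{-(2\alpha+2s)}=d^{2\alpha-2s}$.

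The only delicate point is this exponent bookkeeping: the crude factor $d^{4\alpha}$ loses against $m^{-(2\alpha+2s)}$ when $s<\alpha$, and it is $d\le m$ that makes the loss exactly $d^{(2\alpha-2s)_+}$. If that bound came out too weak, the fallback is to bound the aliasing part directly, without going through \cref{lem:cdeq-observable-target-bias}. One would write $\vartheta_r-\theta_r=a_r/\bar{\lambda}_r-\theta_r b_r/\bar{\lambda}_r$ and bound $\sum_{\absx{r}\le d}\absx{a_r}^2/\lambda_r^2\le C d^{2\alpha}\sum_{\absx{r}\le d}\absx{a_r}^2/\lambda_r$, which \cref{lem:cd-aliasing-bias} makes $\lesssim d^{2\alpha}m^{-(2\alpha+2s)}$. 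The $b_r$ part is bounded by $m^{-4\alpha}\sum_{\absx{r}\le d}(1+\absx{r})^{4\alpha}\theta_r^2/\lambda_r\cdot$ const, which reduces to the same exponent bookkeeping.
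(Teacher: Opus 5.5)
Your argument is correct, but it takes a different route from the paper. You write $\vartheta_r=\theta_r+(\vartheta_r-\theta_r)$ and feed the perturbation into \cref{lem:cdeq-observable-target-bias}. This costs you the worst-case factors $\max_{\absx{r}\le d}\lambda_r^{-1}\lesssim d^{2\alpha}$ in the first sum and $d^{4\alpha}$ in the second. Your exponent checks are right, in particular $d^{4\alpha}m^{-(2\alpha+2s)}\le d^{(2\alpha-2s)_+}$ via $d\le m$. There is no circularity, since \cref{lem:cdeq-observable-target-bias} is proved without the present lemma.

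The paper never passes through $\theta$. It applies Cauchy--Schwarz directly to $\widetilde\gamma_r=\sum_\ell D_{r\ell}^{(m)}\lambda_\ell\theta_\ell$ with weights $(D_{r\ell}^{(m)})^2\lambda_\ell$, which gives $\absx{\widetilde\gamma_r}^2\le\widetilde\lambda_r\sum_\ell (D_{r\ell}^{(m)})^2\lambda_\ell\theta_\ell^2$. It then splits the result into two parts:
\begin{itemize}
\item the diagonal term $\ell=r$, handled by the Sobolev and partial-sum bounds;
\item the off-diagonal aliases with $\absx{\ell}\ge cm$, handled by bounded alias multiplicity and the same $d\le m$ exponent bookkeeping you use.
\end{itemize}

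Your version is shorter and modular. It reuses bias control that is already proved, and it makes explicit that the observable sums differ from the target sums only by that bias. The paper's version is self-contained at the level of the alias structure. It needs only the comparability $\widetilde\lambda_r\asymp\bar\lambda_r\asymp\lambda_r$, not the quantitative aliasing-bias bound (\cref{lem:cd-aliasing-bias}) or the estimate $\absx{\bar\lambda_r-\lambda_r}\lesssim m^{-2\alpha}$.

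Your fallback paragraph is unnecessary, because the main argument already closes.
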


\begin{proof}
  By Cauchy--Schwarz and the definition of \(\widetilde\lambda_r\),
  \[
    \absx{\widetilde\gamma_r}^2
    \le
    \widetilde\lambda_r
    \sum_{\ell:D_{r\ell}^{(m)} \ne0}
    \left(D_{r\ell}^{(m)} \right)^2\lambda_\ell \absx{\theta_\ell}^2.
  \]
  Therefore
  \[
    \absx{\vartheta_r}^2
    \le
    \frac{C}{\lambda_r}
    \sum_{\ell:D_{r\ell}^{(m)} \ne0}
    \left(D_{r\ell}^{(m)} \right)^2\lambda_\ell \absx{\theta_\ell}^2,
    \qquad
    \frac{\absx{\vartheta_r}^2}{\bar{\lambda}_r}
    \le
    \frac{C}{\lambda_r^2}
    \sum_{\ell:D_{r\ell}^{(m)} \ne0}
    \left(D_{r\ell}^{(m)} \right)^2\lambda_\ell \absx{\theta_\ell}^2.
  \]
  Since \(d\le K_m<m/8\), \cref{eq:cdeq-alias-variance-equivalence}
  gives
  \[
    \lambda_r^{-1} \lesssim (1+\absx{r})^{2\alpha},
    \qquad
    \lambda_r^{-2} \lesssim (1+\absx{r})^{4\alpha},
    \qquad
    \absx{r}\le d.
  \]

  For \(\sum_{\absx{r}\le d}\absx{\vartheta_r}^2\), the diagonal terms
  \(\ell=r\) contribute
  \[
    \sum_{\absx{r}\le d}(1+\absx{r})^{2\alpha}\lambda_r \absx{\theta_r}^2
    \le
    C\sum_{r\in\bbZ} \absx{\theta_r}^2
    \le
    C.
  \]
  For the off-diagonal aliases, \(D_{r\ell}^{(m)} \ne0\), \(\ell\ne r\), and
  \(\absx{r}\le d\le K_m<m/8\) imply \(\absx{\ell}\ge cm\). Hence
  \[
    (1+\absx{r})^{2\alpha}\lambda_\ell
    \le
    C d^{2\alpha} m^{-2\alpha}
    \le
    C.
  \]
  For fixed \(\ell\), the congruence formulas in \cref{lem:grid-gram} leave only \(O(1)\) choices of \(r\) with \(\absx{r}\le d<m/2\).
  Thus the off-diagonal contribution is also \(O(1)\).
  This proves the first inequality.

  For \(\sum_{\absx{r}\le d}\absx{\vartheta_r}^2/\bar{\lambda}_r\), the diagonal terms give
  \[
    \sum_{\absx{r}\le d}(1+\absx{r})^{4\alpha}\lambda_r \absx{\theta_r}^2
    \le
    C\sum_{\absx{r}\le d}(1+\absx{r})^{2\alpha}\absx{\theta_r}^2
    \le
    C d^{(2\alpha-2s)_+}.
  \]
  For the off-diagonal aliases, the same residue property yields
  \[
    (1+\absx{r})^{4\alpha}\lambda_\ell
    \le
    C d^{4\alpha}(1+\absx{\ell})^{-2\alpha-2s}(1+\absx{\ell})^{2s}
    \le
    C d^{(2\alpha-2s)_+}(1+\absx{\ell})^{2s}.
  \]
  Summing again over \(\absx{r}\le d\), using the same bounded multiplicity from the congruence formulas,
  and applying the Sobolev constraint gives the second inequality.
\end{proof}

\begin{lemma}[Variance of the regression coefficients]
  \label{lem:cdeq-gamma-variance}
  For every \(\absx{r}\le K_m\),
  \[
    \Var(\hat{\gamma}_r)
    \le
    \frac{C}{n}\left(\bar{\lambda}_r+\frac{1}{m}\right),
  \]
  where \(C\) depends only on
  \(\alpha,s,R_0,c_\lambda,C_\lambda,\sigma_\varepsilon,\sigma_\delta\).
\end{lemma}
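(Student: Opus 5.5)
This is essentially a rerun of the proof of \cref{lem:cd-var-gamma}, now on the active band \(\absx{r}\le K_m\) and with \(\bar{\lambda}_r\) in place of \(\lambda_r\).

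Write \(A_{ir}\coloneqq Y_i\bar{Z}_{ir}\), so that \(\hat{\gamma}_r=n_\gamma^{-1}\sum_{i\in I_\gamma}A_{ir}\). The subjects are independent, hence \(\Var(\hat{\gamma}_r)=n_\gamma^{-1}\Var(A_{1r})\), and \(n_\gamma\asymp n\). Bounding \(\Var(A_{1r})\) will use the law of total variance, conditioning on the Fourier scores \((x_{1\ell})_{\ell\in\bbZ}\) and on \(\varepsilon_1\).

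\textbf{Conditional mean.} By \cref{eq:grid-zbar}, \(\E[A_{1r}\mid x,\varepsilon]=Y\widetilde{x}_r\), so this part contributes at most \(\E Y^2\widetilde{x}_r^2\). By Cauchy--Schwarz and Gaussian fourth moments this is at most \(C(\E Y^4)^{1/2}(\E\widetilde{x}_r^4)^{1/2}\le C\widetilde{\lambda}_r\). Here \(\E Y^4\le C\) because \(\sum_\ell\lambda_\ell\theta_\ell^2\le C_\lambda R_0^2\), and \(\widetilde{x}_r\) is Gaussian with variance \(\widetilde{\lambda}_r\) by \cref{eq:grid-x-covariance}. \cref{lem:cdeq-denominator-comparison} then gives \(\widetilde{\lambda}_r\asymp\bar{\lambda}_r\) uniformly on \(\absx{r}\le K_m\).

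\textbf{Conditional variance.} After conditioning, the grid is deterministic, so only the measurement noise remains random. Hence
\[
  \Var(A_{1r}\mid x,\varepsilon)=\frac{\sigma_\delta^2Y^2}{m}D^{(m)}_{rr}=\frac{\sigma_\delta^2Y^2}{m},
\]
where the last step uses \(D^{(m)}_{rr}=1\) from \cref{lem:grid-gram}, valid since \(\absx{r}\le K_m<m/8<m/2\). Taking expectations with \(\E Y^2\le C\) bounds this term by \(C\sigma_\delta^2/m\).

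Adding the two parts gives \(\Var(A_{1r})\le C(\bar{\lambda}_r+1/m)\); dividing by \(n_\gamma\) proves the lemma. The only step that is not routine is replacing \(\widetilde{\lambda}_r\) by \(\bar{\lambda}_r\). This needs \(c_L\) small enough that \(\bar{\lambda}_r\asymp\lambda_r\) on the whole active band, which is exactly what \cref{lem:cdeq-denominator-comparison} provides. The constants depend on \(R_0,c_\lambda,C_\lambda,\sigma_\varepsilon,\sigma_\delta\) and on \(\alpha\) through \(\sum_\ell\lambda_\ell\), as the statement allows.
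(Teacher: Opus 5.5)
Your proof is correct and takes the same route as the paper, which also reruns the argument of \cref{lem:cd-var-gamma} with \(n_\gamma\asymp n\) and then replaces \(\widetilde\lambda_r\) by \(\bar{\lambda}_r\) via \cref{lem:cdeq-denominator-comparison}. Your explicit law-of-total-variance computation (conditional mean \(Y\widetilde{x}_r\), and conditional variance \(\sigma_\delta^2Y^2/m\) from \(D^{(m)}_{rr}=1\)) simply writes out the steps the paper summarizes as ``identical.''
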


\begin{proof}
  The proof is identical to \cref{lem:cd-var-gamma}, with \(n\) replaced by
  \(n_\gamma \asymp n\) and \(\widetilde\gamma_r\) in place of \(\gamma_r\),
  followed by the comparison \(\widetilde\lambda_r \asymp\bar{\lambda}_r\) from
  \cref{eq:cdeq-alias-variance-equivalence}.
\end{proof}

\begin{lemma}[Directional common block score tails]
  \label{lem:cdeq-block-direction-tail}
  Fix an active block \(B_\ell\), \(0\le\ell\le L_m\), and let \(u=(u_r)_{r\in B_\ell}\in\R^{B_\ell}\) satisfy
  \(\sum_{r\in B_\ell} u_r^2=1\). Define
  \[
    G_{i,\ell}(u)
    \coloneqq
    \sum_{r\in B_\ell} u_r \frac{\bar{Z}_{ir}}{\sqrt{\bar{\lambda}_r}},
    \qquad
    \eta_{i,\ell}(u)
    \coloneqq
    Y_i G_{i,\ell}(u)-\E\bigl[Y_i G_{i,\ell}(u)\bigr].
  \]
  Then, uniformly over \(u\),
  \begin{equation}
    \norm{\eta_{i,\ell}(u)}_{\psi_1}
    \le
    C\left(
       1+\frac{1}{\sqrt{m\bar{\lambda}_{2^\ell}}}
    \right),
    \label{eq:cdeq-block-direction-psi1}
  \end{equation}
  and
  \begin{equation}
    \E\absx{\eta_{i,\ell}(u)}^2
    \le
    C\left(
       1+\frac{1}{m\bar{\lambda}_{2^\ell}}
    \right),
    \label{eq:cdeq-block-direction-variance}
  \end{equation}
  where the constants depend only on
  \((\underline\alpha,\bar{\alpha},R_0,c_\lambda,C_\lambda,\sigma_\varepsilon,\sigma_\delta)\).
\end{lemma}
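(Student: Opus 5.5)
The plan is to write $\eta_{i,\ell}(u)$ as a centered Gaussian chaos of order at most two and then use hypercontractivity, as in \cref{prop:indep-block-structural-truncation}. Under \cref{assum:common-design} the grid is deterministic, so $(Y_i,\bar{Z}_{ir})_{r\in B_\ell}$ is jointly Gaussian. By \cref{eq:grid-zbar}, $\bar{Z}_{ir}=\widetilde{x}_{ir}+\bar{\delta}_{ir}$, and $Y_i=T_i+\varepsilon_i$ with $T_i=\sum_k x_{ik}\theta_k$. Therefore $G_{i,\ell}(u)$ is a centered Gaussian linear form, and $Y_iG_{i,\ell}(u)$ is a product of two Gaussians. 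For such a product, hypercontractivity for degree-two chaoses, or simply Cauchy--Schwarz on Gaussian moments, gives
\[
\norm{Y_iG_{i,\ell}(u)}_{\psi_1}\le C\norm{Y_i}_{L^2}\norm{G_{i,\ell}(u)}_{L^2}.
\]
Centering changes the $\psi_1$ norm by at most a constant factor. The second-moment bound \cref{eq:cdeq-block-direction-variance} then follows from the same product estimate, $\E\absx{\eta}^2\le\E Y_i^2G^2\le 3\E Y_i^2\,\E G^2$. So both claims reduce to bounding $\E Y_i^2$ and $\E G_{i,\ell}(u)^2$.

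For the response, $\E Y_i^2=\sum_k\lambda_k\theta_k^2+\sigma_\varepsilon^2\le C_\lambda R_0^2+\sigma_\varepsilon^2$, which is the same computation as in the independent-design proof.

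For the score, every $r\in B_\ell$ satisfies $\absx{r}\le K_m<m/8<m/2$. \Cref{prop:cd-grid-coefficient-covariance} then makes the $\bar{Z}_{ir}$, $r\in B_\ell$, uncorrelated with variances $\widetilde\lambda_r+\sigma_\delta^2/m$. Hence
\[
\E G_{i,\ell}(u)^2=\sum_{r\in B_\ell}u_r^2\,\frac{\widetilde\lambda_r+\sigma_\delta^2/m}{\bar{\lambda}_r}.
\]
By \cref{lem:cdeq-denominator-comparison}, $\widetilde\lambda_r\asymp\bar{\lambda}_r$ on the active band, so the first part of the ratio is bounded. For the second part, dyadic comparability $\bar{\lambda}_r\asymp\lambda_r\asymp\lambda_{2^\ell}\asymp\bar{\lambda}_{2^\ell}$ on $B_\ell$ (for $\ell=0$ all indices are bounded, so this is trivial) gives $\sigma_\delta^2/(m\bar{\lambda}_r)\le C/(m\bar{\lambda}_{2^\ell})$. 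Using $\sum_r u_r^2=1$,
\[
\E G^2\le C\bigl(1+(m\bar{\lambda}_{2^\ell})^{-1}\bigr).
\]
Taking square roots and using $\sqrt{a+b}\le\sqrt a+\sqrt b$ gives the factor $1+(m\bar{\lambda}_{2^\ell})^{-1/2}$ in \cref{eq:cdeq-block-direction-psi1}. All constants depend only on the endpoints of the parameter rectangle and on $c_\lambda,C_\lambda,R_0,\sigma_\varepsilon,\sigma_\delta$, because the comparability constants in \cref{lem:cdeq-denominator-comparison} are uniform there.

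The main obstacle I expect is the uncorrelatedness step for the aliased block coordinates. The cross terms $\E\widetilde{x}_{ir}\widetilde{x}_{is}$ must vanish for $r\ne s$ in the block, including sine/cosine pairs and $\pm r$. \Cref{prop:cd-grid-coefficient-covariance} covers exactly this when all indices lie below $m/2$, and that is guaranteed by $K_m<m/8$. If the cross terms did not vanish, I would bound the quadratic form by the operator norm of the covariance matrix, using the $O(1)$ alias multiplicity from \cref{lem:grid-gram}; this would lose only a constant. A minor remaining check is that $\bar{\lambda}_r>0$ on $B_\ell$, so the normalization is well defined; this is part of \cref{lem:cdeq-denominator-comparison}.
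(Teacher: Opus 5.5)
Your proposal is correct and matches the paper's proof. Both arguments compute $\E G_{i,\ell}(u)^2$ from the diagonal covariance in \cref{prop:cd-grid-coefficient-covariance}, combined with \cref{lem:cdeq-denominator-comparison} and dyadic comparability on $B_\ell$. Both then apply the Orlicz product rule $\norm{Y_iG_{i,\ell}(u)}_{\psi_1}\le C\norm{Y_i}_{\psi_2}\norm{G_{i,\ell}(u)}_{\psi_2}$ and absorb the centering into a constant; your $L^2$ form of this rule is equivalent because both factors are Gaussian.

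The only cosmetic difference is in \cref{eq:cdeq-block-direction-variance}. You obtain it from Isserlis, $\E Y_i^2G^2=\E Y_i^2\,\E G^2+2(\E Y_iG)^2\le 3\E Y_i^2\,\E G^2$, whereas the paper uses Cauchy--Schwarz with fourth moments.
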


\begin{proof}
  The vector
  \(
  (\widetilde{x}_{ir},\bar{\delta}_{ir})_{r\in B_\ell}
  \)
  is centered jointly Gaussian, so \(G_{i,\ell}(u)\) is centered Gaussian.
  Since \(B_\ell \subset\{\absx{r}\le K_m\}\) and \(K_m<m/4\), the covariance identities in \cref{eq:grid-x-covariance,eq:grid-noise-covariance} apply on \(B_\ell\).
  Therefore
  \[
    \E\absx{G_{i,\ell}(u)}^2
    =
    \sum_{r\in B_\ell} u_r^2
    \left(
      \frac{\widetilde\lambda_r}{\bar{\lambda}_r}
      +
      \frac{\sigma_\delta^2}{m\bar{\lambda}_r}
    \right).
  \]
  On \(B_\ell\), \cref{eq:cdeq-alias-variance-equivalence} and the dyadic
  comparability of \((\lambda_r)_{r\in\bbZ}\) yields
  \(\bar{\lambda}_r \asymp \bar{\lambda}_{2^\ell}\), hence
  \[
    \E\absx{G_{i,\ell}(u)}^2
    \le
    C\left(
       1+\frac{1}{m\bar{\lambda}_{2^\ell}}
    \right).
  \]
  Therefore
  \[
    \norm{G_{i,\ell}(u)}_{\psi_2}
    \le
    C\left(
       1+\frac{1}{\sqrt{m\bar{\lambda}_{2^\ell}}}
    \right).
  \]
  Also \(Y_i\) is sub-Gaussian with \(\norm{Y_i}_{\psi_2} \le C\), so the product
  rule for Orlicz norms gives
  \[
    \norm{Y_i G_{i,\ell}(u)}_{\psi_1}
    \le
    C\norm{Y_i}_{\psi_2} \norm{G_{i,\ell}(u)}_{\psi_2}
    \le
    C\left(
       1+\frac{1}{\sqrt{m\bar{\lambda}_{2^\ell}}}
    \right).
  \]
  Centering changes the \(\psi_1\) norm by at most a factor \(2\), proving
  \cref{eq:cdeq-block-direction-psi1}.

  Finally, \(\E Y_i^4 \le C\) and \(G_{i,\ell}(u)\) is Gaussian, so
  \[
    \E\absx{\eta_{i,\ell}(u)}^2
    \le
    \E\absx{Y_i G_{i,\ell}(u)}^2
    \le
    (\E Y_i^4)^{1/2}\left(\E\absx{G_{i,\ell}(u)}^4 \right)^{1/2}
    \le
    C\left(
       1+\frac{1}{m\bar{\lambda}_{2^\ell}}
    \right),
  \]
  which proves \cref{eq:cdeq-block-direction-variance}.
\end{proof}

\begin{lemma}[Truncated common block noise second moment]
  \label{lem:cdeq-block-noise-truncated}
  Fix \(a>0\). After choosing \(C_V\) sufficiently large in
  \cref{eq:cdeq-block-variance}, there exist constants \(C,c>0\) such that, for
  every active block \(B_\ell\),
  \begin{equation}
    \E\left[
        \norm{\Delta_\ell}_2^2
        \mathbf{1}\{\norm{\Delta_\ell}_2^2>aV_\ell\}
    \right]
    \le
    C V_\ell e^{-c\absx{B_\ell}}.
    \label{eq:cdeq-block-noise-truncated}
  \end{equation}
\end{lemma}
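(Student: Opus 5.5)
The plan mirrors the independent-design structural estimate in \cref{prop:indep-block-structural-truncation}, but it is simpler: under common design the grid is deterministic, so no Bousquet step over random design points is needed.

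\textbf{Reduction to a vector sum.} Write
\[
  \Delta_\ell=n_\gamma^{-1}\sum_{i\in I_\gamma}\eta_{i,\ell},
  \qquad
  (\eta_{i,\ell})_r=\frac{Y_i\bar{Z}_{ir}-\widetilde\gamma_r}{\sqrt{\bar{\lambda}_r}},\quad r\in B_\ell .
\]
This works because \(\E[Y_i\bar{Z}_{ir}]=\widetilde\gamma_r\) by \cref{eq:cd-aliased-gamma}. The \(\eta_{i,\ell}\) are i.i.d.\ centered random vectors in \(\R^{B_\ell}\). For every unit \(u\), the projection \(\angx{u,\eta_{i,\ell}}\) equals \(\eta_{i,\ell}(u)\) from \cref{lem:cdeq-block-direction-tail}. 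That lemma therefore gives the uniform directional bound \(\norm{\angx{u,\eta_{i,\ell}}}_{\psi_1}\le C\sigma_\ell\), where
\[
  \sigma_\ell^2\coloneqq 1+\frac{1}{m\bar{\lambda}_{2^\ell}}.
\]

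\textbf{Tail bound via vector Bernstein.} Apply \cref{lem:upper-vector-bernstein-psi-one} to this i.i.d.\ sum, with \(p_\ell=\absx{B_\ell}\) and \(n_\gamma\asymp n\). For all \(x\ge1\) this yields
\[
  \Pr\left\{\norm{\Delta_\ell}_2>C_A\sigma_\ell\left(\sqrt{\frac{p_\ell+x}{n}}+\frac{p_\ell+x}{n}\right)\right\}\le Ce^{-cx}.
\]
Next use the dyadic comparability \(\bar{\lambda}_r\asymp\bar{\lambda}_{2^\ell}\) on \(B_\ell\), which follows from \cref{lem:cdeq-denominator-comparison}. It gives
\[
  V_\ell\asymp C_V\,\frac{p_\ell\,\sigma_\ell^2}{n},
\]
so \(\sigma_\ell^2(p_\ell+x)/n\lesssim V_\ell\,(p_\ell+x)/(C_Vp_\ell)\). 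For \(x\ge p_\ell\) we then have
\[
  \Pr\left\{\norm{\Delta_\ell}_2^2>\frac{C}{C_V}V_\ell\,\frac{x}{p_\ell}\left(1+\frac{x}{n}\right)\right\}\le Ce^{-cx}.
\]
This uses \(p_\ell\le K_m\le n\), which guarantees \(p_\ell^2/n\le p_\ell\).

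\textbf{Integration.} Feed this tail bound into \cref{lem:upper-bernstein-tail-integration}, with the same parameter choices as in the independent case: \(Z=\norm{\Delta_\ell}_2^2\), \(p=p_\ell\), \(b=M\asymp\sigma_\ell^2/n\), \(q=2\), and \(V=V_\ell\). Take \(C_V\) large enough, depending on \(a\), that the threshold \(aV_\ell\) sits above the level reached at \(x=p_\ell\). Layer-cake integration then gives
\[
  \E\bigl[\norm{\Delta_\ell}_2^2\,\mathbf{1}\{\norm{\Delta_\ell}_2^2>aV_\ell\}\bigr]\le CV_\ell e^{-cp_\ell}.
\]

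\textbf{Main obstacle.} The delicate point is the bookkeeping of scales. The directional \(\psi_1\) constant \(\sigma_\ell\) must be compared with the per-coordinate scale \(V_\ell/p_\ell\) uniformly over blocks, including \(B_0\) and the highest active blocks. This is exactly where \(K_m<m/8\) and \cref{lem:cdeq-denominator-comparison} are needed. Because the grid is fixed and the scores are Gaussian, no additional bad-event terms (\(e^{-cn^{1/3}}\) and the like) arise, unlike in \cref{prop:indep-block-design-truncation}.
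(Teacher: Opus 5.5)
Your proposal is correct and follows the paper's proof essentially step for step: the same i.i.d.\ decomposition of \(\Delta_\ell\), the directional \(\psi_1\) bound from \cref{lem:cdeq-block-direction-tail}, then \cref{lem:upper-vector-bernstein-psi-one} using \(p_\ell\le K_m\le n\), and finally \cref{lem:upper-bernstein-tail-integration} with \(C_V\) enlarged depending on \(a\). One small slip in the integration step: to match your own tail bound \(b(x+x^2/n)\) you should take \(b\asymp\sigma_\ell^2/n\) and \(M=1/n\), as the paper does, not \(b=M\asymp\sigma_\ell^2/n\); with the latter choice, \(b(p_\ell+Mp_\ell^2)\le DV_\ell\) fails for bounded \(D\) on high blocks, where \(\sigma_\ell^2p_\ell/n\) is large.
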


\begin{proof}
  Let \(p_\ell=\absx{B_\ell}\) and
  \[
    \bar{w}_{\ell,0}
    \coloneqq
    \frac{1}{n}\left(
                 1+\frac{1}{m\bar{\lambda}_{2^\ell}}
    \right),
    \qquad
    V_{\ell,0}
    \coloneqq
    \frac{1}{n}
    \sum_{r\in B_\ell}
    \left(1+\frac{1}{m\bar{\lambda}_r}\right).
  \]
  Then \(V_\ell=C_V V_{\ell,0}\) and
  \(V_{\ell,0} \asymp p_\ell \bar{w}_{\ell,0}\).
  For \(i\in I_\gamma\), define the block vector
  \[
    \eta_{i,\ell}
    \coloneqq
    \left(
      \frac{Y_i \bar{Z}_{ir}-\widetilde\gamma_r}{\sqrt{\bar{\lambda}_r}}
    \right)_{r\in B_\ell}
    \in\R^{p_\ell}.
  \]
  Then
  \[
    \Delta_\ell
    =
    \frac{1}{n_\gamma}\sum_{i\in I_\gamma} \eta_{i,\ell}.
  \]
  For every unit vector \(u\in\bbS^{p_\ell-1}\),
  \[
    \angx{u,\eta_{i,\ell}}
    =
    \sum_{r\in B_\ell} u_r
    \frac{Y_i \bar{Z}_{ir}-\widetilde\gamma_r}{\sqrt{\bar{\lambda}_r}}.
  \]
  Hence \cref{lem:cdeq-block-direction-tail} gives
  \[
    \norm{\angx{u,\eta_{i,\ell}}}_{\psi_1}
    \le
    C\sqrt{n\bar{w}_{\ell,0}}.
  \]
  Applying \cref{lem:upper-vector-bernstein-psi-one} with \(p=p_\ell\) and
  \(K\asymp \sqrt{n\bar{w}_{\ell,0}}\), and using \(n_\gamma \asymp n\),
  gives constants \(A,c>0\) such that for every \(x\ge1\),
  \[
    \Pr\left\{
         \norm{\Delta_\ell}_2
      >
      A\left[
         \sqrt{\bar{w}_{\ell,0}(p_\ell+x)}
         +
         \sqrt{n\bar{w}_{\ell,0}}\frac{p_\ell+x}{n}
      \right]
    \right\}
    \le
    Ce^{-cx}.
  \]
  Since \(p_\ell \le K_m \le n\), after increasing \(A\) by an absolute factor the
  preceding display implies
  \[
    \Pr\left\{
         \norm{\Delta_\ell}_2^2
      >
      A^2 \bar{w}_{\ell,0}
         \left(x+\frac{x^2}{n}\right)
    \right\}
    \le
    Ce^{-cx},
    \qquad x\ge p_\ell.
  \]
  Since
  \(V_\ell=C_V V_{\ell,0} \asymp C_V p_\ell \bar{w}_{\ell,0}\), choosing \(C_V\)
  sufficiently large, depending on \(a\), ensures
  \[
    A^2 \bar{w}_{\ell,0}
    \left(p_\ell+\frac{p_\ell^2}{n}\right)
    \le aV_\ell,
    \qquad
    p_\ell \bar{w}_{\ell,0} \le CV_\ell.
  \]
  Applying \cref{lem:upper-bernstein-tail-integration} with
  \[
    Z=\norm{\Delta_\ell}_2^2,\qquad
    p=p_\ell,\qquad
    b=\bar{w}_{\ell,0},\qquad
    M=\frac{1}{n},\qquad
    q=2,\qquad
    V=V_\ell,
  \]
  and with \(D\) an absolute constant, proves
  \cref{eq:cdeq-block-noise-truncated}.
\end{proof}

\begin{lemma}[Saturated oracle cutoff is above the pilot floor]
  \label{lem:cdeq-oracle-band}
  Let
  \[
    d_*(\alpha,s)
    \coloneqq
    \begin{cases}
      K_m, & m\le n^{1/(4\alpha+2s)},\\[1mm]
      \lfloor c_1(nm)^{1/\kappa}\rfloor,
      & n^{1/(4\alpha+2s)} \le m\le n^{2\alpha/(\nu+1)},\\[1mm]
      \lfloor c_2 n^{1/(\nu+1)} \rfloor,
      & m\ge n^{2\alpha/(\nu+1)},
    \end{cases}
  \]
  where \(c_1,c_2>0\) are sufficiently small constants depending only on
  \((\underline\alpha,\bar{\alpha},\underline s,\bar{s},c_\lambda,C_\lambda)\). Then
  there exist constants \(C,n_0,m_0<\infty\), depending only on
  \((\underline\alpha,\bar{\alpha},\underline s,\bar{s},c_\lambda,C_\lambda)\), such
  that, for all \(n\ge n_0\) and \(m\ge m_0\), we have \(d_*(\alpha,s)\le K_m\)
  and
  \begin{equation}
    \frac{d_*}{n}
    +
    \frac{d_*^{2\alpha+1}}{nm}
    +
    d_*^{-\nu}
    \le
    C\max\left\{
           n^{-\nu/(\nu+1)},
      (nm)^{-\nu/\kappa},
      m^{-\nu}
    \right\},
    \label{eq:cdeq-oracle-band-rate}
  \end{equation}
  Consequently, if \(L_*\) is chosen so that
  \begin{equation}
    2^{L_*-1}<d_*\le 2^{L_*},
    \label{eq:cdeq-oracle-level}
  \end{equation}
  then every active block \(B_\ell\) with \(0\le \ell\le L_*\) is eligible on
  \(\caE_n^\lambda\) for all \(n\ge n_0\) and \(m\ge m_0\).
\end{lemma}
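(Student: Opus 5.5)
The plan is a case analysis on the three regimes in the definition of \(d_*\). In each regime I verify three things in turn: that \(d_*\le K_m\), the rate bound \cref{eq:cdeq-oracle-band-rate}, and that \(\lambda_{2d_*}\) dominates the pilot floor \(\zeta_{n,m}\) by a divergent factor. Eligibility then follows by the argument of \cref{lem:indep-eligible-blocks}(i), adapted to \(\bar{\lambda}_r\) and the event \(\caE_n^\lambda\) of \cref{prop:cdeq-lambda-pilot}. Throughout I use that \(2\alpha>1\) gives
\[
  \kappa-1=4\alpha+2s>\nu+1
  \qquad\text{and}\qquad
  2\alpha/(\nu+1)>1/(\nu+1),
\]
so the three regimes are ordered consistently. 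All exponent gaps below are bounded away from zero over the compact rectangle, which gives the required uniformity.

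\emph{Case 1:} \(m\le n^{1/(4\alpha+2s)}\). Here \(m\le n\), so \(K_m\asymp m\) and \(d_*=K_m\). The three terms become \(m/n\), \(m^{2\alpha}/n\) and \(m^{-\nu}\). Each of the first two is at most \(m^{-\nu}\), because \(m^{4\alpha+2s}\le n\) and \(m^{\nu+1}\le m^{4\alpha+2s}\).

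\emph{Case 2:} \(n^{1/(4\alpha+2s)}\le m\le n^{2\alpha/(\nu+1)}\), with \(d_*\asymp(nm)^{1/\kappa}\). The condition \((nm)^{1/\kappa}\lesssim m\) is equivalent to \(n\lesssim m^{\kappa-1}\), which holds. Choosing \(c_1\) small relative to \(c_L\) then gives \(d_*\le K_m\); note also that \(d_*\le n\). The terms \(d_*^{2\alpha+1}/(nm)\) and \(d_*^{-\nu}\) balance at \((nm)^{-\nu/\kappa}\). The term \(d_*/n\) is dominated by them, because \(m\le n^{2\alpha/(\nu+1)}\) forces \(d_*\lesssim n^{1/(\nu+1)}\), exactly as in the oracle analysis of \cref{thm:common-oracle-upper}.

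\emph{Case 3:} \(m\ge n^{2\alpha/(\nu+1)}\), with \(d_*\asymp n^{1/(\nu+1)}\). Then \(d_*\lesssim m\) because \(2\alpha>1\), and clearly \(d_*\le n\). The terms \(d_*/n\) and \(d_*^{-\nu}\) balance at \(n^{-\nu/(\nu+1)}\), and \(d_*^{2\alpha+1}/(nm)\) is no larger since \(m\ge d_*^{2\alpha}\).

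For eligibility, I first use that \(K_m\) is a power of two and \(d_*\le K_m\). Hence \(2^{L_*}\le K_m\), so all blocks with \(\ell\le L_*\) are active, and each \(r\) in them satisfies \(\absx{r}\le 2d_*\). By \cref{lem:cdeq-denominator-comparison}, \(\bar{\lambda}_r\asymp\lambda_r\gtrsim d_*^{-2\alpha}\) on these blocks. It therefore suffices to show, uniformly, that both \(d_*^{2\alpha}\,m^{-1}\sqrt{\ell^\lambda_{n,m}/n}\to0\) and \(d_*^{2\alpha}\,\ell^\lambda_{n,m}/n\to0\); since \(\ell^\lambda_{n,m}\lesssim\log n\), polynomial margins suffice.
\begin{itemize}
\item In Case 1, \(m^{2\alpha-1}\le n^{(2\alpha-1)/(4\alpha+2s)}\), which is \(\ll n^{1/2-\epsilon}\), and \(m^{2\alpha}\le n^{1/2}\).
\item In Case 2, \((nm)^{2\alpha/\kappa}/m\) has \(n\)-exponent \(2\alpha/\kappa<1/2\) and a negative \(m\)-exponent. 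Moreover, \((nm)^{2\alpha/\kappa}\le n^{2\alpha(\nu+1+2\alpha)/(\kappa(\nu+1))}=n^{2\alpha/(\nu+1)}\), which is \(n^{1-\epsilon}\) since \(\nu+1+2\alpha=\kappa\).
\item In Case 3, \(d_*^{2\alpha}=n^{2\alpha/(\nu+1)}\le m\) gives the first limit with margin \(n^{-1/2}\), and \(2\alpha/(\nu+1)<1\) gives the second.
\end{itemize}

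With \(\min\bar{\lambda}_r\ge4\zeta_{n,m}\) in hand, on \(\caE_n^\lambda\) we get \(\check{\lambda}_r\ge\bar{\lambda}_r/2\ge2\zeta_{n,m}\), which is eligibility. I expect the main obstacle to be the bookkeeping rather than any single step. It lies in handling the second floor term \(\ell^\lambda_{n,m}/n\), which has no independent-design analogue, and in the regime boundaries, where \(c_1,c_2\) must be taken small relative to \(c_L\) so that \(d_*\le K_m\) holds without slack issues.
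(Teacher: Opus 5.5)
Your proposal is correct and follows the paper's own argument. You use the same three-regime case analysis: in each regime you check \(d_*\le K_m\), the rate bound, and that \(\lambda_{d_*}\) dominates both terms of \(\zeta_{n,m}\) by a divergent factor, and then you conclude eligibility on \(\caE_n^\lambda\) from \(\check{\lambda}_r\ge\bar{\lambda}_r/2\ge 2\zeta_{n,m}\). Your explicit exponent margins, and the observation that \(2^{L_*}\le K_m\) (so the denominator comparison applies on every block up to \(L_*\)), supply details the paper's terser proof leaves implicit.
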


\begin{proof}
  Throughout this proof, \(K_m \le n\), hence
  \(\ell_{n,m}^{\lambda} \le \log(n(n+1))=O(\log n)\).
  In the floor regime \(m\le n^{1/(4\alpha+2s)}\), we have
  \(d_*=K_m \asymp m\) because then \(m\le n\) and
  \(K_m \asymp m\). Therefore
  \[
    \frac{d_*}{n}
    +
    \frac{d_*^{2\alpha+1}}{nm}
    +
    d_*^{-(2\alpha+2s)}
    \lesssim
    \frac{m}{n}
    +
    \frac{m^{2\alpha}}{n}
    +
    m^{-(2\alpha+2s)}
    \lesssim
    m^{-(2\alpha+2s)},
  \]
  because \(n\ge m^{4\alpha+2s}\) in this regime. Also
  \[
    \frac{\lambda_{d_*}}{\zeta_{n,m}}
    \gtrsim
    \frac{m^{-2\alpha}}{
      m^{-1} \sqrt{\ell_{n,m}^{\lambda}/n}
      +
      \ell_{n,m}^{\lambda}/n
    }
    \to\infty.
  \]
  In the sparse regime
  \(n^{1/(4\alpha+2s)} \le m\le n^{2\alpha/(\nu+1)}\), the choice
  \(d_*=\lfloor c_1(nm)^{1/\kappa}\rfloor\) gives
  \cref{eq:cdeq-oracle-band-rate} after adjusting the constant \(c_1\), and
  \[
    \frac{\lambda_{d_*}}{\zeta_{n,m}}
    \gtrsim
    \frac{(nm)^{-2\alpha/\kappa}}{
      m^{-1} \sqrt{\ell_{n,m}^{\lambda}/n}
      +
      \ell_{n,m}^{\lambda}/n
    }
    \to\infty.
  \]
  In the dense regime \(m\ge n^{2\alpha/(\nu+1)}\), the choice
  \(d_*=\lfloor c_2 n^{1/(\nu+1)} \rfloor\) gives
  \cref{eq:cdeq-oracle-band-rate} after adjusting \(c_2\), and
  \[
    \frac{\lambda_{d_*}}{\zeta_{n,m}}
    \gtrsim
    \frac{n^{-2\alpha/(\nu+1)}}{
      m^{-1} \sqrt{\ell_{n,m}^{\lambda}/n}
      +
      \ell_{n,m}^{\lambda}/n
    }
    \to\infty.
  \]
  By taking \(c_1,c_2\) sufficiently small, the statistical branches also satisfy
  \(d_*\le K_m\) whenever \(m\ge m_{\mathrm{ad}}\). For \(r\in B_\ell\) with
  \(\ell\le L_*\), we have
  \(\absx{r}\le 2^{L_*} \le 2d_*\le 2K_m<m/4\), hence
  \[
    \bar{\lambda}_r
    \gtrsim
    \lambda_r
    \gtrsim
    \lambda_{2d_*}
    \asymp
    \lambda_{d_*}.
  \]
  The three displayed lower bounds for \(\lambda_{d_*}/\zeta_{n,m}\) diverge in
  their respective regimes. Therefore, after enlarging \(n_0\) and \(m_0\) if
  necessary, we may ensure \(m_0 \ge m_{\mathrm{ad}}\) and
  \(\bar{\lambda}_r \ge 4\zeta_{n,m}\) for all \(n\ge n_0\) and \(m\ge m_0\),
  and on
  \(\caE_n^\lambda\) we get
  \[
    \check{\lambda}_r
    \ge
    \bar{\lambda}_r-\frac{1}{2}\bar{\lambda}_r
    \ge
    2\zeta_{n,m}.
  \]
  Hence each block \(B_\ell\), \(0\le \ell\le L_*\), is eligible for all
  \(n\ge n_0\) and \(m\ge m_0\).
\end{proof}

\subsection{Eligible-Block Risk}
\label{subsec:cdeq-eligible-block-risk}

\begin{lemma}[Eligible-block risk decomposition]
  \label{lem:cdeq-eligible-block-risk}
  For every active block \(B_\ell\), the localized risk
  \[
    R_\ell(\caE_n^\lambda\cap\caEelig{\ell})
    \coloneqq
    \sum_{r\in B_\ell}
    \lambda_r
    \E\bigl[
      \absx{\hat{\theta}_r-\vartheta_r}^2
      \mathbf{1}_{\caE_n^\lambda\cap\caEelig{\ell}}
      \bigr]
  \]
  satisfies
  \begin{equation}
    R_\ell(\caE_n^\lambda\cap\caEelig{\ell})
    \le
    C\min\{\widetilde\Theta_\ell,V_\ell\}
    +
    C V_\ell e^{-c\absx{B_\ell}}
    +
    B_\ell^\lambda,
    \label{eq:cdeq-eligible-block-risk}
  \end{equation}
  where
  \begin{equation}
    B_\ell^\lambda
    \coloneqq
    C\sum_{r\in B_\ell}
    \absx{\vartheta_r}^2
    \E\left[
        \frac{(\check{\lambda}_r-\bar{\lambda}_r)^2}
        {\bar{\lambda}_r}
        \mathbf{1}_{\caE_n^\lambda\cap\caEelig{\ell}}
    \right].
    \label{eq:cdeq-plugin-remainder}
  \end{equation}
\end{lemma}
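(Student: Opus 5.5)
The plan is to transcribe the proof of \cref{lem:indep-eligible-block-risk} to the common-design setting, with three changes. The observable parameter \(\vartheta_r=\widetilde\gamma_r/\bar{\lambda}_r\) replaces \(\theta_r\). The denominator target \(\bar{\lambda}_r\) replaces \(\lambda_r\). \Cref{lem:cdeq-block-noise-truncated} replaces \cref{lem:indep-block-noise-truncation}; it carries only the \(e^{-c\absx{B_\ell}}\) remainder, since under the common grid there are no design-point envelope events.

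For \(r\in B_\ell\), write
\[
  \hat{\theta}_r-\vartheta_r
  =
  \mathbf{1}_{\caEkeep{\ell}}\frac{\hat{\gamma}_r-\widetilde\gamma_r}{\check{\lambda}_r}
  +
  \mathbf{1}_{\caEkeep{\ell}}\vartheta_r\frac{\bar{\lambda}_r-\check{\lambda}_r}{\check{\lambda}_r}
  -
  \mathbf{1}_{(\caEkeep{\ell})^c}\vartheta_r .
\]
Squaring gives three terms \(T_{1\ell},T_{2\ell},T_{3\ell}\), all localized to \(\caE_n^\lambda\cap\caEelig{\ell}\). On this event, \cref{lem:cdeq-eligible-blocks} gives \(\check{\lambda}_r\asymp\bar{\lambda}_r\), and \cref{lem:cdeq-denominator-comparison} gives \(\bar{\lambda}_r\asymp\lambda_r\) on the active band. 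Hence \(\lambda_r/\check{\lambda}_r^2\le C/\bar{\lambda}_r\). This immediately bounds \(T_{2\ell}\) by \(B_\ell^\lambda\) as defined in \cref{eq:cdeq-plugin-remainder}. It also gives \(T_{1\ell}\le C\E[\norm{\Delta_\ell}_2^2\mathbf{1}_{\caEkeep{\ell}}]\) and \(T_{3\ell}\le C\widetilde\Theta_\ell\Pr((\caEkeep{\ell})^c\cap\caE_n^\lambda\cap\caEelig{\ell})\). The crude bound \(T_{1\ell}\le CV_\ell\) comes from \cref{lem:cdeq-gamma-variance}, after absorbing constants into \(C_V\) via the definition \cref{eq:cdeq-block-variance}.

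Next, I would transfer the threshold to the oracle-denominator scale. On the good event, \(\tfrac23 S_\ell^\circ\le\hat{S}_\ell\le 2S_\ell^\circ\) and \(\tfrac23V_\ell\le\hat{V}_\ell\le 2V_\ell\). Therefore
\[
  \{S_\ell^\circ\ge3V_\ell\}\subseteq\caEkeep{\ell}\subseteq\{S_\ell^\circ\ge V_\ell/3\}
\]
on \(\caE_n^\lambda\cap\caEelig{\ell}\), using \(S_\ell^\circ=\norm{\mu_\ell+\Delta_\ell}_2^2\) and \(\norm{\mu_\ell}_2^2=\widetilde\Theta_\ell\). Then I would run the three-regime argument.

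\emph{Weak block} (\(\widetilde\Theta_\ell\le\gamma_-V_\ell\)). Keeping the block forces \(\norm{\Delta_\ell}_2^2>C_-V_\ell\), so \cref{lem:cdeq-block-noise-truncated} with \(a=C_-\) gives \(T_{1\ell}\le CV_\ell e^{-c\absx{B_\ell}}\). Also \(T_{3\ell}\le C\widetilde\Theta_\ell\).

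\emph{Strong block} (\(\widetilde\Theta_\ell\ge\gamma_+V_\ell\), with \(\gamma_+\ge12\)). Killing the block forces \(\norm{\Delta_\ell}_2^2>\widetilde\Theta_\ell/4\ge(\gamma_+/4)V_\ell\). On that event \(\widetilde\Theta_\ell\le4\norm{\Delta_\ell}_2^2\), so \cref{lem:cdeq-block-noise-truncated} with \(a=\gamma_+/4\) gives \(T_{3\ell}\le CV_\ell e^{-c\absx{B_\ell}}\), while \(T_{1\ell}\le CV_\ell\).

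\emph{Intermediate block}. Here \(\widetilde\Theta_\ell\asymp V_\ell\), and the crude bounds suffice.

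Combining the three regimes yields \(T_{1\ell}+T_{3\ell}\le C\min\{\widetilde\Theta_\ell,V_\ell\}+CV_\ell e^{-c\absx{B_\ell}}\). Adding the \(T_{2\ell}\) bound proves \cref{eq:cdeq-eligible-block-risk}.

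The main point requiring care is that \(C_V\) must be chosen once, large enough that \cref{lem:cdeq-block-noise-truncated} holds simultaneously for the two fixed levels \(a=C_-\) and \(a=\gamma_+/4\). This is possible because \(C_-,\gamma_\pm\) are absolute numbers chosen before \(C_V\), although \(V_\ell\) itself scales with \(C_V\). One must also note that the selection statistics only ever divide by \(\check{\lambda}_r\) on eligible blocks, where it is positive. Everything else is a direct transcription, because the truncated noise lemma already uses the \(\bar{\lambda}_r\) normalization.
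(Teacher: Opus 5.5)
Your proposal is correct and follows essentially the same route as the paper. It uses the same three-term split of \(\hat{\theta}_r-\vartheta_r\) localized to \(\caE_n^\lambda\cap\caEelig{\ell}\), the same threshold transfer to \(S_\ell^\circ\) and \(V_\ell\) via \cref{lem:cdeq-eligible-blocks}, the same weak/strong/intermediate regime analysis driven by \cref{lem:cdeq-block-noise-truncated}, and the same bound \(T_{2\ell}\le B_\ell^\lambda\) obtained from \(\lambda_r/\check{\lambda}_r^2\le C/\bar{\lambda}_r\), with your remark on fixing \(C_V\) after \(C_-,\gamma_\pm\) making explicit a point the paper leaves implicit.
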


\begin{proof}
  On \(\caE_n^\lambda\cap\caEelig{\ell}\), the event \(\caEkeep{\ell}\) coincides with
  \(\caEthr{\ell}\). For \(r\in B_\ell\), on this localized event,
  \[
    \hat{\theta}_r-\vartheta_r
    =
    \mathbf{1}_{\caEthr{\ell}}
    \frac{\hat{\gamma}_r-\widetilde\gamma_r}{\check{\lambda}_r}
    +
    \mathbf{1}_{\caEthr{\ell}}
    \vartheta_r \frac{\bar{\lambda}_r-\check{\lambda}_r}{\check{\lambda}_r}
    -
    \mathbf{1}_{(\caEthr{\ell})^c}\vartheta_r.
  \]
  Using \((a+b+c)^2\le 3(a^2+b^2+c^2)\), we obtain
  \[
    R_\ell(\caE_n^\lambda\cap\caEelig{\ell})\le T_{1\ell}+T_{2\ell}+T_{3\ell},
  \]
  where
  \begin{align*}
    T_{1\ell}
    &\coloneqq
    C\sum_{r\in B_\ell} \lambda_r
    \E\left[
        \mathbf{1}_{\caEthr{\ell}}
        \frac{\absx{\hat{\gamma}_r-\widetilde\gamma_r}^2}{\check{\lambda}_r^2}
        \mathbf{1}_{\caE_n^\lambda\cap\caEelig{\ell}}
    \right],\\
    T_{2\ell}
    &\coloneqq
    C\sum_{r\in B_\ell} \lambda_r \absx{\vartheta_r}^2
    \E\left[
        \mathbf{1}_{\caEthr{\ell}}
        \frac{(\bar{\lambda}_r-\check{\lambda}_r)^2}{\check{\lambda}_r^2}
        \mathbf{1}_{\caE_n^\lambda\cap\caEelig{\ell}}
    \right],\\
    T_{3\ell}
    &\coloneqq
    C\widetilde\Theta_\ell
    \Pr\bigl((\caEthr{\ell})^c \cap \caE_n^\lambda\cap\caEelig{\ell} \bigr).
  \end{align*}

  On \(\caE_n^\lambda\cap\caEelig{\ell}\), \cref{lem:cdeq-eligible-blocks} yields
  \begin{equation}
    \frac{2}{3} S_\ell^\circ
    \le
    \hat{S}_\ell
    \le
    2S_\ell^\circ,
    \qquad
    \frac{2}{3} V_\ell
    \le
    \hat{V}_\ell
    \le
    2V_\ell,
    \qquad
    \check{\lambda}_r \asymp \bar{\lambda}_r.
    \label{eq:cdeq-adaptive-comparability}
  \end{equation}
  The definitions in \cref{subsec:cdeq-oracle} give
  \[
    S_\ell^\circ=\norm{\mu_\ell+\Delta_\ell}_2^2,
    \qquad
    \norm{\mu_\ell}_2^2=\widetilde\Theta_\ell.
  \]
  Also,
  \[
    \E\norm{\Delta_\ell}_2^2
    =
    \sum_{r\in B_\ell}
    \frac{\Var(\hat{\gamma}_r)}{\bar{\lambda}_r}
    \le
    \frac{C}{n}\sum_{r\in B_\ell}
    \left(
      1+\frac{1}{m\bar{\lambda}_r}
    \right)
    \le
    C V_\ell,
  \]
  by \cref{lem:cdeq-gamma-variance,eq:cdeq-alias-variance-equivalence}.
  Also, on \(\caE_n^\lambda\cap\caEelig{\ell}\), the active-band comparison gives
  \(\lambda_r/\check{\lambda}_r^2 \le C/\bar{\lambda}_r\). Therefore
  \[
    T_{1\ell}
    \le
    C\E\left[
         \norm{\Delta_\ell}_2^2
         \mathbf{1}_{\caEthr{\ell}}
         \mathbf{1}_{\caE_n^\lambda\cap\caEelig{\ell}}
    \right]
    \le
    C\E\norm{\Delta_\ell}_2^2
    \le
    C V_\ell
  \]
  Hence
  \begin{equation}
    R_\ell(\caE_n^\lambda\cap\caEelig{\ell})
    \le
    T_{1\ell}
    +
    C\widetilde\Theta_\ell \Pr((\caEthr{\ell})^c \cap \caE_n^\lambda\cap\caEelig{\ell})
    +
    T_{2\ell}.
    \label{eq:cdeq-block-basic-risk}
  \end{equation}

  Consider three regimes.

  \textbf{Weak block: \(\widetilde\Theta_\ell \le \gamma_- V_\ell\).}
  If \(\caEthr{\ell}\), \(\caE_n^\lambda\), and \(\caEelig{\ell}\) all occur, then
  \cref{eq:cdeq-adaptive-comparability} implies
  \[
    S_\ell^\circ \ge \frac{1}{3}V_\ell.
  \]
  On the other hand, if \(\norm{\Delta_\ell}_2^2 \le C_-V_\ell\), then
  the block identity above gives
  \[
    S_\ell^\circ
    =
    \norm{\mu_\ell+\Delta_\ell}_2^2
    \le
    2\widetilde\Theta_\ell+2\norm{\Delta_\ell}_2^2
    \le
    (2\gamma_-+2C_-)V_\ell.
  \]
  Choose \(C_->0\) and then \(\gamma_->0\) so that
  \(2\gamma_-+2C_-\le1/3\).
  Thus \(\caEthr{\ell}\cap\caE_n^\lambda\cap\caEelig{\ell}\) implies
  \(\norm{\Delta_\ell}_2^2>C_-V_\ell\). By
  \cref{lem:cdeq-block-noise-truncated},
  \[
    T_{1\ell}
    \le
    C\E\left[
         \norm{\Delta_\ell}_2^2
         \mathbf{1}\{\norm{\Delta_\ell}_2^2>C_-V_\ell\}
    \right]
    \le
    C V_\ell e^{-c\absx{B_\ell}}.
  \]
  Since \(T_{3\ell} \le C\widetilde\Theta_\ell\), we obtain
  \begin{equation}
    R_\ell(\caE_n^\lambda\cap\caEelig{\ell})
    \le
    C\widetilde\Theta_\ell
    +
    C V_\ell e^{-c\absx{B_\ell}}
    +
    T_{2\ell}.
    \label{eq:cdeq-block-weak}
  \end{equation}

  \textbf{Strong block: \(\widetilde\Theta_\ell \ge \gamma_+ V_\ell\).}
  Choose \(\gamma_+\ge12\).
  If \(\widetilde\Theta_\ell \ge \gamma_+ V_\ell\), then on the event
  \(\norm{\Delta_\ell}_2^2 \le \widetilde\Theta_\ell/4\), the block identity gives
  \[
    S_\ell^\circ
    =
    \norm{\mu_\ell+\Delta_\ell}_2^2
    \ge
    \left(
      \norm{\mu_\ell}_2-\norm{\Delta_\ell}_2
    \right)^2
    \ge
    \frac{1}{4}\widetilde\Theta_\ell.
  \]
  Hence \cref{eq:cdeq-adaptive-comparability} implies
  \[
    \hat{S}_\ell
    \ge
    \frac{2}{3} S_\ell^\circ
    \ge
    \frac{\widetilde\Theta_\ell}{6}
    \ge
    2V_\ell
    \ge
    \hat{V}_\ell.
  \]
  Thus \((\caEthr{\ell})^c \cap \caE_n^\lambda\cap\caEelig{\ell}\) can only occur if
  \(\norm{\Delta_\ell}_2^2>\widetilde\Theta_\ell/4\).
  On this event, \(\widetilde\Theta_\ell \le4\norm{\Delta_\ell}_2^2\).
  Since \(\widetilde\Theta_\ell/4\ge(\gamma_+/4)V_\ell\) in the strong regime,
  \cref{lem:cdeq-block-noise-truncated} with \(a=\gamma_+/4\) yields
  \[
    \begin{aligned}
      \widetilde\Theta_\ell
      &\Pr\bigl((\caEthr{\ell})^c \cap \caE_n^\lambda\cap\caEelig{\ell} \bigr)\\
      &\le
      C\E\left[
           \norm{\Delta_\ell}_2^2
           \mathbf{1}\{\norm{\Delta_\ell}_2^2>\widetilde\Theta_\ell/4\}
      \right]
      \le
      C\E\left[
           \norm{\Delta_\ell}_2^2
           \mathbf{1}\{\norm{\Delta_\ell}_2^2>(\gamma_+/4)V_\ell\}
      \right]
      \le
      C V_\ell e^{-c\absx{B_\ell}}.
    \end{aligned}
  \]
  Using the bound \(T_{1\ell} \le C V_\ell\) above, we get
  \begin{equation}
    R_\ell(\caE_n^\lambda\cap\caEelig{\ell})
    \le
    C V_\ell
    +
    C V_\ell e^{-c\absx{B_\ell}}
    +
    T_{2\ell}.
    \label{eq:cdeq-block-strong}
  \end{equation}

  \textbf{Intermediate block:
    \(\gamma_-V_\ell<\widetilde\Theta_\ell<\gamma_+V_\ell\).}
  Then \(\widetilde\Theta_\ell \asymp V_\ell\), so
  \cref{eq:cdeq-block-basic-risk} immediately gives
  \begin{equation}
    R_\ell(\caE_n^\lambda\cap\caEelig{\ell})
    \le
    C\min\{\widetilde\Theta_\ell,V_\ell\}
    +
    T_{2\ell}.
    \label{eq:cdeq-block-middle}
  \end{equation}

  For the plug-in term, on \(\caE_n^\lambda\cap\caEelig{\ell}\), eligibility and
  \cref{lem:cdeq-eligible-blocks} imply
  \[
    \lambda_r
    \absx{
      \widetilde\gamma_r \left(
                           \frac{1}{\check{\lambda}_r}-\frac{1}{\bar{\lambda}_r}
      \right)
    }^2
    \mathbf{1}_{\caE_n^\lambda\cap\caEelig{\ell}}
    \le
    C
    \absx{\vartheta_r}^2
    \frac{(\check{\lambda}_r-\bar{\lambda}_r)^2}
    {\bar{\lambda}_r}
    \mathbf{1}_{\caE_n^\lambda\cap\caEelig{\ell}},
  \]
  because \(\widetilde\gamma_r=\bar{\lambda}_r \vartheta_r\) and
  \(\lambda_r \asymp \bar{\lambda}_r\) on the active band. Summing over the
  block shows \(T_{2\ell} \le B_\ell^\lambda\).

  Combining \cref{eq:cdeq-block-weak,eq:cdeq-block-strong,eq:cdeq-block-middle}
  with the bound on \(T_{2\ell}\) proves
  \cref{eq:cdeq-eligible-block-risk}.
\end{proof}

\begin{lemma}[Bad-event contribution]
  \label{lem:cdeq-bad-event}
  There exist \(C,n_{\mathrm{bad}},m_{\mathrm{bad}}<\infty\), depending only on
  \[
    \underline\alpha,\bar{\alpha},\underline s,\bar{s},
    R_0, R, c_\lambda, C_\lambda, \sigma_\varepsilon, \sigma_\delta,
  \]
  such that, for all \(n\ge n_{\mathrm{bad}}\) and \(m\ge m_{\mathrm{bad}}\),
  \begin{equation}
    \sum_{r\in\bbZ}
    \lambda_r
    \E\bigl[\absx{\hat{\theta}_r-\theta_r}^2 \mathbf{1}_{(\caE_n^\lambda)^c} \bigr]
    \le
    C n^{-7}.
    \label{eq:cdeq-bad-event}
  \end{equation}
\end{lemma}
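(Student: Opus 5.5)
The proof will follow Step 7 of the independent-design argument: bound the prediction loss deterministically by a constant, then multiply by the probability of \((\caE_n^\lambda)^c\) from \cref{prop:cdeq-lambda-pilot}. The lemma is stated for \(\hat{\theta}_r\), but the object that enters the final risk is the projected estimator \(\bar{\theta}_r\), which is automatically bounded. I will therefore prove the bound for the coordinates actually used in \(\hat{\beta}\), namely \(\bar{\theta}_{B_\ell}=\Pi_R(\hat{\theta}_{B_\ell})\) with zeros outside \(\absx{r}\le K_m\). This is the quantity needed in the aggregation. If the unprojected \(\hat{\theta}_r\) is genuinely intended, the deterministic bound below fails, because \(\hat{\theta}_r\) can be as large as \(\absx{\hat{\gamma}_r}/(2\zeta_{n,m})\). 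In that case I will fall back on Cauchy--Schwarz, splitting the expectation into \((\E[\cdot]^2)^{1/2}\Pr((\caE_n^\lambda)^c)^{1/2}\).

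\textbf{Step 1: deterministic envelope.} The Sobolev constraint with \(s\ge0\) gives \(\norm{\theta_{B_\ell}}_2\le R_0\), and the projection gives \(\norm{\bar{\theta}_{B_\ell}}_2\le R\). For \(\ell\ge1\) we have \(\max_{r\in B_\ell}\lambda_r\le C_\lambda 2^{-2\alpha(\ell-1)}\). Hence
\[
  \sum_{r\in B_\ell}\lambda_r\absx{\bar{\theta}_r-\theta_r}^2\le C(R+R_0)^2 2^{-2\underline{\alpha}(\ell-1)_+}.
\]
Summing over \(\ell\le L_m\) gives a convergent geometric series. The tail \(\sum_{\absx{r}>K_m}\lambda_r\theta_r^2\) is at most \(C_\lambda R_0^2\). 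So the total loss is bounded by a constant \(C\) depending only on the listed parameters, pointwise on the sample space.

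\textbf{Step 2: probability.} Applying \cref{eq:cdeq-lambda-good-event-prob} gives \(\Pr((\caE_n^\lambda)^c)\le C[n(K_m+1)]^{-20}\le Cn^{-20}\le Cn^{-7}\) for \(n\ge n_0\) and \(m\ge m_0\). Combining with Step 1 yields \cref{eq:cdeq-bad-event}, after setting \(n_{\mathrm{bad}}=n_0\) and \(m_{\mathrm{bad}}=\max(m_0,m_{\mathrm{ad}})\).

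\textbf{Main obstacle: the fallback for unprojected \(\hat{\theta}_r\).} On eligible blocks \(\check{\lambda}_r\ge 2\zeta_{n,m}\gtrsim \log n/n\), so \(\lambda_r\hat{\theta}_r^2\le \lambda_r\hat{\gamma}_r^2 n^2/(\log n)^2\). Using \(\E\hat{\gamma}_r^4\le C\), which follows from Gaussian hypercontractivity of the degree-two chaos \(Y\bar{Z}_{ir}\), the second moment of the loss is polynomial, of order \(n^4 K_m\le n^5\). Cauchy--Schwarz against \(\Pr((\caE_n^\lambda)^c)^{1/2}\le Cn^{-10}\) then leaves \(n^{5/2}\cdot n^{-10}\le n^{-7}\). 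The delicate point is bookkeeping the exponents so the polynomial growth stays below \(n^{13}\); the \(n^{-20}\) margin in \cref{prop:cdeq-lambda-pilot} is what allows this.
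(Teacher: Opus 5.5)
Your proposal is correct, and your fallback argument is the paper's proof. On retained blocks \(\check{\lambda}_r\ge 2\zeta_{n,m}\), and \(\zeta_{n,m}\gtrsim 1/n\) because of the \(\ell_{n,m}^{\lambda}/n_\lambda\) term in the common-design floor. Hence \(\sum_{\absx{r}\le K_m}\lambda_r\hat{\theta}_r^2\lesssim n^2\sum_{\absx{r}\le K_m}\hat{\gamma}_r^2\). Bounded fourth moments of \(\hat{\gamma}_r\) and Cauchy--Schwarz against \(\Pr((\caE_n^\lambda)^c)\) then finish. The \(\theta\)-part and the frequencies \(\absx{r}>K_m\) contribute only a deterministic constant times that probability.

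The lemma is stated for the unprojected \(\hat{\theta}_r\), so only the fallback proves it. The paper's aggregation uses it through the reduction \(\caR(\hat{\beta};\theta,\lambda;\caA)\le C\caR_0(\theta,\lambda;\caA)\).

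Your primary route bounds a deterministic envelope for the projected \(\bar{\theta}\) and multiplies by \(\Pr((\caE_n^\lambda)^c)\), as in Step 7 of the independent-design proof. It proves a different statement, \(\caR(\hat{\beta};\theta,\lambda;(\caE_n^\lambda)^c)\le Cn^{-20}\), which is exactly what the final theorem needs. It bypasses both the reduction and the moment computation. It also never needs \(\zeta_{n,m}^{-1}\) to be polynomial in \(n\). That is why the paper uses this route under independent design, where \(\zeta_{n,m}^{-1}\asymp m\sqrt{n/\log(nm)}\) is not controlled by \(n\) alone. The paper's route instead controls the unthresholded-by-projection estimator itself, which is what the lemma as written asserts.

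There is one bookkeeping slip in your fallback. By Cauchy--Schwarz over the \(2K_m+1\) frequencies, \(\E(\sum_{\absx{r}\le K_m}\hat{\gamma}_r^2)^2\le(2K_m+1)\sum_{\absx{r}\le K_m}\E\hat{\gamma}_r^4\lesssim K_m^2\). So the second moment of the loss is of order \(n^4K_m^2\le n^6\), not \(n^4K_m\). The relevant ceiling is \(n^6\), not \(n^{13}\). The conclusion still holds. Keeping the factor \((K_m+1)^{-10}\) from \cref{eq:cdeq-lambda-good-event-prob}, as the paper does, gives \(n^2K_m[n(K_m+1)]^{-10}\le n^{-8}\). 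Alternatively, the \((\log n)^{-2}\) gain from \(\zeta_{n,m}\gtrsim\log n/n\) supplies the slack.
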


\begin{proof}
  By the eligibility condition in the definition of retained blocks,
  every nonzero \(\hat{\theta}_r\) has
  \(\check{\lambda}_r\ge 2\zeta_{n,m}\). Hence
  \[
    \sum_{\absx{r}\le K_m}\lambda_r \absx{\hat{\theta}_r}^2
    \le
    C\zeta_{n,m}^{-2} \sum_{\absx{r}\le K_m}\absx{\hat{\gamma}_r}^2.
  \]
  For each \(\absx{r}\le K_m\), the variables \(Y_i\) and \(\bar{Z}_{ir}\) are
  centered jointly Gaussian with
  \[
    \Var(Y_i)
    =
    \sigma_\varepsilon^2+\sum_{k\in\bbZ} \lambda_k \absx{\theta_k}^2
    \le
    C,
    \qquad
    \Var(\bar{Z}_{ir})
    =
    \widetilde\lambda_r+\frac{\sigma_\delta^2}{m}
    \le
    C,
  \]
  where the variance of \(\bar{Z}_{ir}\) follows from \cref{eq:grid-zbar-covariance}.
  The bounds use \(\alpha>\frac{1}{2}\) and \(\theta\in\Theta_s(R_0)\).
  Hence
  \[
    \E\absx{Y_i \bar{Z}_{ir}}^4
    \le
    \bigl(\E\absx{Y_i}^8 \bigr)^{1/2}
    \bigl(\E\absx{\bar{Z}_{ir}}^8 \bigr)^{1/2}
    \le
    C.
  \]
  Since \(x\mapsto x^4\) is convex,
  \[
    \E\absx{\hat{\gamma}_r}^4
    =
    \E\absx{
      \frac{1}{n_\gamma}\sum_{i\in I_\gamma} Y_i \bar{Z}_{ir}
    }^4
    \le
    \frac{1}{n_\gamma}\sum_{i\in I_\gamma} \E\absx{Y_i \bar{Z}_{ir}}^4
    \le
    C.
  \]
  Therefore
  \[
    \E\left[
        \left(
          \sum_{\absx{r}\le K_m}\absx{\hat{\gamma}_r}^2
        \right)^2
    \right]
    \le
    (2K_m+1)\sum_{\absx{r}\le K_m}\E\absx{\hat{\gamma}_r}^4
    \le
    C K_m^2
    \le
    C m^2.
  \]
  Also, since \(n_\lambda \asymp n\) and
  \(\ell_{n,m}^{\lambda} \ge \log 2\),
  \[
    \zeta_{n,m}
    \ge
    C\frac{\ell_{n,m}^{\lambda}}{n_\lambda}
    \ge
    \frac{c}{n},
  \]
  so \(\zeta_{n,m}^{-1} \le Cn\). Hence
  \[
    \E\left[
        \left(
          \sum_{\absx{r}\le K_m}\lambda_r \absx{\hat{\theta}_r}^2
        \right)^2
    \right]
    \le
    C\zeta_{n,m}^{-4}
    \E\left[
        \left(
          \sum_{\absx{r}\le K_m}\absx{\hat{\gamma}_r}^2
        \right)^2
    \right]
    \le
    Cn^4 K_m^2.
  \]
  Applying Cauchy--Schwarz and \cref{eq:cdeq-lambda-good-event-prob}, we get,
  for all \(n\ge n_0\) and \(m\ge m_0\),
  \[
    \E\left[
        \sum_{\absx{r}\le K_m}\lambda_r \absx{\hat{\theta}_r}^2
        \mathbf{1}_{(\caE_n^\lambda)^c}
    \right]
    \le
    Cn^2 K_m \bigl[n(K_m+1)\bigr]^{-10}
    \le
    Cn^{-8}.
  \]
  The deterministic term
  \[
    \sum_{r\in\bbZ} \lambda_r \absx{\theta_r}^2
    \le
    C
  \]
  is uniformly bounded, so
  \[
    \E\left[
        \sum_{r\in\bbZ} \lambda_r \absx{\theta_r}^2
        \mathbf{1}_{(\caE_n^\lambda)^c}
    \right]
    \le
    C \Pr\bigl((\caE_n^\lambda)^c\bigr)
    \le
    C n^{-20}
    \le
    C n^{-7}.
  \]
  Since \(\hat{\theta}_r=0\) for \(\absx{r}>K_m\), we have
  \[
    \sum_{r\in\bbZ}
    \lambda_r
    \E\bigl[\absx{\hat{\theta}_r-\theta_r}^2 \mathbf{1}_{(\caE_n^\lambda)^c} \bigr]
    \le
    2\E\left[
         \sum_{\absx{r}\le K_m}\lambda_r \absx{\hat{\theta}_r}^2
         \mathbf{1}_{(\caE_n^\lambda)^c}
    \right]
    +
    2\E\left[
         \sum_{r\in\bbZ} \lambda_r \absx{\theta_r}^2
         \mathbf{1}_{(\caE_n^\lambda)^c}
    \right]
    \le
    C n^{-7},
  \]
  which proves \cref{eq:cdeq-bad-event}.
\end{proof}

\subsection[Proof of the Adaptive Common-Design Upper Bound]{Proof of \texorpdfstring{\cref{thm:common-adaptive-upper}}{the adaptive common-design upper bound}}
\label{subsec:cdeq-main-aggregation}

It is enough to prove the bound uniformly over fixed rectangles of the form above.
For \((\alpha,s)\) in this rectangle, write
\[
  \rho_{n,m}(\alpha,s)
  \coloneqq
  n^{-\nu/(\nu+1)}
  +
  (nm)^{-\nu/\kappa}
  +
  m^{-\nu}
  +
  m^{-4\alpha}.
\]
Fix such \((\alpha,s)\), \(\lambda\in\caL_\alpha(c_\lambda,C_\lambda)\), and
\(\theta\in\Theta_s(R_0)\).
For an event \(\caA\), write
\[
  \caR_0(\theta,\lambda;\caA)
  \coloneqq
  \sum_{r\in\bbZ}
  \lambda_r
  \E\bigl[
    \absx{\hat{\theta}_r-\theta_r}^2\mathbf{1}_{\caA}
    \bigr],
\]
for the thresholded coordinates, and define
\(\caR(\hat{\beta};\theta,\lambda;\caA)\) analogously using the projected
coordinates \(\bar{\theta}_r\).

\textbf{Step 1: oracle cutoff and risk split.}
Choose \(m_0\) large enough that \(m_0 \ge m_{\mathrm{ad}}\) and \(m_0 \ge m_{\mathrm{bad}}\).
Choose \(n_0\) large enough that \(n_0 \ge n_{\mathrm{bad}}\) and that the conclusions of \cref{lem:cdeq-oracle-band} and all asymptotic comparisons below hold for \(n\ge n_0\) and \(m\ge m_0\).
Fix \(n\ge n_0\) and \(m\ge m_0\).
Let \(d_*=d_*(\alpha,s)\) and \(L_*\) be given by \cref{lem:cdeq-oracle-band,eq:cdeq-oracle-level}.
By \cref{lem:cdeq-oracle-band}, every block \(B_0,\dots,B_{L_*}\) is eligible on the pilot event \(\caE_n^\lambda\).
We split the risk into its contributions on \(\caE_n^\lambda\) and
\((\caE_n^\lambda)^c\).
Steps 2--6 bound the good-event contribution; Step 7 controls the
complementary bad-event contribution.

\textbf{Step 2: good-event block reduction.}
The blockwise projection argument is the same as in Step 2 of
\cref{subsec:proof-indep-adaptive-upper}.
Since \(R\ge R_0\), the block \(\theta_{B_\ell}\) belongs to the projection ball.
Since \(\lambda_r\) is comparable within each dyadic block,
\begin{equation}
  \caR(\hat{\beta};\theta,\lambda;\caA)
  \le
  C\caR_0(\theta,\lambda;\caA)
  \label{eq:cdeq-projection-reduction}
\end{equation}
for every event \(\caA\).
The common-design estimator targets
\(\vartheta_r=\widetilde\gamma_r/\bar{\lambda}_r\), so
\cref{eq:cdeq-observable-parameter} gives
\[
  \hat{\theta}_r-\theta_r
  =
  (\hat{\theta}_r-\vartheta_r)+(\vartheta_r-\theta_r),
  \qquad
  \absx{r}\le K_m,
\]
and \(\hat{\theta}_r=0\) for \(\absx{r}>K_m\).
For the active-band estimation part and any event \(\caA\), write
\[
  R_\ell(\caA)
  \coloneqq
  \sum_{r\in B_\ell}
  \lambda_r
  \E\bigl[\absx{\hat{\theta}_r-\vartheta_r}^2 \mathbf{1}_{\caA} \bigr].
\]
The reduction over retained and discarded blocks follows the
independent-design proof, with
\((\bar{\lambda}_r,\vartheta_r,\widetilde\Theta_\ell)\) replacing
\((\lambda_r,\theta_r,\Theta_\ell)\) in the block quantities.
Namely, for \(0\le\ell\le L_*\), \cref{lem:cdeq-oracle-band} gives
\(R_\ell(\caE_n^\lambda)=R_\ell(\caE_n^\lambda\cap\caEelig{\ell})\), so
\cref{lem:cdeq-eligible-block-risk} applies. For \(L_*<\ell\le L_m\), decompose
\(R_\ell(\caE_n^\lambda)\) over
\(\caE_n^\lambda\cap\caEelig{\ell}\) and
\(\caE_n^\lambda\cap(\caEelig{\ell})^c\). On ineligible blocks the estimator is
zero, so the active-band comparison bounds that contribution by
\(C\widetilde\Theta_\ell\); on eligible blocks
\(B_\ell^\lambda\le C\widetilde\Theta_\ell\) by \cref{lem:cdeq-eligible-blocks}.
Thus
\begin{equation}
  \begin{aligned}
    \caR_0(\theta,\lambda;\caE_n^\lambda)
    \le
    &C\underbrace{\sum_{\ell=0}^{L_*} V_\ell}_{\text{oracle variance}}
    +
    C\underbrace{\sum_{\ell=L_*+1}^{L_m}
      \widetilde\Theta_\ell}_{\text{observable signal tail}}\\
    &+
    C\underbrace{\sum_{\ell=0}^{L_*}
      B_\ell^\lambda}_{\text{plug-in remainder}}
    +
    C\underbrace{\sum_{\ell=0}^{L_m}
      V_\ell e^{-c\absx{B_\ell}}}_{\text{thresholding remainder}}\\
    &+
    C\underbrace{\sum_{\absx{r}\le K_m}
      \lambda_r\absx{\vartheta_r-\theta_r}^2}_{\text{observable-target bias}}
    +
    C\underbrace{\sum_{\absx{r}>K_m}
      \lambda_r\absx{\theta_r}^2}_{\text{tail beyond }K_m}.
  \end{aligned}
  \label{eq:cdeq-good-risk}
\end{equation}

\textbf{Step 3: oracle and fixed-grid terms.}
By \cref{lem:cdeq-oracle-band} and \cref{eq:cdeq-alias-variance-equivalence},
\[
  \sum_{\ell=0}^{L_*} V_\ell
  \asymp
  \frac{d_*}{n}
  +
  \frac{d_*^{2\alpha+1}}{nm}
  \le
  C\rho_{n,m}(\alpha,s).
\]
The observable signal tail beyond the oracle cutoff contains the Sobolev tail
of \(g\) and the common-grid aliasing term.
Writing \(a_r=\widetilde\gamma_r-\gamma_r\) and using
\cref{eq:upper-two-sided-tail-bias,lem:cd-aliasing-bias},
\begin{align*}
  \sum_{\ell=L_*+1}^{L_m} \widetilde\Theta_\ell
  &\le
  C\sum_{\absx{r}>d_*}\frac{\absx{\gamma_r}^2}{\lambda_r}
  +
  C\sum_{\absx{r}\le K_m}\frac{\absx{a_r}^2}{\lambda_r}
  \\
  &\le
  C d_*^{-(2\alpha+2s)}
  +
  C m^{-(2\alpha+2s)}
  =
  C d_*^{-\nu}+C m^{-\nu}.
\end{align*}
Thus this term is \(O(\rho_{n,m}(\alpha,s))\).
The observable-target bias is controlled by
\cref{lem:cdeq-observable-target-bias} with \(d=K_m\):
\[
  \sum_{\absx{r}\le K_m}
  \lambda_r\absx{\vartheta_r-\theta_r}^2
  \le
  C\left(
     m^{-\nu}
     +
     m^{-4\alpha}K_m^{(2\alpha-2s)_+}
  \right)
  \le
  C\rho_{n,m}(\alpha,s),
\]
since \(K_m\le m\).
Finally,
\[
  \sum_{\absx{r}>K_m}\lambda_r \absx{\theta_r}^2
  \le
  C K_m^{-(2\alpha+2s)}
\]
by \cref{eq:upper-two-sided-tail-bias}.
Since \(d_*\le K_m\) by \cref{lem:cdeq-oracle-band}, this tail is at most
\(Cd_*^{-\nu}\), hence is also \(O(\rho_{n,m}(\alpha,s))\).

\textbf{Step 4: thresholding remainder.}
The thresholding remainder is controlled exactly as in Step 4 of
\cref{subsec:proof-indep-adaptive-upper}.
Using \(\bar{\lambda}_r\asymp\lambda_r\) on the active band and \(K_m\le n\),
\[
  \sum_{\ell=0}^{L_m}V_\ell e^{-c\absx{B_\ell}}
  \lesssim
  \frac{1}{n} + \frac{1}{nm}
  \le C\rho_{n,m}(\alpha,s).
\]

\textbf{Step 5: plug-in remainder.}
By \cref{eq:cdeq-lambda-pilot-mse}, \cref{eq:cdeq-alias-variance-equivalence}, and the definition \(\widetilde\gamma_r=\bar{\lambda}_r \vartheta_r\),
\[
  \bar{d}_*
  \coloneqq
  2^{L_*} \wedge K_m
  \le
  2d_*,
\]
and
\[
  \sum_{\ell=0}^{L_*} B_\ell^\lambda
  \le
  \frac{C}{n}
  \sum_{\absx{r}\le \bar{d}_*}
  \left(
    \bar{\lambda}_r \absx{\vartheta_r}^2
  +
    \frac{\absx{\vartheta_r}^2}{m}
    +
    \frac{\absx{\vartheta_r}^2}{m^2 \bar{\lambda}_r}
  \right).
\]
Since
\[
  \sum_{\absx{r}\le \bar{d}_*}\bar{\lambda}_r \absx{\vartheta_r}^2
  \le
  C\sum_{\absx{r}\le \bar{d}_*}\lambda_r \absx{\theta_r}^2
  +
  C\sum_{\absx{r}\le \bar{d}_*}\lambda_r \absx{\vartheta_r-\theta_r}^2
  \le
  C,
\]
and \cref{lem:cdeq-observable-sums} yields
\[
  \sum_{\absx{r}\le \bar{d}_*}\absx{\vartheta_r}^2
  \le
  C,
  \qquad
  \sum_{\absx{r}\le \bar{d}_*}\frac{\absx{\vartheta_r}^2}{\bar{\lambda}_r}
  \le
  C \bar{d}_*^{(2\alpha-2s)_+}
  \le
  C d_*^{(2\alpha-2s)_+},
\]
we obtain, with \(a\coloneqq(2\alpha-2s)_+\),
\[
  \sum_{\ell=0}^{L_*} B_\ell^\lambda
  \le
  C\left(
     \frac{1}{n}
     +
     \frac{1}{nm}
     +
     \frac{d_*^a}{nm^2}
  \right).
\]
The last display is absorbed by the same three cutoff regimes in
\cref{lem:cdeq-oracle-band}: in the floor regime it is
\(O(m^{-\nu})\), in the sparse regime it is
\(O((nm)^{-\nu/\kappa})\), and in the dense regime it is
\(O(n^{-\nu/(\nu+1)})\).
Hence
\begin{equation}
  \sum_{\ell=0}^{L_*} B_\ell^\lambda
  \le
  C\rho_{n,m}(\alpha,s).
  \label{eq:cdeq-plugin-bound}
\end{equation}

\textbf{Step 6: good-event bound.}
Combining Steps 3--5 with \cref{eq:cdeq-good-risk} gives
\begin{equation}
  \caR_0(\theta,\lambda;\caE_n^\lambda)
  \le
  C\rho_{n,m}(\alpha,s).
  \label{eq:cdeq-good-event-bound}
\end{equation}
By \cref{eq:cdeq-projection-reduction}, the same bound holds for
\(\hat{\beta}\) on \(\caE_n^\lambda\).

\textbf{Step 7: bad-event contribution.}
This is the same final step as in the independent-design proof, using
\cref{eq:cdeq-projection-reduction,lem:cdeq-bad-event} in place of the
independent-design bad-event lemma.
Namely,
\[
  \caR(\hat{\beta};\theta,\lambda;(\caE_n^\lambda)^c)
  \le
  C n^{-7}.
\]
Since \(\nu/(\nu+1)<1<7\), we have
\(n^{-7} \le n^{-\nu/(\nu+1)} \le \rho_{n,m}(\alpha,s)\).
Combining this with \cref{eq:cdeq-good-event-bound} gives
\(\E\caR(\hat{\beta};\theta,\lambda)\le C\rho_{n,m}(\alpha,s)\), which proves
\cref{thm:common-adaptive-upper}.

\subsection{Even-grid interlaced denominator pilot}
\label{subsec:cdeq-interlaced-pilot}

This subsection provides an alternative denominator pilot that applies only when \(m\) is even.
Within this subsection only, the symbols \(\bar{\lambda}_r\), \(\check{\lambda}_r\), and \(\caE_n^\lambda\) refer to the interlaced definitions below.
For \(m=2M\), split the common grid into
\[
  J_0=\{1,3,\ldots,2M-1\},
  \qquad
  J_1=\{2,4,\ldots,2M\}.
\]
For \(a=0,1\), define
\begin{equation}
  \bar{Z}_{ir}^{(a)}
  \coloneqq
  \frac{1}{M}\sum_{j\in J_a} Z_{ij} e_r(t_j),
  \qquad
  \widetilde{x}_{ir}^{(a)}
  \coloneqq
  \frac{1}{M}\sum_{j\in J_a} X_i(t_j)e_r(t_j).
  \label{eq:cdeq-interlaced-zbar}
\end{equation}
Equivalently,
\[
  \widetilde{x}_{ir}^{(a)}
  =
  \sum_{\ell\in\bbZ} D_{r\ell}^{(a)} x_{i\ell},
  \qquad
  D_{r\ell}^{(a)}
  \coloneqq
  \frac{1}{M}\sum_{j\in J_a} e_\ell(t_j)e_r(t_j).
\]
The denominator target is
\begin{equation}
  \bar{\lambda}_r
  \coloneqq
  \Cov\left(\widetilde{x}_{ir}^{(0)},\widetilde{x}_{ir}^{(1)} \right)
  =
  \sum_{\ell\in\bbZ} D_{r\ell}^{(0)} D_{r\ell}^{(1)} \lambda_\ell .
  \label{eq:cdeq-interlaced-lambda}
\end{equation}
For \(\absx{r}\le K_m\), define
\begin{equation}
  \check{\lambda}_r
  \coloneqq
  \frac{1}{n_\lambda}\sum_{i\in I_\lambda}
  \bar{Z}_{ir}^{(0)} \bar{Z}_{ir}^{(1)}.
  \label{eq:cdeq-interlaced-lambda-pilot}
\end{equation}

\begin{lemma}[Interlaced denominator comparison]
  \label{lem:cdeq-interlaced-denominator}
  For \(m=2M\), choose \(c_L>0\) sufficiently small. Then, uniformly over
  \(\absx{r}\le K_m\),
  \begin{equation}
    \bar{\lambda}_r \asymp\lambda_r \asymp\widetilde\lambda_r.
    \label{eq:cdeq-interlaced-alias-variance-equivalence}
  \end{equation}
  Moreover, for
  \[
    \lambda_r^{(a)}
    \coloneqq
    \Var(\widetilde{x}_{ir}^{(a)})
    =
    \sum_{\ell\in\bbZ}\{D_{r\ell}^{(a)}\}^2 \lambda_\ell,
    \qquad a=0,1,
  \]
  we have \(\lambda_r^{(a)} \asymp\bar{\lambda}_r\) on the same band.
\end{lemma}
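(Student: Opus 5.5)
We will compute the interlaced Gram coefficients \(D_{r\ell}^{(a)}\) explicitly, mirroring \cref{lem:grid-gram}, and then repeat the alias-tail argument of \cref{lem:cdeq-denominator-comparison}. The sub-grid \(J_0\) consists of the points \(t_j=2k/m=k/M\), \(k=0,\dots,M-1\). This is an equal grid of size \(M\), so \(D_{r\ell}^{(0)}=D_{r\ell}^{(M)}\) is given by \cref{lem:grid-gram} with congruences modulo \(M\). The sub-grid \(J_1\) is the same grid shifted by \(1/m\). Using \(u_k(t)=\exp(2\pi\mathrm{i}kt)\), we get
\[
\frac{1}{M}\sum_{j\in J_1}u_k(t_j)=e^{2\pi\mathrm{i}k/m}\,\mathbf{1}\{k\equiv0\pmod M\}.
\]
Taking real and imaginary parts and applying the product-to-sum identities then shows the following. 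The coefficient \(D_{r\ell}^{(1)}\) is supported on the same index set as \(D_{r\ell}^{(0)}\), namely \(\ell\equiv\pm r \pmod M\). The entries are the same combinations of cosines and sines, except that the factor \(1\) attached to a frequency \(k\equiv 0\pmod M\) becomes \(\cos(2\pi k/m)\) or \(\sin(2\pi k/m)\). The key observation is that for \(k=qM\) we have \(\cos(2\pi k/m)=\cos(\pi q)=(-1)^q\) and \(\sin(2\pi k/m)=0\). Hence \(D_{r\ell}^{(1)}=\pm D_{r\ell}^{(0)}\) entrywise, with sign \(+\) whenever the relevant frequency difference or sum is a multiple of \(m=2M\). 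In particular, \(D_{rr}^{(0)}=D_{rr}^{(1)}=1\) for \(\absx{r}<M/2\), i.e.\ for \(\absx{r}<m/4\). This holds on the active band because \(K_m<m/8\).

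With these formulas in hand, we bound the off-diagonal sums. Every \(\ell\ne r\) in the support satisfies \(\ell\equiv\pm r\pmod M\) with \(\absx{r}\le K_m<M/4\), so \(\absx{\ell}\ge M/2\gtrsim m\), and each shell of width \(M\) contains \(O(1)\) such indices. The entries are uniformly bounded by \(2\). Therefore
\[
\absx{\bar{\lambda}_r-\lambda_r}\le\sum_{\ell\ne r}\absx{D_{r\ell}^{(0)}D_{r\ell}^{(1)}}\lambda_\ell\le C\sum_{q\ge1}(qM)^{-2\alpha}\le Cm^{-2\alpha},
\]
and by the same computation \(\absx{\lambda_r^{(a)}-\lambda_r}\le Cm^{-2\alpha}\). \Cref{lem:cdeq-denominator-comparison} already gives \(\absx{\widetilde\lambda_r-\lambda_r}\le Cm^{-2\alpha}\). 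On the active band we also have \(\lambda_r\ge c_\lambda(1+K_m)^{-2\alpha}\ge c_\lambda(1+c_Lm)^{-2\alpha}\). Choosing \(c_L\) small, uniformly over \(\alpha\in[\underline\alpha,\bar\alpha]\), makes each \(Cm^{-2\alpha}\) remainder at most \(\lambda_r/2\). This yields \(\bar{\lambda}_r\asymp\lambda_r\asymp\widetilde\lambda_r\) and \(\lambda_r^{(a)}\asymp\bar{\lambda}_r\).

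The main obstacle is the first step: getting the exact structure of \(D^{(1)}\), and in particular verifying that the off-diagonal cross terms \(D^{(0)}_{r\ell}D^{(1)}_{r\ell}\) stay bounded and are supported only on far aliases. The sign changes do not matter for the argument, since absolute values suffice. What does need care is the sine-cosine mixing entries \(D^{(1)}_{p,-q}\), which vanish for the full grid but could be nonzero for the shifted sub-grid. I will handle these by bounding \(\absx{D^{(a)}_{r\ell}}\le 2\) and restricting the support via the root-of-unity identity above. The identity shows that every entry vanishes unless \(p\pm q\equiv0\pmod M\), and this is all the alias-tail bound requires. The diagonal value \(D^{(1)}_{rr}=1\) is exactly where the condition \(\absx{r}<m/4\) (rather than \(m/2\)) enters: it prevents \(2r\equiv0\pmod M\).
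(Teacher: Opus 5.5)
Your proposal is correct and takes essentially the same route as the paper: compute the half-grid Gram coefficients from the root-of-unity identity on \(J_0\) and its phase-shifted version on \(J_1\), deduce a unit diagonal, bounded entries, and aliases only at \(|\ell|\gtrsim m\) with bounded multiplicity per shell, then sum the alias tail to get \(Cm^{-2\alpha}\) remainders and shrink \(c_L\). Your computation is actually sharper than the paper's ``bounded signs and phases'' remark: since \(\sin(\pi q)=0\), the mixed sine--cosine entries of \(D^{(1)}\) vanish identically, so the caution in your last paragraph is unnecessary, and \(D^{(1)}_{r\ell}=\pm D^{(0)}_{r\ell}\) holds for every row with \(2r\not\equiv 0 \pmod M\), which includes the whole active band.
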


\begin{proof}
  On \(J_0\),
  \[
    \frac{1}{M}\sum_{q=0}^{M-1} \exp(2\pi\mathrm{i}kq/M)
    =
    \mathbf{1}\{k\equiv0\pmod M\},
  \]
  while on \(J_1\),
  \[
    \frac{1}{M}\sum_{q=0}^{M-1} \exp(2\pi\mathrm{i}k(q+1/2)/M)
    =
    \exp(\pi\mathrm{i}k/M)\mathbf{1}\{k\equiv0\pmod M\}.
  \]
  Writing the trigonometric basis in terms of complex exponentials, the
  half-grid products are constrained by the same congruences
  \(\ell-r\equiv0\pmod M\) and \(\ell+r\equiv0\pmod M\), with only bounded signs
  and phases on the shifted grid. Thus \(D_{rr}^{(a)}=1\) for \(\absx{r}<M/2\),
  \(\absx{D_{r\ell}^{(a)}}\le C\), and if
  \(\absx{r}\le M/4\), \(D_{r\ell}^{(a)} \ne0\), and \(\ell\ne r\), then
  \(\absx{\ell}\ge cM\). Each fixed coefficient index belongs to only \(O(1)\)
  of these half-grid supports.

  Hence, for \(\absx{r}\le K_m \le c_L m\) and \(c_L<1/8\),
  \[
    \absx{\bar{\lambda}_r-\lambda_r}
    \le
    C\sum_{\ell\ne r:D_{r\ell}^{(0)} D_{r\ell}^{(1)} \ne0}\lambda_\ell
    \le
    C\sum_{a\ge1}(aM)^{-2\alpha}
    \le
    C m^{-2\alpha}.
  \]
  The same argument gives
  \[
    \absx{\lambda_r^{(a)}-\lambda_r}\le Cm^{-2\alpha},
    \qquad a=0,1.
  \]
  Since \(\lambda_r \ge c_\lambda(1+K_m)^{-2\alpha}\), choosing \(c_L\)
  sufficiently small makes the preceding \(Cm^{-2\alpha}\) remainders at most
  \(\lambda_r/2\). Thus \(\bar{\lambda}_r \asymp\lambda_r\) and
  \(\lambda_r^{(a)} \asymp\lambda_r\). The full-grid comparison
  \(\widetilde\lambda_r \asymp\lambda_r\) follows from
  \cref{lem:cd-alias-variance-equivalence}.
\end{proof}

\begin{proposition}[Interlaced denominator pilot]
  \label{prop:cdeq-interlaced-lambda-pilot}
  There exist constants \(C,c>0\), depending only on
  \(\underline\alpha,\bar{\alpha},c_\lambda,C_\lambda,\sigma_\delta\), such that the
  following hold for every \(\absx{r}\le K_m\).
  \begin{itemize}
    \item The mean-square error obeys
    \begin{equation}
      \E\bigl(\check{\lambda}_r-\bar{\lambda}_r \bigr)^2
      \le
      \frac{C}{n_\lambda}
      \left(
        \bar{\lambda}_r^2
      +
        \frac{\bar{\lambda}_r}{m}
        +
        \frac{1}{m^2}
      \right).
      \label{eq:cdeq-interlaced-lambda-pilot-mse}
    \end{equation}
    \item After enlarging \(M_0\) if necessary, the event
    \begin{equation}
      \caE_n^\lambda
      \coloneqq
      \bigcap_{\absx{r}\le K_m}
      \left\{
        \absx{\check{\lambda}_r-\bar{\lambda}_r}
        \le
        \frac{1}{2}\max\{\bar{\lambda}_r,\zeta_{n,m}\}
      \right\}
      \label{eq:cdeq-interlaced-lambda-good-event}
    \end{equation}
    satisfies, for all \(n\ge n_0\) and even \(m\ge m_0\),
    \begin{equation}
      \Pr\bigl((\caE_n^\lambda)^c\bigr)
      \le
      C\bigl[n(K_m+1)\bigr]^{-20},
      \label{eq:cdeq-interlaced-lambda-good-event-prob}
    \end{equation}
    where \(C,n_0,m_0<\infty\) depend only on
    \(\underline\alpha,\bar{\alpha},c_\lambda,C_\lambda,\sigma_\delta\).
  \end{itemize}
\end{proposition}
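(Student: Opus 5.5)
The plan follows the proof of \cref{prop:cdeq-lambda-pilot}, with the interlaced cross-product $Q_{ir}\coloneqq \bar{Z}_{ir}^{(0)}\bar{Z}_{ir}^{(1)}$ in place of $\bar{Z}_{ir}^2-\bar{Z}_{iH_m}^2$. First I will check unbiasedness. Write $\bar{Z}_{ir}^{(a)}=\widetilde{x}_{ir}^{(a)}+\bar{\delta}_{ir}^{(a)}$ with $\bar{\delta}_{ir}^{(a)}\coloneqq M^{-1}\sum_{j\in J_a}\delta_{ij}e_r(t_j)$. The two half-grids $J_0,J_1$ are disjoint and the measurement errors are independent. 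Hence $\bar{\delta}_{ir}^{(0)}$ and $\bar{\delta}_{ir}^{(1)}$ are independent of each other and of the latent scores, so all noise cross terms have mean zero. This gives $\E Q_{ir}=\Cov(\widetilde{x}_{ir}^{(0)},\widetilde{x}_{ir}^{(1)})=\bar{\lambda}_r$ by \cref{eq:cdeq-interlaced-lambda}. Removing the noise bias this way is the whole point of interlacing, and it is what replaces subtracting the high-frequency floor.

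For the MSE bound \cref{eq:cdeq-interlaced-lambda-pilot-mse}, the summands $Q_{ir}$ are i.i.d.\ over $i\in I_\lambda$, so it suffices to bound $\Var(Q_{ir})$. The pair $(\bar{Z}_{ir}^{(0)},\bar{Z}_{ir}^{(1)})$ is centered jointly Gaussian, so Isserlis' formula gives $\Var(Q_{ir})=\Var(\bar{Z}^{(0)}_{ir})\Var(\bar{Z}^{(1)}_{ir})+\bar{\lambda}_r^2$. Next I need the variance of each half-average. The signal part is $\Var(\widetilde{x}_{ir}^{(a)})=\lambda_r^{(a)}$. The noise part is $\sigma_\delta^2M^{-2}\sum_{j\in J_a}e_r(t_j)^2=\sigma_\delta^2 D^{(a)}_{rr}/M\le C/m$, using $D_{rr}^{(a)}=1$ on the band from the proof of \cref{lem:cdeq-interlaced-denominator}. \Cref{lem:cdeq-interlaced-denominator} also gives $\lambda_r^{(a)}\asymp\bar{\lambda}_r$, so $\Var(\bar{Z}_{ir}^{(a)})\le C(\bar{\lambda}_r+1/m)$. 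Consequently $\Var(Q_{ir})\le C(\bar{\lambda}_r+1/m)^2$, which expands to the bracket in \cref{eq:cdeq-interlaced-lambda-pilot-mse}. Dividing by $n_\lambda$ finishes the first bullet.

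For the concentration bullet, $Q_{ir}-\bar{\lambda}_r$ is a centered degree-two Gaussian chaos. Its $\psi_1$ norm is therefore at most $C(\Var(\bar{Z}^{(0)}_{ir})+\Var(\bar{Z}^{(1)}_{ir}))\le C(\bar{\lambda}_r+1/m)$. Bernstein's inequality for i.i.d.\ sub-exponential summands then yields, for $x\ge1$,
\[
\Pr\Bigl\{\absx{\check{\lambda}_r-\bar{\lambda}_r}>C\bigl(\bar{\lambda}_r+\tfrac1m\bigr)\Bigl(\sqrt{\tfrac{x}{n_\lambda}}+\tfrac{x}{n_\lambda}\Bigr)\Bigr\}\le 2e^{-cx}.
\]
I will take $x=M_1\ell^\lambda_{n,m}$ with $M_1$ large and union bound over the $2K_m+1$ frequencies, which produces the probability $[n(K_m+1)]^{-20}$.

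It remains to check that the deviation is at most $\tfrac12\max\{\bar{\lambda}_r,\zeta_{n,m}\}$ on this event, so that $\caE_n^\lambda$ holds. The pieces proportional to $\bar{\lambda}_r$ carry the factor $\sqrt{\ell^\lambda_{n,m}/n}+\ell^\lambda_{n,m}/n\to0$, since $K_m\le n$ gives $\ell^\lambda_{n,m}=O(\log n)$. For large $n$ these pieces are therefore at most $\bar{\lambda}_r/4$. The pieces proportional to $1/m$ are $m^{-1}\sqrt{\ell^\lambda_{n,m}/n_\lambda}+m^{-1}\ell^\lambda_{n,m}/n_\lambda$, and both are dominated by \cref{eq:cdeq-pilot-floor}. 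After enlarging $M_0$ they are at most $\zeta_{n,m}/4$.

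The step I expect to be the main obstacle is the half-grid bookkeeping that \cref{lem:cdeq-interlaced-denominator} only sketches. Specifically, I need $D^{(a)}_{rr}=1$ and the equivalence $\lambda_r^{(a)}\asymp\bar{\lambda}_r$ on $\absx{r}\le K_m$. For the noise term I also need that $\bar{\delta}^{(0)}_{ir}$ and $\bar{\delta}^{(1)}_{ir}$ carry no cross-frequency coupling that could bias $Q_{ir}$. Independence across the disjoint index sets $J_0,J_1$ settles the coupling question directly. If the phase on the shifted grid $J_1$ made $D^{(1)}_{rr}$ differ from $1$, I would instead use only the bound $D^{(a)}_{rr}\le 2$ in the noise variance, which still gives the $C/m$ bound.
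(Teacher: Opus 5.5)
Your proposal is correct and follows the paper's proof essentially step by step: unbiasedness from the independent half-grid noise averages, the identity $\Var(AB)=\Var(A)\Var(B)+\Cov(A,B)^2$ combined with the half-grid variance bounds from \cref{lem:cdeq-interlaced-denominator}, a $\psi_1$ bound on the Gaussian product, and Bernstein plus a union bound at $x=M_1\ell^{\lambda}_{n,m}$, with the deviation absorbed into $\bar{\lambda}_r/4+\zeta_{n,m}/4$. The only cosmetic difference is that you keep the factor $(\bar{\lambda}_r+1/m)$ on the linear Bernstein term, whereas the paper bounds that term crudely by $x/n_\lambda$; both versions are dominated by $\zeta_{n,m}$ once $M_0$ is enlarged.
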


\begin{proof}
  For \(a=0,1\), write
  \[
    \bar{Z}_{ir}^{(a)}
    =
    \widetilde{x}_{ir}^{(a)}
    +
    \bar{\delta}_{ir}^{(a)},
    \qquad
    \bar{\delta}_{ir}^{(a)}
    =
    \frac{1}{M}\sum_{j\in J_a} \delta_{ij} e_r(t_j).
  \]
  The two half-grid noise averages use disjoint pointwise errors, and the noise
  is independent of \(X_i\). Therefore
  \[
    \E\check{\lambda}_r
    =
    \E\left[\bar{Z}_{ir}^{(0)} \bar{Z}_{ir}^{(1)} \right]
    =
    \bar{\lambda}_r.
  \]

  Set \(A_{ir}=\bar{Z}_{ir}^{(0)}\) and \(B_{ir}=\bar{Z}_{ir}^{(1)}\). The pair
  \((A_{ir},B_{ir})\) is centered jointly Gaussian and
  \(\Cov(A_{ir},B_{ir})=\bar{\lambda}_r\). By
  \cref{lem:cdeq-interlaced-denominator},
  \[
    \Var(\widetilde{x}_{ir}^{(a)})\le C\bar{\lambda}_r,\qquad a=0,1.
  \]
  Also,
  \[
    \Var(\bar{\delta}_{ir}^{(a)})
    =
    \frac{\sigma_\delta^2}{M^2}\sum_{j\in J_a} e_r(t_j)^2
    \le
    \frac{C}{m},
    \qquad a=0,1.
  \]
  Thus
  \[
    \Var(A_{ir})+\Var(B_{ir})
    \le
    C\left(\bar{\lambda}_r+\frac{1}{m}\right).
  \]
  For centered jointly Gaussian \(A,B\),
  \[
    \Var(AB)=\Var(A)\Var(B)+\Cov(A,B)^2.
  \]
  The summands are independent over \(i\), so the stated mean-square bound
  follows.

  The centered product \(A_{ir} B_{ir}-\bar{\lambda}_r\) is sub-exponential with
  \[
    \norm{A_{ir} B_{ir}-\bar{\lambda}_r}_{\psi_1}
    \le
    C\left(\bar{\lambda}_r+\frac{1}{m}\right).
  \]
  Bernstein's inequality gives, for every \(x\ge1\),
  \[
    \Pr\left\{
         \absx{\check{\lambda}_r-\bar{\lambda}_r}
         >
         C\left(
            \bar{\lambda}_r \sqrt{\frac{x}{n_\lambda}}
            +
            \frac{1}{m}\sqrt{\frac{x}{n_\lambda}}
            +
            \frac{x}{n_\lambda}
      \right)
    \right\}
    \le
    2e^{-cx}.
  \]
  Take \(x=M_1 \ell_{n,m}^{\lambda}\) with \(M_1\) sufficiently large and apply a
  union bound over \(\absx{r}\le K_m\). Since \(K_m \le n\) and
  \(n_\lambda \asymp n\), the term
  \(\bar{\lambda}_r \sqrt{\ell_{n,m}^{\lambda}/n_\lambda}\) is at most
  \(\bar{\lambda}_r/4\) for all sufficiently large \(n\). After enlarging \(M_0\),
  the remaining two terms are at most \(\zeta_{n,m}/4\). Therefore
  \cref{eq:cdeq-interlaced-lambda-good-event-prob} follows.
\end{proof}
   \clearpage\section{Lower Bound Under Common Design}
\label{sec:lower-common}

In this section, we provide the lower bound results for the common design setting.
We separate the proof into three parts: the statistical lower bound in \cref{prop:cd-lower-fixed-statistical};
the fixed-grid lower bound in \cref{prop:cd-lower-fixed-grid};
and the unknown-eigenvalue identification lower bound in \cref{prop:cd-lower-identification}.
The first two propositions give the fixed-known-eigenvalue lower bound in \cref{thm:common-lower}, while all three propositions give the unknown-eigenvalue lower bound.

\subsection{Statistical lower bound}
\label{subsec:common-lower-statistical}
\begin{proposition}
  \label{prop:cd-lower-fixed-statistical}
  Assume \(\alpha>1/2\) and \(s\ge 0\).
  Let \( \lambda\in\caL_\alpha(c_\lambda,C_\lambda) \) be fixed.
  Under \cref{assum:common-design}, for all sufficiently large \(n\) and \(m\),
  \begin{equation*}
    \inf_{\hat{\beta}}
    \sup_{\theta\in\Theta_s(R_0)}
    \E\caR(\hat{\beta};\theta,\lambda)
    \gtrsim
    \underbrace{n^{-\frac{2\alpha+2s}{2\alpha+2s+1}}}_{\mathrm{I}}
    +
    \underbrace{(nm)^{-\frac{2\alpha+2s}{4\alpha+2s+1}}}_{\mathrm{II}}.
  \end{equation*}
\end{proposition}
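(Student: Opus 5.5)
We reuse the van Trees argument from \cref{subsec:proof-indep-lower} almost verbatim. The only change is that the design is now the deterministic grid \(t_j=(j-1)/m\) instead of a random one. Fix the known \(\lambda\), and for a block size \(d\) with \(2d\le m/4\) take the block \(B_d=\{d+1,\dots,2d\}\) and the product cosine-squared prior \(\pi_d\) from \cref{eq:lower-block-prior}. Set \(\theta_k=0\) off the block. By \cref{eq:lower-block-prior-sobolev} this prior is supported in \(\Theta_s(R_0)\), and its Fisher information is \((J_{\pi_d})_{rr}\asymp d^{2s+1}\). The block risk reduction \cref{eq:lower-risk-block-reduction} together with \cref{lem:finite-dimensional-van-trees} gives the analogue of \cref{eq:indep-van-trees-reduction}:
\[
  \inf_{\hat\beta}\sup_{\theta}\E\caR
  \ge c\,d^{1-2\alpha}\Bigl[\sup_{r\in B_d}\E_{\pi_d}I_{T,rr}(\theta_B)+Cd^{2s+1}\Bigr]^{-1}.
\]

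To bound the Fisher information, note that the subjects are i.i.d. and the grid is fixed. Hence \(I_T(\theta_B)=n\,I^{\Phi_B(t),W_B(t)}(\theta_B)\) with \(t=(t_1,\dots,t_m)\) deterministic. Here \(Z=\Phi_B(t)x_B+\xi_B(t)\), where \(\xi_B(t)\) collects the out-of-block Fourier scores evaluated on the grid plus the measurement noise. It is Gaussian, independent of \(x_B\), and its covariance satisfies \(W_B(t)\succeq\sigma_\delta^2 I_m\), exactly as in \cref{eq:indep-block-nuisance-cov}.

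This out-of-block part can be aliased with the block frequencies on the grid. That does not matter, because \cref{lem:noisy-gaussian-block-fisher} only requires \(W_B\succeq\sigma_\delta^2 I_m\) and \(\max_r\norm{\varphi_r}_2^2\le C_\Phi m\). The latter holds since the trigonometric basis is bounded; indeed \(\norm{\varphi_r}_2^2=mD^{(m)}_{rr}=m\) for \(\absx{r}<m/2\) by \cref{lem:grid-gram}. The lemma therefore gives
\[
  I_{T,rr}\lesssim n\lambda_r\bigl[\min(1,m\lambda_r)+\theta_B^\top\Lambda_B\theta_B\bigr].
\]
Averaging over \(\pi_d\) with \cref{lem:lower-prior-energy} reproduces \cref{eq:indep-n-term-fisher}.

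The two rates then follow as before, in two cases.
\begin{itemize}
\item Using \(\min(1,\cdot)\le1\) and choosing \(d\asymp n^{1/(2\alpha+2s+1)}\) gives term I.
\item Using \(\min(1,m\lambda_r)\le m\lambda_r\) and choosing \(d\asymp(nm)^{1/(4\alpha+2s+1)}\) gives term II. In this case the prior-energy term is of lower order, because \(d^{-2s}/m\le1\).
\end{itemize}

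The one genuinely new point is the constraint \(d\le m/8\), which I need so that \(D_{rr}^{(m)}=1\) and the norm bound is clean. It is not strictly required by the Fisher bound, since boundedness of \(e_r\) alone gives \(\norm{\varphi_r}_2^2\le 2m\) for any \(r\). The lemma's hypotheses therefore hold for every \(d\), and no restriction \(d\lesssim m\) is needed. I would emphasize this in the write-up, because otherwise one might worry that the lower bound degenerates when \(m\) is small. The one remaining check is that aliasing of block frequencies with each other, for instance \(r\) and \(m-r\) when \(d\gtrsim m/2\), does not invalidate the lemma. It does not: the lemma is stated for an arbitrary matrix \(\Phi_B\), possibly rank deficient, and its proof only uses \(V_B\succeq\sigma_\delta^2 I_m\) and \(V_B\succeq\lambda_r\varphi_r\varphi_r^\top\).

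Finally, combine the two lower bounds by taking their maximum and adjusting constants. I expect the only step requiring care to be stating cleanly that the conditional Gaussian structure of \cref{lem:noisy-gaussian-block-fisher} applies with a deterministic, possibly aliased design matrix.
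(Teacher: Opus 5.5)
Your proposal is correct and is the paper's argument. The paper likewise reruns the independent-design van Trees block argument with \(I_T^{\mathrm{cd}}(\theta_B)=n\,I^{\Phi_B(t),W_B(t)}(\theta_B)\) and bounds it by the noisy Gaussian block Fisher lemma, using only \(W_B(t)\succeq\sigma_\delta^2 I_m\) and \(\norm{\varphi_r(t)}_2^2\le Cm\).

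You are also right to drop your tentative constraint \(d\le m/8\) in favor of no restriction \(d\lesssim m\). The paper imposes none, and the lemma and the coordinate van Trees bound hold for an arbitrary, possibly aliased, design matrix.
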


\begin{proof}
  The proof is exactly the same as the proof for the independent design lower bound in \cref{subsec:proof-indep-lower}, except that the Fisher information bound is replaced by the fixed-design version.
  Since all subjects share the deterministic grid \(t=(t_1,\dots,t_m)\), the Fisher information of the observation is
  \begin{equation}
    I_T^{\mr{cd}}(\theta_B)
    =
    n I^{\Phi_B(t),W_B(t)}(\theta_B).
    \label{eq:cd-fixed-design-fisher}
  \end{equation}
  This replaces the random-design Fisher information formula \cref{eq:indep-random-design-fisher}.
  For this fixed \(t\), the block observation satisfies the assumptions of \cref{lem:noisy-gaussian-block-fisher} with \(\Phi_B=\Phi_B(t)\) and \(W_B=W_B(t)\).
  The trigonometric basis is uniformly bounded, so \(\norm{\varphi_r(t)}_2^2 \le Cm\) for every \(r\in B_d\).
  Thus \cref{lem:noisy-gaussian-block-fisher,eq:cd-fixed-design-fisher} yield, for every \(r\in B_d\),
  \begin{equation}
    I_{T,rr}^{\mr{cd}}(\theta_B)
    \lesssim
    n\lambda_r
    \zk{\min(1,m\lambda_r)+\theta_B^\top \Lambda_B \theta_B}.
    \label{eq:cd-block-fisher-bound}
  \end{equation}
  This is the same bound as \cref{eq:indep-block-fisher-min-bound}, with \(I_{T,rr}\) replaced by \(I_{T,rr}^{\mr{cd}}\).
\end{proof}

\subsection{Fixed-grid lower bound}
\label{subsec:common-lower-fixed-grid}

\begin{proposition}[Fixed-grid lower bound]
  \label{prop:cd-lower-fixed-grid}
  Assume \(\alpha>1/2\) and \(s\ge 0\).
  Let \( \lambda\in\caL_\alpha(c_\lambda,C_\lambda) \) be fixed.
  Under \cref{assum:common-design}, for
  all sufficiently large \(n\) and \(m\),
  \begin{equation*}
    \inf_{\hat{\beta}}
    \sup_{\theta\in\Theta_s(R_0)}
    \E\caR(\hat{\beta};\theta,\lambda)
    \gtrsim
    \underbrace{m^{-(2\alpha+2s)}}_{\mathrm{III}}.
  \end{equation*}
\end{proposition}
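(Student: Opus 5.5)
The plan is a two-point argument in which the two alternatives induce exactly the same law of the observations on the common grid, so no amount of data can separate them. The key observation is that, for fixed known \(\lambda\), every observation \((Y_i,Z_{i1},\dots,Z_{im})\) is a centered Gaussian vector, independent across \(i\). Its law is therefore determined by three covariance blocks. The block \(\Cov(Z_{ij},Z_{ij'})=K(t_j,t_{j'})+\sigma_\delta^2\mathbf{1}\{j=j'\}\) does not depend on \(\theta\). The cross-covariances are \(\Cov(Y_i,Z_{ij})=g(t_j)\), with \(g=\Sigma\beta\). The response variance is \(\Var(Y_i)=\sum_r\lambda_r\theta_r^2+\sigma_\varepsilon^2\). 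I will therefore take \(\theta^{\pm}=\pm h\) for a vector \(h\) whose induced cross-covariance \(g_h=\sum_r\lambda_r h_r e_r\) vanishes at every grid point \(t_j=(j-1)/m\). Then \(g^{\pm}=\pm g_h\) agree on the grid, and \(\Var(Y)\) is even in \(\theta\), so the two product laws \(P_{+}^{\otimes n}\) and \(P_{-}^{\otimes n}\) coincide for every \(n\).

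To build \(h\), fix an integer \(p\) with \(m/8\le p\le m/4\) and use the aliasing pair of cosines \(e_p\) and \(e_{m-p}\). Since \(\cos(2\pi(m-p)(j-1)/m)=\cos(2\pi p(j-1)/m)\), the two functions have identical traces on the grid; this is also the congruence \(p+(m-p)\equiv0\pmod m\) in \cref{lem:grid-gram}. I will set \(\gamma_p=a\), \(\gamma_{m-p}=-a\), and \(\gamma_r=0\) otherwise, which gives \(g_h(t_j)=0\) for all \(j\). The corresponding slope coefficients are \(h_p=a/\lambda_p\) and \(h_{m-p}=-a/\lambda_{m-p}\). Both frequencies are of order \(m\), so \(\lambda_p\asymp\lambda_{m-p}\asymp m^{-2\alpha}\) and \(\absx{h_r}\asymp a\,m^{2\alpha}\). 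The Sobolev constraint \(\sum_r(1+\absx{r})^{2s}h_r^2\lesssim m^{2s}a^2m^{4\alpha}\le R_0^2\) is met by choosing \(a=c\,m^{-2\alpha-s}\) with a small constant \(c\). Hence \(\theta^{\pm}\in\Theta_s(R_0)\).

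The risk bound then follows from the triangle inequality in the weighted norm \(\normx{u}_\lambda^2=\sum_r\lambda_ru_r^2\). Because the two data laws are identical, the estimator \(\hat\theta\) has the same distribution under both alternatives, and
\[
  \E_{+}\caR(\hat\beta;\theta^+,\lambda)+\E_{-}\caR(\hat\beta;\theta^-,\lambda)
  \ge \tfrac12\normx{\theta^+-\theta^-}_\lambda^2
  =2\sum_r\lambda_rh_r^2
  \asymp a^2m^{2\alpha}
  \asymp m^{-(2\alpha+2s)}.
\]
Consequently the larger of the two risks is \(\gtrsim m^{-(2\alpha+2s)}\) for every \(\hat\beta\), which gives term III. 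I do not expect a genuine obstacle. The only points to check are the exact vanishing of \(g_h\) on the grid, which the cosine pair with \(p\ne m/2\) settles via \(D^{(m)}\), and that the law of \(Y\) depends on \(\theta\) only through \(\theta^\top\Lambda\theta\) and the grid values of \(g\), which holds by Gaussianity. The requirement that \(m\) be sufficiently large is needed only so that such a \(p\) exists and the eigenvalue comparisons apply.
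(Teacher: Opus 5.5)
Your proposal is correct, and its skeleton is the paper's. Both arguments take a symmetric pair \(\pm\beta\) whose cross-covariance \(g=\Sigma\beta\) vanishes at every grid point. The Gaussian law of \((Y,Z_1,\dots,Z_m)\) depends on \(\theta\) only through the even quantity \(\Var(Y)\) and the grid values of \(g\), so it is identical under both alternatives for every \(n\).

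The difference is in how the grid-null \(g\) is constructed.
\begin{itemize}
\item The paper places a smooth bump \(g(t)=A\psi((t-a)/h)\), with \(h=(8m)^{-1}\), inside a gap between consecutive grid points. It calibrates \(A\) through the scaling \(\norm{g}_{H^r}^2\asymp A^2h^{1-2r}\), combined with the equivalences \(\norm{\beta}_{H^s}\asymp\norm{g}_{H^{s+2\alpha}}\) and \(\langle\beta,\Sigma\beta\rangle\asymp\norm{g}_{H^\alpha}^2\).
\item You use the aliased cosine pair \(e_p,e_{m-p}\) with \(p\asymp m\). The perturbation then has only two nonzero coefficients. Both the Sobolev constraint and the separation \(2\sum_r\lambda_r h_r^2\asymp a^2m^{2\alpha}\asymp m^{-(2\alpha+2s)}\) become explicit computations, with no bump-scaling or Sobolev norm-equivalence estimates.
\end{itemize}

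Your route also mirrors the mechanism behind term III in the upper bound, namely the aliasing bias of \cref{lem:cd-aliasing-bias}. In exchange, the bump buys robustness to the design. It only uses that some gap between consecutive points has length at least \(1/m\), so the same bound holds for any common deterministic \(m\)-point grid. Your exact cancellation \(e_p(t_j)=e_{m-p}(t_j)\) relies on the equispaced grid of \cref{assum:common-design}, which is all the proposition requires.
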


\begin{proof}
  Let \(\Sigma\) be the covariance operator corresponding to the fixed sequence \(\lambda\), namely
  \[
    \Sigma e_r=\lambda_r e_r.
  \]
  Define
  \[
    g\coloneqq \Sigma\beta,
    \qquad
    g=\sum_{r\in\bbZ} \gamma_r e_r,
    \qquad
    \gamma_r=\lambda_r \theta_r.
  \]
  Since \(\lambda\in\caL_\alpha(c_\lambda,C_\lambda)\), we have \(\lambda_r \asymp(1+\absx{r})^{-2\alpha}\).
  Then
  \begin{equation}
    \norm{\beta}_{H^s}^2
    \asymp
    \sum_{r\in\bbZ}(1+\absx{r})^{2(s+2\alpha)}
    \absx{\gamma_r}^2,
    \label{eq:cd-equivalence-beta-g-fixed}
  \end{equation}
  and
  \begin{equation}
    \langle \beta,\Sigma\beta\rangle
    =
    \sum_{r\in\bbZ} \frac{\absx{\gamma_r}^2}{\lambda_r}
    \asymp
    \sum_{r\in\bbZ}(1+\absx{r})^{2\alpha}\absx{\gamma_r}^2.
    \label{eq:cd-equivalence-pred-g-fixed}
  \end{equation}
  Also,
  \begin{equation}
    \mr{Cov}(Y,Z_{\cdot j})=(\Sigma\beta)(t_j)=g(t_j).
    \label{eq:cd-cov-yz-g-fixed}
  \end{equation}

  Because the design has only \(m\) points on the unit circle, at least one gap between consecutive points has length at least \(1/m\).
  Choose such a gap and pick \(a\) inside it so that
  \[
    [a-h,a+h]\subset (t_j,t_{j+1})
  \]
  for some \(j\), with \(h=(8m)^{-1}\).
  Let \(\psi\in C_c^\infty(-1,1)\), \(\psi\not\equiv 0\), and define
  \begin{equation}
    g(t)\coloneqq A\psi\left(\frac{t-a}{h}\right).
    \label{eq:cd-bump-g-fixed}
  \end{equation}
  Then \(g(t_j)=0\) for all \(j=1,\dots,m\).
  For each \(r>0\), bump scaling gives
  \begin{equation}
    c_r A^2 h^{1-2r}
    \le
    \norm{g}_{H^r}^2
    \le
    C_r A^2 h^{1-2r}.
    \label{eq:cd-scaling-hr-fixed}
  \end{equation}
  Set
  \begin{equation}
    A^2=c_1 R_0^2 h^{2(s+2\alpha)-1}
    \label{eq:cd-bump-amplitude-fixed}
  \end{equation}
  with \(c_1\) small enough.
  Then \(\norm{g}_{H^{s+2\alpha}}\le R_0\), and by \cref{eq:cd-equivalence-beta-g-fixed},
  \[
    \beta\coloneqq \Sigma^{-1} g\in\Theta_s(R_0).
  \]

  Let \(\beta_+=\beta\) and \(\beta_-=-\beta\).
  By \cref{eq:cd-cov-yz-g-fixed,eq:cd-bump-g-fixed},
  \[
    \mr{Cov}_{\beta_+}(Y,Z)=\mr{Cov}_{\beta_-}(Y,Z)=0.
  \]
  Also,
  \[
    \mr{Var}_{\beta_+}(Y)=\mr{Var}_{\beta_-}(Y)
    =\langle\beta,\Sigma\beta\rangle+\sigma_\varepsilon^2,
  \]
  and the law of \(Z\) does not depend on \(\beta\).
  Since \((Y,Z)\) is jointly Gaussian, the two experiments are identical:
  \[
    \mr{TV}(P_{\beta_+}^{(n)},P_{\beta_-}^{(n)})=0.
  \]
  Their prediction separation under \(\lambda\) is
  \[
    D^2
    \coloneqq
    \E\left[\langle X_\star,\beta_+-\beta_-\rangle^2 \right]
    =
    4\langle\beta,\Sigma\beta\rangle.
  \]
  Using \cref{eq:cd-equivalence-pred-g-fixed,eq:cd-scaling-hr-fixed,eq:cd-bump-amplitude-fixed} with \(r=\alpha\),
  \begin{equation}
    D^2
    \asymp
    A^2 h^{1-2\alpha}
    \asymp
    R_0^2 h^{2(\alpha+s)}
    \asymp
    m^{-(2\alpha+2s)}.
    \label{eq:cd-m-rate-separation}
  \end{equation}
  Le Cam's two-point method therefore gives
  \begin{equation}
    \inf_{\hat{\beta}}
    \sup_{\theta\in\Theta_s(R_0)}
    \E\caR(\hat{\beta};\theta,\lambda)
    \ge
    c m^{-(2\alpha+2s)}.
    \label{eq:cd-m-rate-lower}
  \end{equation}
\end{proof}

\subsection{Unknown-eigenvalue identification lower bound}
\label{subsec:common-lower-identification}

\begin{proposition}
  \label{prop:cd-lower-identification}
  Assume \(\alpha>1/2\) and \(s\ge 0\).
  Under \cref{assum:common-design}, for
  all sufficiently large \(n\) and \(m\),
  \begin{equation*}
    \inf_{\hat{\beta}}
    \sup_{\lambda\in\caL_\alpha(c_\lambda,C_\lambda)}
    \sup_{\theta\in\Theta_s(R_0)}
    \E\caR(\hat{\beta};\theta,\lambda)
    \gtrsim
    \underbrace{m^{-4\alpha}}_{\mathrm{IV}}.
  \end{equation*}
\end{proposition}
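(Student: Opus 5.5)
The plan is an exact non-identifiability two-point argument in the spirit of \cref{prop:cd-lower-fixed-grid}. We construct two parameter pairs \((\lambda,\theta)\) and \((\lambda',\theta')\) in \(\caL_\alpha(c_\lambda,C_\lambda)\times\Theta_s(R_0)\) that induce the same law of \((Y_i,Z_{i1},\dots,Z_{im})\). Their slope coefficients will differ by order \(m^{-2\alpha}\) at a fixed low frequency, so the prediction separation is of order \(m^{-4\alpha}\). Since each observation vector is centered jointly Gaussian, its law is determined by three objects:
\[
  \mr{Cov}(Z)_{jk}=\sum_{\ell}\lambda_\ell e_\ell(t_j)e_\ell(t_k)+\sigma_\delta^2\mathbf{1}\{j=k\},
  \qquad
  \mr{Cov}(Y,Z_j)=g(t_j),
  \qquad
  \Var(Y)=\sum_\ell \frac{\gamma_\ell^2}{\lambda_\ell}+\sigma_\varepsilon^2.
\]
It therefore suffices to match these three objects exactly. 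Then the total variation distance between the two \(n\)-sample laws is \(0\) for every \(n\), which is why the resulting term does not depend on \(n\).

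\textbf{Step 1 (grid covariance).} Start from \(\lambda_\ell=\bar c(1+\absx{\ell})^{-2\alpha}\) with \(\bar c\) strictly inside \((c_\lambda,C_\lambda)\). Fix two low cosine frequencies \(r_1=1\) and \(r_2=2\), and their grid aliases \(\ell_a=m-r_a\). By \cref{lem:grid-gram}, \(e_{\ell_a}(t_j)=e_{r_a}(t_j)\) for all \(j\). Define \(\lambda'\) by
\[
  \lambda'_{r_a}=\lambda_{r_a}+\eta_a,
  \qquad
  \lambda'_{\ell_a}=\lambda_{\ell_a}-\eta_a,
  \qquad a=1,2,
\]
with \(\absx{\eta_a}\le c\,m^{-2\alpha}\) and \(c\) small. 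Then \(\lambda'\in\caL_\alpha\), because \(\lambda_{\ell_a}\asymp m^{-2\alpha}\) and \(\bar c\) sits in the interior of the admissible range. Moreover \(\mr{Cov}(Z)\) is unchanged, since mass is only moved between two basis functions with identical grid traces.

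\textbf{Step 2 (cross-covariance and variance of \(Y\)).} Keep \(g\) itself fixed. Let \(\gamma'=\gamma\), with \(\gamma\) supported on \(\{r_1,r_2\}\) and \(\gamma_{r_a}=\lambda_{r_a}\theta_{r_a}\), where \(\theta_{r_a}=c'R_0\). This gives \(g(t_j)\) identical under both models. It also makes \(\theta'_{r_a}=\gamma_{r_a}/\lambda'_{r_a}\) bounded and low-frequency, and \(\theta'_{\ell_a}=0\), so both slopes lie in \(\Theta_s(R_0)\) for \(c'\) small. The remaining constraint is \(\Var(Y)\), namely
\[
  \sum_{a=1,2}\frac{\gamma_{r_a}^2}{\lambda_{r_a}+\eta_a}=\sum_{a=1,2}\frac{\gamma_{r_a}^2}{\lambda_{r_a}}.
\]
This holds when \(\eta_1\) and \(\eta_2\) have opposite signs. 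Concretely, I will take \(\eta_1=c\,m^{-2\alpha}\) and solve the displayed equation for \(\eta_2\). The map \(\eta_2\mapsto \gamma_{r_2}^2/(\lambda_{r_2}+\eta_2)\) is smooth and strictly monotone near \(0\), so the solution exists and satisfies \(\eta_2\asymp-\eta_1\). Then \(\absx{\eta_2}\le Cm^{-2\alpha}\), which keeps \(\lambda'\) in the class after shrinking \(c\).

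\textbf{Step 3 (separation and Le Cam).} Once the laws coincide, any estimator \(\hat\beta\) has the same distribution under both models. The two losses are \(\caR(\hat\beta;\theta,\lambda)\) and \(\caR(\hat\beta;\theta',\lambda')\), each weighted by its own eigenvalue sequence. Because \(\lambda_{r_1}\asymp\lambda'_{r_1}\asymp1\), both losses dominate \(c\,\absx{\hat\theta_{r_1}-\theta_{r_1}}^2\) and \(c\,\absx{\hat\theta_{r_1}-\theta'_{r_1}}^2\) respectively. The inequality \((x-a)^2+(x-b)^2\ge(a-b)^2/2\) then gives
\[
  \max\bigl\{\E\caR(\hat\beta;\theta,\lambda),\,\E\caR(\hat\beta;\theta',\lambda')\bigr\}
  \gtrsim\absx{\theta_{r_1}-\theta'_{r_1}}^2
  =\gamma_{r_1}^2\Bigl(\frac{\eta_1}{\lambda_{r_1}(\lambda_{r_1}+\eta_1)}\Bigr)^2
  \asymp m^{-4\alpha}.
\]

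The main obstacle is Step 2: matching \(\Var(Y)\) exactly. A single perturbed frequency leaves a mismatch of order \(m^{-2\alpha}\), which costs KL divergence \(\asymp n m^{-4\alpha}\) and is not bounded. I first tried compensating through the alias coefficients \(\theta_{\ell_a}\), or through a grid-invisible bump as in \cref{prop:cd-lower-fixed-grid}. Both fail: they either force \(\theta_{\ell_a}=O(1)\) at frequency \(\approx m\), violating the Sobolev constraint for \(s>0\), or produce only an \(m^{-2\alpha-2s}\) change, which is too small. The two-frequency balancing with opposite-signed eigenvalue shifts avoids both problems, and it is where I would concentrate the care: the implicit-function choice of \(\eta_2\), and checking that all four perturbed eigenvalues stay inside \(\caL_\alpha(c_\lambda,C_\lambda)\) uniformly in large \(m\).
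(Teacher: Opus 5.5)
Your proposal is correct and is essentially the paper's proof. The paper also moves $O(m^{-2\alpha})$ of eigenvalue mass between frequencies $1,2$ and their grid aliases (it uses $m+1,m+2$ rather than your $m-1,m-2$), keeps $\gamma$ fixed, and uses opposite-signed shifts at the two frequencies to match $\Var(Y)$ exactly, giving identical laws and an $m^{-4\alpha}$ separation. The only cosmetic difference is that the paper perturbs symmetrically, $\bar\lambda_a\pm h$, and chooses $g_a=\eta\sqrt{\bar\lambda_a^2-h^2}$, so that $g_a^2/(\bar\lambda_a\pm h)=\eta^2(\bar\lambda_a\mp h)$ matches the variances in closed form; you instead solve for $\eta_2$.
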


\begin{proof}
  We will first show that the observation laws of \( (Y_i,(Z_{ij})_{j=1}^m)\)
  under two different parameter points can coincide, and then use this to show that the two parameter points must be separated in prediction risk.
  Recall the Fourier notation in \cref{subsec:fourier-coordinate-calculations}.
  The grid coefficients \(\bar{Z}_{ir}\) are those in \cref{eq:grid-zbar}.
  Since the grid Fourier vectors \((e_r(t_j))_{j=1}^m\), \(\absx{r}\le m\),
  span \(\R^m\), the coefficients \((\bar{Z}_{ir})_{\absx{r}\le m}\) determine
  \(Z_i=(Z_{i1},\dots,Z_{im})\).
  Hence it is enough to compare the law of
  \[
    \bar{W}_i
    \coloneqq
    \bigl(Y_i,(\bar{Z}_{ir})_{\absx{r}\le m}\bigr).
  \]
  This vector is centered Gaussian.
  For \(\absx{r},\absx{s}\le m\), put
  \begin{equation}
    \label{eq:cd-identification-zbar-cov-block}
    C_{rs}
    \coloneqq
    \sum_{\ell\in\bbZ}
    D_{r\ell}^{(m)} D_{s\ell}^{(m)} \lambda_\ell
    +
    \frac{\sigma_\delta^2}{m}D_{rs}^{(m)}.
  \end{equation}
  By \cref{eq:cd-discrete-orthogonality,eq:grid-zbar,eq:grid-x-tilde,eq:cd-aliased-gamma},
  its covariance matrix has the block form
  \[
    \Cov(\bar{W}_i)
    =
    \begin{pmatrix}
      \Var(Y_i)
      &
      (\widetilde\gamma_r)_{\absx{r}\le m}^{\top}
      \\
      (\widetilde\gamma_r)_{\absx{r}\le m}
      &
      \left(C_{rs} \right)_{\absx{r},\absx{s}\le m}
    \end{pmatrix},
  \]
  where the last block is the full covariance matrix of the redundant grid
  Fourier coefficients. Therefore the single-observation common-design
  Gaussian experiment is determined by this \(\bar{Z}\)-covariance block, the
  aliased cross-covariances \((\widetilde\gamma_r)_{\absx{r}\le m}\), and
  \(\Var(Y)\). It remains only to check that these three objects agree under
  the two parameter points constructed below.

  Choose \(\bar{c}\in(c_\lambda,C_\lambda)\) and set
  \[
    \bar{\lambda}_r \coloneqq \bar{c}(1+\absx{r})^{-2\alpha},
    \qquad r\in\bbZ.
  \]
  Let
  \[
    h\coloneqq c_0 m^{-2\alpha},
  \]
  where \(c_0>0\) is sufficiently small. Define two eigenvalue sequences by
  \begin{align*}
    \lambda_1^\pm
    &=\bar{\lambda}_1 \pm h,
    &
    \lambda_{m+1}^\pm
    &=\bar{\lambda}_{m+1} \mp h,\\
    \lambda_2^\pm
    &=\bar{\lambda}_2 \mp h,
    &
    \lambda_{m+2}^\pm
    &=\bar{\lambda}_{m+2} \pm h,
  \end{align*}
  and set
  \[
    \lambda_r^\pm=\bar{\lambda}_r,
    \qquad
    r\notin\{1,2,m+1,m+2\}.
  \]
  Since \(\bar{c}\) lies strictly between \(c_\lambda\) and \(C_\lambda\) and
  \(\bar{\lambda}_{m+1} \asymp m^{-2\alpha}\), shrinking \(c_0\) once and for all if
  necessary ensures \(\lambda^\pm \in\caL_\alpha(c_\lambda,C_\lambda)\).

  Choose
  \[
    g_1 \coloneqq \eta\sqrt{\bar{\lambda}_1^2-h^2},
    \qquad
    g_2 \coloneqq \eta\sqrt{\bar{\lambda}_2^2-h^2},
  \]
  where \(\eta>0\) is chosen below, and define
  \[
    \theta_1^\pm \coloneqq \frac{g_1}{\lambda_1^\pm},
    \qquad
    \theta_2^\pm \coloneqq \frac{g_2}{\lambda_2^\pm}.
  \]
  Set \(\theta_k^\pm \coloneqq 0\) for all \(k\notin\{1,2\}\).
  Because \(\lambda_1^\pm,\lambda_2^\pm \asymp 1\), choosing \(\eta\) sufficiently
  small gives \(\theta^\pm \in\Theta_s(R_0)\).
  
  Now let us verify that the block covariance matrix under \((\theta^+,\lambda^+)\) and \((\theta^-,\lambda^-)\) coincide.
  Use superscripts \(\pm\) to denote the two parameter points.
  First, for \( C_{rs} \) in \cref{eq:cd-identification-zbar-cov-block},
  the congruence formulas in \cref{lem:grid-gram} give \(D_{r,m+a}^{(m)}=D_{ra}^{(m)}\) for \(a=1,2\).
  Thus we have
  \begin{align*}
    C_{rs}^+ - C_{rs}^- &= \sum_{\ell\in\bbZ} D_{r\ell}^{(m)} D_{s\ell}^{(m)}(\lambda_\ell^+-\lambda_\ell^-)
    \\
    &= D_{r1}^{(m)} D_{s1}^{(m)} (\lambda_1^+ + \lambda_{m+1}^+ - \lambda_1^- - \lambda_{m+1}^-) \\
    &\qquad + D_{r2}^{(m)} D_{s2}^{(m)} (\lambda_2^+ + \lambda_{m+2}^+ - \lambda_2^- - \lambda_{m+2}^-) \\
    &= 0,
  \end{align*}
  as \(\lambda_1^++\lambda_{m+1}^+=\lambda_1^-+\lambda_{m+1}^-\) and
  \(\lambda_2^++\lambda_{m+2}^+=\lambda_2^-+\lambda_{m+2}^-\).
  Moreover,
  \[
    \gamma_1^\pm=\lambda_1^\pm \theta_1^\pm=g_1,
    \qquad
    \gamma_2^\pm=\lambda_2^\pm \theta_2^\pm=g_2,
  \]
  while \(\gamma_k^\pm=0\) for \(k\notin\{1,2\}\).
  Therefore, by \cref{eq:cd-aliased-gamma},
  the aliased cross-covariances \((\widetilde\gamma_r)_{\absx{r}\le m}\) are the same under the two experiments.
  Finally,
  \begin{align*}
    \Var_\pm(Y)
    & =
    \sigma_\varepsilon^2
    +
    \sum_{r\in\bbZ} \lambda_r^{\pm} (\theta_r^{\pm})^2 \\
    &=
    \sigma_\varepsilon^2
    +
    \frac{g_1^2}{\lambda_1^\pm}
    +
    \frac{g_2^2}{\lambda_2^\pm} \\
    &=
    \sigma_\varepsilon^2
    +
    \eta^2(\bar{\lambda}_1 \mp h)
    +
    \eta^2(\bar{\lambda}_2 \pm h) \\
    &=
    \sigma_\varepsilon^2+\eta^2(\bar{\lambda}_1+\bar{\lambda}_2).
  \end{align*}
  Hence the full single-observation Gaussian laws coincide, and therefore so do
  the \(n\)-sample laws.

  We next show that the two parameter points are separated in prediction risk.
  Set
  \[
    \underline\lambda_j \coloneqq \min\{\lambda_j^+,\lambda_j^-\},
    \qquad
    j=1,2.
  \]
  Since \(\underline\lambda_j \asymp 1\),
  \[
    \theta_1^+-\theta_1^-
    =
    g_1 \left(\frac{1}{\bar{\lambda}_1+h}-\frac{1}{\bar{\lambda}_1-h}\right)
    \asymp
    h,
  \]
  and similarly \(\theta_2^+-\theta_2^-\asymp h\).
  Therefore
  \begin{equation}
    \sum_{j=1}^2 \underline\lambda_j(\theta_j^+-\theta_j^-)^2
    \asymp
    h^2
    \asymp
    m^{-4\alpha}.
    \label{eq:cd-extra-separation}
  \end{equation}
  Let \(P^{(n)}\) denote this common \(n\)-sample law.
  For any estimator
  \(\hat{\beta}\), the equality of experiments implies
  \[
    \E\caR(\hat{\beta};\theta^+,\lambda^+)
    +
    \E\caR(\hat{\beta};\theta^-,\lambda^-)
    \ge
    \sum_{j=1}^2 \underline\lambda_j
    \E_{P^{(n)}}\left[
                  (\hat{\theta}_j-\theta_j^+)^2+(\hat{\theta}_j-\theta_j^-)^2
    \right].
  \]
  Using \((u-v)^2+(u-w)^2\ge \frac{1}{2}(v-w)^2\) pointwise and then
  \cref{eq:cd-extra-separation} gives
  \[
    \E\caR(\hat{\beta};\theta^+,\lambda^+)
    +
    \E\caR(\hat{\beta};\theta^-,\lambda^-)
    \ge
    c m^{-4\alpha},
  \]
  which proves \cref{prop:cd-lower-identification}.
\end{proof}
  \clearpage\section{Auxiliary Results}
\label{sec:auxiliary}

This section collects auxiliary results used in the upper-bound proofs.

\subsection{Concentration Inequalities}
\label{subsec:auxiliary-concentration}

\begin{proposition}[Product tail bound]
\label{prop:indep-product-tail}
Let \(A,B\) be real random variables such that
\[
  \E\exp\left[\left(\frac{\absx{A}}{K_A}\right)^{\alpha_A}\right]
  \le 2,
  \qquad
  \E\exp\left[\left(\frac{\absx{B}}{K_B}\right)^{\alpha_B}\right]
  \le 2
\]
for some \(\alpha_A,\alpha_B \in (0,\infty)\). Define
\[
  \alpha
  \coloneqq
  \left(\frac{1}{\alpha_A} + \frac{1}{\alpha_B}\right)^{-1}.
\]
Then
\[
  \E\exp\left[
  \left(
  \frac{\absx{AB}}{K_A K_B}
  \right)^\alpha
  \right]
  \le 4.
\]
In particular, the product of two sub-exponential random variables is
sub-Weibull of order \(1/2\).
\end{proposition}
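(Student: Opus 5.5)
The natural route is a pointwise inequality followed by Hölder's inequality. Put \(a\coloneqq \absx{A}/K_A\) and \(b\coloneqq \absx{B}/K_B\). Since \(\alpha/\alpha_A+\alpha/\alpha_B=1\), Young's inequality with conjugate exponents \(p=\alpha_A/\alpha\) and \(q=\alpha_B/\alpha\) gives, pointwise,
\[
  (ab)^\alpha = (a^{\alpha_A})^{1/p}(b^{\alpha_B})^{1/q}
  \le \frac{a^{\alpha_A}}{p}+\frac{b^{\alpha_B}}{q}.
\]
Exponentiating this bound yields
\[
  \exp\bigl[(ab)^\alpha\bigr]
  \le \exp\Bigl[\frac{a^{\alpha_A}}{p}\Bigr]\exp\Bigl[\frac{b^{\alpha_B}}{q}\Bigr].
\]

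Next take expectations and apply Hölder's inequality with the same exponents \(p\) and \(q\):
\[
  \E\exp\bigl[(ab)^\alpha\bigr]
  \le \Bigl(\E\exp[a^{\alpha_A}]\Bigr)^{1/p}\Bigl(\E\exp[b^{\alpha_B}]\Bigr)^{1/q}
  \le 2^{1/p}2^{1/q}=2.
\]
This is already stronger than the claimed bound of \(4\). Alternatively, one could use the weighted AM--GM step \(e^{x/p}e^{y/q}\le e^x/p+e^y/q\) to get the bound \(2\) directly, without invoking Hölder's inequality. No independence between \(A\) and \(B\) is needed in either version, which matters because the paper applies this to the correlated half-scores \(U^{(1)}_{ik},U^{(2)}_{ik}\).

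The "in particular" clause follows by taking \(\alpha_A=\alpha_B=1\), which gives \(\alpha=1/2\).

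I do not expect a real obstacle. The only point to check is the edge case where \(K_A\) or \(K_B\) is zero; there the variable vanishes almost surely, so one can either interpret the hypothesis with \(K>0\) or treat it trivially. The stated constant \(4\) leaves slack, so even a cruder bound such as \(e^{x/p+y/q}\le e^{x}+e^{y}\) would suffice: it gives \(2+2=4\) and matches the constant as stated.
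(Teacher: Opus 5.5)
Your proof is correct and is essentially the paper's argument: the paper uses the same pointwise Young inequality and then, exactly as in your alternative, the weighted AM--GM bound \(e^{x/p}e^{y/q}\le e^{x}/p+e^{y}/q\) before taking expectations, where your main route uses H\"older instead. Either way the bound is actually \(2\), so the stated constant \(4\) has slack, and, as you note, no independence between \(A\) and \(B\) is needed.
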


\begin{proof}
  Young's inequality gives
  \[
    \left(
    \frac{\absx{AB}}{K_A K_B}
    \right)^\alpha
    =
    \left(\frac{\absx{A}}{K_A}\right)^\alpha
    \left(\frac{\absx{B}}{K_B}\right)^\alpha
    \le
    \frac{\alpha}{\alpha_A}
    \left(\frac{\absx{A}}{K_A}\right)^{\alpha_A}
    +
    \frac{\alpha}{\alpha_B}
    \left(\frac{\absx{B}}{K_B}\right)^{\alpha_B}.
  \]
  Exponentiating and applying Young's inequality once more,
  \[
    \exp\left[
    \left(
    \frac{\absx{AB}}{K_A K_B}
    \right)^\alpha
    \right]
    \le
    \frac{\alpha}{\alpha_A}
    \exp\left[
    \left(\frac{\absx{A}}{K_A}\right)^{\alpha_A}
    \right]
    +
    \frac{\alpha}{\alpha_B}
    \exp\left[
    \left(\frac{\absx{B}}{K_B}\right)^{\alpha_B}
    \right].
  \]
  Taking expectations proves the claim.
\end{proof}

\begin{lemma}[Sub-Weibull tail expectation]
\label{lem:upper-subweibull-tail-expectation}
Let \(X\) be a random element in a normed vector space
\((\mathcal{X},\norm{\cdot})\). Suppose that
\[
  \E\exp\left[\left(\frac{\norm{X}}{K}\right)^\alpha\right]\le C_0
\]
for some \(\alpha>0\), \(C_0>0\), and \(K>0\). Then there exists
\(L_\alpha<\infty\), depending only on \(\alpha\), such that, for every
\(L\ge L_\alpha\),
\[
  \E\left[\norm{X}\mathbf{1}\{\norm{X}>KL\}\right]
  \le
  C_0 K L e^{-L^\alpha}.
\]
Consequently,
\[
  \norm{\E\left[X\mathbf{1}\{\norm{X}>KL\}\right]}
  \le
  C_0 K L e^{-L^\alpha}.
\]
\end{lemma}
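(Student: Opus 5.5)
The plan is to bound the truncated first moment by a single application of Markov's inequality to the exponential Orlicz functional. Set \(Y\coloneqq\norm{X}/K\), so that \(\E e^{Y^\alpha}\le C_0\) and the claim becomes \(\E[Y\mathbf{1}\{Y>L\}]\le C_0 L e^{-L^\alpha}\).

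The key step is a pointwise comparison on the event \(\{Y>L\}\). Write
\[
  Y\mathbf{1}\{Y>L\}
  =
  \frac{Y}{e^{Y^\alpha}}\,e^{Y^\alpha}\mathbf{1}\{Y>L\}
  \le
  \Bigl(\sup_{y\ge L} y e^{-y^\alpha}\Bigr)\, e^{Y^\alpha}.
\]
Let \(\phi(y)=ye^{-y^\alpha}\). Its derivative is \(\phi'(y)=e^{-y^\alpha}(1-\alpha y^\alpha)\), so \(\phi\) is nonincreasing on \([\alpha^{-1/\alpha},\infty)\). I would therefore take \(L_\alpha\coloneqq\alpha^{-1/\alpha}\), which depends only on \(\alpha\). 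For \(L\ge L_\alpha\) the supremum over \(y\ge L\) is attained at \(y=L\) and equals \(Le^{-L^\alpha}\). Taking expectations then gives
\[
  \E\bigl[Y\mathbf{1}\{Y>L\}\bigr]
  \le
  Le^{-L^\alpha}\,\E e^{Y^\alpha}
  \le
  C_0Le^{-L^\alpha}.
\]
Multiplying through by \(K\) yields the first claim.

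The consequence follows from the first claim. The triangle inequality for the (Bochner) expectation in a normed space gives \(\norm{\E[X\mathbf{1}\{\norm{X}>KL\}]}\le\E[\norm{X}\mathbf{1}\{\norm{X}>KL\}]\). Integrability is automatic here, because \(\E\norm{X}\) is finite under the exponential moment assumption; any measurability issues I would assume handled by the standing conventions. No step is a real obstacle: the only point needing care is choosing \(L_\alpha\) so that \(\phi\) is monotone past \(L\).
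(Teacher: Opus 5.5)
Your proposal is correct and is essentially the paper's proof: the paper chooses \(L_\alpha\) so that \(x\mapsto e^{x^\alpha}/x\) (the reciprocal of your \(\phi\)) is increasing on \([L_\alpha,\infty)\), deduces \(r\le Le^{-L^\alpha}e^{r^\alpha}\) for \(r\ge L\ge L_\alpha\), takes expectations, and gets the second claim from the triangle inequality. Your explicit choice \(L_\alpha=\alpha^{-1/\alpha}\) is a valid concrete value for the paper's unspecified constant.
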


\begin{proof}
  Choose \(L_\alpha\) so that the map \(x\mapsto e^{x^\alpha}/x\) is
  increasing on \([L_\alpha,\infty)\). If \(r\ge L\ge L_\alpha\), then
  \[
    r
    \le
    Le^{-L^\alpha} e^{r^\alpha}.
  \]
  Applying this with \(r=\norm{X}/K\), on the event \(\{\norm{X}>KL\}\), gives
  \[
    \norm{X}\mathbf{1}\{\norm{X}>KL\}
    \le
    KLe^{-L^\alpha}
    \exp\left[\left(\frac{\norm{X}}{K}\right)^\alpha\right].
  \]
  Taking expectations proves the first bound. The second follows from the
  triangle inequality for the norm.
\end{proof}

\begin{lemma}[Truncation for sub-Weibull averages]
\label{lem:upper-subweibull-truncation}
Let \(X_1,\dots,X_n\) be i.i.d.\ random elements in a normed vector space
\((\mathcal{X},\norm{\cdot})\). Suppose that
\[
  \E\exp\left[\left(\frac{\norm{X_1}}{K}\right)^\alpha\right]\le C_0
\]
for some \(\alpha>0\), \(C_0>0\), and \(K>0\). For \(L>0\), put
\[
  Y_i \coloneqq X_i \mathbf{1}\{\norm{X_i}\le KL\}.
\]
Then there exists \(L_\alpha<\infty\), depending only on \(\alpha\), such that
for every \(L\ge L_\alpha\),
\[
  \Pr\left\{
  \norm{
  \frac{1}{n}\sum_{i=1}^n(X_i-\E X_i)
  -
  \frac{1}{n}\sum_{i=1}^n(Y_i-\E Y_i)
  }
  >
  C_0 K L e^{-L^\alpha}
  \right\}
  \le
  C_0 n e^{-L^\alpha}.
\]
\end{lemma}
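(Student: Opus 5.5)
The plan is a truncation-plus-union-bound argument: on the event that no summand is large, the truncated and untruncated averages coincide up to a deterministic bias term, and \cref{lem:upper-subweibull-tail-expectation} controls that bias. Write
\[
  \frac{1}{n}\sum_{i=1}^n(X_i-\E X_i)-\frac{1}{n}\sum_{i=1}^n(Y_i-\E Y_i)
  =
  \frac{1}{n}\sum_{i=1}^n X_i\mathbf{1}\{\norm{X_i}>KL\}
  -
  \E\left[X_1\mathbf{1}\{\norm{X_1}>KL\}\right],
\]
using that the $X_i$ are identically distributed, so that $\E X_i-\E Y_i=\E[X_1\mathbf{1}\{\norm{X_1}>KL\}]$ for every $i$. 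The difference therefore splits into a random part and a deterministic bias part.

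For the random part, consider the event $\mathcal{A}\coloneqq\bigcap_{i=1}^n\{\norm{X_i}\le KL\}$. On $\mathcal{A}$ every indicator vanishes, so the random sum is zero. By Markov's inequality applied to $\exp[(\norm{X_1}/K)^\alpha]$,
\[
  \Pr\{\norm{X_1}>KL\}\le C_0 e^{-L^\alpha},
\]
and a union bound gives $\Pr(\mathcal{A}^c)\le C_0 n e^{-L^\alpha}$.

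For the bias part, take $L_\alpha$ as in \cref{lem:upper-subweibull-tail-expectation}. Its second conclusion gives, for $L\ge L_\alpha$,
\[
  \norm{\E[X_1\mathbf{1}\{\norm{X_1}>KL\}]}\le C_0KLe^{-L^\alpha}.
\]

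Combining the two parts: on $\mathcal{A}$ the norm of the difference equals the norm of the deterministic bias, which is at most $C_0KLe^{-L^\alpha}$. Hence the event that the difference exceeds $C_0KLe^{-L^\alpha}$ is contained in $\mathcal{A}^c$, whose probability is at most $C_0ne^{-L^\alpha}$. This is exactly the stated bound.

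The argument needs no genuine concentration. The only step requiring care is the bias term, which is handled entirely by \cref{lem:upper-subweibull-tail-expectation}. The Bochner expectation used there is well defined because the moment assumption implies $\E\norm{X_1}<\infty$.
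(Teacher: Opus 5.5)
Your proposal is correct and follows the paper's proof essentially verbatim. The paper uses the same split into the random part \(\frac{1}{n}\sum_{i}(X_i-Y_i)\), which vanishes outside an event of probability at most \(C_0 n e^{-L^\alpha}\) by Markov's inequality and a union bound, plus the deterministic bias \(\E(Y_1-X_1)\), which it bounds by \(C_0KLe^{-L^\alpha}\) using the sub-Weibull tail-expectation lemma for \(L\ge L_\alpha\).
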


\begin{proof}
  We have
  \[
    \frac{1}{n}\sum_{i=1}^n(X_i-\E X_i)
    -
    \frac{1}{n}\sum_{i=1}^n(Y_i-\E Y_i)
    =
    \frac{1}{n}\sum_{i=1}^n(X_i-Y_i)
    +
    \E(Y_1-X_1).
  \]
  By Markov's inequality,
  \[
    \Pr\{X_i \ne Y_i\}
    =
    \Pr\{\norm{X_i}>KL\}
    \le
    C_0 e^{-L^\alpha},
  \]
  so the first term is zero with probability at least
  \(1-C_0 n e^{-L^\alpha}\). After enlarging \(L_\alpha\) if necessary,
  \cref{lem:upper-subweibull-tail-expectation} gives
  \[
    \norm{\E(Y_1-X_1)}
    \le
    \E\left[\norm{X_1}\mathbf{1}\{\norm{X_1}>KL\}\right]
    \le
    C_0 K L e^{-L^\alpha}.
  \]
  Combining the two bounds proves the claim.
\end{proof}

\begin{lemma}[Sub-Weibull average concentration by truncation]
\label{lem:upper-subweibull-bernstein}
Let \(X_1,\dots,X_n\) be i.i.d.\ mean-zero real random variables satisfying
\[
  \E\exp\left[\left(\frac{\absx{X_1}}{K}\right)^\alpha\right]\le 4
\]
for some \(\alpha>0\) and \(K>0\). Then there exist constants
\(C,c,L_\alpha>0\), depending only on \(\alpha\), such that, for every
\(L\ge L_\alpha\) and \(u>0\),
\[
  \Pr\left\{
  \absx{\frac{1}{n}\sum_{i=1}^n X_i}
  >
  u+CKLe^{-L^\alpha}
  \right\}
  \le
  Cne^{-L^\alpha}
  +
  2\exp\left[
  -cn\min\left\{
  \frac{u^2}{K^2},
  \frac{u}{KL}
  \right\}
  \right].
\]
\end{lemma}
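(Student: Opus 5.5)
The plan is the standard truncation-plus-Bernstein argument, and it rests on \cref{lem:upper-subweibull-truncation}. Fix \(L\ge L_\alpha\) and set \(Y_i\coloneqq X_i\mathbf{1}\{\absx{X_i}\le KL\}\). The proof splits the deviation as
\[
  \frac{1}{n}\sum_{i=1}^n X_i
  =
  \frac{1}{n}\sum_{i=1}^n (Y_i-\E Y_i)
  +
  \Bigl[\frac{1}{n}\sum_{i=1}^n (X_i-\E X_i)-\frac{1}{n}\sum_{i=1}^n (Y_i-\E Y_i)\Bigr],
\]
using \(\E X_i=0\).

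\cref{lem:upper-subweibull-truncation} with \(C_0=4\) controls the bracket: it exceeds \(4KLe^{-L^\alpha}\) with probability at most \(4ne^{-L^\alpha}\). Choosing \(C\ge4\) makes the threshold \(CKLe^{-L^\alpha}\) cover this term, and the probability \(4ne^{-L^\alpha}\) goes into the first term of the claimed bound.

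The first sum is a sum of independent centered variables with \(\absx{Y_i-\E Y_i}\le 2KL\). We need a variance bound \(\Var(Y_i)\le\E X_1^2\le C_\alpha K^2\), which follows from the \(\psi_\alpha\) condition via the moment characterization, or directly from \(x^2\le C_\alpha e^{x^\alpha}\) applied to \(x=\absx{X_1}/K\). Bernstein's inequality for bounded variables then gives
\[
  \Pr\Bigl\{\Bigl|\frac{1}{n}\sum_i (Y_i-\E Y_i)\Bigr|>u\Bigr\}
  \le
  2\exp\Bigl[-\frac{nu^2/2}{C_\alpha K^2+2KLu/3}\Bigr]
  \le
  2\exp\Bigl[-cn\min\Bigl\{\frac{u^2}{K^2},\frac{u}{KL}\Bigr\}\Bigr].
\]
Here we use \(a/(b+d)\ge\frac12\min\{a/b,a/d\}\), with \(c\) depending only on \(\alpha\).

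A union bound over the two events finishes the argument. Neither step is a real obstacle. The only points needing care are keeping every constant dependent on \(\alpha\) alone, and noting that \(L_\alpha\) may need enlarging to satisfy the hypotheses of \cref{lem:upper-subweibull-tail-expectation} as invoked inside \cref{lem:upper-subweibull-truncation}.
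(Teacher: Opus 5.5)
Your proposal is correct and follows essentially the same route as the paper: truncate at level \(KL\), use \cref{lem:upper-subweibull-truncation} (with \(C_0=4\)) to control the gap between the truncated and untruncated centered averages off an event of probability at most \(4ne^{-L^\alpha}\), then apply scalar Bernstein to the bounded variables \(Y_i-\E Y_i\), using \(\E Y_i^2\le \E X_1^2\le C_\alpha K^2\) and \(\absx{Y_i-\E Y_i}\le 2KL\). Your explicit bookkeeping—taking \(C\ge 4\) and using \(a/(b+d)\ge\tfrac12\min\{a/b,a/d\}\) to reach the \(\min\{u^2/K^2,\,u/(KL)\}\) form—just fills in the constants the paper leaves implicit.
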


\begin{proof}
  This is the scalar version of the truncation argument used in
  \cref{lem:upper-subweibull-truncation}. Set
  \(Y_i=X_i \mathbf{1}\{\absx{X_i}\le KL\}\). The truncation lemma gives
  \[
    \absx{
    \frac{1}{n}\sum_{i=1}^n X_i
    -
    \frac{1}{n}\sum_{i=1}^n(Y_i-\E Y_i)
    }
    \le
    CKLe^{-L^\alpha}
  \]
  except on an event of probability at most \(Cne^{-L^\alpha}\).

  Moreover \(\E Y_i^2 \le \E X_i^2 \le C_\alpha K^2\), and
  \(\absx{Y_i-\E Y_i}\le 2KL\). Scalar Bernstein's inequality therefore gives,
  for \(u>0\),
  \[
    \Pr\left\{
    \absx{\frac{1}{n}\sum_{i=1}^n(Y_i-\E Y_i)}>u
    \right\}
    \le
    2\exp\left[
    -c n\min\left\{\frac{u^2}{K^2},\frac{u}{KL}\right\}
    \right].
  \]
  Combining this inequality with the truncation bound and increasing \(C\)
  completes the proof.
\end{proof}

\begin{lemma}[Vector Bernstein under directional sub-exponential tails]
\label{lem:upper-vector-bernstein-psi-one}
Let \(X_1,\dots,X_n\) be independent mean-zero random vectors in \(\R^p\).
Assume that, for every \(u \in \bbS^{p-1}\) and \(1\le i\le n\),
\[
  \norm{\angx{u,X_i}}_{\psi_1} \le K.
\]
Then there exist universal constants \(C,c>0\) such that, for every
\(x\ge1\),
\[
  \Pr\left\{
  \norm{\frac{1}{n}\sum_{i=1}^n X_i}_2
  >
  CK\left(
  \sqrt{\frac{p+x}{n}}
  +
  \frac{p+x}{n}
  \right)
  \right\}
  \le
  Ce^{-cx}.
\]
\end{lemma}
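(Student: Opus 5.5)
The plan is to reduce the vector sum to scalar sub-exponential sums through a finite net on the sphere, and then apply the scalar Bernstein inequality together with a union bound. Write $S\coloneqq n^{-1}\sum_{i=1}^n X_i$. Fix a $1/2$-net $\mathcal{N}$ of $\bbS^{p-1}$ with $\absx{\mathcal{N}}\le 5^p$. The standard net argument gives
\[
  \norm{S}_2\le 2\max_{u\in\mathcal{N}}\angx{u,S},
\]
because $\norm{S}_2=\angx{u_0,S}$ for some unit vector $u_0$. We then choose $u\in\mathcal{N}$ with $\norm{u-u_0}_2\le 1/2$, which yields $\angx{u,S}\ge \norm{S}_2-\tfrac12\norm{S}_2$. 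It therefore suffices to control $\angx{u,S}$ uniformly over the net.

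For a fixed $u\in\mathcal{N}$, the variables $\angx{u,X_i}$ are independent and mean zero, with $\psi_1$ norm at most $K$. The scalar Bernstein inequality for sub-exponential variables gives, for all $t\ge0$,
\[
  \Pr\left\{\absx{\angx{u,S}}>CK\left(\sqrt{t/n}+t/n\right)\right\}\le 2e^{-t}.
\]
We apply this with $t=c_1 p+x$, taking $c_1=\log 5+1$ so that $t\ge p\log 5+x$. A union bound over $\mathcal{N}$ then makes the failure probability at most
\[
  2\cdot 5^p e^{-c_1p-x}\le 2e^{-x}.
\]

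On the complementary event we have $\norm{S}_2\le 2CK\bigl(\sqrt{(c_1p+x)/n}+(c_1p+x)/n\bigr)$. Since $c_1p+x\le c_1(p+x)$, this is bounded by $C'K\bigl(\sqrt{(p+x)/n}+(p+x)/n\bigr)$, which is the claimed bound with $c=1$ and $C$ enlarged. The net and union bound are routine. The only point needing care is the scalar Bernstein inequality in exactly this mixed form under the $\psi_1$ normalization. We take it from Vershynin's formulation, namely a tail bound $2\exp[-c\min(s^2/K^2,s/K)n]$ at deviation $s$, and invert it: the choice $s=CK(\sqrt{t/n}+t/n)$ gives $\min(s^2/K^2,s/K)\,n\gtrsim t$. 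The constants depend on nothing except universal ones, as required. The assumption $x\ge1$ plays no role beyond keeping $p+x$ comparable to $p+x+1$.
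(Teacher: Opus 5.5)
Your proposal is correct and follows essentially the same route as the paper: a $1/2$-net $\mathcal{N}$ of $\bbS^{p-1}$ with $\absx{\mathcal{N}}\le 5^p$ and $\norm{z}_2\le 2\sup_{u\in\mathcal{N}}\absx{\angx{u,z}}$, the scalar sub-exponential Bernstein inequality in each fixed direction at a deviation of order $K\bigl(\sqrt{(p+x)/n}+(p+x)/n\bigr)$, and a union bound over the net. Your explicit choice $t=(\log 5+1)p+x$ simply makes concrete the paper's step of taking the constant $A$ sufficiently large to absorb the net cardinality.
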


\begin{proof}
  Let \(\mathcal{N}\) be a \(1/2\)-net of \(\bbS^{p-1}\) with
  \(\absx{\mathcal{N}}\le 5^p\). For every \(z\in\R^p\),
  \[
    \norm{z}_2 \le 2\sup_{u\in\mathcal{N}}\absx{\angx{u,z}}.
  \]
  Fix \(u\in\mathcal{N}\). The variables
  \(\angx{u,X_i}\) are independent, mean zero, and have \(\psi_1\)-norm at
  most \(K\). The scalar Bernstein inequality for sub-exponential variables
  gives, for every \(t>0\),
  \[
    \Pr\left\{
    \absx{\angx{u,\frac{1}{n}\sum_{i=1}^n X_i}}>t
    \right\}
    \le
    2\exp\left[
    -cn\min\left\{\frac{t^2}{K^2},\frac{t}{K}\right\}
    \right].
  \]
  Taking
  \[
    t=A K\left(
    \sqrt{\frac{p+x}{n}}
    +
    \frac{p+x}{n}
    \right)
  \]
  with \(A\) sufficiently large gives
  \[
    n\min\left\{\frac{t^2}{K^2},\frac{t}{K}\right\}
    \ge
    C_0(p+x).
  \]
  The union bound over \(\mathcal{N}\), followed by the net inequality above
  and an adjustment of constants, proves the claim.
\end{proof}

\begin{lemma}[Variance-sensitive vector Bernstein under directional sub-Weibull tails]
\label{lem:upper-vector-bernstein}
Let \(X_1,\dots,X_n\) be independent mean-zero random vectors in \(\R^p\).
Let \(\alpha>0\) and \(\alpha_*=\min\{\alpha,1\}\). Assume that, for every
\(u \in \bbS^{p-1}\) and \(1\le i\le n\),
\[
  \norm{\angx{u,X_i}}_{\psi_\alpha} \le K,
  \qquad
  \E \angx{u,X_i}^2 \le \sigma^2.
\]
Then there exist constants \(C_\alpha,c_\alpha > 0\), depending only on
\(\alpha\), such that, for every \(x \ge 1\),
\[
  \Pr\left\{
  \norm{\frac{1}{n}\sum_{i=1}^n X_i}_2
  >
  C_\alpha \left(
  \sigma\sqrt{\frac{p+x}{n}}
  +
  K(\log(2n))^{1/\alpha}
  \frac{(p+x)^{1/\alpha_*}}{n}
  \right)
  \right\}
  \le
  3e^{-c_\alpha x}.
\]
\end{lemma}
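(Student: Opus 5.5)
The plan is to run a net argument on top of a variance-sensitive scalar Bernstein inequality for sub-Weibull averages, following the structure of the proof of \cref{lem:upper-vector-bernstein-psi-one}. Let \(\mathcal{N}\) be a \(1/2\)-net of \(\bbS^{p-1}\) with \(\absx{\mathcal{N}}\le 5^p\). Then \(\norm{z}_2\le 2\sup_{u\in\mathcal{N}}\absx{\angx{u,z}}\), so it suffices to prove the following scalar claim. For fixed \(u\), write \(W_i=\angx{u,X_i}\): independent, mean zero, \(\norm{W_i}_{\psi_\alpha}\le K\), \(\E W_i^2\le\sigma^2\). Then for every \(t\ge1\),
\[
  \Pr\left\{\absx{\frac1n\sum_{i=1}^n W_i}>C_\alpha\left(\sigma\sqrt{\frac tn}+K(\log(2n))^{1/\alpha}\frac{t^{1/\alpha_*}}{n}\right)\right\}\le 2e^{-c_\alpha t}.
\]
Given this, apply it with \(t=A(p+x)\) and \(A\) large enough that \(5^p e^{-c_\alpha A(p+x)}\le e^{-c_\alpha x}\). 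A union bound over \(\mathcal{N}\) then gives the statement after adjusting constants; note \((A(p+x))^{1/\alpha_*}\lesssim_\alpha(p+x)^{1/\alpha_*}\).

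\textbf{Case \(\alpha\ge1\).} Here \(\norm{W_i}_{\psi_1}\le C_\alpha K\). The classical scalar Bernstein inequality, in the form using the variance proxy \(\sigma^2\) together with the \(\psi_1\) scale, gives \(\sigma\sqrt{t/n}+Kt/n\). The extra factor \((\log 2n)^{1/\alpha}\ge(\log 2)^{1/\alpha}\) only weakens this, so the claim follows.

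\textbf{Case \(\alpha<1\).} Truncate at level \(\tau\coloneqq K(\log(2n))^{1/\alpha}\) and write \(W_i=U_i+R_i\), with \(U_i=W_i\mathbf{1}\{\absx{W_i}\le\tau\}-\E[\cdot]\) and \(R_i\) the centered remainder.
\begin{itemize}
  \item \emph{Bounded part.} Bernstein for bounded variables with variance \(\le\sigma^2\) and envelope \(2\tau\) gives deviation \(\sigma\sqrt{t/n}+\tau t/n\). Since \(t\le t^{1/\alpha}\) for \(t\ge1\), this is within the claimed bound.
  \item \emph{Remainder.} The \(R_i\) satisfy \(\norm{R_i}_{\psi_\alpha}\lesssim K\). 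Their second moments are tiny: \(\E R_i^2\lesssim K^2(\log 2n)^{2/\alpha}e^{-\log(2n)}\lesssim \tau^2/n\).
  \item \emph{Moment bound for the remainder.} After symmetrization, I would invoke the Hitczenko--Montgomery-Smith--Oleszkiewicz moment inequality for sums of independent variables with Weibull-type log-concave tails (or equivalently Latała's moment formula). It gives
  \[
    \norm{\textstyle\sum_i R_i}_{L^q}\lesssim_\alpha \sqrt q\Bigl(\sum_i\E R_i^2\Bigr)^{1/2}+q^{1/\alpha}K .
  \]
  With the variance bound above, the first term is \(\lesssim\sqrt q\,\tau\).
  \item \emph{Converting to a tail.} Choose \(q=t\) and apply Markov. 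The remainder contributes \(\sqrt t\,\tau/n+Kt^{1/\alpha}/n\) with probability at least \(1-e^{-t}\), and both terms are dominated by \(K(\log 2n)^{1/\alpha}t^{1/\alpha}/n\).
\end{itemize}

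The main obstacle is this remainder step: obtaining the sharp power \(t^{1/\alpha}\) rather than \(t^{1+1/\alpha}\). A cruder route, via Hoffmann-J\o rgensen together with a max bound, loses an extra factor of \(q\). If citing the Weibull moment inequality is not acceptable, my fallback is a peeling argument. Split the excess event into dyadic levels \(\absx{W_i}\in(2^k\tau,2^{k+1}\tau]\). Count the exceedances at each level with a binomial Chernoff bound, using probability \(\lesssim e^{-(2^k\tau/K)^\alpha}\). Then sum the level contributions, checking that the total is \(\lesssim K t^{1/\alpha}\) except with probability \(e^{-ct}\), since the dominant configuration is a single variable of size \(\approx K t^{1/\alpha}\).
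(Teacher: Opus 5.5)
Your reduction (a $1/2$-net with $\absx{\mathcal N}\le 5^p$, a scalar tail bound at level $t=A(p+x)$, and a union bound) is fine. It matches the paper, which applies \citet[Theorem~3.4]{kuchibhotla2022_MovingBeyondSubGaussianity} to the vector of $\le 5^p$ net projections. That theorem's $\log q$ term plays the role of your union bound, and your scalar claim is exactly its one-coordinate case. The problems are in your proof of the scalar claim.

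\emph{Case $\alpha\ge 1$.} The variance-sensitive bound $\sigma\sqrt{t/n}+Kt/n$ is false for unbounded variables, so the factor $(\log 2n)^{1/\alpha}$ is not slack. Take $\alpha=1$ and $W_i=\epsilon_i L B_i$, with $\epsilon_i$ Rademacher, $B_i\sim\mathrm{Bernoulli}(\pi)$ and $L=K\log(1/\pi)$; then $\norm{W_i}_{\psi_1}\le K$ and $\sigma^2=\pi L^2$. Set $\pi=n^{-1-\epsilon}$ and $t=c\log(1/\pi)$ with $c<1/(2C_\alpha)$. For large $n$ your threshold lies below $L/n$, yet $\Pr\{\absx{n^{-1}\sum_iW_i}\ge L/n\}\ge\Pr\{\sum_iB_i=1\}\asymp n^{-\epsilon}$. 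This exceeds $2e^{-c_\alpha t}=2n^{-c_\alpha c(1+\epsilon)}$ once $\epsilon<c_\alpha c$. What does work is Bernstein's moment condition with $b\asymp K(\log 2n)^{1/\alpha}$ and $v\asymp\sigma^2+b^2/n$, obtained by splitting $\E\absx{W}^k$ at $b$. That gives exactly $\sigma\sqrt{t/n}+K(\log 2n)^{1/\alpha}t/n$.

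\emph{Case $\alpha<1$.} The moment inequality you attribute to Hitczenko--Montgomery-Smith--Oleszkiewicz does not hold for the $R_i$. That result needs logarithmically convex tails, and a mere $\psi_\alpha$ bound (domination by Weibull variables) loses the small variance of $R_i$. Concretely, take $W_i=\epsilon_i L B_i$ with $B_i\sim\mathrm{Bernoulli}(1/(4n))$ and $L=K(\log 4n)^{1/\alpha}>\tau$. Then $\norm{W_i}_{\psi_\alpha}\le K$, $R_i=W_i$ and $\sum_i\E R_i^2=L^2/4$. But $\sum_iR_i$ is $L$ times a Poisson-type Rademacher sum, whose $L^q$ norm grows like $q/\log q$ (consider about $q/\log q$ nonzero $B_i$ with equal signs). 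For fixed large $q$ and $n\to\infty$ this beats $\sqrt q\,L+q^{1/\alpha}K$.

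Your peeling fallback also aims at a false target. In this example a single exceedance of $\tau$ has probability of order one and already contributes more than $Kt^{1/\alpha}$ whenever $t\ll\log n$.

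The repair is that no cancellation is needed: the remainder only has to be $O(\tau t^{1/\alpha})$, which is $n$ times the second term of your target. Let $N_k=\#\{i:\absx{W_i}>2^k\tau\}\sim\mathrm{Bin}(n,p_k)$, so $np_k\le(2n)^{1-2^{k\alpha}}$. Binomial tail bounds give $N_k=0$ for $2^{k\alpha}\gtrsim t$ and $N_k\lesssim 1+t2^{-k\alpha}$ otherwise, outside an event of probability $Ce^{-ct}$. Hence $\sum_i\absx{W_i}\mathbf 1\{\absx{W_i}>\tau\}\lesssim\sum_{2^{k\alpha}\lesssim t}2^k\tau(1+t2^{-k\alpha})\lesssim_\alpha\tau t^{1/\alpha}$. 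The centering adds only $n\E\absx{W}\mathbf 1\{\absx{W}>\tau\}\lesssim\tau$.
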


\begin{proof}
  Let \(\mathcal{N}\) be a \(1/2\)-net of \(\bbS^{p-1}\) with
  \(\absx{\mathcal{N}}\le 5^p\). For every \(z \in \R^p\),
  \[
    \norm{z}_2 \le 2\sup_{u \in \mathcal{N}} \angx{u,z}.
  \]
  Apply \citet[Theorem~3.4]{kuchibhotla2022_MovingBeyondSubGaussianity} to
  the finite-dimensional vectors
  \((\angx{u,X_i})_{u\in\mathcal{N}}\). Since
  \(\log\absx{\mathcal{N}}\le Cp\), the theorem gives, with probability at
  least \(1-3e^{-x}\),
  \[
    \sup_{u\in\mathcal{N}}
    \absx{\angx{u,\frac{1}{n}\sum_{i=1}^n X_i}}
    \le
    C_\alpha \left(
    \sigma\sqrt{\frac{p+x}{n}}
    +
    K(\log(2n))^{1/\alpha}
    \frac{(p+x)^{1/\alpha_*}}{n}
    \right).
  \]
  Combining this display with the net bound and adjusting constants proves the
  claim.
\end{proof}

\begin{lemma}[Hilbert-valued Bousquet inequality]
\label{lem:upper-hilbert-bousquet}
Let \(\mathcal{H}\) be a separable real Hilbert space, and let
\(\xi_1,\dots,\xi_N\) be independent mean-zero \(\mathcal{H}\)-valued random
variables. Suppose that, almost surely,
\[
  \norm{\xi_i}_{\mathcal{H}}\le R,
  \qquad 1\le i\le N.
\]
Put \(S=\sum_{i=1}^N \xi_i\), \(Z=\norm{S}_{\mathcal{H}}\), and
\[
  \sigma_w^2
  \coloneqq
  \sup_{\norm{u}_{\mathcal{H}}\le1}
  \sum_{i=1}^N \E\angx{u,\xi_i}_{\mathcal{H}}^2 .
\]
Then, for every \(x\ge0\),
\[
  \Pr\left\{
  Z>
  \E Z+\sqrt{2\left(\sigma_w^2+2R\E Z\right)x}
  +\frac{Rx}{3}
  \right\}
  \le e^{-x}.
\]
Consequently, for every deterministic \(M\ge\E Z\) and every \(x\ge0\),
\[
  \Pr\left\{
  Z>
  M
  +
  C\left[
  \sqrt{\sigma_w^2 x}
  +
  \sqrt{RMx}
  +
  Rx
  \right]
  \right\}
  \le e^{-x},
\]
where \(C>0\) is a universal constant.
\end{lemma}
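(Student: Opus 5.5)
The first inequality is exactly Bousquet's concentration inequality for the supremum of an empirical process, transported to the Hilbert-norm setting. The consequence then follows by elementary algebra. The plan is as follows.

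First, represent the norm as a supremum of a countable family of linear functionals. Since \(\mathcal{H}\) is separable, fix a countable dense subset \(\{u_k\}\) of the unit ball. Then
\[
  Z=\norm{S}_{\mathcal{H}}=\sup_k \sum_{i=1}^N \angx{u_k,\xi_i}_{\mathcal{H}}.
\]
Each function \(f_k(\xi)=\angx{u_k,\xi}_{\mathcal{H}}\) is centered under every law of \(\xi_i\), is bounded by \(R\) in absolute value, and has summed variance at most \(\sigma_w^2\). These are precisely the hypotheses of Bousquet's version of Talagrand's inequality for suprema of sums of independent (not necessarily identically distributed) bounded centered processes, in Klein--Rio's non-i.i.d. form. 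That inequality gives
\[
  \Pr\Bigl\{Z>\E Z+\sqrt{2(\sigma_w^2+2R\E Z)x}+\tfrac{Rx}{3}\Bigr\}\le e^{-x}.
\]
I will cite this rather than reprove it. The absolute value is harmless, because the class is symmetric under \(u\mapsto -u\). Measurability of the supremum is ensured by countability, and passing from a finite subfamily to the countable supremum uses monotone convergence. The only point I expect to need care with is the bookkeeping of constants across the i.i.d. versus non-i.i.d. versions: Klein--Rio's constants are slightly different, but any universal-constant version suffices for the second display. I would state the inequality with whatever constants the cited theorem provides, since only the consequence is used downstream.

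For the consequence, take a deterministic \(M\ge \E Z\). Use \(\sqrt{a+b}\le\sqrt a+\sqrt b\) to bound
\[
  \sqrt{2(\sigma_w^2+2R\E Z)x}\le \sqrt{2\sigma_w^2x}+2\sqrt{RMx}.
\]
Replacing \(\E Z\) by \(M\) makes the threshold larger, so the event in the second display is contained in the event of the first. Collecting terms with \(C=2\) then gives the claimed bound.
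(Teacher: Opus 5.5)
Your proposal is correct and follows the paper's proof. Both write \(Z\) as the supremum of the linear process \(u\mapsto\sum_i\angx{u,\xi_i}_{\mathcal{H}}\) over (a countable dense part of) the unit ball, apply Bousquet's inequality to this centered \(R\)-bounded class, and then obtain the second display from \(\sqrt{a+b}\le\sqrt a+\sqrt b\) and \(\E Z\le M\).

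Your caveat about the constants is well placed, and it is more careful than the paper. The paper cites Bousquet's theorem, which is stated for identically distributed summands, although the \(\xi_i\) here (and in the later conditional application) are not identically distributed. In that case the Klein--Rio version gives the same structure with a larger constant on the \(Rx\) term. This affects only the first display, not the universal-constant consequence that is actually used downstream.
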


\begin{proof}
  The result is the Hilbert-space specialization of Bousquet's inequality for
  suprema of bounded empirical processes. By separability,
  \(Z\) is the supremum of the empirical process indexed by the unit ball of
  \(\mathcal{H}\):
  \[
    Z
    =
    \sup_{\norm{u}_{\mathcal{H}}\le1}
    \sum_{i=1}^N \angx{u,\xi_i}_{\mathcal{H}} .
  \]
  Applying Bousquet's inequality
  \citep{bousquet2002_BennettConcentrationInequality}; see also
  \citet[Theorem~12.5 and Corollary~12.12]{boucheron2013_ConcentrationInequalities},
  to the centered, \(R\)-bounded class
  \(u\mapsto\angx{u,\xi_i}_{\mathcal{H}}\) gives, with
  \(v=\sigma_w^2+2R\E Z\),
  \[
    \Pr\left\{
    Z>\E Z+\sqrt{2vx}+\frac{Rx}{3}
    \right\}
    \le e^{-x}.
  \]
  If \(M\ge\E Z\), then
  \[
    \sqrt{2vx}
    \le
    \sqrt{2\sigma_w^2 x}+2\sqrt{RMx},
  \]
  and the second claim follows by increasing the universal constant.
\end{proof}

\begin{lemma}[Envelope vector Bernstein under sub-Weibull tails]
\label{lem:upper-vector-bernstein-envelope}
Let \(X_1,\dots,X_n\) be independent mean-zero random vectors in \(\R^p\).
Let \(0<\alpha\le1\). Assume that
\[
  \sup_{u\in\bbS^{p-1}}
  \frac{1}{n}\sum_{i=1}^n \E\angx{u,X_i}^2
  \le \sigma^2,
  \qquad
  M_\alpha
  \coloneqq
  \norm{\max_{1\le i\le n} \norm{X_i}_2}_{\psi_\alpha}
  <\infty.
\]
Then there exist constants \(C_\alpha,c_\alpha>0\), depending only on
\(\alpha\), such that, for every \(x\ge1\),
\[
  \Pr\left\{
  \norm{\frac{1}{n}\sum_{i=1}^n X_i}_2
  >
  C_\alpha \left(
  \sigma\sqrt{\frac{p+x}{n}}
  +
  M_\alpha \frac{x^{1/\alpha}}{n}
  \right)
  \right\}
  \le
  C_\alpha e^{-c_\alpha x}.
\]
In particular, if
\[
  \norm{\angx{u,X_i}}_{\psi_\alpha} \le K,
  \qquad
  u\in\bbS^{p-1},\quad 1\le i\le n,
\]
then
\[
  \Pr\left\{
  \norm{\frac{1}{n}\sum_{i=1}^n X_i}_2
  >
  C_\alpha \left(
  \sigma\sqrt{\frac{p+x}{n}}
  +
  K\sqrt{p} (\log(2n))^{1/\alpha}\frac{x^{1/\alpha}}{n}
  \right)
  \right\}
  \le
  C_\alpha e^{-c_\alpha x}.
\]
\end{lemma}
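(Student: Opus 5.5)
The plan is to reduce the vector statement to a scalar-type tail bound for \(\norm{\sum_i X_i}_2\) and to handle the heavy tails by truncating at the envelope level. Set \(S\coloneqq\sum_{i=1}^n X_i\). Fix a truncation level \(T\coloneqq M_\alpha x^{1/\alpha}\) and split \(X_i=X_i\mathbf{1}\{\norm{X_i}_2\le T\}+X_i\mathbf{1}\{\norm{X_i}_2>T\}\).

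On the event \(\{\max_i\norm{X_i}_2\le T\}\), the second pieces vanish. By the definition of \(M_\alpha\), the complement has probability at most \(2e^{-x}\). The recentring term satisfies \(\norm{\sum_i \E[X_i\mathbf{1}\{\norm{X_i}_2>T\}]}_2\le \E[\max_i\norm{X_i}_2\cdot n\,\mathbf{1}\{\max_i\norm{X_i}_2>T\}]\), which may be too crude. Instead I would bound \(\sum_i\E\norm{X_i}_2\mathbf{1}\{\norm{X_i}_2>T\}\) using \cref{lem:upper-subweibull-tail-expectation} applied to the maximum. This gives a total of order \(M_\alpha x^{1/\alpha}e^{-x}\), which is absorbed into \(M_\alpha x^{1/\alpha}\).

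For the truncated, recentred vectors \(\xi_i\coloneqq X_i\mathbf{1}\{\norm{X_i}_2\le T\}-\E[\cdot]\), I apply the Hilbert-valued Bousquet inequality \cref{lem:upper-hilbert-bousquet} with \(R=2T\), weak variance \(\sigma_w^2\le n\sigma^2\), and \(\E Z\le(\sum_i\E\norm{\xi_i}_2^2)^{1/2}\le\sqrt{np}\,\sigma\), using trace \(\le p\cdot\)operator norm. Taking \(M=\sqrt{np}\,\sigma\) gives
\[
Z\lesssim \sqrt{np}\,\sigma+\sqrt{n\sigma^2x}+\sqrt{T\sqrt{np}\,\sigma x}+Tx
\]
with probability at least \(1-e^{-x}\). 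By AM--GM the cross term is at most \(\sqrt{np}\,\sigma+Tx\). Dividing by \(n\) yields \(\sigma\sqrt{(p+x)/n}+M_\alpha x^{1+1/\alpha}/n\).

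The main obstacle is the extra factor of \(x\) coming from \(Rx\). To avoid it, I would instead truncate at \(T=M_\alpha x^{1/\alpha-1}\) and accept a failure probability of \(e^{-x^{1-\alpha}\cdots}\). The cleaner fix is to follow the standard Talagrand/Adamczak argument (Adamczak's inequality for unbounded empirical processes), where the envelope contributes \(\norm{\max_i\norm{X_i}}_{\psi_\alpha}x^{1/\alpha}\) directly. I would cite that inequality, specialised to the unit ball of \(\R^p\) as in the proof of \cref{lem:upper-hilbert-bousquet}. Adjusting constants then gives the first display.

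For the ``in particular'' claim, I bound \(M_\alpha\). First, \(\norm{X_i}_2\le\sqrt p\max_k|\angx{e_k,X_i}|\) and each coordinate has \(\psi_\alpha\) norm at most \(K\). This gives \(\norm{\norm{X_i}_2}_{\psi_\alpha}\lesssim K\sqrt p\), up to a \((\log p)^{1/\alpha}\) factor from the maximum over coordinates. Next, the maximum over \(i\) costs a factor \((\log 2n)^{1/\alpha}\) by the standard union bound for \(\psi_\alpha\) maxima. To get exactly \(K\sqrt p\) without the \(\log p\) factor, I would bound \(\E\norm{X_i}_2^q\) via the coordinate moments \(\E|X_{ik}|^q\le (Cq^{1/\alpha}K)^q\) and Minkowski's inequality in \(L^{q/2}\), which gives \(\norm{\norm{X_i}_2}_{L^q}\le\sqrt p\,Cq^{1/\alpha}K\). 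Finally, the variance condition follows from \(\E\angx{u,X_i}^2\le C_\alpha K^2\), or from the \(\sigma\) already assumed.
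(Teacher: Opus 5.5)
Your final route matches the paper's proof: apply Adamczak's inequality to the linear functionals indexed by the unit ball, with weak variance $n\sigma^2$, $\E\norm{\sum_i X_i}_2\le\sigma\sqrt{np}$ and envelope $M_\alpha$, then use Minkowski in $L^{q/2}$ and the $\psi_\alpha$ maximal inequality (costing $(\log(2n))^{1/\alpha}$) for the ``in particular'' part. You can drop the truncation/Bousquet detour, since, as you noticed, it loses either a factor of $x$ or the $e^{-cx}$ probability.
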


\begin{proof}
  Let \(\mathcal{F}=\{f_u(z)=\angx{u,z}:u\in\bbS^{p-1}\}\). By separability of
  \(\bbS^{p-1}\), it is enough to apply the empirical-process bound on a
  countable dense subset. Put
  \[
    Z=\sup_{f\in\mathcal{F}}\absx{\sum_{i=1}^n f(X_i)}
    =
    \norm{\sum_{i=1}^n X_i}_2.
  \]
  For this class,
  \[
    \sup_{f\in\mathcal{F}}\sum_{i=1}^n \E f(X_i)^2\le n\sigma^2,
    \qquad
    \norm{\max_{1\le i\le n} \sup_{f\in\mathcal{F}}\absx{f(X_i)}}_{\psi_\alpha}
    =
    M_\alpha.
  \]
  Moreover,
  \[
    \E Z
    \le
    \left(\E\norm{\sum_{i=1}^n X_i}_2^2 \right)^{1/2}
    =
    \left(\sum_{i=1}^n \E\norm{X_i}_2^2 \right)^{1/2}
    \le
    \sigma\sqrt{np}.
  \]
  Adamczak's empirical-process inequality
  \citep[Theorem~4]{adamczak2008_TailInequalitySuprema} therefore gives
  \[
    \Pr\left\{
    Z
    >
    C_\alpha \left(
    \sigma\sqrt{n(p+x)}
    +
    M_\alpha x^{1/\alpha}
    \right)
    \right\}
    \le
    C_\alpha e^{-c_\alpha x}.
  \]
  Dividing by \(n\) proves the first claim.

  For the last assertion, let \(e_1,\dots,e_p\) be the standard basis of
  \(\R^p\). For \(r\ge2\), the moment characterization of the
  \(\psi_\alpha\) norm and Minkowski's inequality give
  \[
    \norm{\norm{X_i}_2}_{L_r}
    =
    \norm{\sum_{j=1}^p \angx{e_j,X_i}^2}_{L_{r/2}}^{1/2}
    \le
    \left(\sum_{j=1}^p \norm{\angx{e_j,X_i}}_{L_r}^2 \right)^{1/2}
    \le
    C_\alpha K\sqrt{p} r^{1/\alpha}.
  \]
  The case \(1\le r<2\) follows by monotonicity of \(L_r\) norms, so
  \(\norm{\norm{X_i}_2}_{\psi_\alpha} \le C_\alpha K\sqrt{p}\). The standard
  maximal inequality for sub-Weibull variables then gives
  \(M_\alpha \le C_\alpha K\sqrt{p} (\log(2n))^{1/\alpha}\).
\end{proof}

\begin{lemma}[Tail integration for Bernstein block tails]
\label{lem:upper-bernstein-tail-integration}
Let \(Z\ge0\) be a random variable.
Let \(p\ge1\), \(b>0\), \(M\ge0\), and \(q\ge1\).
Suppose that there exist constants \(A_0,A_1,c_0>0\) such that
\[
  \Pr\left\{
  Z>A_0 b \left(x+Mx^q \right)
  \right\}
  \le A_1 e^{-c_0 x},
  \qquad x\ge p.
\]
If \(V>0\), \(D<\infty\), and \(a>0\) satisfy
\[
  b\left(p+Mp^q \right)\le DV,
  \qquad
  A_0 b \left(p+Mp^q \right)\le aV,
\]
then there exist constants \(C,c>0\), depending only on
\((A_0,A_1,c_0,q)\), such that
\[
  \E\left[Z\mathbf{1}\{Z>aV\}\right]\le CDV e^{-cp}.
\]
\end{lemma}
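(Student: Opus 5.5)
The plan is a layer-cake argument. First reduce the tail event to the hypothesis, then integrate the tail of \(Z\) above the threshold. Set \(\phi(x)\coloneqq A_0 b(x+Mx^q)\) for \(x\ge p\). This map is continuous and strictly increasing, and \(\phi(p)\le aV\) by the second hypothesis.

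Write
\[
  \E\left[Z\mathbf{1}\{Z>aV\}\right]
  =
  aV\Pr\{Z>aV\}+\int_{aV}^\infty \Pr\{Z>t\}\dd t.
\]
Since \(aV\ge\phi(p)\), the first term is at most \(aV\Pr\{Z>\phi(p)\}\le aVA_1e^{-c_0p}\). For the integral, we enlarge the domain to \([\phi(p),\infty)\) and substitute \(t=\phi(x)\), \(x\ge p\), which gives
\[
  \int_{\phi(p)}^\infty \Pr\{Z>t\}\dd t
  \le
  \int_p^\infty A_1e^{-c_0x}\,A_0 b\,(1+qMx^{q-1})\dd x .
\]

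Next we bound the resulting elementary integrals. For \(x\ge p\ge1\) we use \(x^{q-1}\le p^{q-1}(x/p)^{q-1}\le p^{q-1}x^{q-1}\), but it is cleaner to write \(x=p+y\) and \(x^{q-1}\le 2^{q-1}(p^{q-1}+y^{q-1})\). Then
\[
  \int_p^\infty e^{-c_0x}\dd x\le c_0^{-1}e^{-c_0p},
  \qquad
  \int_p^\infty x^{q-1}e^{-c_0x}\dd x\le C_q\,e^{-c_0p}(p^{q-1}+1)\le C_q\,p^{q-1}e^{-c_0p},
\]
where the last step uses \(p\ge1\). Hence the integral is at most
\[
  C\,b\,(1+Mp^{q-1})e^{-c_0p}\le C\,b\,(p+Mp^q)e^{-c_0p}\le CDVe^{-c_0p},
\]
using \(p\ge1\) and the first hypothesis \(b(p+Mp^q)\le DV\).

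The main obstacle is the boundary term \(aV\Pr\{Z>aV\}\). It must be of order \(DV\), but \(a\) is a free parameter. We handle it by cases. If \(a\le D\), the bound \(aVA_1e^{-c_0p}\le A_1DVe^{-c_0p}\) is immediate. If \(a>D\), we do not use the boundary split at all. Instead we bound directly
\[
  \E\left[Z\mathbf{1}\{Z>aV\}\right]\le\E\left[Z\mathbf{1}\{Z>\phi(p)\}\right],
\]
and apply the same layer-cake split at the level \(\phi(p)\). Its boundary term is \(\phi(p)\Pr\{Z>\phi(p)\}\le A_0b(p+Mp^q)A_1e^{-c_0p}\le A_0A_1DVe^{-c_0p}\). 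This second route works in all cases, so it is in fact the cleanest proof.

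Combining the boundary and integral bounds gives \(\E[Z\mathbf{1}\{Z>aV\}]\le CDVe^{-cp}\) with \(c=c_0\) and \(C\) depending only on \((A_0,A_1,c_0,q)\).
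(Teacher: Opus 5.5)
Your final argument is correct and is the paper's proof. You bound $\E[Z\mathbf{1}\{Z>aV\}]$ by $\E[Z\mathbf{1}\{Z>\phi(p)\}]$ using $\{Z>aV\}\subseteq\{Z>\phi(p)\}$, then split into the boundary term $\phi(p)\Pr\{Z>\phi(p)\}$ and the substituted integral $\int_p^\infty \Pr\{Z>\phi(x)\}\phi'(x)\dd x$; the first layer-cake split at level $aV$ and the case distinction $a\le D$ versus $a>D$ are unnecessary, and your shift $x=p+y$ gives $c=c_0$ directly where the paper shrinks $c$.
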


\begin{proof}
  Define
  \[
    H(x)\coloneqq A_0 b \left(x+Mx^q \right),
    \qquad x\ge p.
  \]
  Since \(H\) is increasing, the tail-integration formula gives
  \[
  \begin{aligned}
    \E\left[Z\mathbf{1}\{Z>H(p)\}\right]
    &\le
    H(p)\Pr\{Z>H(p)\}
    +
    \int_p^\infty \Pr\{Z>H(x)\}H'(x) dx.
  \end{aligned}
  \]
  Using the tail assumption,
  \[
  \begin{aligned}
    \E\left[Z\mathbf{1}\{Z>H(p)\}\right]
    &\le
    Cb\left(p+Mp^q \right)e^{-c_0 p}
    +
    Cb\int_p^\infty
    \left(1+Mx^{q-1} \right)e^{-c_0 x} dx\\
    &\le
    Cb\left(p+Mp^q \right)e^{-cp},
  \end{aligned}
  \]
  after decreasing \(c\) if necessary. Here we used
  \(H'(x)=A_0 b(1+qMx^{q-1})\) and, for \(p\ge1\),
  \[
    \int_p^\infty x^{q-1} e^{-c_0 x} \dd x
    \le C_q p^{q-1} e^{-cp}
    \le C_q p^q e^{-cp}.
  \]
  If \(H(p)\le aV\), then \(\{Z>aV\}\subseteq\{Z>H(p)\}\).
  Combining this inclusion with \(b(p+Mp^q)\le DV\) proves the claim.
\end{proof}

\subsection{Supremum-Norm Control}
\label{subsec:auxiliary-supnorm}

\begin{lemma}[Uniform supremum-norm control]
\label{lem:indep-process-supnorm}
Let
\[
  \frac{1}{2}<\underline{\alpha}<\infty,
  \qquad
  0<c_\lambda \le C_\lambda<\infty.
\]
For each \(\alpha\ge\underline{\alpha}\) and
\(\lambda\in\caL_\alpha(c_\lambda,C_\lambda)\), let
\(X_i(t)=\sum_{r\in\bbZ} x_{ir} e_r(t)\) be the centered Gaussian process with
Fourier variances \((\lambda_r)_{r\in\bbZ}\).
Then there exists \(C<\infty\), depending only on
\((\underline{\alpha},c_\lambda,C_\lambda)\), such that
\[
  \sup_{\alpha \ge \underline{\alpha}}
  \sup_{\lambda \in \caL_\alpha(c_\lambda,C_\lambda)}
  \norm{\norm{X_i}_\infty}_{\psi_2} \le C.
\]
\end{lemma}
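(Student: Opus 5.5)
We want a \(\psi_2\) bound for \(\norm{X_i}_\infty\) that is uniform over \(\alpha\ge\underline{\alpha}\) and \(\lambda\in\caL_\alpha(c_\lambda,C_\lambda)\). The plan has two ingredients. First, a bound on \(\E\norm{X_i}_\infty\) from Dudley's entropy integral. Second, Gaussian concentration (Borell--TIS) for the supremum. For the deviation part, note that \(\norm{X_i}_\infty=\sup_{t}\absx{X_i(t)}\) is the supremum of a centered Gaussian process. Its weak variance is
\[
  \sigma^2=\sup_t \E X_i(t)^2=\sup_t\sum_{r\in\bbZ}\lambda_r e_r(t)^2\le 2C_\lambda\sum_{r\in\bbZ}(1+\absx{r})^{-2\alpha}\le 2C_\lambda\sum_{r\in\bbZ}(1+\absx{r})^{-2\underline{\alpha}}.
\]
The last sum is finite because \(\underline{\alpha}>1/2\), and it is monotone in \(\alpha\), so the bound is uniform. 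Borell--TIS then gives that \(\norm{X_i}_\infty-\E\norm{X_i}_\infty\) is sub-Gaussian with parameter \(\sigma\). Hence \(\norm{\norm{X_i}_\infty}_{\psi_2}\lesssim \E\norm{X_i}_\infty+\sigma\), and it remains to bound the mean uniformly.

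For the mean, we use the canonical metric
\[
  d(s,t)^2=\E(X_i(s)-X_i(t))^2=\sum_r\lambda_r(e_r(s)-e_r(t))^2 .
\]
Pairing the cosine and sine terms at frequency \(r\ge1\) with \(\lambda_{\pm r}\le C_\lambda(1+r)^{-2\alpha}\) gives
\[
  d(s,t)^2\le C\sum_{r\ge1}(1+r)^{-2\underline{\alpha}}\min\{1,r^2\absx{s-t}^2\}\lesssim \absx{s-t}^{2\rho},\qquad \rho\coloneqq\min\{1,\underline{\alpha}-1/2\}/2>0.
\]
To get this, split the sum at \(r\approx\absx{s-t}^{-1}\). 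The low frequencies contribute \(\absx{s-t}^2\sum_{r\le \absx{s-t}^{-1}} r^{2-2\underline{\alpha}}\), and the high tail contributes \(\absx{s-t}^{2\underline{\alpha}-1}\). Each piece is bounded by \(\absx{s-t}^{\min(2,2\underline{\alpha}-1)}\), up to a log factor in the borderline case \(\underline{\alpha}=3/2\). That log factor is absorbed by taking the exponent \(\rho\) slightly smaller, as in the definition above. With this Hölder modulus on the compact circle, the covering numbers satisfy \(N(\epsilon)\lesssim \epsilon^{-1/\rho}\). Dudley's integral \(\int_0^{\sigma}\sqrt{\log N(\epsilon)}\,d\epsilon\) is therefore finite with a constant depending only on \((\underline{\alpha},C_\lambda)\). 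Adding \(\E\absx{X_i(0)}\le\sigma\) gives a uniform bound on \(\E\norm{X_i}_\infty\). The same Hölder modulus also yields a continuous version of the process via Kolmogorov, which justifies taking the supremum over a countable dense set.

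The main obstacle is keeping every constant uniform in \(\alpha\ge\underline{\alpha}\), which may be unbounded. This works because each bound depends on \(\lambda\) only through the upper envelope \(C_\lambda(1+\absx{r})^{-2\alpha}\le C_\lambda(1+\absx{r})^{-2\underline{\alpha}}\). So all the estimates hold for the worst case \(\alpha=\underline{\alpha}\). The lower constant \(c_\lambda\) is never needed.
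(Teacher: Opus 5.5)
Your proposal is correct and follows the paper's proof essentially step for step. Both arguments bound the canonical metric by a Hölder modulus that uses only the envelope $C_\lambda(1+\absx{r})^{-2\underline{\alpha}}$, apply Dudley's entropy bound to get a uniform bound on $\E\norm{X_i}_\infty$, and then use Borell--TIS to obtain the sub-Gaussian deviation and hence the $\psi_2$ bound.

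The only difference is cosmetic: the paper gets the Hölder modulus from the pointwise inequality $\min\{1,x\}^2\le x^{2\eta_0}$ with $\eta_0=(\underline{\alpha}-1/2)/2$, rather than by splitting the frequency sum. Your cap $\min\{1,\cdot\}$ in $\rho$ is in fact exactly what that inequality needs, since it fails once $\eta_0>1$, i.e.\ for $\underline{\alpha}>5/2$.
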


\begin{proof}
  Write \(x_{ir}=\lambda_r^{1/2} g_{ir}\), and let
  \(d_\lambda\) be the
  canonical metric of the centered Gaussian process \(X_i\):
  \[
    d_\lambda(s,t)^2
    \coloneqq
    \E\absx{X_i(s)-X_i(t)}^2
    =
    \sum_{r \in \bbZ} \lambda_r \absx{e_r(s)-e_r(t)}^2.
  \]
  By the trigonometric basis fixed in \cref{sec:setup}, there
  exists \(C<\infty\) such that for every \(r\in\bbZ\) and \(s,t\in[0,1]\),
  \[
    \absx{e_r(s)-e_r(t)}
    \le
    C\min\{1,\absx{r}\absx{s-t}\}.
  \]
  Choose
  \[
    \eta_0
    \coloneqq
    \frac{\underline{\alpha}-\frac{1}{2}}{2}
    >0.
  \]
  Since \(\min\{1,x\}^2 \le x^{2\eta_0}\) for all \(x\ge 0\), we obtain
  \[
    \begin{aligned}
      d_\lambda(s,t)^2
      &\le
      C\sum_{r \in \bbZ} \lambda_r
      \min\{1,\absx{r}\absx{s-t}\}^2\\
      &\le
      C\absx{s-t}^{2\eta_0}
      \sum_{r \in \bbZ}(1+\absx{r})^{-2\alpha+2\eta_0}\\
      &\le
      C\absx{s-t}^{2\eta_0},
    \end{aligned}
  \]
  uniformly over \(\alpha\ge\underline{\alpha}\), because
  \[
    -2\alpha+2\eta_0
    \le
    -2\underline{\alpha}+2\eta_0
    =
    -\underline{\alpha}-\frac{1}{2}
    < -1.
  \]
  Therefore
  \[
    d_\lambda(s,t)\le C\absx{s-t}^{\eta_0}.
  \]

  Also, since the trigonometric basis is uniformly bounded,
  \[
    \sup_{t\in[0,1]} \Var(X_i(t))
    =
    \sup_{t\in[0,1]} \sum_{r \in \bbZ} \lambda_r \absx{e_r(t)}^2
    \le
    C\sum_{r \in \bbZ} \lambda_r
    \le
    C,
  \]
  because \(\alpha>\frac{1}{2}\) implies
  \(\sum_{r \in \bbZ} \lambda_r
  \lesssim \sum_{r \in \bbZ}(1+\absx{r})^{-2\underline{\alpha}} < \infty\).

  The metric bound \(d_\lambda(s,t)\le C\absx{s-t}^{\eta_0}\) gives
  \[
    N([0,1],d_\lambda,\varepsilon)
    \le
    C\varepsilon^{-1/\eta_0},
    \qquad
    0<\varepsilon\le 1,
  \]
  so the standard Dudley entropy bound yields
  \[
    \E\sup_{t\in[0,1]} X_i(t)\le C.
  \]
  Applying the same argument to \(-X_i\) gives
  \[
    \E\norm{X_i}_\infty \le C.
  \]

  Let
  \[
    M_i^+\coloneqq \sup_{t\in[0,1]} X_i(t),
    \qquad
    M_i^-\coloneqq \sup_{t\in[0,1]}(-X_i(t)).
  \]
  Since \(X_i\) is centered Gaussian and
  \(\sup_{t\in[0,1]} \Var(X_i(t))\le C\), the Borell--TIS inequality
  \citep{adler2007_RandomFieldsGeometry} gives
  \[
    \Pr\{M_i^\pm > \E M_i^\pm + u\}\le e^{-cu^2},
    \qquad
    u\ge 0.
  \]
  Because \(\norm{X_i}_\infty=\max\{M_i^+,M_i^-\}\), this implies
  \[
    \Pr\{\norm{X_i}_\infty > C + u\}\le 2e^{-cu^2},
    \qquad
    u\ge 0.
  \]
  This tail bound is equivalent to
  \(\norm{\norm{X_i}_\infty}_{\psi_2} \le C\).
\end{proof}

\subsection{Sequence Bounds}
\label{subsec:auxiliary-sequence-bounds}

\begin{lemma}[Two-sided sequence bounds]
\label{lem:upper-two-sided-sequence-bounds}
Assume \(\lambda \in \caL_\alpha(c_\lambda,C_\lambda)\) and
\(\theta \in \Theta_s(R_0)\). Then there exists \(C<\infty\), depending only on
\((\alpha,s,R_0,c_\lambda,C_\lambda)\), such that for every integer \(d \ge 1\),
\begin{equation}
  \sum_{\absx{r} > d}\lambda_r \absx{\theta_r}^2
  \le
  C R_0^2 d^{-(2\alpha+2s)}.
  \label{eq:upper-two-sided-tail-bias}
\end{equation}
and
\begin{equation}
  \sum_{\absx{r} \le d}(1+\absx{r})^{2\alpha}\absx{\theta_r}^2
  \le
  C d^{(2\alpha-2s)_+}.
  \label{eq:upper-two-sided-partial-sum}
\end{equation}
\end{lemma}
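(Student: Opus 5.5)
The plan is to prove both bounds directly from the two-sided constraints \(\lambda_r\le C_\lambda(1+\absx{r})^{-2\alpha}\) and \(\sum_r(1+\absx{r})^{2s}\theta_r^2\le R_0^2\). The approach is to write each weight as a product of the Sobolev weight and a residual power of \(1+\absx{r}\), and then to bound the residual power uniformly over the summation range.

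For the tail bound \cref{eq:upper-two-sided-tail-bias}, fix \(\absx{r}>d\) and write
\[
  \lambda_r\theta_r^2\le C_\lambda(1+\absx{r})^{-(2\alpha+2s)}(1+\absx{r})^{2s}\theta_r^2 .
\]
Since \(2\alpha+2s>0\) and \(1+\absx{r}>d\), the prefactor is at most \(d^{-(2\alpha+2s)}\). Summing over \(\absx{r}>d\) and using the Sobolev ball then gives \(C_\lambda R_0^2 d^{-(2\alpha+2s)}\). This half is elementary.

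For the partial sum \cref{eq:upper-two-sided-partial-sum}, write \((1+\absx{r})^{2\alpha}\theta_r^2=(1+\absx{r})^{2\alpha-2s}(1+\absx{r})^{2s}\theta_r^2\) and split into two cases according to the sign of \(2\alpha-2s\).
\begin{itemize}
  \item If \(\alpha\le s\), the factor \((1+\absx{r})^{2\alpha-2s}\) is at most \(1\), so the sum is bounded by \(R_0^2=R_0^2d^{0}\).
  \item If \(\alpha>s\), then for \(\absx{r}\le d\) the factor is at most \((1+d)^{2\alpha-2s}\le 2^{2\alpha-2s}d^{2\alpha-2s}\), using \(d\ge1\). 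The sum is therefore at most \(C R_0^2 d^{(2\alpha-2s)_+}\).
\end{itemize}
The eigenvalue class enters only the first bound. The constant \(C\) depends only on \((\alpha,s,R_0,C_\lambda)\), and it stays uniform over compact \((\alpha,s)\)-rectangles, which matches how the bound is used in the adaptive proofs. The only step needing any care is this case split for the positive part, i.e.\ checking that the comparison \(1+d\le 2d\) is what produces \(d^{(2\alpha-2s)_+}\) rather than \((1+d)^{(2\alpha-2s)_+}\); otherwise there is no real obstacle.
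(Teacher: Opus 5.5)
Your proposal is correct and follows the paper's proof essentially verbatim: you factor out the Sobolev weight $(1+\absx{r})^{2s}\theta_r^2$ and bound the residual power by $d^{-(2\alpha+2s)}$ on the tail and by a constant multiple of $d^{(2\alpha-2s)_+}$ on $\absx{r}\le d$. Your explicit case split on the sign of $2\alpha-2s$, together with the comparison $1+d\le 2d$, only spells out a step the paper leaves implicit.
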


\begin{proof}
  Since \(\lambda_r \le C_\lambda(1+\absx{r})^{-2\alpha}\),
  \[
    \sum_{\absx{r} > d}\lambda_r \absx{\theta_r}^2
    \le
    C_\lambda \sum_{\absx{r} > d}
    (1+\absx{r})^{-(2\alpha+2s)}(1+\absx{r})^{2s}\absx{\theta_r}^2
    \le
    C R_0^2 d^{-(2\alpha+2s)},
  \]
  which proves \cref{eq:upper-two-sided-tail-bias}. Likewise,
  \[
    \sum_{\absx{r} \le d}(1+\absx{r})^{2\alpha}\absx{\theta_r}^2
    =
    \sum_{\absx{r} \le d}
    (1+\absx{r})^{(2\alpha-2s)}(1+\absx{r})^{2s}\absx{\theta_r}^2
    \le
    C d^{(2\alpha-2s)_+},
  \]
  which proves \cref{eq:upper-two-sided-partial-sum}.
\end{proof}
   \clearpage\section{Additional Numerical Experiments}
\label{sec:supp-numerics}
\FloatBarrier

This appendix provides additional numerical experiments for \cref{sec:numerics}.
We use the same Fourier power-law data-generating model as in \cref{subsec:numerics-simulations}.

\subsection{Error decay pattern}
\label{subsec:supp-error-decay-pattern}

We first present additional heatmaps under various smoothness profiles in \Cref{fig:supp-common-design-heatmaps,fig:supp-independent-design-heatmap}.

\begin{figure}[htpb]
  \centering
  \includegraphics[width=1\textwidth]{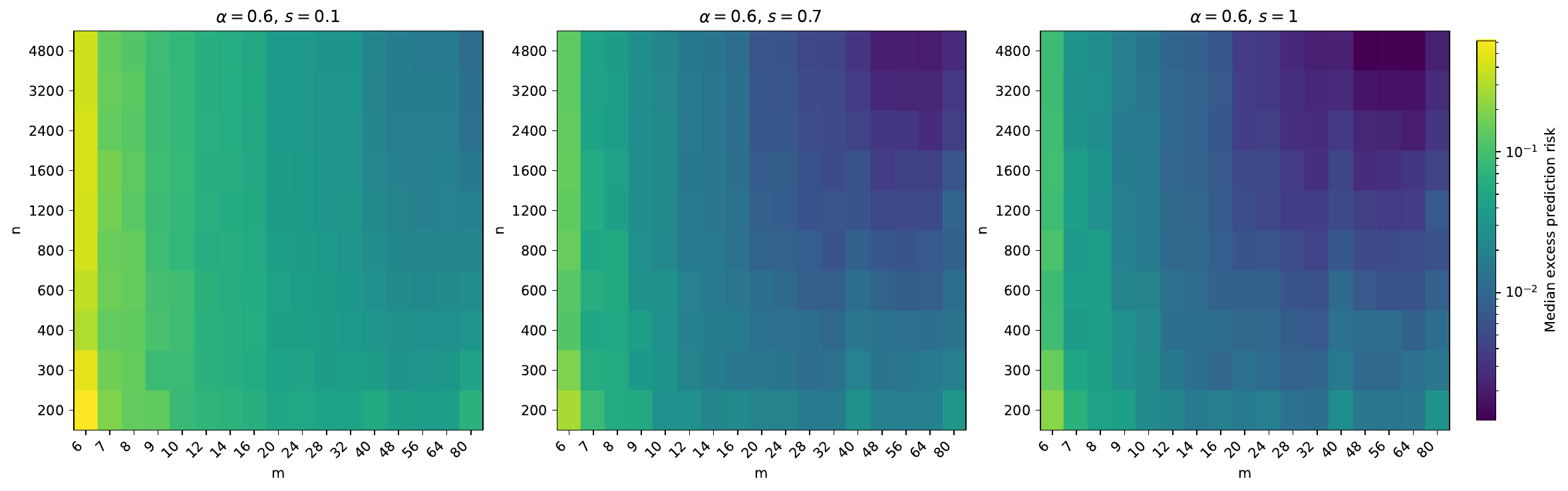}
  \vspace{0.5em}
  \includegraphics[width=1\textwidth]{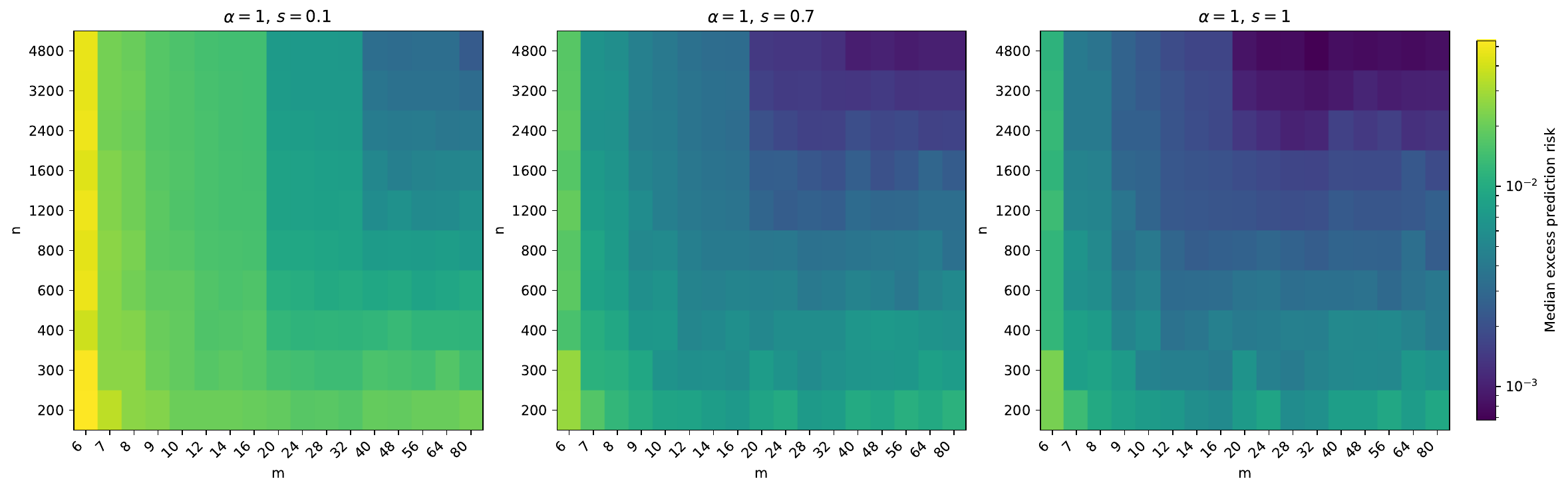}
  \caption{Additional simulation heatmaps under common design.}
  \label{fig:supp-common-design-heatmaps}
\end{figure}

\begin{figure}[htpb]
  \centering
  \includegraphics[width=1\textwidth]{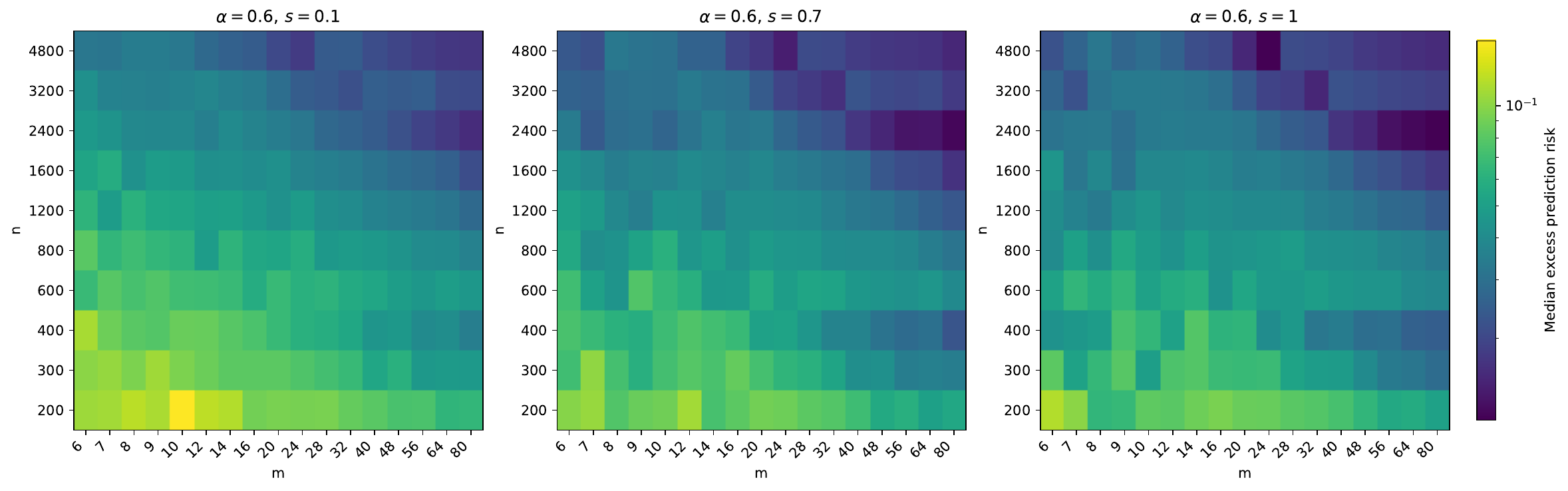}
  \vspace{0.5em}
  \includegraphics[width=1\textwidth]{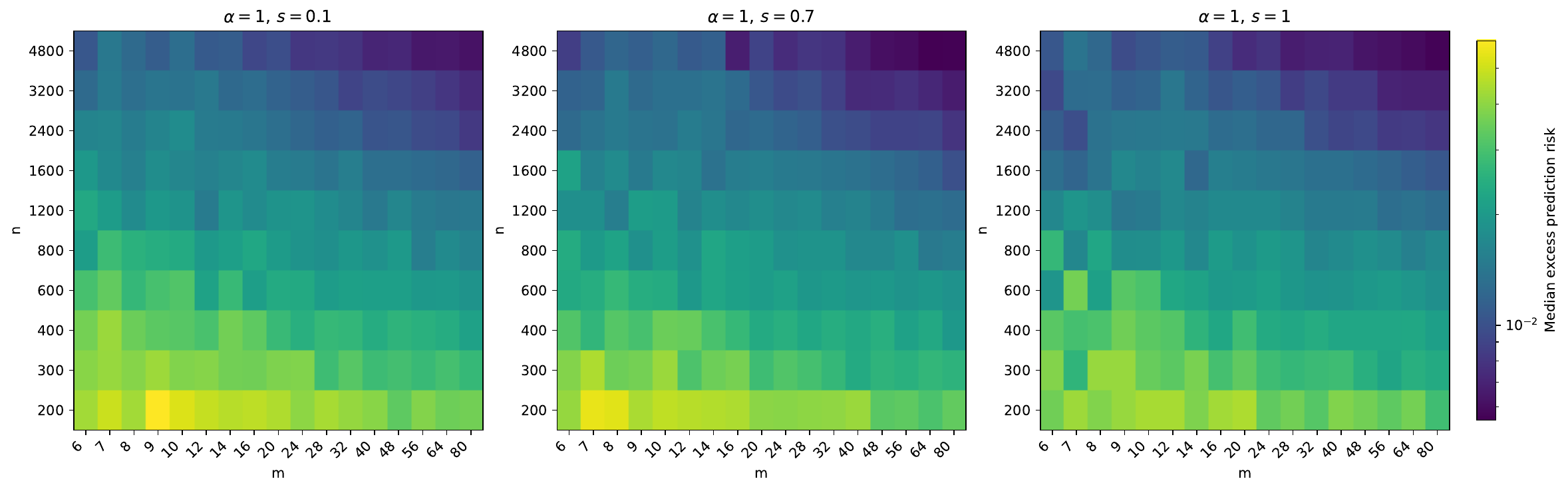}
  \caption{Additional simulation heatmaps under independent design.
  }
  \label{fig:supp-independent-design-heatmap}
\end{figure}

For \Cref{fig:supp-common-design-heatmaps} under common design, changing \(\alpha\) and \(s\) visibly changes the decay pattern in the heatmaps.
When both smoothness parameters are small, the discretization error at low resolution is relatively more prominent, so increasing the grid size \(m\) is the main driver of risk reduction and the decay pattern is mostly left-to-right.
For larger \(\alpha\) or \(s\), the sampling component becomes more prominent on the displayed grid, and the risk decreases mainly when \(m\) and \(n\) grow together.
This produces a more diagonal decay pattern from the lower-left region toward the upper-right region.
These orientations are consistent with the common-design decomposition: the low-resolution terms are primarily controlled by the grid size, whereas the sampling term improves with the joint growth of the number of curves and the number of measurements per curve.

For \Cref{fig:supp-independent-design-heatmap} under independent design, the heatmaps display a simpler decay pattern across the two profile sets.
Most panels decay diagonally from the lower-left region toward the upper-right region, indicating that the risk improves mainly as \(m\) and \(n\) grow together.
This behavior contrasts with the common-design heatmaps, where different low-resolution errors create more varied decay patterns and sharper phase transitions.
Under independent design, the rate decomposition contains only the dense FLR term and the term due to noisy measurements, so the additional fixed-grid and unknown-eigenvalue low-resolution phenomena from common design are absent.
For the \(\alpha=1\) profiles, increasing \(s\) shifts the transition toward smaller \(m\), in line with the independent-design threshold \(\gamma=2\alpha/(2\alpha+2s+1)\) along \(m=n^\gamma\).

\FloatBarrier

\subsection{Convergence rate}
\label{subsec:supp-convergence-rate}

We also present additional rate validation.
To display the qualitative rate more clearly, we use a version of the adaptive estimator with an oracle choice of the truncation level instead of the block thresholding procedure as in the main text.
Results for common design and independent design are given in \Cref{fig:supp-common-design-rate-alpha1} and \Cref{fig:supp-independent-design-rate-comparisons}, respectively.
We also compare the common-design oracle estimator, which uses known eigenvalues, with its counterpart that estimates the covariance eigenvalues in \Cref{fig:supp-common-known-unknown-oracle-loglog}.

\begin{figure}[htpb]
  \centering
  \includegraphics[width=1\textwidth]{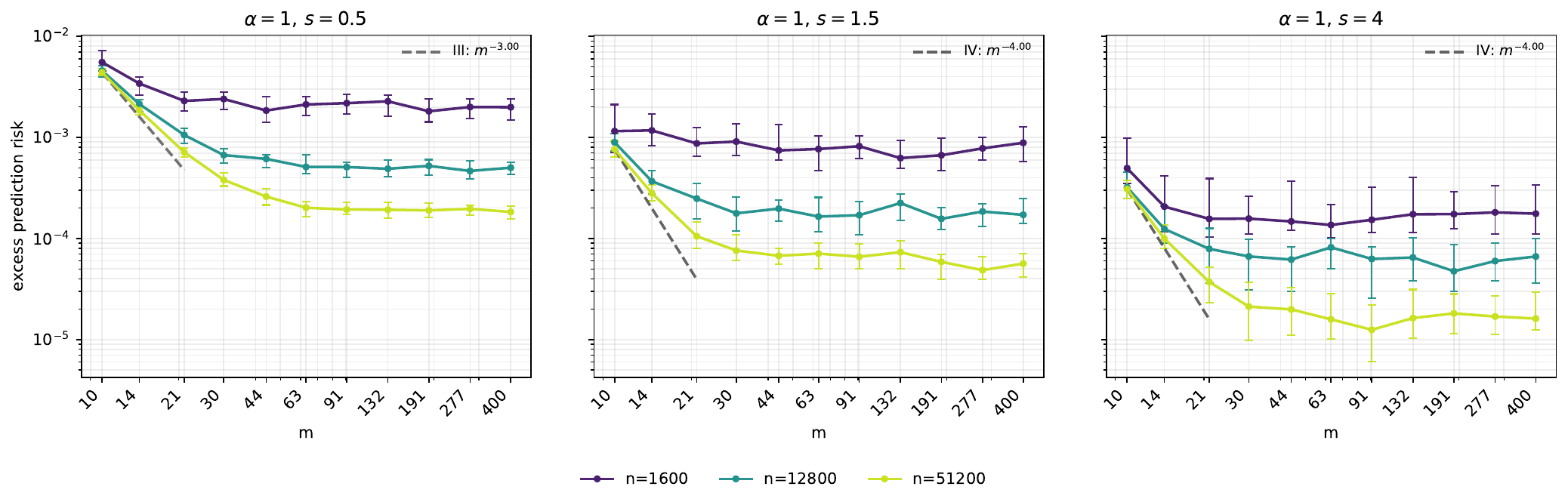}
  \vspace{0.5em}
  \includegraphics[width=1\textwidth]{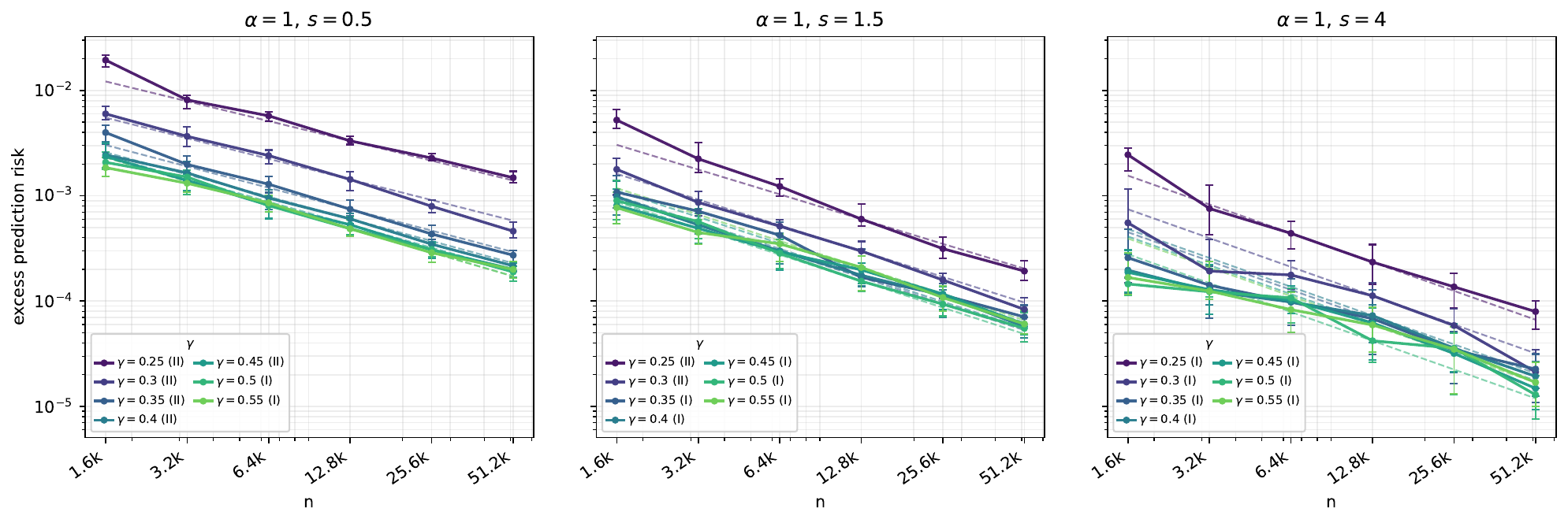}
  \caption{Additional common-design rate comparisons.
  }
  \label{fig:supp-common-design-rate-alpha1}
\end{figure}

\begin{figure}[htpb]
  \centering
  \includegraphics[width=1\textwidth]{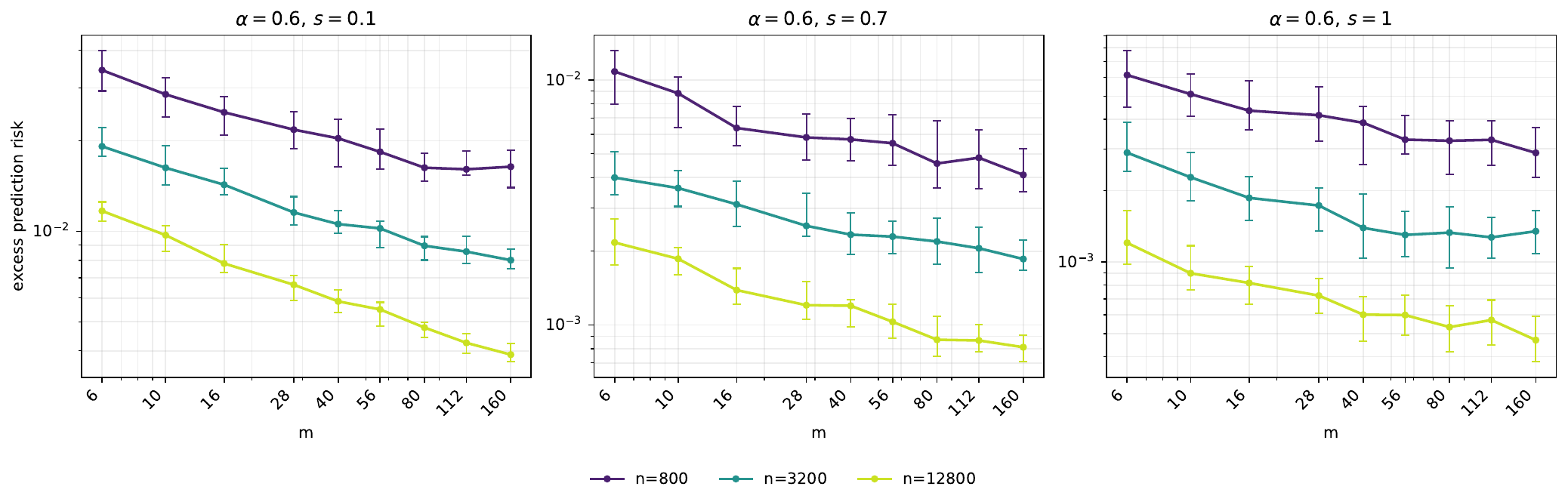}
  \vspace{0.5em}
  \includegraphics[width=1\textwidth]{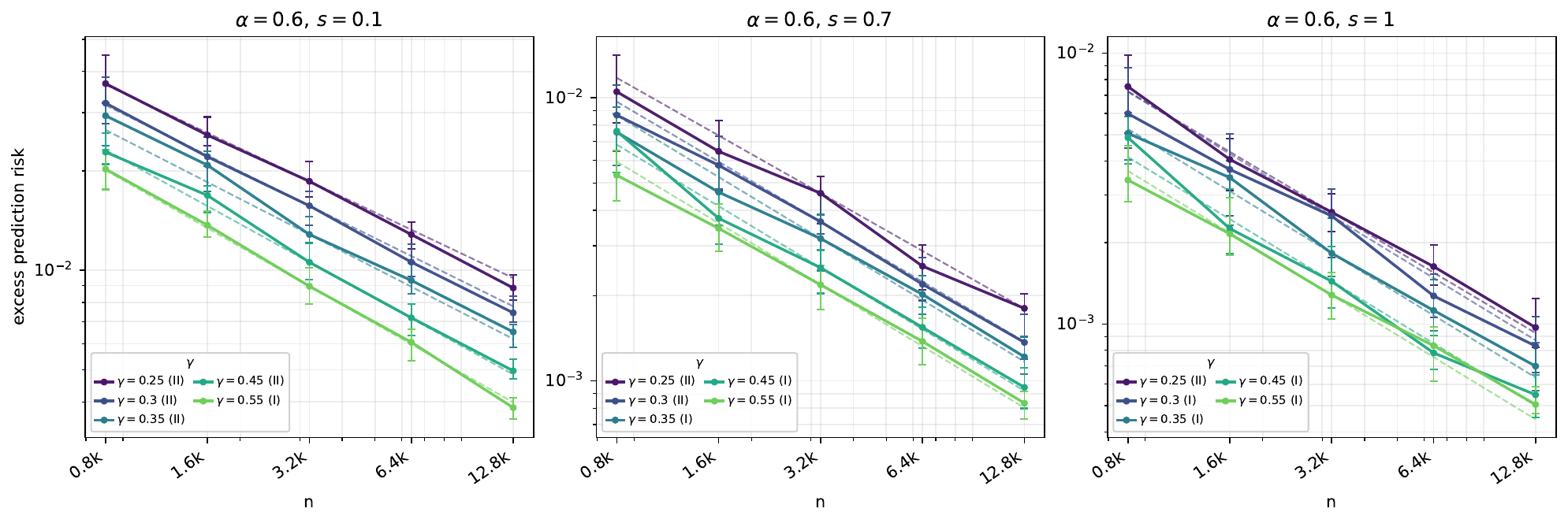}
  \caption{Additional independent-design rate comparisons.}
  \label{fig:supp-independent-design-rate-comparisons}
\end{figure}

\begin{figure}[htpb]
  \centering
  \includegraphics[width=1\textwidth]{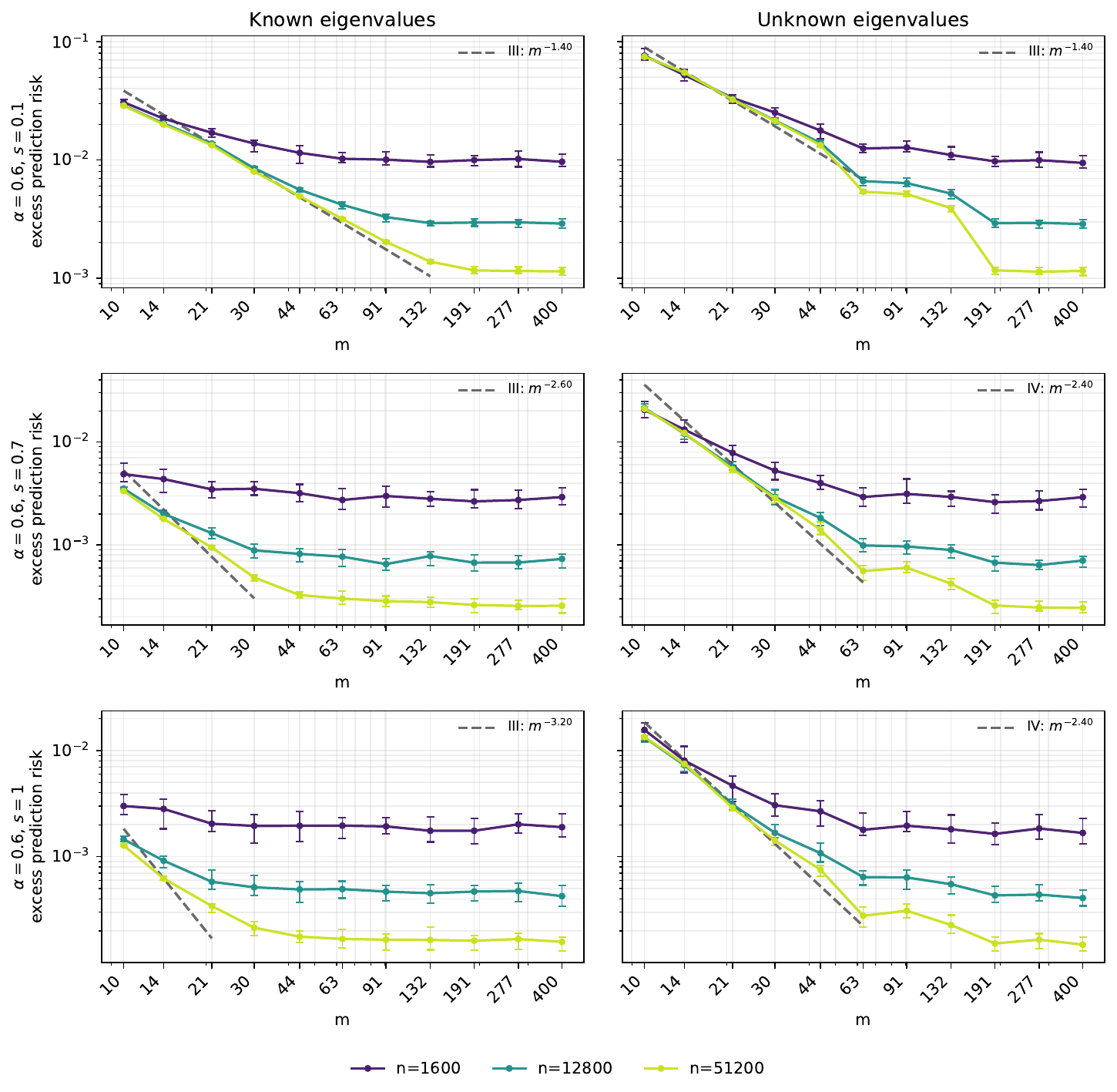}
  \caption[Known versus unknown eigenvalues under common design]{Common-design rate comparisons with known and unknown eigenvalues.
  The left column uses the true eigenvalue sequence in the oracle estimator, while the right column estimates eigenvalues from data on the common grid.
  Panels in the same row use the same vertical scale.
  }
  \label{fig:supp-common-known-unknown-oracle-loglog}
\end{figure}

For \Cref{fig:supp-independent-design-rate-comparisons} under independent design, we observe that when $n$ is fixed, the prediction error gradually decreases as $m$ increases in comparison with the common design case, since it is the $(mn)$-term that dominates the rate under independent design, and the low-resolution phenomena from common design are absent.
The curves also form a plateau when $m$ is large enough, showing the transition to the dense regime.

\Cref{fig:supp-common-known-unknown-oracle-loglog} isolates the contribution of eigenvalue estimation in the common-design rate experiment.
For \(s=0.7\) and \(s=1\), the known-eigenvalue panels follow the steeper fixed-grid guide \(m^{-(2\alpha+2s)}\), while the unknown-eigenvalue panels follow the \(m^{-4\alpha}\) guide over the low-resolution range.
This behavior is consistent with the additional unknown-eigenvalue term in the common-design rate.

\FloatBarrier

\subsection{Impact of noise scale}
\label{subsec:supp-impact-noise-scale}

\Cref{fig:supp-measurement-noise-sensitivity} shows the finite-sample sensitivity to the measurement-noise level under the two sampling designs.
Increasing \(\sigma_\delta\) raises the risk, especially in the lower-resolution and smaller-\(n\) parts of the grid, but the broad orientation of each design-specific heatmap remains similar across the noise levels.
\Cref{fig:supp-observation-noise-sensitivity} gives the analogous sensitivity check for the scalar response error level \(\sigma_\varepsilon\), with \(\sigma_\delta=0.1\) fixed.
Increasing \(\sigma_\varepsilon\) shifts the risk surface upward while preserving the main dependence on \(n\) and \(m\).

\begin{figure}[htpb]
  \centering
  \includegraphics[width=1\textwidth]{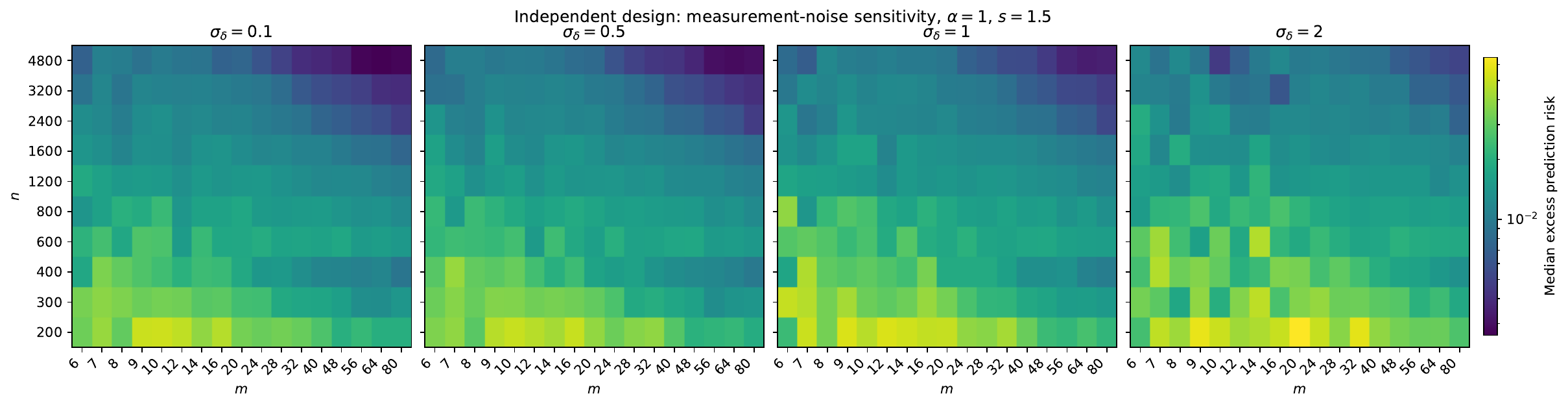}
  \vspace{0.5em}
  \includegraphics[width=1\textwidth]{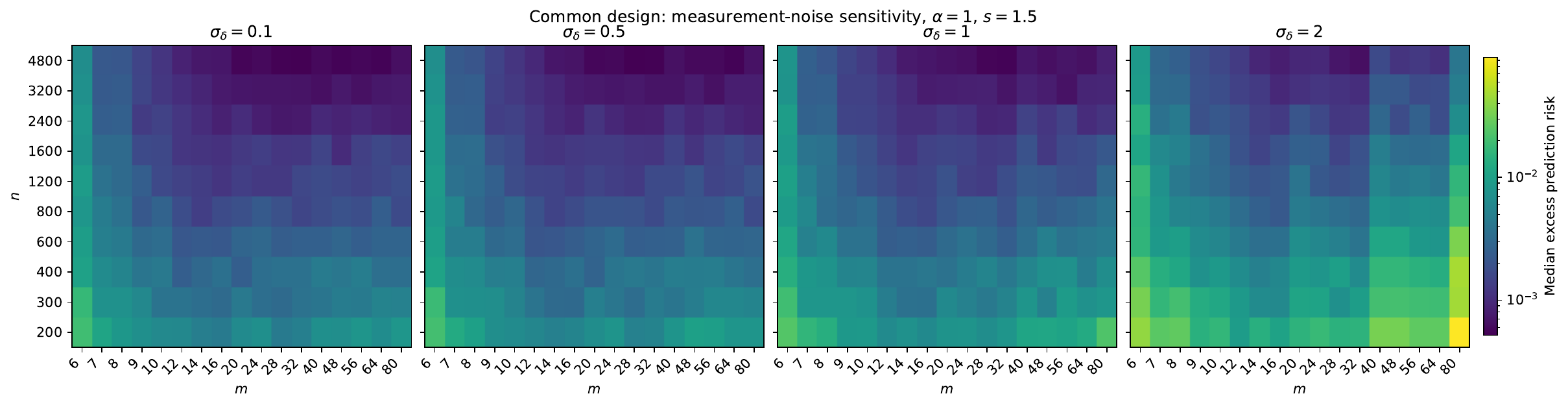}
  \caption[Measurement-noise sensitivity]{Measurement-noise sensitivity under independent design (top) and common design (bottom).
  The panels vary the measurement-error standard deviation \(\sigma_\delta\), with \(\sigma_\varepsilon=0.5\) fixed.
  }
  \label{fig:supp-measurement-noise-sensitivity}
\end{figure}

\begin{figure}[htpb]
  \centering
  \includegraphics[width=1\textwidth]{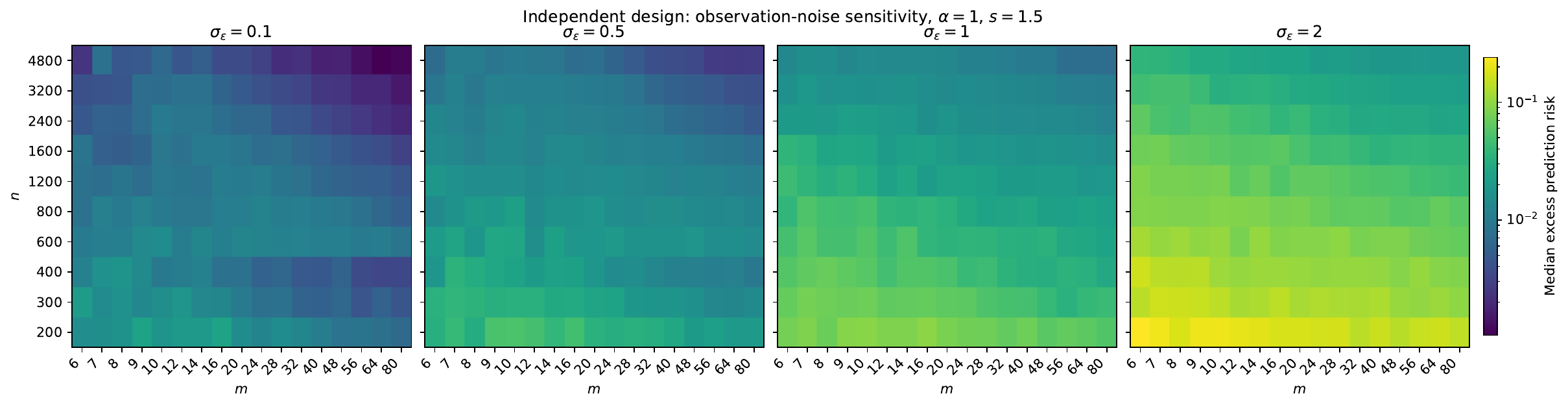}
  \vspace{0.5em}
  \includegraphics[width=1\textwidth]{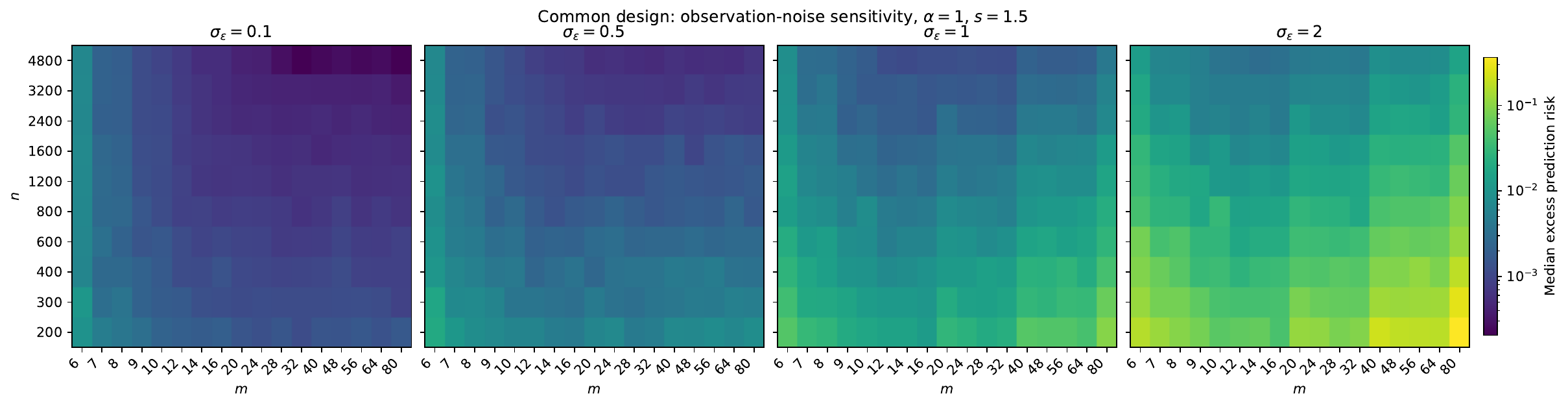}
  \caption[Observation-noise sensitivity]{Observation-noise sensitivity under independent design (top) and common design (bottom).
  The panels vary the scalar response error standard deviation \(\sigma_\varepsilon\), with \(\sigma_\delta=0.1\) fixed.
  }
  \label{fig:supp-observation-noise-sensitivity}
\end{figure}

\FloatBarrier

\bibliographystyle{plainnat}
\bibliography{main}

\end{document}